\documentclass[11pt,twoside]{amsart}

\usepackage[utf8]{inputenc}
\usepackage[T1]{fontenc}
\usepackage{amssymb,amsmath,amsthm,mathtools,amstext}
\usepackage{hyperref}
\usepackage{xcolor}
\usepackage{booktabs}
\usepackage{graphicx}
\usepackage{pgfplots}\pgfplotsset{compat=1.18}
\usepackage{amsaddr}

\newtheorem{theor}{Theorem}[section]
\newtheorem{lem}[theor]{Lemma}
\newtheorem{prop}[theor]{Proposition}
\newtheorem{cor}[theor]{Corollary}
\newtheorem{fact}[theor]{Fact}

\newtheorem{defin}[theor]{Definition}
\newtheorem{notation}[theor]{Notation}

\newtheorem{rem}[theor]{Remark}

\newcommand{\mc}{\mathcal}
\newcommand{\mb}{\mathbf}
\newcommand{\mbb}{\mathbb}

\newcommand{\es}{\emptyset}

\newcommand{\mcA}{\mathcal{A}}
\newcommand{\mcB}{\mathcal{B}}

\newcommand{\mbW}{\mathbf{W}}
\newcommand{\mbX}{\mathbf{X}}
\newcommand{\mbY}{\mathbf{Y}}

\newcommand{\mbbE}{\mathbb{E}}

\newcommand{\mbbP}{\mathbb{P}}
\newcommand{\mbbM}{\mathbb{M}}
\newcommand{\mbbN}{\mathbb{N}}
\newcommand{\mbbR}{\mathbb{R}}
\newcommand{\mbbZ}{\mathbb{Z}}

\newcommand{\eff}{f}
\newcommand{\Eff}{F}

\title[Random coloured digraphs defined by a Markov logic network]{Random coloured digraphs defined by a \\ Markov logic network}

\keywords{Statistical relational artificial intelligence, Markov logic network, knowledge transfer,
Convergence law, Random graph,
combinatorial probability, finite model theory}

\author{Yasmin Tousinejad and Vera Koponen}

\address{Department of Mathematics, Uppsala University, Sweden.}
\email{yasmin.tousinejad@math.uu.se}

\address{Department of Mathematics, Uppsala University, Sweden.}
\email{vera.koponen@math.uu.se}

\date{16 June, 2026}

\begin{document}

\begin{abstract}
A Markov Logic Network (MLN) is a probabilistic relational model used in Statistical Relational Artificial Intelligence for defining 
a probability distribution on the set of possible worlds with domain $D$ for an arbitrary finite domain $D$.
An MLN consists of soft constraints with associated weights which are nonnegative real numbers.
In this study we consider a language speaking about a property $P(x)$ and a relation $R(x, y)$.
We consider an MLN for which every Boolean combination of $P(x)$ and $R(x, y)$ is a soft constraint (with associated weight).
Let $n$ denote the size (cardinality) of the domain.
We show that, for every choice of weights, if the weights are scaled by $1/n$ then, for every first-order sentence $\varphi$, 
the probability that $\varphi$ holds tends to either 0 or 1 as $n \to \infty$; that is, a 0-1 law for first-order logic holds.
Morover, the limit probability does {\em not} depend on the weights.
If we instead use the standard semantics of MLNs, in the case of which the weights are {\em not} scaled, 
then the limit behaviour is more complicated and {\em depends} on the weights.
With unscaled weights we get 7 qualitatively different cases which depend on the weights.
In some cases we have a 0-1 law for first-order logic, in some cases not, but we may still have a convergence law.
The influence of the weights on the asymptotic probability of a first-order sentence may be in the form
of a sudden ``phase transition'' from one of the 7 cases to another.
The presence of a convergence law has positive implications for inference on large domains.
\end{abstract}

\maketitle

\tableofcontents

\section{Introduction}

\noindent
{\em Probabilistic graphical models (PGMs)} have an established role in statistical/\-probabilistic AI \cite{Koller}
and can be used to define probability distributions on the set of possible states of a system, which we call the set of
possible worlds. More precisely a possible world consists of objects (in principle of any kind, including abstract objects) 
and a specification of which properties the objects have (or not have) and how different objects are related (or not related).
For the PGM approach to work out we must specify exactly which objects we consider, which is sometimes called
a {\em closed domain assumption}.

But often the closed domain assumption is too restrictive.
For example, we may have learned a PGM from examples of objects coming from one domain $D$ 
and in this case the PGM should define a probability distribution on the set of possible worlds with domain $D$
which reflects the observed properties of the examples (of objects from $D$) that the PGM was learned from.
But what if we wish to use the PGM on another domain of objects, possibly with many more, or many fewer, objects?
And how does a PGM learned from examples of possible worlds over one domain make sense on possible worlds on another domain?

To overcome this hurdle, researchers in the field of {\em Statistical Relational AI (SRAI)} 
\cite{RKNP, GT} have considered
formal specifications that define, for {\em every finite domain} $D$ of objects, a probability distribution on the set of
possible worlds of objects with domain $D$, which we denote by $\mbW_D$.
Such formal specifications have been called by various names, such as 
{\em probabilistic relational models, lifted (probabilistic) graphical models,
parametrized (probabilistic) graphical models}, and {\em template (probabilistic) graphical models}
\cite{BKNP, RKNP, GT, KMG}.
We call such models PRMs.
One popular PRM, which also appears as a component of other PRMs, is the notion of 
Markov logic network, introduced by Richardson and Domingos in \cite{RD} and discussed in many other sources
(e.g. \cite{DL, BKNP, RKNP, KMG}).
A {\em Markov logic network (MLN)}
is a (finite) set 
\[
\mbbM=\{(\varphi_i(\bar x_i), w_i) :  1\le i\le t\}
\]
where $\bar{x}_i = (x_1, \ldots, x_{k_i})$, $\varphi_i(\bar{x}_i)$ is a first-order formula, 
called a {\em soft constraint}, 
with free variables in the tuple $\bar{x}_i$ of object variables,
and $w_i \geq 0$ is a real number, called the {\em weight} of the soft constraint.
The restriction that weights are nonnegative is not a restriction from the point of view of which probability distributions
can be defined.
Given any finite domain of objects $D$ and possible world $\mcA \in \mbW_D$,
the MLN $\mbbM$ as above defines the probability, $\mbbP_D^\mbbM(\mcA)$, of $\mcA$ as follows:
\begin{align*}
&\mbbP_D^\mbbM(\mcA)
:=
\frac{
\exp\bigl(\sum_{i=1}^t w_i |\varphi_i(\mcA)|\bigr)
}{
\sum_{\mcB\in\mbW_D}
\exp\bigl(\sum_{i=1}^t w_i |\varphi_i(\mcB)|\bigr)
} \\
&\text{where $|\varphi_i(\mcA)|$ is the number of groundings in $D$ which
satisfy $\varphi_i(\bar{x}_i)$ in $\mcA$.}
\end{align*}

The role of the weight $w_i$ is to be a relative quantification of 
how likely it is that a random $k_i$-tuple of objects from $D$
satisfies $\varphi_i(\bar{x}_i)$ in a random possible world from $\mbW_D$.
So if the MLN $\mbbM$ was learned from a set of example possible worlds from $\mbW_D$ then the weights $w_1, \ldots, w_t$
were chosen so that, for each $i = 1, \ldots, t$, the probability, using $\mbbP_D^\mbbM$,
that a random $k_i$-tuple satisfies $\varphi_i(\bar{x}_i)$ reflects the corresponding probability among the examples.
If the $\mbbM$ is to be useful for inference and reasoning we also wish that for first-order sentences $\varphi$
(at least those which are not ``too'' complex relative to the size of $D$), $\mbbP_D^\mbbM(\varphi)$ 
(the probability, using $\mbbP_D^\mbbM$, of the event that $\varphi$ is true)
should reflect the probability that an example satisfies $\varphi$.

Typically we wish to use the MLN $\mbbM$ to make inferences about, and reason about, possible worlds with other domains
which may have different size than $D$.
It turns out that such a transfer of inferences on $\mbW_D$ to inferences on $\mbW_{D'}$ is delicate for MLNs.
A weight that
has a visible effect on one domain may become dominant, negligible, or critical on a
larger domain. Understanding this phenomenon is a basic requirement for reliable
statistical relational artificial intelligence \cite{GT,RKNP,BKNP}.   

Several approaches have been proposed to understand or reduce unwanted domain-size
effects in PRMs \cite{Poole,JBB,Mittal,ChenWM,Wei25}. One is
projectivity, which requires exact consistency across domain sizes. If a structure is
sampled on a larger domain and then restricted to a smaller set of vertices, the induced
distribution on the smaller structures must agree with the distribution obtained by
sampling directly on the smaller domain \cite{JS,MS}. This is a strong finite-domain
consistency requirement, but many MLNs are not projective. Another approach is
weight-scaling, where the weights are allowed to depend on the domain size
\cite{JBB,Mittal,Wei25}. Scaling can change the balance between the energy coming from
ground formulas and the entropy coming from the number of possible structures. These two
ideas solve different problems. Projectivity imposes exact consistency across domain
sizes, while scaling changes the asymptotic balance between energy and entropy.
However, the studies on the knowledge transfer problem for MLNs have so far not produced any reasonably general
results concerning the stability of events/queries expressed by first-order formulas.

First-order logic is a natural formal language for expressing events/queries on possible worlds 
because it does not need a closed domain assumption.
We call an event/query a {\em first-order property} if it can be expressed by some first-order sentence
(i.e. formula with no free variables).
The main question of this paper is whether first-order
properties have stable limiting probabilities with respect to $\mbbP_D^\mbbM$ as the size of $D$ tends to infinity.
Since the nature of the objects in $D$ is irrelevant for this question we will assume
(without loss of generality) that $D := [n] := \{1, \ldots, n\}$ and let $\mbW_n = \mbW_{[n]}$
(the set of possible worlds with domain $[n]$), and $\mbbP_n^\mbbM := \mbbP_{[n]}^\mbbM$ (so $\mbbP_n^\mbbM$ is
a probability distribution on $\mbW_n$).
Letting $\mbbP_n^\mbbM(\varphi)$ denote the probability of the event that $\varphi$ is true in a possible world from $\mbW_n$
we can now ask if $\lim_{n\to\infty} \mbbP_n^\mbbM(\varphi)$ exists, what it is, and how the answer depends on the weights of $\mbbM$.

Let $\mbbN^+$ denote the set of positive integers.
Given a set $S$ of first-order sentences and a sequence $(\mbbP_n : n \in \mbbN^+)$ where 
$\mbbP_n$ is a probability distribution on $\mbW_n$
we say that $(\mbbP_n : n \in \mbbN^+)$ satisfies a {\em convergence law for $S$} if 
$\lim_{n\to\infty}\mbbP_n(\varphi)$ exists for all $\varphi \in S$.
If, in addition, the limit is always 0 or 1, then we say that $(\mbbP_n : n \in \mbbN^+)$ satisfies a {\em 0-1 law for $S$}.
If $S$ is the set of all first-order sentences then we replace `$S$' by `first-order logic'.

Here we study MLNs which is rich enough to display non-projectivity (as well as projectivity), phase
transitions, 0-1 laws for first-order logic, failures of 0-1 laws for first-order logic,
and convergence laws for first-order logic.
We do this both for unscaled and for scaled weights and we give a complete analysis with respect to 0-1 laws
(and an almost complete analysis with respect to convergence laws).
We consider the signature (also called language, or vocabulary)
\[
\sigma=\{P,R\},
\]
where $P$ is a relation symbol of arity 1 and $R$ is a relation symbol of arity 2. 
In this setting a possible world with domain $[n]$ is a $\sigma$-structure with domain $[n]$, or equivalently,
a directed graph (digraph), with loops allowed, in which every vertex may be coloured.
So every choice of a subset of $[n]$ (the set of colours) and choice of a subset of $[n]^2 := [n] \times [n]$ 
(the set of directed edges) determines a $\sigma$-structure with domain $[n]$.
Note that the much used example in the literature about MLNs (and other PRMs) of ``smokers and friends'' 
can be modelled with this signature if $P(x)$ means ``$x$ smokes'' and $R(x, y)$ means ``$x$ and $y$ are friends''.
In general, MLNs over this signature can model how a property $P$ of objects may influence a relationship $R$ 
between objects, and vice versa.
Also, the event that one object has the property $P$ may influence the probability that another object has the property $P$.

\subsection*{Main contributions, informally}

\begin{enumerate}
\item
We give a complete seven-regime classification of the asymptotic colour proportion in the
unscaled MLN. The seven regimes, which are determined by the weights, are an entropy-selected balanced regime, a unique
interior linear-size regime, two logarithmic boundary regimes, two pinned endpoint
regimes, and a two-endpoint mixture regime. The precise statement is
Theorem~\ref{the colour distribution, exact formulation}.

\item
We prove the corresponding first-order logical limit laws. Four regimes satisfy a full
first-order $0$-$1$ law. The two-endpoint mixture regime satisfies a full first-order
convergence law, and every limiting probability belongs to
$\{0,\frac12,1\}$. In the two logarithmic regimes, bounded-rank $0$-$1$ laws hold, 
but a full first-order $0$-$1$ law fails in
general. The precise statement is Theorem~\ref{convergence laws}.

\item
We analyse the corresponding $1/n$-scaled MLN. This scaling is natural in the setting of coloured digraphs
because the number of ordered pairs is of order $n^2$, while the entropy of
choosing the coloured vertices is of order $n$. In the scaled model, the almost sure colour
proportions are bounded away from the endpoints 0 and 1, 
and this implies that we get a first-order $0$-$1$ law for every choice of weights.
Moreover, for every first-order sentence $\varphi$ the limit probability of $\varphi$ does {\em not} depend on 
the choie of the weights.
The exact statements are Theorems~\ref{the colour proportion in scaled case}
and~\ref{0-1 law in scaled case}.

\end{enumerate}

\subsection*{Related work}\label{Related work}

In \cite{KopMLN} Koponen studies, among other things,
MLNs over $\sigma := \{P\}$, where $P$ is a unary relation symbol, and 
a result about the typical proportion of elements satisfying $P$ is proved which is analogous to 
Proposition~\ref{thm:concentration} below. But the results in \cite{KopMLN} neither imply, nor are implied by,
the results here, because here we also consider an edge relation
and we get sharper results, while in \cite{KopMLN} soft constraints may contain any
number of free variables (but here we limit it to 2).
Malhotra and Serafini \cite{MS} have studied projective MLNs with relation symbols of arity at most 2 and with
only soft constraints with at most 2 free variables. But in most cases of our results
the MLN is not projective and their results are not applicable.

It has been shown, first by Poole et al. \cite{Poole} and then more generally 
(and with some corrections) by Mittal et al. \cite{Mittal} that for some MLNs $\mbbM$ there is an atomic sentence
or atomic ground formula such that the limit probability of the formula (as the domain size tends to infinity)
is independent of the weights of the soft constraints of $\mbbM$.
This can be interpreted as meaning that MLNs are inadequate for knowledge transfer, or extrapolation, since weights learned
from one domain may have negligible influence on the probability of some event on another (e.g. much larger) domain.
To avoid that the weights lose influence in the limit,
variants of MLNs have been proposed,
such as Adaptive MLNs by Jain et al. \cite{JBB} and Domain-size Aware MLNs by Mittal et al. \cite{Mittal},
which scale the weights by dividing them by some number that depends on the domain size.
But Weitkämper \cite{Wei25} has recently shown that there are rescaled MLNs $\mbbM$ such that the asymptotic
probability of some atomic ground formula is independent of the weights of $\mbbM$, so the rescaling does not
help in general. 

Perhaps previous studies have been too alarmed by the asymptotic loss of influence of the weights
in some cases. The studies \cite{Poole, Mittal}
have considered some particular MLNs which seem to have
been constructed for the sole purpose of being able to show that the asymptotic probability of some
particular atomic formula is independent from the weights.
Also, there may be many other asymptotic properties of random structures that may be of interest.
This study shows that for random coloured digraphs defined by an MLN 
the weights in the unscaled case do have crucial influence on the asymptotic properties of random coloured digraphs.
The weights in the unscaled case also determine the edge probability conditioned on knowing the colours of the vertices.

For {\em directed} graphical models, and various logics, there are a number of 
logical convergence results, and related results. This includes
relational Bayesian networks \cite{Jae98a},
relational Bayesian network specifications \cite{CM},
lifted Bayesian networks \cite{Kop20, KW1},
probabilistic logic programs \cite{Wei21},
PLA-networks \cite{KW2, KT, Kop26}, and
functional lifted Bayesian networks \cite{Wei24}.
The proofs of these results rely on the fact that it is possible to use induction on 
the longest directed path in the underlying directed acyclic graph of the directed graphical model.
There are also convergence laws on graph neural networks, and part of the proofs rely on 
there being a ``direction'' when passing from level to level in the network:
\cite{Adam-Day1, Adam-Day2, Adam-Day3}.

\subsection*{Notation and terminology}

\noindent
$\mbbN$ and $\mbbN^+$ denote the set of nonnegative, respectively, positive integers.
For all $n \in \mbbN^+$ we let $[n] := \{1, \ldots, n\}$.
Let $\exp(x) := e^x$.
If $S$ is a set then $|S|$ denotes its cardinality, also informally called its size.
Finite sequences will be denoted by $\bar{s}$ for some letter $s$ and the length of 
$\bar{s}$ will be denoted by $|\bar{s}|$.

If a sequence of logical variables is denoted by $\bar{x}$ (say) then it is assumed that all entries in $\bar{x}$ are different.
By a {\em signature} we mean a finite set of relation symbols.
If $\sigma$ is a signature then $FO(\sigma)$ denotes the set of first-order formulas that 
can be formed by using symbols in $\sigma$
and the identity symbol `$=$'.
If a formula in $FO(\sigma)$ is denoted by $\varphi(\bar{x})$ (where $\bar{x}$ is a finite sequence of logical variables)
then it is assumed that every free variable in the formula denoted by $\varphi(\bar{x})$ occurs in $\bar{x}$.
First-order structures will be denoted by calligraphic letters $\mcA$, $\mcB$, etc.
If $\varphi(x_1, \ldots, x_k) \in FO(\sigma)$ and $\mcA$ is a $\sigma$-structure with domain $A$, then
\[
\varphi(\mcA) := \{ (a_1, \ldots, a_k) \in A^k : \mcA \models \varphi(a_1, \ldots, a_k)\}.
\]
If $R \in \sigma$ then $R^\mcA$ denotes its interpretation in the $\sigma$-structure $\mcA$,
and it follows that $R(\mcA) = R^\mcA$.
Informally speaking, the {\em quantifier-rank} (also called {\em quantifier-depth}) of a first-order formula
is the maximal number of nested quantifiers in the formula;
for the exact definition we refer to e.g.
\cite[p.~7]{EF}.

\section{Markov logic networks and random coloured digraphs}

\noindent
{\em In the rest of this article we fix a signature $\sigma = \{P, R\}$ where $P$ and $R$ have arities 1 and 2, respectively.
Let $\mbW_n$ be the set of all $\sigma$-structures with domain $[n]$.}
Informally we can think of $P$ as a colour, so elements satisfying $P$ are coloured and others are uncoloured,
and $R$ as a directed edge relation (with loops allowed).

{\em For the rest of the article we fix a Markov logic network }
\[
\mbbM = \{(\varphi_i(x, y), w_i) : i = 1, \ldots, 8\}
\]
where
\begin{align*}
\varphi_1(x,y)  &:=  P(x)\land  P(y) \land  R(x,y),\\
\varphi_2(x,y)  &:=  P(x)\land  P(y) \land \neg  R(x,y),\\
\varphi_3(x,y)  &:=  \neg  P(x) \land \neg  P(y) \land  R(x,y),\\
\varphi_4(x,y)  &:=  \neg  P(x) \land \neg  P(y) \land \neg  R(x,y),\\
\varphi_5(x,y)  &:=  P(x) \land \neg  P(y) \land  R(x,y), \\
\varphi_6(x,y)  &:=  P(x) \land \neg  P(y) \land \neg  R(x,y), \\
\varphi_7(x,y)  &:=  \neg  P(x) \land  P(y) \land  R(x,y), \\
\varphi_8(x,y)  &:=  \neg  P(x)\land  P(y) \land \neg  R(x,y).
\end{align*}

\noindent
Recall that, for $i = 1, \ldots, 8$, if $\mcA \in \mbW_n$ then $|\varphi_i(\mcA)|$ denotes the number of ordered pairs 
$(a, b) \in [n]^2$ that satisfy $\varphi_i(x, y)$ (i.e. such that $\mcA \models \varphi_i(a, b)$).

\begin{defin}\label{definition of distribution defined by an MLN}{\rm
For every $\mcA \in \mbW_n$, define 
\[
\mu_n^\mbbM(\mcA) = 
\exp\Big(\sum_{i=1}^8 w_i |\varphi_i(\mcA)| \Big)
\]
and for every $\mbX \subseteq \mbW_n$, define
\[
\mu_n^\mbbM(\mbX) = 
\sum_{\mcA \in \mbX} \mu_n^\mbbM(\mcA) \ \ \text{ and } \ \ 
\mbbP_n^\mbbM\big(\mbX\big) = \frac{\mu_n^\mbbM(\mbX)}{\mu_n^\mbbM\big(\mbW_n\big)},
\]
so in particular $\mbbP_n^\mbbM\big(\mcA\big) = \frac{\mu_n(\mcA)}{\mu_n(\mbW_n)}$.
Then $\mbbP_n^\mbbM$ is a probability distribution on $\mbW_n$ which we call the 
{\em distribution (on $\mbW_n$) determined by $\mbbM$}.
}\end{defin}

\begin{rem}{\rm
The choice of basis in Definition~\ref{definition of distribution defined by an MLN} is 
irrelevant from the point of view of which distributions Markov logic networks can define.
The reason is that if we change the basis, in this case $e$, to some other basis $b > 1$ then 
we can just adjust the weights so that the distribution defined by the new Markov logic network and
the new weights is the same as the distribution defined with the basis $e$ and the original weights.
In this study it is convenient to use the basis $e$ of the natural logarithm.

The requirement that the weights are nonnegative is not a restriction with respect to the
distributions that can be determined by MLNs, because it is possible to shift the
weights so that all become nonnegative without changing the distribution determined by the new MLN.
}\end{rem}

\noindent
By Definition~\ref{definition of distribution defined by an MLN}
we get a function $\mu_n := \mu_n^\mbbM$ from $\mbW_n$ to the positive reals and
a probability distribution $\mbbP_n := \mbbP_n^\mbbM$ on $\mbW_n$.
It is convenient to introduce the following:

\begin{notation}\label{notation for Z-n}{\rm
For $n \in \mbbN^+$ and $0 \leq m \leq n$, let
\begin{align*}
&Z_n := \mu_n\big(\mbW_n\big), \\
&\mbW_n(m) := \{ \mcA \in \mbW_n : |P^\mcA| = m\} \ \ \text{and} \\
&Z_n(m) := \mu_n\big(\mbW_n(m)\big).
\end{align*}
}\end{notation}

\noindent
Observe that $Z_n = \sum_{m=0}^n Z_n(m)$ and, if $\mbX, \mbY \subseteq \mbW_n$ and $\mbY \neq \es$ then
\[
\mbbP_n(\mbX) = \frac{\mu_n(\mbX)}{Z_n} \qquad \text{and} \qquad 
\mbbP_n(\mbX \ | \ \mbY) = \frac{\mu_n(\mbX \cap \mbY)}{\mu_n(\mbY)}.
\]

\begin{defin}\label{definition of probabilities of formulas}{\rm
If $\varphi(\bar{x})$ is a $\sigma$-formula and $\bar{a} \in [n]^{|\bar{x}|}$, then
\[
\mbbP_n(\varphi(\bar{a})) := \mbbP_n\big(\big\{\mcA \in \mbW_n : \mcA \models \varphi(\bar{a}) \big\}\big).
\]
If, in addition, $\mbX \subseteq \mbW_n$ then 
\[
\mbbP_n(\varphi(\bar{a}) \ | \ \mbX) := 
\mbbP_n\big(\big\{\mcA \in \mbW_n : \mcA \models \varphi(\bar{a}) \big\} \ \big| \  \mbX).
\]
So in particular, if $\varphi$ is a $\sigma$-sentence, then
\[
\mbbP_n(\varphi) := \mbbP_n\big(\big\{\mcA \in \mbW_n : \mcA \models \varphi \big\}\big)
\]
and similarly if we condition on $\mbX$.
Let $S \subseteq FO(\sigma)$ be a set of sentences.
If, for all sentences $\varphi \in S$, $\lim_{n\to\infty}\mbbP_n(\varphi)$ exists then 
we say that $(\mbbP_n : n \in \mbbN^+)$ {\em satisfies a convergence law for} $S$. If, in addition,
the limit $\lim_{n\to\infty}\mbbP_n(\varphi)$ is always 0 or 1, then
we say that $(\mbbP_n : n \in \mbbN^+)$ {\em satisfies a 0-1 law for} $S$.
}\end{defin}

\begin{defin}\label{viewing the relation symbols as random variables}{\rm
(a) For any fixed $n$ and $a \in [n]$ we define the random variable
$P_a : \mbW_n \to \{0, 1\}$  by $P_a(\mcA) = P^\mcA(a)$.\\
(b) For any fixed $n$ and $(a, b) \in [n]^2$ we define the random variable
$R_{(a, b)} : \mbW_n \to \{0, 1\}$ by $R_{(a, b)}(\mcA) = R^\mcA(a, b)$.
}\end{defin}

\begin{rem}\label{remark on different notations for P-A}{\rm
Observe that with the explained notational conventions, for all $n$, $a, b \in [n]$, and $\mcA \in \mbW_n$,
\begin{align*}
&\mcA \models P(a) \ \Longleftrightarrow \ P^\mcA(a) = 1 \ \Longleftrightarrow \ P_a(\mcA) = 1 \text{ and } \\
&\mcA \models R(a, b)  \ \Longleftrightarrow \ R^\mcA(a, b) = 1 \ \Longleftrightarrow \ R_{(a, b)}(\mcA) = 1.
\end{align*}
}\end{rem}

\begin{notation}\label{notation C-ij}{\rm Let
\begin{align*}
&C_{12}:=e^{w_1}+e^{w_2},\qquad C_{34}:=e^{w_3}+e^{w_4},  \\
&C_{56}:=e^{w_5}+e^{w_6} ,
\  \text{and} \ \ C_{78}:=e^{w_7}+e^{w_8}.
\end{align*}
}\end{notation}

\noindent
Next we define the ``phase function'' $\eff$ which will turn out to control the asymptotic behaviour
of $(\mbbP_n : n \in \mbbN^+)$. Its form is derived from the definition of $\mbbP_n$,
as will become clear in the proofs.

\begin{defin}\label{definition of phase function}{\rm
 (a) For $\alpha\in \mbbR$ define
\begin{equation}\label{eq:def-psi}
\eff(\alpha) :=  \alpha^2\ln C_{12}+(1-\alpha)^2\ln C_{34}
+\alpha(1-\alpha)\ln (C_{56}C_{78}).
\end{equation}
(b) Define the following constants:
\begin{align*}
&c_0 := \ln C_{34},\\
&c_1 := \ln (C_{56}C_{78}) - 2\ln C_{34} = \ln \frac{C_{56}C_{78}}{(C_{34})^2}, \text{ and}\\
&c_2 := \ln C_{12} + \ln C_{34} - \ln (C_{56}C_{78}) = \ln \frac{C_{12}C_{34}}{C_{56}C_{78}}.
\end{align*}
}\end{defin}

\noindent
By straightforward calculations we get
\begin{align}\label{normal form of phase function}
&\eff(\alpha) = c_2 \alpha^2 + c_1 \alpha + c_0, \\
&\eff'(\alpha) = 2c_2 \alpha + c_1, \text{ and} \nonumber \\
&\eff''(\alpha) = 2c_2. \nonumber
\end{align}
We will use that if $c_2 \neq 0$ then
\[
\eff'(\alpha) = 0 \ \Longleftrightarrow \ \alpha  = -\frac{c_1}{2c_2}.
\]

\begin{defin}\label{definition of maximiser set}{\rm
Define $\Eff^{\max} = \underset{\alpha\in[0,1]}{\arg\max} \, \eff(\alpha)$,
or equivalently, if $\beta = \sup\{\eff(\alpha) : \alpha \in [0, 1]\}$ then 
$\Eff^{\max} = \{\alpha \in [0, 1] : \eff(\alpha) = \beta\}$.
}\end{defin}

\noindent
The main results, concerning the asymptotic colour proportion and logical convergence laws,
turn out to depend on the nature of $\Eff^{\max}$, so we start with a characterization of it.

\begin{lem}\label{prop:maximiser} $\text{  }$
\begin{itemize}
\item[(a)] If $c_2 < 0$ 
(equivalently, $C_{12}C_{34} < C_{56}C_{78}$) then $\Eff^{\max}=\{\alpha^\star\}$ is a singleton and
\[
\alpha^\star=
\begin{cases}
-\frac{c_1}{2c_2}, & \text{if } C_{56}C_{78}>\max\{(C_{12})^2, (C_{34})^2\} ,\\
0, & \text{if } (C_{34})^2 \ge C_{56}C_{78},\\
1, & \text{if } (C_{12})^2 \ge C_{56}C_{78}.
\end{cases}
\]
\item[(b)] If $c_2 > 0$ (equivalently, $C_{12}C_{34} > C_{56}C_{78}$) then 
\[
\Eff^{\max}=
\begin{cases}
\{0\}, & \text{if } C_{34}>C_{12},\\
\{1\}, & \text{if } C_{12}>C_{34},\\
\{0,1\}, & \text{if } C_{12}=C_{34}.
\end{cases}
\]
\item[(c)] If $c_2 = 0$ (equivalently, $C_{12}C_{34} = C_{56}C_{78}$) then 
\[
\Eff^{\max}=
\begin{cases}
\{0\}, & \text{if } C_{56}C_{78} < (C_{34})^2,\\
\{1\}, & \text{if } C_{56}C_{78} > (C_{34})^2,\\
[0,1], & \text{if } C_{56}C_{78} = (C_{34})^2 \ (\text{equivalently } C_{12} = C_{34}).
\end{cases}
\]
\end{itemize}
\end{lem}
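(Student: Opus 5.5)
This is elementary maximisation of the quadratic $\eff(\alpha)=c_2\alpha^2+c_1\alpha+c_0$ on $[0,1]$, split by the sign of $c_2$ as in the statement. First I record the endpoint and derivative values, since every case reduces to them. From \eqref{eq:def-psi} we get $\eff(0)=\ln C_{34}$ and $\eff(1)=\ln C_{12}$. From \eqref{normal form of phase function} we get $\eff'(0)=c_1=\ln\frac{C_{56}C_{78}}{C_{34}^2}$ and
\[
\eff'(1)=2c_2+c_1=\ln\frac{C_{12}^2C_{34}^2}{C_{56}^2C_{78}^2}+\ln\frac{C_{56}C_{78}}{C_{34}^2}=\ln\frac{C_{12}^2}{C_{56}C_{78}}.
\]
So $\eff'(0)>0$ iff $C_{56}C_{78}>C_{34}^2$, and $\eff'(1)<0$ iff $C_{56}C_{78}>C_{12}^2$. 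The equivalence of the sign conditions on $c_2$ with the stated product inequalities is immediate because $\ln$ is increasing.

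\emph{Case (a), $c_2<0$.} Here $\eff$ is strictly concave, so its maximiser on $[0,1]$ is unique. The derivative $\eff'$ is strictly decreasing.
\begin{itemize}
\item If $C_{56}C_{78}>\max\{C_{12}^2,C_{34}^2\}$, then $\eff'(0)>0>\eff'(1)$. The unique zero $-c_1/(2c_2)$ of $\eff'$ therefore lies in $(0,1)$, and it is the maximiser.
\item If $C_{34}^2\ge C_{56}C_{78}$, then $\eff'(0)\le 0$, hence $\eff'<0$ on $(0,1]$, and $\alpha^\star=0$.
\item If $C_{12}^2\ge C_{56}C_{78}$, then $\eff'(1)\ge0$, hence $\eff'>0$ on $[0,1)$, and $\alpha^\star=1$.
\end{itemize}
These three conditions exhaust all possibilities. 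The last two cannot hold simultaneously, since together they would give $C_{12}^2C_{34}^2\ge (C_{56}C_{78})^2$, contradicting $c_2<0$.

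\emph{Case (b), $c_2>0$.} Here $\eff$ is strictly convex, so its maximum on $[0,1]$ is attained only at endpoints. Comparing $\eff(0)=\ln C_{34}$ with $\eff(1)=\ln C_{12}$ gives the three subcases. When $C_{12}=C_{34}$, both endpoints attain the maximum, and strict convexity shows that no interior point does.

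\emph{Case (c), $c_2=0$.} Here $\eff$ is affine with slope $c_1$. If $c_1<0$, i.e.\ $C_{56}C_{78}<C_{34}^2$, the maximiser is $\{0\}$. If $c_1>0$, it is $\{1\}$. If $c_1=0$, $\eff$ is constant and the maximiser set is $[0,1]$. For the parenthetical equivalence, combine $C_{12}C_{34}=C_{56}C_{78}$ with $C_{56}C_{78}=C_{34}^2$ to get $C_{12}=C_{34}$. Conversely, $C_{12}=C_{34}$ gives $C_{56}C_{78}=C_{12}C_{34}=C_{34}^2$.

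The only step that needs care is the case analysis in (a): checking that the three conditions are exhaustive and that the two boundary cases are disjoint. It is settled by the derivative identities above. Everything else is routine.
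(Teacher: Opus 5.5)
Your proof is correct and follows essentially the paper's route: the paper places the critical point $-c_1/(2c_2)$ relative to $0$ and $1$ via the conditions $c_1>0$ and $c_1+2c_2<0$, which are exactly your endpoint-derivative conditions $\eff'(0)>0$ and $\eff'(1)<0$, and it handles (b) and (c) by the same endpoint comparison and slope argument. Your explicit checks that the three conditions in (a) are exhaustive and that the last two are disjoint, and your converse direction for the parenthetical equivalence in (c), are minor additions that the paper leaves implicit.
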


\begin{proof}
\emph{(a) Concave case: suppose that $c_2<0$.}
Here $\alpha^\star := -\frac{c_1}{2c_2}$ is the unique critical point of $\eff$,
and $\eff$ is strictly concave on $\mbbR$, hence it has a unique maximum on $[0,1]$. 
If $\alpha^\star \in(0,1)$, then the unique maximiser on $[0,1]$ is $\alpha^\star$; if $\alpha^\star\le 0$ 
(respectively $\alpha^\star\ge 1$) then the unique maximiser on $[0,1]$ is 0 (respectively 1).

Since $c_2<0$ we have
\begin{align*}
&\alpha^\star>0 \ \Longleftrightarrow\ \frac{c_1}{2c_2}<0\ \Longleftrightarrow \ c_1>0
\ \Longleftrightarrow \\ 
&\ln \frac{C_{56}C_{78}}{(C_{34})^2} > 0 \ \Longleftrightarrow \ 
C_{56}C_{78} > (C_{34})^2,
\end{align*}
and
\begin{align*}
\alpha^\star<1
&\Longleftrightarrow
\alpha^\star-1
=
\frac{-c_1-2c_2}{2c_2}<0 \\
&\Longleftrightarrow
-c_1-2c_2>0
\qquad\text{since } c_2<0 \\
&\Longleftrightarrow
c_1+2c_2<0 \\
&\Longleftrightarrow
\ln(C_{56}C_{78})-2\ln C_{34}
+2\bigl(\ln C_{12}+\ln C_{34}-\ln(C_{56}C_{78})\bigr)<0 \\
&\Longleftrightarrow
2\ln C_{12}-\ln(C_{56}C_{78})<0 \\
&\Longleftrightarrow
2\ln C_{12}<\ln(C_{56}C_{78}) \\
&\Longleftrightarrow
(C_{12})^2<C_{56}C_{78}.
\end{align*}
Thus
\[
0<\alpha^\star<1\quad\Longleftrightarrow\quad C_{56}C_{78} > \max\{(C_{12})^2, (C_{34})^2\}.
\]

Similarly (remembering that $c_2 < 0$),
\[
\alpha^\star\le 0\ \Longleftrightarrow\ \frac{c_1}{2c_2}\geq 0\ \Longleftrightarrow\ c_1\leq 0
\ \Longleftrightarrow\ C_{56}C_{78} \leq (C_{34})^2,
\]
and
\begin{align*}
&\alpha^\star\ge 1\ \Longleftrightarrow\ \alpha^\star-1= \frac{-c_1 - 2c_2}{2c_2} \ge 0
\ \Longleftrightarrow\ c_1 + 2c_2 \ge 0 \\
&\Longleftrightarrow\ \ln(C_{12})^2 - \ln (C_{56}C_{78}) \geq 0
\ \Longleftrightarrow\ (C_{12})^2 \geq C_{56}C_{78}.
\end{align*}

\smallskip
\noindent\emph{(b) Convex case: suppose that $c_2>0$.}
Now $\eff$ is convex on $\mbb R$, so the critical point $\alpha^\star := -\frac{c_1}{2c_2}$ 
is a global \emph{minimum}; hence on $[0,1]$ the maximum is attained at one, or both, endpoints. 
The comparison is between
\[
\eff(0)=\ln C_{34}\quad\text{and}\quad \eff(1)=\ln C_{12}.
\]
Therefore $\Eff^{\max} = \{0\}$ if $C_{34}>C_{12}$, $\Eff^{\max} = \{1\}$ if $C_{12}>C_{34}$, 
and $\Eff^{\max} = \{0,1\}$ if $C_{12}=C_{34}$.

\smallskip
\noindent\emph{(c) Linear case: suppose that $c_2=0$.}
Then $\eff(\alpha) = c_1\alpha + c_0$ where $c_1 = \ln \frac{C_{56}C_{78}}{(C_{34})^2}$.
It follows that if $C_{56}C_{78} > (C_{34})^2$ then the maximum of $\eff$ restricted to $[0, 1]$ is attained in 1,
if $C_{56}C_{78} < (C_{34})^2$ then the maximum of $\eff$ restricted to $[0, 1]$ is attained in 0,
and if $C_{56}C_{78} = (C_{34})^2$ then $\eff$ is constant and all points are maxima.
In the last case we have $C_{12}C_{34} = C_{56}C_{78} = (C_{34})^2$ so $C_{12} = C_{34}$.
\end{proof}

\begin{rem}\label{remark about statistical mechanics}
{\bf (Analogy to statistical mechanics)} 
{\rm 
From the point of view of statistical mechanics we can view each $\mcA \in \mbW_n$ as a state of a system
with energy $E_\mcA := \sum_{i=1}^8 w_i \big|\varphi_i(\mcA)\big|$
(where a constant related to the temperature is omitted).
Then $\mu_n(\mcA) = \exp\big(E_\mcA\big)$ is the (simplified)
Boltzmann weight of $\mcA$, and $Z_n  = \sum_{\mcA \in \mbW_n} \mu_n(\mcA)$ is a normalizing constant,
also called ``partition function'', where $\ln Z_n$ and $\ln Z_n(m)$, 
for $0 \leq m \leq n$, strongly influence the behaviour of the whole system.
We will see that $\ln Z_n(m) = n^2 \eff\big(\frac{m}{n}\big) + O(n)$ 
where $\eff$ can be viewed as the free energy density function, or phase function, of the system.
Since $n^2 \eff\big( \frac{m}{n} \big)$
is the leading term of $\ln Z_n(m)$ 
the shape of the free energy density function $\eff$ is crucial when we compare $\ln Z_n(m)$ and $\ln Z_n$
in order to estimate $\mbbP_n(\mbW_n(m))$.
See e.g. \cite[Chapters 5.3.2, 6.1, 6.2, 9.2]{Set}.
}\end{rem}

\section{Main results}\label{Main results}

\subsection{The unscaled case}

\noindent
By Lemma~\ref{prop:maximiser}, the cases in the theorems below cover all possibilities.
From the same lemma we can see how each case corresponds to the relative sizes
of the constants $C_{12}, C_{34}$, $C_{56}$, and $C_{78}$.

It is useful to describe these cases using the two logarithmic
coordinates
\[
u=\ln(C_{12}/C_{34})
\qquad\text{and}\qquad
v=\ln\big(C_{56}C_{78}/(C_{12}C_{34})\big).
\]
In these coordinates, the conditions from Lemma~\ref{prop:maximiser} become
the regions, boundary lines, and point shown in
Figure~\ref{fig:uv-case-diagram}. This figure is a visual guide to the case
distinction used in Theorems~\ref{the colour distribution, exact formulation}
and~\ref{convergence laws}.

\begin{figure}[t]
\centering
\includegraphics[width=\textwidth]{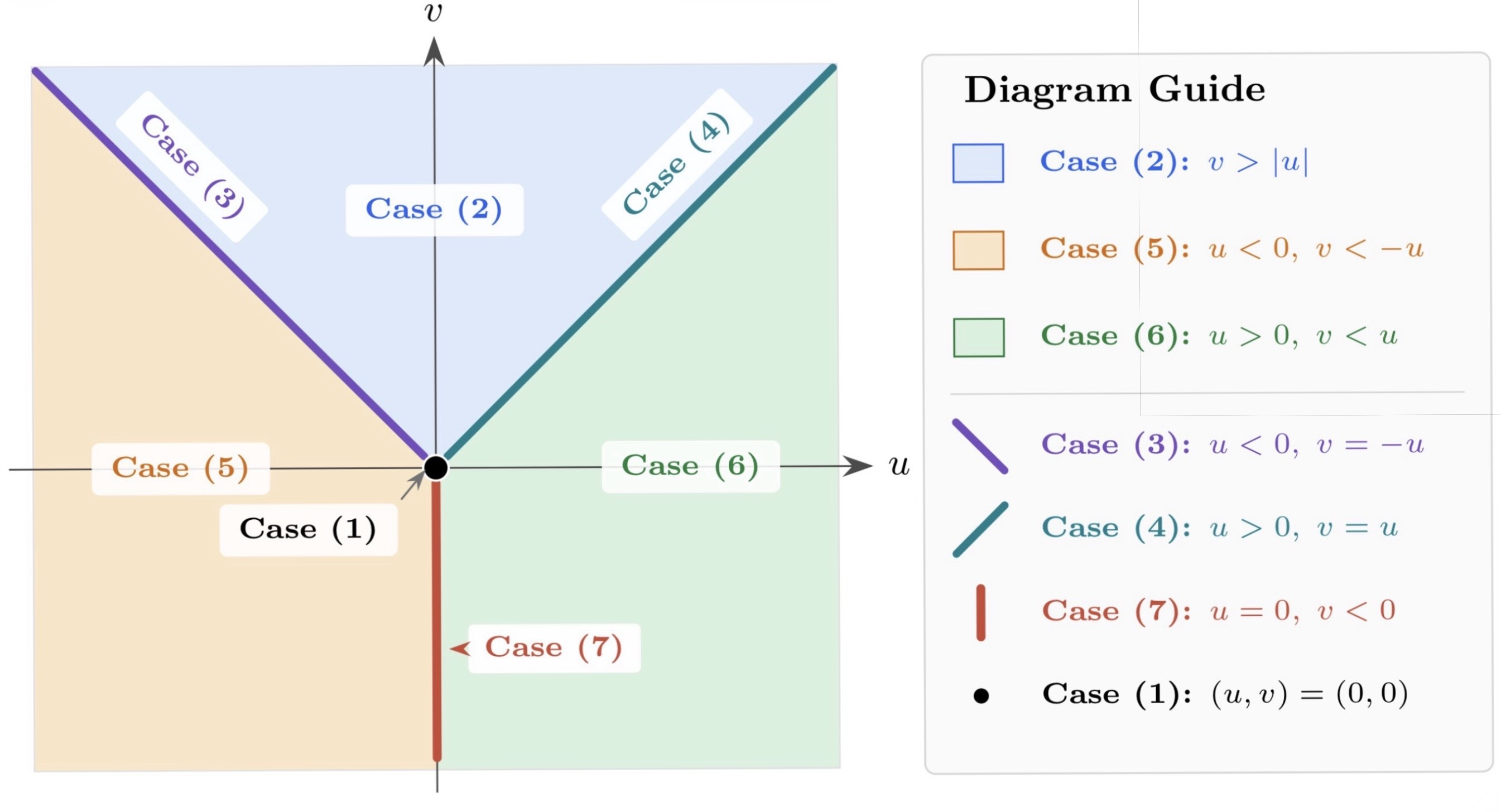}
\caption{The coordinate $u$ compares the two endpoint values of the phase function,
while $v$ controls its curvature. The lines $v=-u$ and $v=u$ are the
boundary cases $\eff'(0)=0$ and $\eff'(1)=0$, respectively.}
\label{fig:uv-case-diagram}
\end{figure}

\begin{theor}\label{the colour distribution, exact formulation} 
{\bf (Asymptotic colour distribution)}
\begin{enumerate}
\item If $\Eff^{\max} = [0, 1]$ 
then, for all $\varepsilon > 0$,
\[
\lim_{n\to\infty} \mbbP_n \Big(
  \big\{\mcA \in \mb W_n :
    (1/2 - \varepsilon)n
      \le \big|P^\mcA\big|
      \le (1/2 + \varepsilon)n
  \big\}
\Big) = 1.
\]

\item If $\Eff^{\max} = \{\alpha^\star\}$ where $\alpha^\star \in (0, 1)$, 
then, for all $\varepsilon > 0$,
\[
\lim_{n\to\infty} \mbbP_n \Big(
  \big\{\mcA \in \mb W_n :
    (\alpha^\star - \varepsilon)n
      \le \big|P^\mcA\big|
      \le (\alpha^\star + \varepsilon)n
  \big\}
\Big) = 1.
\]

\item If $\Eff^{\max} = \{0\}$ and $\eff'(0) = 0$ 
then, for all $\varepsilon \in (0, \tfrac{1}{2})$,
\begin{equation*}
\begin{aligned}
\lim_{n\to\infty} \mbbP_n \Big(
  \big\{\mcA \in \mb W_n :
    \big\lfloor\tfrac{1 - \varepsilon}{2|c_2|}\ln n \big\rfloor
      \le \big|P^\mcA\big|
      \le \big\lceil \tfrac{1 + \varepsilon}{2|c_2|}\ln n \big\rceil
  \big\}
\Big)
\ = \ 1.
\end{aligned}
\end{equation*}

\item If  $\Eff^{\max} = \{1\}$ and $\eff'(1) = 0$ 
then, for all $\varepsilon \in (0, \tfrac{1}{2})$,
\begin{align*}
\lim_{n\to\infty} \mbbP_n \Big(
  \big\{\mcA \in \mbW_n : \ 
    &n - \big\lceil \tfrac{1 + \varepsilon}{2|c_2|}\ln n \big\rceil
      \le \big|P^\mcA\big| \le \\
     &n - \big\lfloor \tfrac{1 - \varepsilon}{2|c_2|}\ln n \big\rfloor
  \big\}
\Big)
\ = \ 1.
\end{align*}

\item If $\Eff^{\max} = \{0\}$ and $\eff'(0) < 0$, then 
$\lim_{n\to\infty} \mbbP_n\big(\mbW_n(0)\big) = 1$. 

\item If $\Eff^{\max} = \{1\}$ and $\eff'(1) > 0$, then 
$\lim_{n\to\infty} \mbbP_n\big(\mbW_n(n)\big) = 1$.

\item If $\Eff^{\max} = \{0, 1\}$, then 
\[
\lim_{n\to\infty} \mbbP_n\big(\mbW_n(0)\big) = \lim_{n\to\infty} \mbbP_n\big(\mbW_n(n)\big) = 1/2.
\]
\end{enumerate}
\end{theor}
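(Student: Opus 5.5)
The key observation is that $Z_n(m)$ has a closed form, after which each case reduces to an analysis of a one-variable sequence in $m$. Fix a set $S\subseteq[n]$ of size $m$ as the interpretation of $P$. The weight $\mu_n$ then factorises over the $n^2$ ordered pairs $(a,b)$, and the pair contributes $e^{w_i}$ or $e^{w_{i+1}}$ according to whether $R(a,b)$ holds. Summing over $R$ therefore gives:
\begin{itemize}
\item a factor $C_{12}$ for each of the $m^2$ pairs inside $S$;
\item a factor $C_{34}$ for each of the $(n-m)^2$ pairs outside $S$;
\item a factor $C_{56}$, respectively $C_{78}$, for each of the $m(n-m)$ pairs of the two mixed types.
\end{itemize}
Hence
\[
Z_n(m)=\binom{n}{m}\,C_{12}^{m^2}C_{34}^{(n-m)^2}(C_{56}C_{78})^{m(n-m)}=\binom{n}{m}\exp\bigl(n^2\eff(m/n)\bigr)
\]
exactly. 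I will work throughout with $g_n(m):=\ln\binom nm+n^2\eff(m/n)$ and with the ratios $r_n(m):=Z_n(m+1)/Z_n(m)=\frac{n-m}{m+1}\exp\bigl(c_2(2m+1)+c_1n\bigr)$. Cases (4) and (6) follow from (3) and (5) by the symmetry that swaps $P$ and $\neg P$. This symmetry exchanges $C_{12}\leftrightarrow C_{34}$ and $C_{56}\leftrightarrow C_{78}$, replaces $\eff(\alpha)$ by $\eff(1-\alpha)$, and sends $|P^\mcA|$ to $n-|P^\mcA|$.

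\emph{Cases (1) and (2).} In case (1), $\eff$ is constant by Lemma~\ref{prop:maximiser}(c), so $\mbbP_n(\mbW_n(m))=\binom nm2^{-n}$. The colour count is then $\mathrm{Bin}(n,1/2)$, and Chebyshev's inequality gives the claim.

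In case (2), $c_2<0$ and $\alpha^\star\in(0,1)$. Let $m^\star$ be the integer nearest to $\alpha^\star n$. Then $n^2\eff(m^\star/n)\ge n^2\eff(\alpha^\star)-O(1)$, because $\eff'(\alpha^\star)=0$. For $|m/n-\alpha^\star|>\varepsilon$, strict concavity gives $\eff(m/n)\le\eff(\alpha^\star)-\eta$ for some $\eta=\eta(\varepsilon)>0$. Bounding $\binom nm\le 2^n$, the total weight outside the window is at most $(n+1)2^n e^{-\eta n^2}Z_n(m^\star)/\binom{n}{m^\star}$, which is $o(Z_n)$.

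\emph{Cases (5) and (7).} Suppose $\Eff^{\max}=\{0\}$ and $\eff'(0)=c_1<0$. Choose $\delta>0$ so small that $\eff(\alpha)-\eff(0)\le \tfrac12 c_1\alpha$ on $[0,\delta]$; this is possible since $\eff$ is a quadratic with $\eff'(0)=c_1$. For $1\le m\le\delta n$ this gives
\[
Z_n(m)/Z_n(0)\le n^m e^{c_1 nm/2},
\]
and the sum of these bounds over $m\ge1$ tends to $0$. For $m>\delta n$ we have $\eff(m/n)\le\eff(0)-\eta$, since $0$ is the unique maximiser and $\eff$ is continuous. Those terms are therefore at most $2^n e^{-\eta n^2}Z_n(0)$ in total. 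Together these give $\mbbP_n(\mbW_n(0))\to1$.

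In case (7), $c_2>0$ and $C_{12}=C_{34}$. Since $\eff$ is strictly convex and $\eff(0)=\eff(1)$, we get $\eff'(0)<0<\eff'(1)$. The argument above then applies near each endpoint, using $\delta<1/2$. Finally $Z_n(0)=C_{34}^{n^2}=C_{12}^{n^2}=Z_n(n)$, so each endpoint has limit probability $1/2$.

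\emph{Case (3), the main obstacle.} Here $c_1=0$ and $c_2<0$, so $Z_n(m)=e^{n^2c_0}\binom nm e^{-|c_2|m^2}$ and
\[
r_n(m)=\frac{n-m}{m+1}e^{-|c_2|(2m+1)}.
\]
This ratio is strictly decreasing in $m$, so the sequence is log-concave. Let $m_0:=\frac{\ln n}{2|c_2|}$ and $m_\pm:=(1\pm\varepsilon)m_0$.

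\emph{Upper tail.} For $m\ge m_+$ we have $r_n(m)\le n\cdot n^{-(1+\varepsilon)}=n^{-\varepsilon}$. It follows that $\sum_{m\ge m_+}Z_n(m)\le 2Z_n(\lceil m_+\rceil)$ for large $n$. Moreover $Z_n(\lceil m_+\rceil)/Z_n(\lceil(1+\varepsilon/2)m_0\rceil)\le (n^{-\varepsilon/2})^{\varepsilon m_0/2}\to0$, so the upper tail is $o(Z_n)$.

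\emph{Lower tail.} For $m\le m_-$ with $m=O(\ln n)$, we have $r_n(m)\ge \frac{n(1-o(1))}{\ln n}\,n^{-(1-\varepsilon)}\ge n^{\varepsilon/2}$. Comparing $Z_n(m)$ for $m\le m_-$ with $Z_n(\lfloor(1-\varepsilon/2)m_0\rfloor)$ by multiplying these ratios bounds the lower tail by $O(Z_n\, n^{-c\varepsilon^2\ln n})$ for some $c>0$, which is $o(Z_n)$.

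The delicate bookkeeping is controlling the $\frac{n-m}{m+1}$ factor uniformly for $m=\Theta(\ln n)$, and handling the floor and ceiling rounding in the statement. It is exactly the $\frac{1}{m+1}$ factor that forces the $\varepsilon$-loss rather than a sharper window.
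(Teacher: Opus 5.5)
Your proof is correct and follows the paper's route. The paper also uses the exact formula $Z_n(m)=\binom{n}{m}e^{n^2\eff(m/n)}$, compares with a near-maximal $Z_n(m^\star)$ for (2), compares with the endpoint masses for (5)--(7), uses monotonicity of $Z_n(m+1)/Z_n(m)$ for (3), and gets (4) and (6) from the $P\leftrightarrow\neg P$ symmetry. The differences are only technical: in case (5) you avoid Stirling by bounding $\binom{n}{m}\le n^m$ and $\eff(\alpha)-\eff(0)\le\tfrac12 c_1\alpha$, and in case (3) you compare against the intermediate points $(1\pm\varepsilon/2)m_0$ rather than directly against $m_-+1$ and $m_+-1$.
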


\begin{figure}[t]
\centering
\begin{tikzpicture}[
  x=1cm,
  y=1cm,
  >=stealth,
  font=\footnotesize,
  line cap=round,
  line join=round,
  panel/.style={draw=black!55, rounded corners=2pt, line width=0.35pt},
  axis/.style={->, line width=0.45pt},
  curve/.style={line width=1pt},
  guide/.style={densely dashed, line width=0.35pt},
  title/.style={font=\footnotesize\bfseries, align=center},
  hyp/.style={font=\scriptsize, align=center, text width=3.45cm, inner sep=1pt},
  conclusion/.style={font=\scriptsize, align=center, text width=3.45cm, inner sep=1pt},
  maxdot/.style={circle, fill=black, inner sep=1.15pt}
]

\def\panelbox{%
  \draw[panel] (-0.50,-1) rectangle (3.70,1.76);
}
\def\axes{%
  \draw[axis] (0,0) -- (3.25,0) node[right] {$\alpha$};
  \draw[axis] (0,0) -- (0,1.12);
  \node[anchor=east] at (-0.04,1.03) {$\eff$};
  \node[below] at (0,0) {$0$};
  \node[below] at (3,0) {$1$};
}

\begin{scope}[shift={(0,0)}]
  \panelbox
  \axes
  \node[title] at (1.60,2) {Case (1)};
  \node[hyp] at (1.60,1.24) {$\Eff^{\max}=[0,1]$};
  \draw[curve] (0,0.78) -- (3,0.78);
  \draw[guide] (1.5,0) -- (1.5,0.78);
\end{scope}

\begin{scope}[shift={(6.55,0)}]
  \panelbox
  \axes
  \node[title] at (1.60,2) {Case (2)};
  \node[hyp] at (1.60,1.18)
    {$\Eff^{\max}=\{\alpha^\star\}$};
  \draw[curve, domain=0:3, samples=100, smooth, variable=\x]
    plot ({\x},{0.96-0.18*(\x-1.72)*(\x-1.72)});
  \node[maxdot] at (1.72,0.96) {};
  \draw[guide] (1.72,0) -- (1.72,0.96);
  \node[below] at (1.72,0) {$\alpha^\star$};
\end{scope}

\begin{scope}[shift={(0,-4)}]
  \panelbox
  \axes
  \node[title] at (1.60,2) {Case (3)};
  \node[hyp] at (1.60,1.24)
    {$\Eff^{\max}=\{0\}$,\quad $\eff'(0)=0$};
  \draw[curve, domain=0:3, samples=100, smooth, variable=\x]
    plot ({\x},{0.95-0.105*\x*\x});
  \node[maxdot] at (0,0.95) {};
\end{scope}

\begin{scope}[shift={(6.55,-4)}]
  \panelbox
  \axes
  \node[title] at (1.60,2) {Case (4)};
  \node[hyp] at (1.60,1.24)
    {$\Eff^{\max}=\{1\}$,\quad $\eff'(1)=0$};
  \draw[curve, domain=0:3, samples=100, smooth, variable=\x]
    plot ({\x},{0.95-0.105*(3-\x)*(3-\x)});
  \node[maxdot] at (3,0.95) {};
\end{scope}

\begin{scope}[shift={(0,-8)}]
  \panelbox
  \axes
  \node[title] at (1.60,2) {Case (5)};
  \node[hyp] at (1.60,1.24)
    {$\Eff^{\max}=\{0\}$,\quad $\eff'(0)<0$};

  \draw[curve, domain=0:3, samples=120, smooth, variable=\x]
    plot ({\x},{0.95 - 0.15*\x - 0.035*\x*\x});

  \node[maxdot] at (0,0.95) {};

\end{scope}

\begin{scope}[shift={(6.55,-8)}]
  \panelbox
  \axes
  \node[title] at (1.60,2) {Case (6)};
  \node[hyp] at (1.60,1.24)
    {$\Eff^{\max}=\{1\}$,\quad $\eff'(1)>0$};

  \draw[curve, domain=0:3, samples=120, smooth, variable=\x]
    plot ({\x},{0.95 - 0.15*(3-\x) - 0.035*(3-\x)*(3-\x)});

  \node[maxdot] at (3,0.95) {};
  
\end{scope}

\begin{scope}[shift={(3.275,-12)}]
  \panelbox
  \axes
  \node[title] at (1.60,2) {Case (7)};
  \node[hyp] at (1.60,1.24) {$\Eff^{\max}=\{0,1\}$};
  \draw[curve, domain=0:3, samples=100, smooth, variable=\x]
    plot ({\x},{0.36+0.25*(\x-1.5)*(\x-1.5)});
  \node[maxdot] at (0,0.9225) {};
  \node[maxdot] at (3,0.9225) {};
\end{scope}

\end{tikzpicture}
\caption{Schematic shapes of the phase function $\eff$ in the seven regimes of
Theorem~\ref{the colour distribution, exact formulation}. }
\label{fig:seven-phase-regimes}
\end{figure}

\begin{proof}
Part~(1) is given by Lemma~\ref{colour proportion when psi is constant}.
Part~(2) is given by Proposition~\ref{thm:concentration}.
Parts~(3) and~(4) are given by Lemma~\ref{lem:caseC-window-rigorous}.
Parts~(5) and~(6) are given by Lemma~\ref{lem:pinning}.
Part~(7) is given by Lemma~\ref{distribution of P when 0 and 1 are maxpoints}.
\end{proof}

\begin{theor}\label{convergence laws} {\bf (Logical convergence law)} 
\begin{enumerate}
\item If $\Eff^{\max} = [0, 1]$ 
then $(\mbbP_n : n \in \mbbN^+)$ satisfies 0-1 law for first-order logic.

\item If $\Eff^{\max} = \{\alpha^\star\}$ where $\alpha^\star \in (0, 1)$, 
then  $(\mbbP_n : n \in \mbbN^+)$ satisfies a 0-1 law for first-order logic.

\item Suppose that $\Eff^{\max} = \{0\}$ and $\eff'(0) = 0$.
Let $r \in \mbbN$. 
If $|C_{12} C_{34} - C_{56}C_{78}|$ is sufficiently small (depending on $r$)
then $(\mbbP_n : n \in \mbbN^+)$ satisfies a 0-1 law for first-order sentences with quantifier-rank at most $r$.
But in general we do not have a 0-1 law for first-order logic.

\item Suppose that  $\Eff^{\max} = \{1\}$ and $\eff'(1) = 0$.
Let $r \in \mbbN$. 
If $|C_{12} C_{34} - C_{56}C_{78}|$ is sufficiently small (depending on $r$)
then $(\mbbP_n : n \in \mbbN^+)$ satisfies a 0-1 law for first-order sentences with quantifier-rank at most $r$.
But in general we do not have a 0-1 law for first-order logic.

\item If $\Eff^{\max} = \{0\}$ and $\eff'(0) < 0$, then 
$(\mbbP_n : n \in \mbbN^+)$ satisfies a 0-1 law for first-order logic.

\item If $\Eff^{\max} = \{1\}$ and $\eff'(1) > 0$, then 
$(\mbbP_n : n \in \mbbN^+)$ satisfies a 0-1 law for first-order logic.

\item If $\Eff^{\max} = \{0, 1\}$ then 
$(\mbbP_n : n \in \mbbN^+)$ satisfies a convergence law,
but not 0-1 law, for first-order logic, and for 
every first-order sentence $\varphi$, $\lim_{n\to\infty}\mbbP_n(\varphi) \in \{0, \frac{1}{2}, 1\}$.
\end{enumerate}
\end{theor}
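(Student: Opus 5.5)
The plan is to reduce everything to the conditional distribution given the colour class, and then use Theorem~\ref{the colour distribution, exact formulation} to control $|P^\mcA|$. Condition on $P^\mcA = S$ with $|S| = m$. The weight $\mu_n(\mcA)$ factorises over ordered pairs $(a,b)$, so the edges $R_{(a,b)}$ are independent. Their probabilities depend only on the colour pattern of $(a,b)$:
\begin{itemize}
\item $p_{11}=e^{w_1}/C_{12}$ inside $S$;
\item $p_{00}=e^{w_3}/C_{34}$ outside $S$;
\item $p_{10}=e^{w_5}/C_{56}$ from $S$ to its complement;
\item $p_{01}=e^{w_7}/C_{78}$ from the complement to $S$.
\end{itemize}
All four lie in $(0,1)$ and are fixed. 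Hence, conditioned on $|P|=m$, the structure is a random $S$ of size $m$ (by symmetry) carrying a four-type random digraph with constant edge probabilities.

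The core lemma is an extension-axiom argument. For each $k$, consider the extension axioms $\theta_k$ saying that for every tuple of at most $k$ vertices and every consistent prescription of its colour and edge pattern, plus a chosen colour $c$ and an edge pattern to and from the tuple, there is a new vertex realising it. If both colour classes grow, i.e. $\min(m,n-m)\ge n^{\delta}$ for any $\delta>0$ (or even $\ge C_k\ln n$ for a large $C_k$), then $\theta_k$ holds with conditional probability $1-o(1)$. This follows from a union bound: there are $n^{k}$ tuples, and each fails with probability at most $(1-p_{\min}^{2k+1})^{\min(m,n-m)}$. All structures satisfying $\theta_k$ are $r$-equivalent (Ehrenfeucht--Fra\"{i}ss\'{e} back-and-forth), for $k\ge r$. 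So every sentence of quantifier rank $r$ has a conditional limit in $\{0,1\}$, and that limit is the same for all such $m$.

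This immediately handles cases (1) and (2), since there $|P|$ is linear in $n$ with probability $\to 1$. Cases (5) and (6) are also handled, but by a different route. When $m=0$ or $m=n$ the structure is a single-type random digraph with loops: $P$ is empty (or full), and $R$ is $G(n,p)$ with a fixed $p$. Classical Glebskii--Fagin extension axioms for loops and non-loops then give a 0-1 law, with the uncoloured version of $\theta_k$. For case (7) we condition on the two events $\mbW_n(0)$ and $\mbW_n(n)$, each of which has limit probability $1/2$. Within each we have a 0-1 law, so $\lim \mbbP_n(\varphi)$ is $0$, $1/2$ or $1$. To see that it is not a 0-1 law, take $\varphi=\exists x P(x)$, which has limit $1/2$.

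Cases (3) and (4) are the main obstacle; by symmetry (swapping $P$ and $\neg P$) it suffices to treat (3). Here $|P|\approx \frac{1}{2|c_2|}\ln n$, so the coloured class has logarithmic size. The union-bound argument must now be run only with $k$-tuples and witnesses whose colour is forced into the small class. A witness of colour $P$ over a $k$-tuple exists with failure probability $\le n^{k}(1-p_{\min}^{2k+1})^{m}$. This is $o(1)$ precisely when $m\,p_{\min}^{2k+1}\gg k\ln n$, i.e. when $\frac{1}{2|c_2|}$ is large compared with $k/p_{\min}^{2k+1}$. That is the condition that $|C_{12}C_{34}-C_{56}C_{78}|$ is small relative to $r$. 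I expect the delicate part to be verifying that $p_{\min}$ stays bounded below uniformly while $c_2\to0$ with $f'(0)=0$; this should hold because we can fix the other parameters.

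For the negative statement, I would exhibit weights with $|c_2|$ large, so that $m\approx \lambda\ln n$ with $\lambda$ small. Then the expected number of, say, coloured vertices adjacent to all coloured vertices behaves like a Poisson variable or fluctuates with $n$. Alternatively, one can use the fact that a fixed $k$-pattern among coloured vertices appears with probability tending to a value strictly in $(0,1)$, or oscillating. Since $m$ is concentrated only within the window given by Theorem~\ref{the colour distribution, exact formulation}(3), a sentence such as ``there exist $k$ coloured vertices with no edges among them'' has probability bounded away from $0$ and $1$. Establishing this will be done by a second-moment computation.
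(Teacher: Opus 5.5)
Your handling of the positive statements is essentially the paper's argument. You condition on $P^\mcA$, use the edge independence of Proposition~\ref{lem:edge-factor}, get (bounded) extension axioms by a union bound over $n^k$ tuples, and conclude via EF games or the completeness of $T_{ext}$ and $T_{ext}^{\pm}$. This settles (1), (2), (5)--(7) and the rank-$r$ part of (3) and (4).

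The gap is the claim that a first-order 0-1 law fails in general in cases (3) and (4): none of the sentences you propose does this. Since $m=|P^\mcA|\to\infty$ in probability, the coloured part is a growing random digraph with fixed edge probability $p_{11}$. So every fixed pattern inside it appears with probability tending to $1$. For instance, ``there exist $k$ coloured vertices with no edges among them'' has limit $1$: cut the coloured class into $\lfloor m/k\rfloor$ disjoint blocks of size $k$. Likewise, ``some coloured vertex has edges to all coloured vertices'' has limit $0$, since its expected number is $mp_{11}^{m}\to0$.

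Counts that tie uncoloured vertices to the whole coloured class through independent edges do not give a Poisson regime either. With probability close to $1$, $m$ lies within $O(1)$ of $\frac{1}{2|c_2|}(\ln n-\ln\ln n)$. So a first moment of the form $n^kq^m$ is of order $n^{k-\gamma}(\ln n)^{\gamma}$ with $\gamma=|\ln q|/(2|c_2|)$, which tends to $0$ or $\infty$ for every choice of weights. A second-moment computation can only turn this into concentration, i.e.\ a limit $0$ or $1$. Taking $|c_2|$ large merely breaks your union bound, which is not the same as breaking the 0-1 law.

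The missing idea is a \emph{shared} random quantity inside the coloured class whose fluctuations swamp the $\ln\ln n$ offset. The weights must also be tuned so that the threshold sits at its centre. The paper takes weights with $c_1=0$, $c_2=-\frac12$, $p_{11}=\frac12$ and $1-p_{01}=e^{-2}$, and uses the sentence $\psi_0=\forall x(\neg P(x)\to\exists y(P(y)\wedge R(y,y)\wedge R(x,y)))$.

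Given $P^\mcA=B$ with $|B|=m$, the number $Y$ of looped coloured vertices is $\mathrm{Bin}(m,\frac12)$. Then $\mbbP_n(\psi_0\mid P^\mcA=B)=\mbbE[(1-e^{-2Y})^{n-m}]$. This is near $1$ or $0$ according as $Y$ lies above or below $\frac12\ln(n-m)$ by a margin $h_n\to\infty$. Since $m/2$ differs from $\frac12\ln n$ by only about $\frac12\ln\ln n=o(\sqrt m)$, anti-concentration of the symmetric binomial (Lemma~\ref{lem:fair-coin-small-shift}) gives limit $\frac12$.

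This argument also needs $|P^\mcA|$ localised to within $o(\sqrt{\ln n})$. The paper shows it stays within $O(1)$ of $\ln n-\ln\ln n$ (Lemma~\ref{lem:rare-class-half-limit}). The window of Theorem~\ref{the colour distribution, exact formulation}(3), which you rely on, has width of order $\varepsilon\ln n$ and does not provide this.
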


\begin{proof}
Parts~(1) and~(2) are given by Lemma~\ref{0-1 laws in cases 1 and 2}.
Parts~(3) and~(4) are given by Lemmas~\ref{0-1 laws in cases 3 and 4},
\ref{convergence to 1/e in case 3}, and~\ref{convergence to 1/e in case 4}.
Parts~(5) and~(6) are given by Lemma~\ref{0-1 laws in cases 5 and 6}.
Part~(7) is given by Lemma~\ref{convergence law in case 7}.
\end{proof}

\begin{rem}\label{remark on phase transitions} {\bf (Phase transitions)} {\rm
Note that $C_{12}$, $C_{34}$, $C_{56}$, and $C_{78}$ depend continuously on the weights
$w_1, \ldots, w_8$.
We can choose the weights so that $C_{12}C_{34} = C_{56}C_{78}$ and
$C_{56}C_{78} < (C_{34})^2$.
By Lemma~\ref{prop:maximiser}
we must have $\Eff^{\max} = \{0\}$ and $f'(0) < 0$, so we are in 
Case (5) of Theorem~\ref{the colour distribution, exact formulation} 
and hence $\lim_{n\to\infty} \mbbP_n\big(\exists x P(x)\big) = 0$.
But, by Lemma~\ref{prop:maximiser} again, if 
$C_{56}C_{78} \geq (C_{34})^2$ then we are in case (1) or case (6),
so by  Theorem~\ref{the colour distribution, exact formulation} we get
$\lim_{n\to\infty}\mbbP_n\big(\exists x P(x)\big) = 1$. 
So an ever so small change of the weights can change the asymptotic probability of a sentence from 0 to 1.
}\end{rem}

\subsection{The scaled case}

In this section we consider a scaling of the weights by $1/n$.
The MLN $\mbbM$ is still defined exactly as before, but instead of $\mu_n$ and $\mbbP_n$ we will consider 
scaled versions of them, denoted $\widetilde{\mu}_n$ and $\widetilde{\mbbP}_n$ which we now define.

For all $n \in \mbbN^+$ and $\mcA \in \mbW_n$, define
\[
\widetilde{\mu}_n(\mcA) = \exp\bigg( \frac{1}{n} \sum_{i=1}^8 w_i|\varphi_i(\mcA)| \bigg)
\]
and for every $\mbX \subseteq \mbW_n$, define
\[
\widetilde{\mu}_n(\mbX) = \sum_{\mcA \in \mbX} \widetilde{\mu}_n(\mcA) \qquad \text{ and } \qquad
\widetilde{\mbbP}_n(\mbX) = \frac{\widetilde{\mu}_n(\mbX)}{\widetilde{\mu}_n(\mbW_n)}.
\]
We call $\widetilde{\mbbP}_n$ the {\em scaled distribution on $\mbW_n$ determined by $\mbbM$}.

\begin{theor}\label{the colour proportion in scaled case}
For every choice of the weights $w_1, \ldots, w_8 \geq 0$, there is $\varepsilon > 0$ such that
\[
\lim_{n\to\infty} \widetilde{\mbbP}_n \Big(
  \big\{\mcA \in \mb W_n :
    \varepsilon n
      \le \big|P^\mcA\big|
      \le (1 - \varepsilon)n
  \big\}
\Big) = 1.
\]
\end{theor}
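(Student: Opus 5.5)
We compute $\widetilde{Z}_n(m) := \widetilde{\mu}_n(\mbW_n(m))$ exactly and then compare it with its value at $m$ near $n/2$. Fix $m$ and a set $S\subseteq[n]$ with $|S|=m$. Conditioned on $P^\mcA = S$, the ordered pairs contribute independently. There are $m^2$ pairs inside $S$, $(n-m)^2$ pairs outside $S$, and $m(n-m)$ pairs of each mixed type. Summing over edge sets therefore gives
\[
\widetilde{Z}_n(m) = \binom{n}{m}\, \widetilde{C}_{12}^{\,m^2}\, \widetilde{C}_{34}^{\,(n-m)^2}\, (\widetilde{C}_{56}\widetilde{C}_{78})^{m(n-m)},
\qquad \widetilde{C}_{ij} := e^{w_i/n}+e^{w_j/n}.
\]
Write $\widetilde{C}_{ij} = 2\exp\bigl(\tfrac{w_i+w_j}{2n} + O(n^{-2})\bigr)$. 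Then
\[
\ln \widetilde{Z}_n(m) = \ln\binom{n}{m} + n^2\ln 2 + n\, g(m/n) + O(1),
\]
where $g(\alpha) := \tfrac{w_1+w_2}{2}\alpha^2 + \tfrac{w_3+w_4}{2}(1-\alpha)^2 + \tfrac{w_5+w_6+w_7+w_8}{2}\alpha(1-\alpha)$. The error term $O(1)$ is uniform in $m$, because it comes from $n^2\cdot O(n^{-2})$. The key point is that the energy term $n g(\alpha)$ is bounded by $Kn$ with $K := \sum_i w_i$, so it lives on the same scale $n$ as the entropy term $\ln\binom{n}{m} = n H(m/n) + O(\ln n)$.

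The term $n^2\ln 2$ is common to all $m$ and cancels. So $\widetilde{\mbbP}_n(\mbW_n(m))$ is governed by $\exp\bigl(n[H(m/n)+g(m/n)] + O(\ln n)\bigr)$, where $H$ is the binary entropy. Set $h := H+g$ on $[0,1]$. Since $g$ is a polynomial and $H'(\alpha)\to+\infty$ as $\alpha\to0^+$ (and $H'(\alpha)\to-\infty$ as $\alpha\to 1^-$), the function $h$ is strictly increasing near $0$ and strictly decreasing near $1$. Hence no maximiser of $h$ on $[0,1]$ lies at an endpoint.

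To make this concrete, let $\beta := \max_{[0,1]} h$. Choose $\varepsilon>0$ such that $\sup_{[0,\varepsilon]\cup[1-\varepsilon,1]} h \le \beta - \delta$ for some $\delta>0$. This is possible by continuity of $h$ together with the strict monotonicity near the endpoints. For instance, $H(\alpha)\le H(\varepsilon)\to 0$ on $[0,\varepsilon]$, while $g$ varies by $O(\varepsilon)$ there, and $h(\alpha_0)>h(0)$ for small $\alpha_0>0$.

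Now take $\alpha_0$ with $h(\alpha_0)=\beta$ and put $m_0 := \lfloor \alpha_0 n\rfloor$. Then $\widetilde{Z}_n \ge \widetilde{Z}_n(m_0) \ge \exp(n^2\ln 2 + n\beta - O(\ln n))$, where the $O(\ln n)$ accounts for the Stirling error and the rounding in $H$ and $g$; both functions are Lipschitz away from the endpoints. On the other hand,
\[
\sum_{m\le \varepsilon n \text{ or } m\ge(1-\varepsilon)n} \widetilde{Z}_n(m) \le (n+1)\exp\bigl(n^2\ln2 + n(\beta-\delta) + O(1)\bigr).
\]
Here we use $\ln\binom nm\le nH(m/n)$, so no Stirling error arises in this direction. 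The ratio of the two bounds is $\le (n+1)e^{-\delta n + O(\ln n)}\to0$, which proves the theorem, after shrinking $\varepsilon$ slightly to handle the boundary of the interval.

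The main obstacle is keeping the $O(n^{-2})$ expansion of $\ln\widetilde{C}_{ij}$ uniform, and making sure the lower bound at $m_0$ loses only $o(n)$. Both points are routine but need care. The qualitative point to check is that the scaled energy is $\Theta(n)$ and not $\Theta(n^2)$, since this is exactly what lets the entropy's infinite slope at the endpoints push mass into the interior. That is the source of the contrast with the unscaled cases (3)–(7) of Theorem~\ref{the colour distribution, exact formulation}.
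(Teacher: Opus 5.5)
Your proposal is correct and follows essentially the paper's route: the exact product formula for $\widetilde{Z}_n(m)$, the expansion $\ln\widetilde{Z}_n(m)=n^2\ln 2+n\bigl(H+E\bigr)(m/n)+O(\ln n)$, the use of the entropy's infinite slope at $0$ and $1$ to keep the maximisers of $\widetilde{\eff}=H+E$ away from the endpoints (Lemma~\ref{lem:scaled-variational-properties}), and a comparison of the tail sum with a single term near a maximiser (Proposition~\ref{prop:scaled-colour-concentration}). The differences are cosmetic. You bound the two endpoint tails directly instead of proving concentration around the whole maximiser set. Also, your final ``shrinking $\varepsilon$'' step is unnecessary, because the tail you bound already contains $m=\varepsilon n$ and $m=(1-\varepsilon)n$.
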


\begin{proof}
This is Corollary~\ref{corollary for colour concentration} below.
\end{proof}

\begin{theor}\label{0-1 law in scaled case}
For every choice of the weights $w_1, \ldots, w_8 \geq 0$, 
$(\widetilde{\mbbP}_n : n \in \mbbN^+)$ satisfies a 0-1 law for first-order logic.
For every sentence $\varphi \in FO(\sigma)$, $\lim_{n\to\infty}\mbbP_n(\varphi)$ does {\em not}
depend on the choice of weights.
\end{theor}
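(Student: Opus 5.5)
The strategy is to condition on the colour set and reduce to a product measure on edges. Then I would show that the conditional edge probabilities are bounded away from 0 and 1, so the standard extension-axiom argument applies.

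\emph{Conditional structure.} Fix $n$ and a set $U\subseteq[n]$ with $|U|=m$. Conditioned on $P^\mcA=U$, the scaled weight $\widetilde{\mu}_n(\mcA)$ factorises over ordered pairs $(a,b)$. Hence under $\widetilde{\mbbP}_n(\cdot\mid P^\mcA=U)$ the variables $R_{(a,b)}$ are independent. The probability that $R_{(a,b)}=1$ is $p_{ij}(n):=e^{w_k/n}/(e^{w_k/n}+e^{w_{k+1}/n})$, where $(k,k+1)$ is the constraint pair matching the colours of $a,b$. Since the weights are fixed, $p_{ij}(n)=\tfrac12+O(1/n)$ uniformly. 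So conditioned on the colouring, the digraph is close to a uniform random digraph with loops. The conditional law of $P^\mcA$ depends only on $m$ and is uniform among sets of size $m$.

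\emph{Extension axioms.} Let $\mathrm{EA}_r$ be the extension axioms for $\sigma$. These say that for every $\sigma$-structure $\mathcal{B}$ on $k<r$ elements with a one-point extension $\mathcal{B}'$, every copy of $\mathcal{B}$ extends to a copy of $\mathcal{B}'$. The extension must fix both the colour of the new vertex and all edges to and from it, including its loop. The theory of all extension axioms is complete and $\omega$-categorical: its unique countable model is the random coloured digraph. Consequently, every first-order $\varphi$ is either implied by it or its negation is. It therefore suffices to show $\widetilde{\mbbP}_n(\mathrm{EA}_r)\to1$ for each $r$. This also gives independence from the weights, since the limit is $1$ or $0$ according to whether $\varphi$ holds in this weight-independent countable model.

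\emph{Counting.} Take $\varepsilon$ from Theorem~\ref{the colour proportion in scaled case}, so with probability tending to 1 we have $\varepsilon n\le|P^\mcA|\le(1-\varepsilon)n$. Condition on such a colouring $U$. Fix a $k$-tuple $\bar a$ with $k<r$ and a desired extension type, where the prescribed colour is $P$ or $\neg P$. There are at least $\varepsilon n-k$ candidate vertices of the required colour outside $\bar a$. For each candidate, the $2k+1$ edge indicators are independent of those of other candidates. The probability of the right pattern is at least $(\tfrac12-O(1/n))^{2k+1}\geq 4^{-r}$ for large $n$. So the failure probability is at most $(1-4^{-r})^{\varepsilon n-k}$. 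A union bound over at most $n^{r}$ tuples and a bounded number of types gives conditional failure probability $n^{r}e^{-cn}\to0$ uniformly in admissible $U$. Averaging over $U$ and adding the vanishing probability of inadmissible colourings yields $\widetilde{\mbbP}_n(\mathrm{EA}_r)\to1$.

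\emph{Main obstacle.} All of the real difficulty is in the colour-concentration input, Theorem~\ref{the colour proportion in scaled case}. It is exactly what rules out colour classes of size $o(n)$ or empty, where extension axioms requiring a vertex of the rare colour would fail. Once it is available, the remaining step is routine. The only point to check carefully is that the edge probabilities stay bounded away from 0 and 1 uniformly in $n$, which holds because the scaled weights tend to 0.
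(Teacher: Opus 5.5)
Your proposal is correct and follows essentially the same route as the paper: colour concentration away from $0$ and $1$ (Corollary~\ref{corollary for colour concentration}), conditional edge independence with edge probabilities tending to $1/2$ (Lemma~\ref{lem:scaled-edge-factor}), the counting and union-bound argument of Lemma~\ref{0-1 laws in cases 1 and 2} showing that each extension axiom has limit probability $1$, and completeness of $T_{ext}$ (Fact~\ref{the theory of extension axioms is complete}), which gives both the $0$-$1$ law and independence from the weights. You also correctly read the second sentence of the statement as referring to $\widetilde{\mbbP}_n$ rather than the unscaled $\mbbP_n$.
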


\begin{proof}
This is Corollary~\ref{cor:logical-0-1 law in scaled case} below.
\end{proof}

\subsection{Implications for inference}\label{Implications for inference}

Note that $|\mbW_n| = 2^{n^2 + n}$.
So for a first-order sentence $\varphi$ the brute force method of computing $\mbbP_n(\varphi)$
by first finding all $\mcA \in \mbW_n$ such that $\mcA \models \varphi$ and then computing 
\[
\sum_{\substack{\mcA \in \mbW_n \\ \mcA \models \varphi}} \mbbP_n(\mcA)
\]
is very inefficient for large $n$. 
The main results imply that 
(possibly with the exception of cases (3) and (4) of
Theorem~\ref{convergence laws})
the probability $\mbbP_n(\varphi)$ stabilizes as $n \to \infty$ and can be estimated efficiently, as we now explain.

Consider first the unscaled case.
Given the weights $w_1, \ldots, w_8$ we can determine, with
Lemma~\ref{prop:maximiser} (which uses only the weights),
what $\Eff^{\max}$ is.
Then the results in Section~\ref{sec:logical convergence laws} apply.
Unless we are in cases (3) or (4), we know that for every first-order sentence $\varphi$, 
$\lim_{n\to\infty} \mbbP_n(\varphi)$ exists and belongs to $\{0, 1/2, 1\}$.
Let $\varepsilon > 0$. Then there are $k_\varepsilon$ and $n_\varepsilon$ such that for all $n \geq n_\varepsilon$,
if we take $k_\varepsilon$ independent random samples $\mcA \in \mbW_n$, then, with probability at least $1 - \varepsilon$,
 the proportion of the random samples that satisfy $\varphi$ is within distance $\varepsilon$ from 
 $\lim_{n\to\infty}\mbbP_n(\varphi) \in \{0, 1/2, 1\}$. 
 For example, if the proportion turns out to be $\leq \varepsilon$,
 then we can be almost sure that $\mbbP_n(\varphi) \leq \varepsilon$ for all $n \geq n_\varepsilon$.
In the cases (3) and (4) we can reason in the same way, {\em but only} for $\varphi$ with at most $r$ nested quantifiers,
where $r$ depends only on the weights
(as explained in the proof of 
Lemma~\ref{0-1 laws in cases 3 and 4}). 

In the scaled case, 
Theorem~\ref{0-1 law in scaled case}
tells that we can reason in the above way for every first-order sentence $\varphi$, no matter how the weights are chosen,
and that the limit is always 0 or 1.

In fact, the arguments in Section~\ref{sec:logical convergence laws}
give more information. These arguments 
describe concrete so-called ``extension axioms'' such that every finite conjunction of extension axioms
has limit probability 0, 1, or $1/2$.
The extension axioms have 
(with the possible exception of cases (3) and (4)
in Theorem~\ref{convergence laws}) 
the property that, for every first-order sentence $\varphi$, there is a finite conjunction $T$ of
extension axioms with limit probability $> 0$ such that either $\varphi$ or $\neg\varphi$ is a logical consequence of $T$.

\subsection{Outline of the proofs and discussion of their main components}

The proofs of the main results consist of three steps.
First, we prove that conditioned on any fixed colouring of the vertices, the probability that there is an edge from 
$a \in [n]$ to $b \in [n]$ depends only on whether $a$, respectively $b$, is coloured or not, and
the probability that there is an edge from a vertex $a$ to another vertex $b$ is independent of whether there is an edge 
from $a'$ to $b'$ for other pairs $(a', b')$ of vertices.
Secondly, we prove that for any $\alpha \in [0, 1]$ and $\varepsilon > 0$,
the probability that the proportion of coloured vertices belongs to the interval $(\alpha - \varepsilon, \alpha + \varepsilon)$ 
converges as $n \to \infty$; in fact, we give a more detailed analysis in the case when $\alpha = 0$ or $\alpha = 1$.
These two steps require what we believe are novel ideas in the context of proving logical convergence laws.
Once these two steps are completed we can use ideas and techniques which are well-known to researchers on
logical convergence laws, {\em except} in cases~(3) and~(4) of 
Theorem~\ref{convergence laws} 
where the proof of ``{\em no} 0-1 law in general'' uses some intricate and very technical computations
(unfortunately we have not been able to find a simpler proof).

Now let us go back to the first two steps. 
The analysis separates local edge behaviour from global colour behaviour. Once the coloured vertices are fixed
all edges are independent and the probability of an edge depends only on the colour of the (at most) two vertices involved. 
For example, the probability that there is an edge from a coloured vertex $a$ to a coloured vertex $b$ is
\[
p_{11}=\frac{e^{w_1}}{e^{w_1}+e^{w_2}}.
\]
Notation~\ref{not:typed} gives the probabilities for the other colour combinations.
The distribution of the number of coloured vertices is instead controlled by the four sums
\[
C_{12}=e^{w_1}+e^{w_2},\qquad
C_{34}=e^{w_3}+e^{w_4},\qquad
C_{56}=e^{w_5}+e^{w_6},\qquad
C_{78}=e^{w_7}+e^{w_8}.
\]
This distinction between the expressions that govern the probability of edges and the expressions that govern
the distribution of coloured vertices is central to the paper.

The same distinction also explains why projectivity does not account for the results.
With respect to induced-substructure marginals, the family studied here is projective
only in the exceptional equality case
\[
C_{12}=C_{34}
\qquad\text{and}\qquad
C_{56}C_{78}=C_{12}^{\,2}.
\]
Outside this case, the induced distribution on a fixed smaller set may
depend on the size of the larger domain from which the structure was sampled. The limit laws proved in this paper therefore describe
genuinely non-projective behaviour of undirected MLNs.

We reduce the colour distribution to a one-dimensional
optimisation problem. 
In the unscaled case:
if exactly $m$ out of a total of $n$ vertices are coloured then the total unnormalised mass of all
such structures is
\[
Z_n(m)=
\binom{n}{m}
C_{12}^{m^2}
C_{34}^{(n-m)^2}
C_{56}^{m(n-m)}
C_{78}^{m(n-m)}.
\]
Equivalently,
\[
\ln Z_n(m)
=
\ln\binom{n}{m}
+
n^2\eff(m/n),
\]
where
\[
\eff(\alpha)
:=
\alpha^2\ln C_{12}
+
(1-\alpha)^2\ln C_{34}
+
\alpha(1-\alpha)\ln(C_{56}C_{78}).
\]
The quadratic phase function $\eff$ determines the leading behaviour of the colour
proportion. The binomial term is lower order in the unscaled model, but it becomes
decisive when the leading term is flat or tangent at an endpoint. This is where the
logarithmically rare colour classes in parts~(3) and~(4) of 
Theorem~\ref{the colour distribution, exact formulation} 
arise.

In the scaled case we reason similarly but the scaling affects the exact formulation of, for example
the expression for the unnormalized mass of all digraphs with exactly $m$ coloured vertices out of a total of $n$ vertices.
The scaling turns out to rule out, in the limit, the extreme cases when either all or none of the vertices are coloured.
Why the scaling by $1/n$ (in the present context) is natural is explained after the proof of 
Lemma~\ref{lem:scaled-two-term-log-expansion}.

Since we allow loops, that is, an edge from a vertex to itself, 
the counting formula is kept simple. That is, if exactly $m$ out of a total of $n$ vertices are coloured, 
then there are exactly $m^2$
coloured-coloured pairs, $(n-m)^2$ uncoloured-uncoloured pairs,
$m(n-m)$ coloured-uncoloured pairs, and $m(n-m)$ uncoloured-coloured
pairs. This gives the clean expression
\[
Z_n(m) = \binom{n}{m} C_{12}^{m^2}C_{34}^{(n-m)^2}C_{56}^{m(n-m)}C_{78}^{m(n-m)}.
\]
If loops were forbidden, the two same-colour exponents would become
$m(m-1)$ and $(n-m)(n-m-1)$. These diagonal corrections are lower order than the leading $n^2$ terms, but
they would add extra complications without changing the main asymptotic mechanism.
If we imposed the restriction that if there is an edge from $a$ to $b$ then there must be an edge from $b$ to $a$
we would get extra complications for similar reasons.

\section{Edge independence, conditioned on a colouring}\label{edge independence}

\noindent
The main result of this section, Proposition~\ref{lem:edge-factor}, 
is that, conditioned on a fixed (but arbitrary) colouring, the edges are independent of each other.
Recall 
Definition~\ref{viewing the relation symbols as random variables}
and Remark~\ref{remark on different notations for P-A} regarding the notation used below.
We also introduce the following: 

\begin{notation}\label{not:typed}
Let
\begin{align*}
&p_{11} :=\dfrac{e^{w_1}}{e^{w_1}+e^{w_2}},\qquad
p_{00} :=\dfrac{e^{w_3}}{e^{w_3}+e^{w_4}}, \\
&p_{10} :=\dfrac{e^{w_5}}{e^{w_5}+e^{w_6}}, \ \text{and } \
p_{01} := \dfrac{e^{w_7}}{e^{w_7}+e^{w_8}}.
\end{align*}
Figure~\ref{fig:edge-probabilities-by-colour} gives a schematic picture of the four
edge probabilities.
\end{notation}

\begin{figure}[t]
\centering
\includegraphics[width=0.6\textwidth]{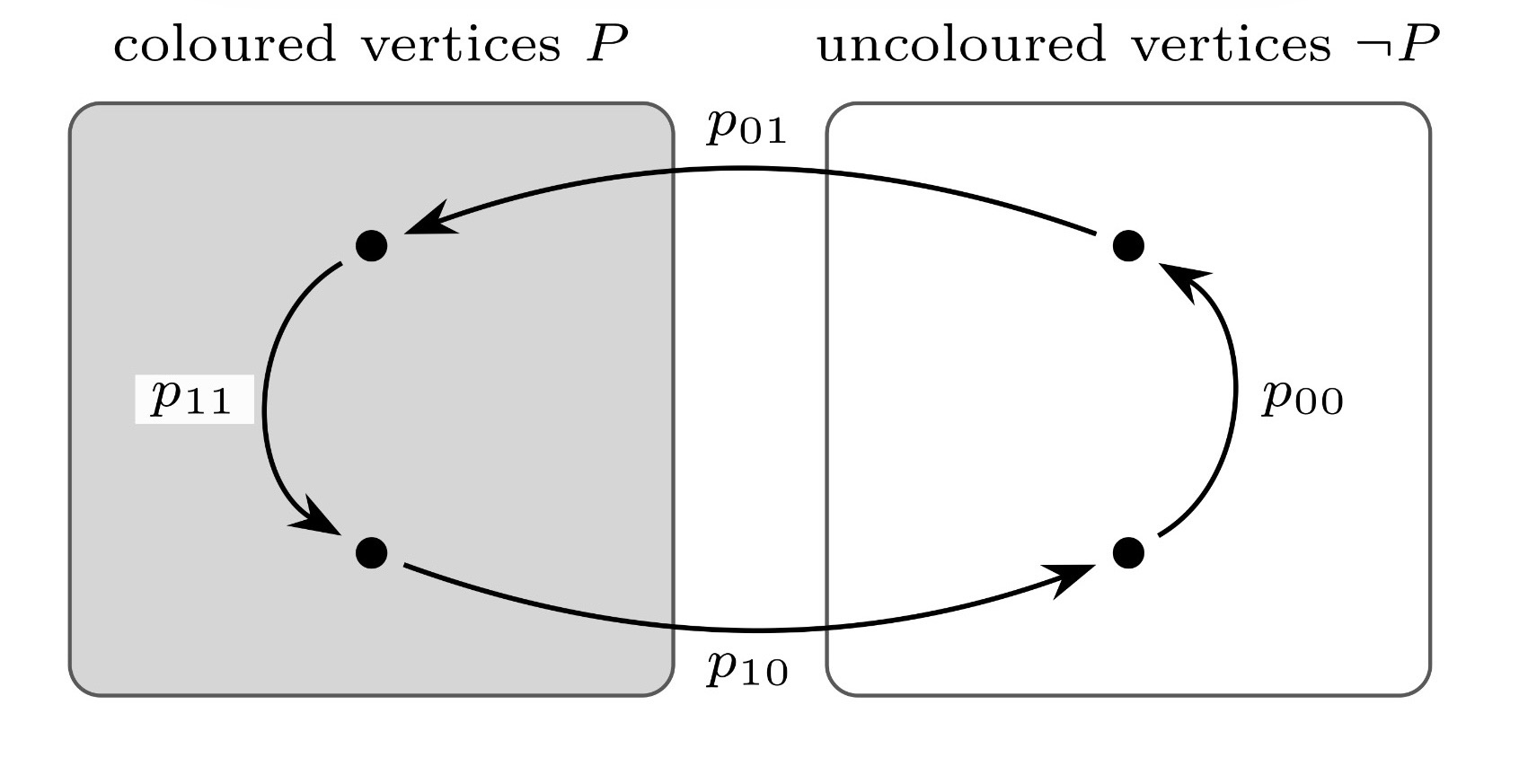}
\caption{Edge probabilities conditioned on a fixed colouring of the vertices.}
\label{fig:edge-probabilities-by-colour}
\end{figure}

\begin{prop}\label{lem:edge-factor}
Fix any $\bar{\alpha} = (\alpha_1, \ldots, \alpha_n) \in \{0, 1\}^n$ and let 
\[
\mbX_{\bar{\alpha}} = \big\{\mcA \in \mbW_n : P^\mcA(k) = \alpha_k \ \text{ for all } k = 1, \ldots, n \big\}.
\]
For every $(a, b) \in [n]^2$,
\[
\mbbP_n\big(R_{(a, b)} = 1  \ \big| \ \mbX_{\bar{\alpha}} \big) = 
\begin{cases}
p_{11} &\text{ if } \alpha_a = \alpha_b =1,\\
p_{00} &\text{ if } \alpha_a = \alpha_b =0,\\
p_{01} & \text{ if } \alpha_a = 0 \text{ and } \alpha_b = 1, \\
p_{10} & \text{ if } \alpha_a = 1 \text{ and } \alpha_b = 0.
\end{cases}
\]
Also, {\rm conditioned on  $\mbX_{\bar{\alpha}}$},
the random variable $R_{(a, b)}$  is independent from all random variables $R_{(a', b')}$,
where $(a', b') \neq (a, b)$.
\end{prop}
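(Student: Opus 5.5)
The plan is to show that, once the colouring is fixed, the weight $\mu_n(\mcA)$ factorises as a product over ordered pairs, so that $\mbbP_n(\cdot \mid \mbX_{\bar{\alpha}})$ is a product measure on $\{0,1\}^{[n]^2}$. A structure $\mcA \in \mbX_{\bar{\alpha}}$ is determined by its edge set $R^\mcA \subseteq [n]^2$. For each pair $(a,b)$, the colours $\alpha_a,\alpha_b$ and the value $R^\mcA(a,b)$ determine exactly one $i \in \{1,\dots,8\}$ with $\mcA \models \varphi_i(a,b)$, since the eight formulas are the atoms of the Boolean algebra generated by $P(x),P(y),R(x,y)$. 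Write $i(a,b,\varepsilon)$ for this index when $R^\mcA(a,b)=\varepsilon$. Then $\sum_i w_i|\varphi_i(\mcA)| = \sum_{(a,b)\in[n]^2} w_{i(a,b,R^\mcA(a,b))}$, so
\[
\mu_n(\mcA) = \prod_{(a,b)\in[n]^2} g_{a,b}\big(R^\mcA(a,b)\big), \qquad g_{a,b}(\varepsilon) := \exp\big(w_{i(a,b,\varepsilon)}\big).
\]

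Next, I sum over all $\mcA \in \mbX_{\bar{\alpha}}$, which amounts to a free choice of $\varepsilon_{(a,b)} \in \{0,1\}$ for every pair. The sum of the product distributes as $\mu_n(\mbX_{\bar{\alpha}}) = \prod_{(a,b)} \big(g_{a,b}(1)+g_{a,b}(0)\big)$. Dividing gives
\[
\mbbP_n(\{\mcA\}\mid\mbX_{\bar{\alpha}}) = \prod_{(a,b)} \frac{g_{a,b}(R^\mcA(a,b))}{g_{a,b}(1)+g_{a,b}(0)}.
\]
This is exactly the law of independent Bernoulli variables $R_{(a,b)}$ with parameter $q_{a,b} := g_{a,b}(1)/(g_{a,b}(1)+g_{a,b}(0))$. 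Independence of $R_{(a,b)}$ from the family $\{R_{(a',b')}:(a',b')\neq(a,b)\}$ then follows, as does the marginal $\mbbP_n(R_{(a,b)}=1\mid\mbX_{\bar{\alpha}})=q_{a,b}$, by summing out the other coordinates. Each of those factors sums to $1$.

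Finally, I identify $q_{a,b}$ case by case. If $\alpha_a=\alpha_b=1$ then $i(a,b,1)=1$ and $i(a,b,0)=2$, so $q_{a,b}=p_{11}$. If both are $0$, the indices are $3,4$, giving $p_{00}$. If $\alpha_a=1,\alpha_b=0$, the indices are $5,6$, giving $p_{10}$. If $\alpha_a=0,\alpha_b=1$, the indices are $7,8$, giving $p_{01}$. Loops $(a,a)$ fall under the same-colour cases and need no separate treatment.

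There is no real obstacle here. The only point requiring care is the bookkeeping that each grounding $(a,b)$ satisfies exactly one $\varphi_i$, so that the energy is genuinely a sum of per-pair terms. It is also worth noting that conditioning on $\mbX_{\bar{\alpha}}$ fixes the $P$-part, so it contributes no extra factor beyond what the colours determine.
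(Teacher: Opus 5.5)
Your proposal is correct and follows essentially the same route as the paper: you factorise $\mu_n(\mcA)$ over ordered pairs via the unique index of the satisfied $\varphi_i$, write $\mu_n(\mbX_{\bar{\alpha}})$ as a product of per-pair sums, and read off the one-edge probabilities (your index bookkeeping, e.g.\ $7,8$ for $\alpha_a=0,\alpha_b=1$, matches the paper). The only cosmetic difference is that you identify the conditional law directly as a product measure on $\{0,1\}^{[n]^2}$, whereas the paper computes the joint probability of prescribed values on finitely many distinct pairs; the two are equivalent.
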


\begin{proof}
Fix $\bar{\alpha} \in\{0,1\}^n$. 
It follows from the definition of the formulas 
$\varphi_1(x, y),$ $\ldots$, $\varphi_8(x, y)$ that
for each ordered pair $(a,b)\in[n]^2$ and each $r\in\{0,1\}$ there is a unique index
$\iota(a,b,r)\in\{1,\ldots,8\}$ such that, 
for each $\mcA \in \mbX_{\bar{\alpha}}$,
\[
\text{$R^\mcA(a, b) = r$ if and only if $\mcA \models \varphi_{\iota(a,b,r)}(a, b)$.}
\]
For each $(a,b) \in [n]^2$ we have 
\[
\sum_{\ell=1}^8 w_\ell |\varphi_\ell(\mc A)|
=\sum_{(a,b) \in[n]^2} w_{\iota(a,b,R(a,b))}.
\]
It follows that for all $\mcA \in \mbX_{\bar{\alpha}}$,
\begin{align*}
\mu_n(\mc A)&=\exp \bigg(\sum_{\ell=1}^8 w_\ell |\varphi_\ell(\mc A)|\bigg)
= \exp\bigg(\sum_{(a,b) \in[n]^2} w_{\iota(a,b,R^\mcA(a,b))}\bigg) \\
&=\prod_{(a,b)\in[n]^2} e^{\,w_{\iota(a,b,R^\mcA(a,b))}}.
\end{align*}
Therefore
\[
\mu_n\big(\mbX_{\bar{\alpha}}\big) 
=\prod_{(a,b)\in[n]^2}\Big(e^{w_{\iota(a,b,0)}}+e^{w_{\iota(a,b,1)}}\Big).
\]
We also have
\begin{align*}
&\mu_n\big(\big\{\mcA \in \mbX_{\bar{\alpha}} : \mcA \models R(a,b)\big\}\big)
= \\
&e^{w_{\iota(a,b,1)}} \cdot
\prod_{\substack{(a',b')\in[n]^2 \\ (a',b') \neq (a,b)}}\Big(e^{w_{\iota(a',b',0)}}+e^{w_{\iota(a',b',1)}}\Big).
\end{align*}
 It follows that for each ordered pair $(a,b) \in [n]^2$,
\begin{align*}
&\mbbP_n\big(\big\{\mcA \in \mbX_{\bar{\alpha}} : \mcA \models R(a,b) \big\} \ \big| \  \mbX_{\bar{\alpha}}\big)
=  \dfrac{\mu_n\big( \big\{\mcA \in \mbX_{\bar{\alpha}} : \mcA \models R(a,b) \big\} \big)} 
{\mu_n\big(\mbX_{\bar{\alpha}}\big)} \\
= &\frac{e^{w_{\iota(a,b,1)}}}{e^{w_{\iota(a,b,0)}}+e^{w_{\iota(a,b,1)}}}
\prod_{\substack{(a',b')\in[n]^2 \\ (a',b') \neq (a,b)}}
\dfrac{e^{w_{\iota(a',b',0)}}+e^{w_{\iota(a',b',1)}}}{e^{w_{\iota(a',b',0)}}+e^{w_{\iota(a',b',1)}}} \\
= &\dfrac{e^{w_{\iota(a,b,1)}}}{e^{w_{\iota(a,b,0)}}+e^{w_{\iota(a,b,1)}}}.
\end{align*}
Therefore, recalling Notation~\ref{not:typed},
\begin{align*}
\mbbP_n\big(R_{(a,b)}=1 \ | \ \mbX_{\bar{\alpha}}\big) &=
\mbbP_n\big(\big\{\mcA \in \mbX_{\bar{\alpha}} : \mcA \models R(a,b) \big\} \ \big| \  \mbX_{\bar{\alpha}}\big) \\
&= 
\begin{cases}
p_{11}:=\frac{e^{w_1}}{e^{w_1}+e^{w_2}} & \text{ if } \alpha_a = \alpha_b =1,\\
p_{00}:=\frac{e^{w_3}}{e^{w_3}+e^{w_4}} & \text{ if } \alpha_a = \alpha_b =0,\\
p_{10}:=\frac{e^{w_5}}{e^{w_5}+e^{w_6}} & \text{ if } \alpha_a = 1 \text{ and } \alpha_b = 0, \\
p_{01}:=\frac{e^{w_7}}{e^{w_7}+e^{w_8}} & \text{ if } \alpha_a = 0 \text{ and } \alpha_b = 1.
\end{cases}
\end{align*}
By similar reasoning as above, if $(a_1, b_1), \ldots, (a_k, b_k) \in [n]^2$ are distinct pairs
and $\beta_1, \ldots, \beta_k \in \{0, 1\}$, then
\begin{align*}
&\mbbP_n\big(R_{(a_1, b_1)}=\beta_1, \ldots, R_{(a_k, b_k)} = \beta_k \ | \ \mbX_{\bar{\alpha}}\big) \\
= 
&\prod_{i=1}^k \dfrac{e^{w_{\iota(a_i, b_i, \beta_i)}}}{e^{w_{\iota(a_i, b_i, 0)}} + e^{w_{\iota(a_i, b_i, 1)}}}
\prod_{\substack{(a',b')\in[n]^2 \\ (a',b') \neq (a_i,b_i) \\ \text{for all } i = 1, \ldots, k}}
\dfrac{e^{w_{\iota(a',b',0)}}+e^{w_{\iota(a',b',1)}}}{e^{w_{\iota(a',b',0)}}+e^{w_{\iota(a',b',1)}}} \\
=
&\prod_{i=1}^k \mbbP_n\big(R_{(a_i, b_i)} = \beta_i \ \big| \ \mbX_{\bar{\alpha}}\big).
\end{align*}
It follows that the random variable $R_{(a, b)}$ is independent from all $R_{(a', b')}$ where $(a', b') \neq (a, b)$.
\end{proof}

\section{Exact and approximate expressions for $Z_n(m)$}\label{sec:asymptotics}

\noindent
Recall that $Z_n(m) = \mu_n\big(\mbW_n(m)\big)$ where $\mbW_n(m) = \{\mcA \in \mbW_n : |P^\mcA| = m\}$.
Suppose that $0 \leq m \leq n$ and $\mcA \in \mbW_n(m)$.
Then $|P^\mcA| = m$, so intuitively speaking, $\mcA$ has exactly $m$ coloured vertices.
Then there are $m^2$ ordered pairs of two coloured
vertices, $(n-m)^2$ ordered pairs of two uncoloured vertices (these counts include the loops), and 
$2m(n-m)$ ordered pairs where exactly one of the vertices is coloured (no loops occur in the mixed coloured
pairs). 
Let $M \subseteq [n]$ and suppose that $|M| = m$.
Due to the specification of the formulas $\varphi_i(x, y)$ and weights $w_i$ for $i = 1, \ldots, 8$,
it follows that 
\[
\mu_n\big(\big\{ \mcA \in \mbW_n(m) : P^\mcA = M \big\}\big) = 
C_{12}^{m^2}C_{34}^{(n-m)^2}C_{56}^{m(n-m)}C_{78}^{m(n-m)}.
\]
As there are $\binom{n}{m}$ ways to choosing exactly $m$
vertices from $[n]$, it follows that 
\begin{equation}\label{eq:Znm-exact}
Z_n(m) = \mu_n(\mbW_n(m)) 
=\binom{n}{m}\,C_{12}^{\,m^2}C_{34}^{\,(n-m)^2}C_{56}^{m(n-m)}C_{78}^{m(n-m)}.
\end{equation}
It follows from the definition of $Z_n$ that 
\[
Z_n=\sum_{m=0}^n Z_n(m).
\]

\noindent
From~(\ref{eq:Znm-exact}) and~(\ref{eq:def-psi}) we get 

\begin{align*}
\ln Z_n(m) & = \ln\binom{n}{m} + m^2\ln C_{12} + (n-m)^2\ln C_{34} + m(n-m)\ln (C_{56}C_{78}) \\
&= \ln\binom{n}{m} + n^2 \bigg[\Big(\frac{m}{n}\Big)^2 \ln C_{12}
+ \Big(1 - \frac{m}{n}\Big)^2 \ln C_{34} \\
&+ \Big(\frac{m}{n}\Big)\Big(1 - \frac{m}{n}\Big) \ln (C_{56}C_{78}) \bigg] 
=  \ln\binom{n}{m} + n^2\eff\Big(\frac{m}{n}\Big).
\end{align*}

\noindent
Hence
\begin{equation}\label{ln of Z-n(m)}
\ln Z_n(m) = \ln\binom{n}{m} + n^2\eff\Big(\frac{m}{n}\Big).
\end{equation}

\begin{defin}\label{definition of rounding map}{\rm
For all $r \in \mbbR$ define
$
\lfloor r \rceil=\lfloor r +\dfrac{1}{2}\rfloor.
$
}\end{defin}

\begin{lem}\label{when the rounding is close to the real value}
If $0 \leq m \leq n$, $\alpha \in [0, 1]$ and $|m - \lfloor \alpha n \rceil |\leq 1$, then
\[
\left|\dfrac{1}{n^2}\ln Z_n(m)-\dfrac{1}{n^2}\ln Z_n(\lfloor \alpha n \rceil)\right|=O\left(\dfrac{1}{n}\right)
\ \text{  as } \ n\to\infty
\]
with the constant associated to $O(1/n)$ depending only on the fixed weights $w_1,\ldots,w_8$.
\end{lem}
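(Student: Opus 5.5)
We will work directly from the identity (\ref{ln of Z-n(m)}), $\ln Z_n(m) = \ln\binom{n}{m} + n^2\eff(m/n)$, and write $m' := \lfloor \alpha n \rceil$. Since $\alpha \in [0,1]$, we have $0 \le m' \le n$, so both $m$ and $m'$ are valid indices with $|m - m'| \le 1$. Dividing by $n^2$, the difference to be bounded splits into two terms:
\[
\frac{1}{n^2}\Big|\ln\tbinom{n}{m} - \ln\tbinom{n}{m'}\Big| \ + \ \Big|\eff\Big(\frac{m}{n}\Big) - \eff\Big(\frac{m'}{n}\Big)\Big|,
\]
and we will show that each term is $O(1/n)$ with constants depending only on the weights.

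For the phase-function term, use the normal form (\ref{normal form of phase function}), $\eff(\alpha) = c_2\alpha^2 + c_1\alpha + c_0$. On $[0,1]$ it is Lipschitz with constant $L := \sup_{[0,1]}|\eff'| \le 2|c_2| + |c_1|$, and $L$ depends only on $C_{12}, C_{34}, C_{56}, C_{78}$, hence only on $w_1,\ldots,w_8$. Since $|m/n - m'/n| \le 1/n$, this term is at most $L/n$.

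For the binomial term, we use the crude bounds $0 \le \ln\binom{n}{k} \le \ln 2^n = n\ln 2$ for every $0 \le k \le n$. These give $|\ln\binom{n}{m} - \ln\binom{n}{m'}| \le n\ln 2$, so after dividing by $n^2$ the term is at most $(\ln 2)/n$, with no dependence on $m$ or the weights at all. A sharper bound via the ratio $\binom{n}{k+1}/\binom{n}{k} = (n-k)/(k+1)$ is not usable uniformly: near the endpoints this ratio is of order $n$, which only gives $O(\ln n)$, and a bound that way would cost more work.

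Adding the two estimates gives the claim with constant $L + \ln 2$. The only point needing care is the uniformity near $k = 0$ or $k = n$, and the crude $n\ln 2$ bound sidesteps it entirely, so we expect no real obstacle.
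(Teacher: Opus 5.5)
Your proof is correct and is essentially the paper's argument: the paper also bounds each binomial term crudely, via $\binom{n}{k}\le 2^n\le e^n$, so that $\ln\binom{n}{k}/n^2\le 1/n$, and controls the $\eff$-difference by the boundedness of $\eff'$ on $[0,1]$ together with $|m/n-\lfloor\alpha n\rceil/n|\le 1/n$. One correction to your side remark, which does not affect your proof: the ratio $\binom{n}{k+1}/\binom{n}{k}=(n-k)/(k+1)$ always lies in $[1/n,n]$, so that route does work uniformly and gives the sharper bound $\ln n/n^2$ for the binomial contribution.
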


\begin{proof}
Suppose that $\alpha \in [0, 1]$ and $|m - \lfloor \alpha n \rceil |\leq 1$.
By~(\ref{ln of Z-n(m)}) we have
\[
\ln Z_n(m) = \ln\binom{n}{m} + n^2\eff\Big(\frac{m}{n}\Big).
\]
We have $\binom{n}{m} \leq 2^n \leq e^n$ so $\ln \binom{n}{m} \leq n$ and hence
$\frac{\ln\binom{n}{m}}{n^2} \leq \frac{1}{n}$.
In the same way we get $\frac{\ln\binom{n}{\lfloor \alpha n \rceil}}{n^2} \leq \frac{1}{n}$.
From $|m - \lfloor \alpha n \rceil |\leq 1$ we get
$\big|\frac{m}{n} - \frac{\lfloor \alpha n \rceil}{n} \big| \leq \frac{1}{n}$.

Since $\eff'(\alpha)$ is bounded on $[0, 1]$ it follows that 
$\big|\eff\big(\frac{m}{n}\big) - \eff\big(\frac{\lfloor \alpha n \rceil}{n}\big)\big| = O(1/n)$
where the constant associated to $O(1/n)$ depends only on $\eff$ which in turn depends only on
the weights $w_1, \ldots, w_8$.
The lemma follows from these observations.
\end{proof}

\begin{defin}\label{not:H}{\rm
The {\em entropy function} $H$ is defined as follows:
\begin{align*}
&\text{For $\alpha\in(0,1)$, $H(\alpha)=-\alpha\ln\alpha-(1-\alpha)\ln(1-\alpha)$, and } \\
&H(0)=H(1)=0.
\end{align*}
}\end{defin}

\noindent
Then $H$ is continuous on $[0, 1]$.

\begin{lem}\label{lem:entropy-range}
We have
\[
0\ \le\ H(\alpha)\ \le\ \ln 2\qquad\text{for all }\alpha\in[0,1].
\]
Consequently, for all $\alpha,\beta\in[0,1]$,
\[
\bigl|H(\alpha)-H(\beta)\bigr|\ \le\ \ln 2.
\]
\end{lem}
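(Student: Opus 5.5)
The plan is to prove the two-sided bound on $[0,1]$ directly and then obtain the difference bound as a trivial consequence. For the lower bound, at the endpoints $H(0)=H(1)=0$ by definition. For $\alpha\in(0,1)$ we have $\ln\alpha<0$ and $\ln(1-\alpha)<0$, so each of the two terms $-\alpha\ln\alpha$ and $-(1-\alpha)\ln(1-\alpha)$ is a product of a positive number and a positive number. Hence $H(\alpha)>0$.

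For the upper bound I will use calculus on $(0,1)$. First compute
\[
H'(\alpha) = -\ln\alpha - 1 + \ln(1-\alpha) + 1 = \ln\frac{1-\alpha}{\alpha}.
\]
This derivative is positive for $\alpha<\tfrac12$, zero at $\alpha=\tfrac12$, and negative for $\alpha>\tfrac12$. So $H$ is increasing on $(0,\tfrac12]$ and decreasing on $[\tfrac12,1)$. By the continuity of $H$ on $[0,1]$, noted just after Definition~\ref{not:H}, the maximum over $[0,1]$ is $H(\tfrac12)=-\ln\tfrac12=\ln 2$.

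An alternative route to the upper bound is concavity of $\ln$ (Jensen). It gives
\[
H(\alpha)=\alpha\ln\tfrac1\alpha+(1-\alpha)\ln\tfrac1{1-\alpha}\le \ln\Bigl(\alpha\cdot\tfrac1\alpha+(1-\alpha)\cdot\tfrac1{1-\alpha}\Bigr)=\ln 2.
\]
Either argument suffices.

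For the consequence, take $\alpha,\beta\in[0,1]$. Both $H(\alpha)$ and $H(\beta)$ lie in $[0,\ln 2]$, and two numbers in an interval of length $\ln 2$ differ by at most $\ln 2$. This gives $|H(\alpha)-H(\beta)|\le\ln2$.

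No step is a real obstacle. The only points needing care are treating the endpoints separately, since $H$ is defined there by convention, and invoking continuity so that the maximum over the closed interval is justified.
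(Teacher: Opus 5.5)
Your proof is correct and follows essentially the paper's route: the paper also computes $H'(\alpha)=\ln\frac{1-\alpha}{\alpha}$, adds $H''<0$ to get strict concavity, and reads off the maximum $H(1/2)=\ln 2$, after which the difference bound is immediate. Your explicit sign argument for $H\ge 0$ supplies a step that the paper only asserts.
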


\begin{proof}
Since
$H'(\alpha)=\ln \bigl(\frac{1-\alpha}{\alpha}\bigr)$ and
$H''(\alpha)=-\frac{1}{\alpha}-\frac{1}{1-\alpha}<0$ for $\alpha\in(0,1)$,
it follows that $H$ is strictly concave with unique maximum at $\alpha= 1/2$, 
and $H(1/2)=\ln 2$.
Since $H([0,1])\subseteq[0,\ln 2]$, the consequence
$\bigl|H(\alpha)-H(\beta)\bigr|\le \ln 2$ follows for all $\alpha,\beta\in[0,1]$.
\end{proof}

\begin{lem}\label{lem:stirling} {\rm (Stirling's approximation \cite{Robbins1955})}
For every $k \in \mbbN^+$ we have
\[
k! =
\sqrt{2\pi}\,k^{\,k+\frac{1}{2}}e^{-k}\,e^{r_k} 
\;\;\;\text{ where }\;\;\;
\dfrac{1}{12k+1}\ <\ r_k\ <\ \dfrac{1}{12k}.
\]
\end{lem}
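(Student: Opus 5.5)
We follow Robbins' classical argument. For $k \in \mbbN^+$ set
\[
d_k := \ln k! - \big(k+\tfrac12\big)\ln k + k ,
\]
so that $k! = k^{k+\frac12}e^{-k}e^{d_k}$. The claim is equivalent to the following: $d_k$ converges to a limit $C$, we have $C=\ln\sqrt{2\pi}$, and $r_k := d_k - C$ satisfies $\frac{1}{12k+1} < r_k < \frac{1}{12k}$. We obtain all of this from a telescoping analysis of the differences $d_k - d_{k+1}$.

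First we compute
\[
d_k - d_{k+1} = \big(k+\tfrac12\big)\ln\frac{k+1}{k} - 1 .
\]
Put $x := \frac{1}{2k+1}\in(0,1)$. Then $\frac{k+1}{k} = \frac{1+x}{1-x}$ and $k+\frac12 = \frac{1}{2x}$. The power series $\frac{1}{2x}\ln\frac{1+x}{1-x} = \sum_{j\ge 0}\frac{x^{2j}}{2j+1}$ therefore gives
\[
d_k - d_{k+1} = \sum_{j\ge 1}\frac{x^{2j}}{2j+1} .
\]

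We then bound this series on both sides.

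\emph{Upper bound.} Replace each $2j+1$ by $3$ and sum the geometric series. This gives
\[
d_k - d_{k+1} < \frac{x^2}{3(1-x^2)} = \frac{1}{12k(k+1)} = \frac{1}{12k}-\frac{1}{12(k+1)} .
\]
The inequality is strict because the terms with $j\ge 2$ are strictly smaller after the replacement.

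\emph{Lower bound.} Keep only the first term, so
\[
d_k - d_{k+1} > \frac{x^2}{3} = \frac{1}{3(2k+1)^2} .
\]
A direct algebraic check shows that this exceeds $\frac{1}{12k+1}-\frac{1}{12(k+1)+1}$. Indeed, the latter equals $\frac{12}{(12k+1)(12k+13)}$, and $(12k+1)(12k+13) > 36(2k+1)^2 = 144k^2+144k+36$ holds because $144k^2+168k+13$ is larger. This is the only place where a small calculation is needed.

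Consequently the sequence $d_k - \frac{1}{12k}$ is strictly increasing, and the sequence $d_k - \frac{1}{12k+1}$ is strictly decreasing. Since $\frac{1}{12k}$ and $\frac{1}{12k+1}$ both tend to $0$, the sequence $(d_k)$ is sandwiched between them and converges to some limit $C$. Monotonicity, together with strictness at every step, then gives
\[
d_k - \tfrac{1}{12k} < C < d_k - \tfrac{1}{12k+1}
\quad\text{for all } k .
\]
Setting $r_k := d_k - C$, this is exactly $\frac{1}{12k+1} < r_k < \frac{1}{12k}$.

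The main obstacle is identifying $C=\ln\sqrt{2\pi}$, and for this we plan to use Wallis' product. We know $k! \sim e^C k^{k+\frac12}e^{-k}$. Substituting this into
\[
\frac{2^{4k}(k!)^4}{((2k)!)^2(2k+1)} \to \frac{\pi}{2} ,
\]
which is Wallis' formula and follows from comparing the integrals $\int_0^{\pi/2}\sin^n t\,dt$, yields $e^{2C}/(4\cdot 2)\cdot 4 = \pi$. Hence $e^{C} = \sqrt{2\pi}$. Alternatively, since the paper cites \cite{Robbins1955}, this step may be taken as the standard value of Stirling's constant.
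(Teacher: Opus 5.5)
Your proposal is correct and is exactly Robbins' classical argument, which is all the paper relies on here: the paper states the lemma with a citation to Robbins and gives no proof of its own. The two-sided telescoping bounds $\frac{1}{12k+1}-\frac{1}{12k+13} < d_k-d_{k+1} < \frac{1}{12k}-\frac{1}{12(k+1)}$ and the Wallis identification of the constant both check out; only your displayed arithmetic for the constant is garbled, since the Wallis quotient tends to $e^{2C}/4$, and setting this equal to $\pi/2$ gives $e^{C}=\sqrt{2\pi}$, as you conclude.
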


\begin{lem}\label{lem:uniform}
For all $n\ge2$ and $1\le m\le n-1$, with $\alpha :=\dfrac{m}{n}$,
\[
\ln Z_n(m)=n^2\eff(\alpha)+nH(\alpha)+O(\ln n).
\]
At the endpoints, $\ln Z_n(0)=n^2\ln C_{34}$ and $\ln Z_n(n)=n^2\ln C_{12}$.
\end{lem}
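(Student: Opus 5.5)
Starting from the exact identity~(\ref{ln of Z-n(m)}), $\ln Z_n(m) = \ln\binom{n}{m} + n^2\eff(m/n)$, which holds for every $0 \le m \le n$, the whole statement reduces to estimating $\ln\binom{n}{m}$. The claim to prove is
\[
\ln\binom{n}{m} = nH(m/n) + O(\ln n), \qquad 1 \le m \le n-1,
\]
with the implicit constant independent of $m$. The endpoint statements need no estimate: $\binom{n}{0} = \binom{n}{n} = 1$, and $\eff(0) = \ln C_{34}$, $\eff(1) = \ln C_{12}$ by~(\ref{eq:def-psi}). This gives $\ln Z_n(0) = n^2\ln C_{34}$ and $\ln Z_n(n) = n^2\ln C_{12}$ directly.

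For $1 \le m \le n-1$, all three of $n$, $m$, $n-m$ are positive integers, so Lemma~\ref{lem:stirling} applies to each factorial in $\binom{n}{m} = \frac{n!}{m!(n-m)!}$. Taking logarithms gives
\[
\ln\binom{n}{m} = n\ln n - m\ln m - (n-m)\ln(n-m) + \tfrac12\ln\frac{n}{m(n-m)} - \tfrac12\ln(2\pi) + r_n - r_m - r_{n-m}.
\]
The $e^{-k}$ factors cancel because $n - m - (n-m) = 0$. Writing $m = \alpha n$, the first three terms equal $-n\alpha\ln\alpha - n(1-\alpha)\ln(1-\alpha) = nH(\alpha)$. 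This follows from expanding $\ln m = \ln\alpha + \ln n$ and $\ln(n-m) = \ln(1-\alpha) + \ln n$; the $\ln n$ contributions cancel.

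The remaining terms must be bounded uniformly in $m$. Each $r_k$ lies in $(0, 1/12)$, so $|r_n - r_m - r_{n-m}| \le 1/4$. The constant $\tfrac12\ln(2\pi)$ is bounded. For the middle term, $n-1 \le m(n-m) \le n^2/4$ whenever $1 \le m \le n-1$; the lower bound holds because the product is minimised at $m = 1$ or $m = n-1$. Hence $\tfrac12\ln\frac{n}{m(n-m)}$ lies between $\tfrac12\ln\frac{4}{n}$ and $\tfrac12\ln\frac{n}{n-1} \le \tfrac12\ln 2$, so its absolute value is at most $\tfrac12\ln n + O(1)$. Together these give the $O(\ln n)$ error, uniformly in $m$.

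There is no real obstacle. The only point needing care is uniformity near the endpoints $m = 1$ and $m = n-1$. There $m$ or $n-m$ is small, so Stirling's formula cannot be used asymptotically and only the explicit bounds on $r_k$ are valid. Because Lemma~\ref{lem:stirling} holds with explicit error bounds for every $k \ge 1$, and the polynomial factor is controlled by $m(n-m) \ge n-1$, the estimate stays uniform.
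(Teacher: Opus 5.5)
Your proposal is correct and follows essentially the same route as the paper: start from the exact identity $\ln Z_n(m)=\ln\binom{n}{m}+n^2\eff(m/n)$, apply Lemma~\ref{lem:stirling} to $n!$, $m!$ and $(n-m)!$, and recognise $n\ln n-m\ln m-(n-m)\ln(n-m)=nH(\alpha)$. Your bound $n-1\le m(n-m)\le n^2/4$, which controls the term $\tfrac12\ln\frac{n}{m(n-m)}$ uniformly in $m$, spells out the uniformity that the paper leaves implicit when it simply writes the remainder as $-\tfrac12\ln\bigl(2\pi n\alpha(1-\alpha)\bigr)+O(1)$.
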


\begin{proof}
By~(\ref{ln of Z-n(m)}),
\[
\ln Z_n(m)=\ln\binom{n}{m} + n^2\eff(\alpha).
\]
Direct calculations give $\ln Z_n(0)=n^2\ln C_{34}$ and $\ln Z_n(n)=n^2\ln C_{12}$.
So now it suffices to show, uniformly in $1\le m\le n-1$, that
\[
\ln\binom{n}{m}=nH(\alpha)+O(\ln n).
\]
By Lemma~\ref{lem:stirling},
for every $k\in\mbb N$ we have 
\[
k! =
\sqrt{2\pi}\,k^{\,k+\frac{1}{2}}e^{-k}\,e^{r_k} 
\;\;\;\text{where}\;\;\;
\dfrac{1}{12k+1}\ <\ r_k\ <\ \dfrac{1}{12k}. 
\]
Taking logarithms gives
\[
\ln k! = k\ln k-k+\frac{1}{2}\ln(2\pi k)+r_k.
\]
By applying this to $k=n$, $k=m$, and $k=n-m$, and subtracting we get:
\[
\begin{aligned}
\ln\binom{n}{m}&=\ln\frac{n!}{m!(n-m)!}
=\ln n!-\ln m!-\ln(n-m)!\\
&=\big[n\ln n-m\ln m-(n-m)\ln(n-m)\big]\\
&\quad +\frac12\big[\ln(2\pi n)-\ln(2\pi m)-\ln(2\pi(n-m))\big] \\
&\quad +\big(r_n-r_m-r_{\,n-m}\big).
\end{aligned}
\]
Writing $\alpha:=\dfrac{m}{n}$ so that
$m=\alpha n$ and $n-m=(1-\alpha)n$, we get
\begin{align*}
&n\ln n-m\ln m-(n-m)\ln(n-m) \\
= &n\ln n - \alpha n \ln (\alpha n) - (1-\alpha)n\ln ((1-\alpha)n) \\
= &n\Big(-\alpha\ln\alpha-(1-\alpha)\ln(1-\alpha)\Big)
= n\,H(\alpha),
\end{align*}
and
\[
\frac12\big[\ln(2\pi n)-\ln(2\pi m)-\ln(2\pi(n-m))\big]
= -\frac12\ln \big(2\pi n\,\alpha(1-\alpha)\big).
\]
Therefore, for $1\le m\le n-1$,
\[
\ln\binom{n}{m}
= nH(\alpha)-\dfrac{1}{2}\ln\big(2\pi n\,\alpha(1-\alpha)\big)
+\big(r_n-r_m-r_{n-m}\big),
\]
where $r_n-r_m-r_{n-m} = O(1)$.
\end{proof}

\begin{lem}\label{lem:ratio}
Let $m_n^\star\in\arg\max_{0\le m\le n} Z_n(m)$.
Then for all $m\in\{0,\dots,n\}$,
\[
\dfrac{Z_n(m)}{Z_n(m_n^\star)}
\ \le\ \exp\Big(-n^2\big[\eff\Big(\dfrac{m_n^\star}{n}\Big)-\eff\Big(\dfrac{m}{n}\Big)\big]+n\ln 2\Big).
\]

\end{lem}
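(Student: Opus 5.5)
The plan is to compare the two quantities directly through the exact formula~(\ref{ln of Z-n(m)}), namely $\ln Z_n(m) = \ln\binom{n}{m} + n^2\eff(m/n)$, valid for all $0 \le m \le n$ including the endpoints. Taking the difference for $m$ and for $m_n^\star$ gives
\[
\ln \frac{Z_n(m)}{Z_n(m_n^\star)} = -n^2\Big[\eff\Big(\frac{m_n^\star}{n}\Big) - \eff\Big(\frac{m}{n}\Big)\Big] + \ln\binom{n}{m} - \ln\binom{n}{m_n^\star}.
\]
Exponentiating, it then suffices to show that $\ln\binom{n}{m} - \ln\binom{n}{m_n^\star} \le n\ln 2$.

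For this bound I would use the crude estimates $\binom{n}{m} \le \sum_{k}\binom{n}{k} = 2^n$ and $\binom{n}{m_n^\star} \ge 1$. Together they give $\ln\binom{n}{m} \le n\ln 2$ and $-\ln\binom{n}{m_n^\star} \le 0$. Alternatively, one could try to invoke Lemma~\ref{lem:entropy-range} via Lemma~\ref{lem:uniform}, but that route introduces $O(\ln n)$ error terms and breaks down at the endpoints. The elementary binomial bound is exact and uniform in $m$, so I would use it instead.

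There is essentially no obstacle here. The one point needing care is that the bound must hold for every $m \in \{0,\dots,n\}$, including $m=0$, $m=n$, and the case where $m_n^\star$ is an endpoint. This is why I avoid Stirling-based expansions and rely only on the exact identity~(\ref{ln of Z-n(m)}), which holds at the endpoints as well since $\binom{n}{0}=\binom{n}{n}=1$.

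Note that the maximality of $m_n^\star$ is not actually used for the inequality itself. It only matters later, when the bracket in the exponent is known to be nonnegative up to $O(n)$ corrections.
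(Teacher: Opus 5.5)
Your proposal is correct and essentially identical to the paper's proof: you subtract the exact identity $\ln Z_n(m)=\ln\binom{n}{m}+n^2\eff(m/n)$ at $m$ and at $m_n^\star$, then use $\ln\binom{n}{m}\le n\ln 2$ and $\ln\binom{n}{m_n^\star}\ge 0$. Your remark that the maximality of $m_n^\star$ plays no role in the inequality is also accurate, since the paper's argument does not use it either.
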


\begin{proof}
From \eqref{ln of Z-n(m)},
\[
\ln\dfrac{Z_n(m)}{Z_n(m_n^\star)}
=\Big[\ln\binom{n}{m}-\ln\binom{n}{m_n^\star}\Big]
+n^2\Big[\eff\Big(\dfrac{m}{n}\Big)-\eff\Big(\dfrac{m_n^\star}{n}\Big)\Big].
\]
Since $\ln\binom{n}{m}\le n\ln 2$ and $\ln\binom{n}{m_n^\star}\ge 0$, we obtain
\[
\ln\dfrac{Z_n(m)}{Z_n(m_n^\star)}
\ \le\ n\ln 2+n^2\Big[\eff\Big(\dfrac{m}{n}\Big)-\eff\Big(\dfrac{m_n^\star}{n}\Big)\Big].
\]
\end{proof}

\begin{lem}\label{lem:max-gap}
Suppose that $\eff$ is nonconstant. For all large enough $n \in \mbbN$,
if  $m_n^\star\in\arg\max_{0\le m\le n}Z_n(m)$ then
\[
\eff\Big(\frac{m_n^\star}{n}\Big) \ge \max_{\alpha\in[0,1]} \eff(\alpha) - O \Big(\dfrac{1}{n}\Big).
\]
\end{lem}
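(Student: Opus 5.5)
We compare $Z_n(m_n^\star)$ with $Z_n(m)$ for an integer $m$ close to a maximiser of $\eff$ on $[0,1]$; the conclusion then follows from the identity (\ref{ln of Z-n(m)}) and the crude bounds $0 \le \ln\binom{n}{m} \le n\ln 2$.

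\emph{Step 1: choose a comparison point.} Let $\beta := \max_{\alpha\in[0,1]}\eff(\alpha)$, which exists because $\eff$ is a continuous quadratic, and fix $\alpha_0 \in \Eff^{\max}$. Put $m_0 := \lfloor \alpha_0 n\rceil$ (Definition~\ref{definition of rounding map}). Then $0\le m_0\le n$ and $|m_0/n - \alpha_0| \le 1/(2n)$.

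\emph{Step 2: bound $\eff(m_0/n)$ from below.} Let $L := \max_{\alpha\in[0,1]}|\eff'(\alpha)| = \max\{|c_1|, |2c_2+c_1|\}$, which depends only on the weights. By the mean value theorem,
\[
\eff(m_0/n) \ge \beta - L/(2n).
\]

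\emph{Step 3: use the maximality of $m_n^\star$.} Since $Z_n(m_n^\star)\ge Z_n(m_0)$, taking logarithms in (\ref{ln of Z-n(m)}) gives
\[
\ln\binom{n}{m_n^\star} + n^2\eff(m_n^\star/n) \ \ge\ \ln\binom{n}{m_0} + n^2\eff(m_0/n).
\]
The left-hand binomial term is at most $n\ln 2$, and the right-hand one is at least $0$. Dividing by $n^2$ yields
\[
\eff(m_n^\star/n) \ \ge\ \beta - \frac{L/2 + \ln 2}{n},
\]
which is the claimed $O(1/n)$ bound. The constant depends only on the weights, and the estimate holds for every $n\ge 1$, so in particular for all large enough $n$. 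This is essentially Lemma~\ref{lem:ratio} applied in reverse, and one could equally invoke Lemma~\ref{when the rounding is close to the real value} for the comparison.

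There is no genuine obstacle here. The only care needed is to check that the constant in the $O(1/n)$ is uniform in $n$, which it is because $L$ and $\ln 2$ do not depend on $n$. The hypothesis that $\eff$ is nonconstant is not used. If $\eff$ were constant the inequality would hold trivially, so the hypothesis presumably only matters for how the lemma is applied later.
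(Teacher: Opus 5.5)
Your proof is correct and follows the paper's argument essentially verbatim. You round a maximiser $\alpha_0\in\Eff^{\max}$ to $m_0=\lfloor \alpha_0 n\rceil$, bound $\eff(m_0/n)$ from below by the mean value theorem, and combine $Z_n(m_n^\star)\ge Z_n(m_0)$ with $0\le\ln\binom{n}{m}\le n\ln 2$ to get the uniform constant $(\ln 2+L/2)/n$, exactly as the paper does. Your remark that the nonconstancy hypothesis is never used is also accurate, since the paper's proof does not use it either.
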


\begin{proof}
Recall from~\eqref{ln of Z-n(m)} that for every $n\in\mbbN^+$ and every $0\le m\le n$,
\[
\ln Z_n(m)=\ln\binom{n}{m}+n^2\,\eff\Big(\frac{m}{n}\Big).
\]
Since $1\le \binom{n}{m}\le 2^n$, we have the uniform bounds
\[
0 \le \ln\binom{n}{m} \le n\ln 2.
\]
Let
\[
M := \max_{\alpha\in[0,1]}\eff(\alpha)
\qquad\text{and}\qquad
M_n := \max_{0\le m\le n}\eff\Big(\frac{m}{n}\Big).
\]
Then $M_n\le M$. Fix $\alpha^\star\in\Eff^{\max}$ and let $m^\star:=\lfloor \alpha^\star n\rceil$.
By Definition~\ref{definition of rounding map}, we have $m^\star=\lfloor\alpha^\star n+\frac{1}{2}\rfloor$, therefore $\lfloor\alpha^\star n+\frac{1}{2}\rfloor \leq \alpha^\star n+\frac{1}{2}<\lfloor\alpha^\star n+\frac{1}{2}\rfloor +1$, so $m^\star\leq\alpha^\star n+\frac{1}{2}<m^\star+1$, thus  $|m^\star -\alpha^\star n|\leq \frac{1}{2}$.
Now dividing by $n>0$ gives $\big|\frac{m^\star}{n}-\alpha^\star\big|\le \frac{1}{2n}$.
Since $\eff$ is a polynomial, $\eff'$ is bounded on $[0,1]$.
Set
\[
K:=\sup_{\alpha\in[0,1]}|\eff'(\alpha)|<\infty.
\]
By the mean value theorem,
\[
\Big|\eff\Big(\frac{m^\star}{n}\Big)-\eff(\alpha^\star)\Big|
\le
K\Big|\frac{m^\star}{n}-\alpha^\star\Big|
\le
\frac{K}{2n}.
\]
Because $\eff(\alpha^\star)=M$, this implies
\begin{equation}\label{f bounded below by M minus something}
\eff\Big(\frac{m^\star}{n}\Big)\ge M-\frac{K}{2n}.
\end{equation}

Let $m_n^\star\in\arg\max_{0\le m\le n}Z_n(m)$ and set $\alpha_n^\star:=\frac{m_n^\star}{n}$.
Since $m_n^\star$ maximizes $Z_n(m)$, we have $Z_n(m_n^\star)\ge Z_n(m^\star)$, hence
\[
\ln Z_n(m_n^\star)\ge \ln Z_n(m^\star)
\]
which gives
\[
\ln\binom{n}{m_n^\star}+n^2\eff(\alpha_n^\star)
\ge
\ln\binom{n}{m^\star}+n^2\eff\Big(\frac{m^\star}{n}\Big).
\]
So,
\[
n^2\eff(\alpha_n^\star)
\ge
n^2\eff\Big(\frac{m^\star}{n}\Big)
+\ln\binom{n}{m^\star}-\ln\binom{n}{m_n^\star}.
\]
Using $\ln\binom{n}{m^\star}\ge 0$ and $\ln\binom{n}{m_n^\star}\le n\ln 2$, we get
\[
n^2\eff(\alpha_n^\star)
\ge
n^2\eff\Big(\frac{m^\star}{n}\Big)-n\ln 2,
\]
so
\[
\eff(\alpha_n^\star)
\ge
\eff\Big(\frac{m^\star}{n}\Big)-\frac{\ln 2}{n}.
\]
This together with \eqref{f bounded below by M minus something} gives
\begin{align*}
\eff(\alpha_n^\star)
&\ge
\eff\Big(\frac{m^\star}{n}\Big)-\frac{\ln 2}{n}
\ge
M-\frac{\ln 2+K/2}{n} \\
&=
\max_{\alpha\in[0,1]}\eff(\alpha)-O\Big(\frac{1}{n}\Big).
\end{align*}

\end{proof}

\section{Limit behaviour of the colour proportion}\label{sec:limit}

\noindent
We prove Theorem~\ref{the colour distribution, exact formulation} in this section.
Recall that 
\[
\Eff^{\max}=\underset{\alpha\in[0,1]}{\arg\max} \, \eff(\alpha).
\]
Each case of Theorem~\ref{the colour distribution, exact formulation} 
follows from one of the lemmas below.
First we consider the simplest case.

\begin{lem}\label{colour proportion when psi is constant}
Suppose that $\Eff^{\max} = [0, 1]$, which is equivalent to $\eff$ being constant.
Then, for every $\varepsilon > 0$,
\[
\lim_{n\to\infty} \mbbP_n \Big(
  \big\{\mcA \in \mb W_n :
    (1/2 - \varepsilon)n
      \le \big|P^\mcA\big|
      \le (1/2 + \varepsilon)n
  \big\}
\Big) = 1.
\]
\end{lem}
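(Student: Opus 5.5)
When $\eff$ is constant, say $\eff \equiv c_0$, formula~(\ref{ln of Z-n(m)}) gives $Z_n(m) = \binom{n}{m} e^{n^2 c_0}$ for every $0 \le m \le n$. The factor $e^{n^2c_0}$ does not depend on $m$, so it cancels in $Z_n = \sum_m Z_n(m) = 2^n e^{n^2 c_0}$. Hence
\[
\mbbP_n\big(\mbW_n(m)\big) = \frac{Z_n(m)}{Z_n} = \binom{n}{m} 2^{-n}.
\]
In other words, $|P^\mcA|$ is distributed exactly as $\mathrm{Bin}(n, 1/2)$. This matches the fact that in this case each vertex is coloured independently with probability $1/2$. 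The plan is to establish this exact identity first and then apply a standard concentration bound.

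For the concentration step, let $X_n = |P^\mcA|$. Then $\mbbE X_n = n/2$ and $\mathrm{Var}(X_n) = n/4$. Chebyshev's inequality gives
\[
\mbbP_n\big(|X_n - n/2| > \varepsilon n\big) \le \frac{n/4}{\varepsilon^2 n^2} = \frac{1}{4\varepsilon^2 n} \to 0.
\]
If one prefers to avoid probabilistic language, the same bound can be stated as a sum: $\sum_{|m - n/2| > \varepsilon n} \binom{n}{m} 2^{-n} \to 0$. Alternatively, Lemma~\ref{lem:uniform} shows $\binom{n}{m} 2^{-n} \le \exp\big(-n(\ln 2 - H(m/n)) + O(\ln n)\big)$. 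Since $H$ is strictly concave with its unique maximum at $1/2$ (Lemma~\ref{lem:entropy-range}), the quantity $\ln 2 - H(\alpha)$ is bounded below by some $\delta_\varepsilon > 0$ whenever $|\alpha - 1/2| \ge \varepsilon$. Summing at most $n+1$ such terms then gives a bound of order $e^{-\delta_\varepsilon n + O(\ln n)} \to 0$. Either route completes the argument.

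There is essentially no obstacle here. The only point that needs care is confirming that $Z_n(m)$ has exactly the form $\binom{n}{m} e^{n^2 c_0}$, including the endpoints $m = 0$ and $m = n$. Formula~(\ref{ln of Z-n(m)}) is exact for all $0 \le m \le n$, so this holds. The Chebyshev route is the simplest, and I would use it.
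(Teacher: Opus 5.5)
Your proposal is correct and follows essentially the same route as the paper. The paper writes $C:=C_{12}=C_{34}$ (so $C=e^{c_0}$ and $C^2=C_{56}C_{78}$) to get $Z_n(m)=\binom{n}{m}C^{n^2}$, deduces $|P^\mcA|\sim\mathrm{Bin}(n,1/2)$, and concludes by the law of large numbers; your Chebyshev bound is just an explicit quantitative form of that last step.
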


\begin{proof}
By Lemma~\ref{prop:maximiser},
$C_{12}C_{34} = C_{56}C_{78}$ and $C_{12} = C_{34}$.
Let $C = C_{12}$, so $C^2 = C_{56}C_{78}$.
It follows from~\eqref{eq:Znm-exact}
that, for all $0 \leq m \leq n$,
\begin{align*}
Z_n(m) 
&= \binom{n}{m} C_{12}^{m^2}C_{34}^{(n-m)^2}C_{56}^{m(n-m)}C_{78}^{m(n-m)} \\
&= \binom{n}{m} C^{m^2}C^{(n-m)^2}C^{2m(n-m)}
= \binom{n}{m} C^{n^2}
\end{align*}
and hence
\[
Z_n = \sum_{m=0}^n Z_n(m) = \sum_{m=0}^n \binom{n}{m} C^{n^2} = 2^n  C^{n^2}.
\]
Therefore 
\[
\mbbP_n\big(\mbW_n(m)\big) = \frac{Z_n(m)}{Z_n} = \frac{\binom{n}{m}}{2^n}.
\]
Hence  $\mbbP_n\big(\mbW_n(m)\big)$ is the probability of having exactly $m$ successes in a series of
$n$ independent trials with success probability $1/2$. Thus the lemma follows from the 
law of large numbers.
\end{proof}

\noindent
The next result tells that if $\eff$ is not constant, then, with probability tending to 1 (as $n\to\infty$),
the colour proportion will be close to the maximal value(s) that $\eff$ takes on $[0, 1]$.

\begin{prop}\label{thm:concentration}
Assume $\eff$ is nonconstant. For all sufficiently small $\delta>0$ the number 
\[
c_\delta := 
\inf\Big\{\big(\max_{\beta \in [0,1]}\eff(\beta)\big) - \eff(\gamma):\ 
\gamma \in [0, 1] \text{ and } \inf_{\beta\in\Eff^{\max}}|\gamma-\beta|\ge\delta \Big\}
\]
is well-defined and positive and we have
\[
\mbb P_n \left(\Big\{\mcA \in \mb W_n : 
\inf_{\beta\in\Eff^{\max}} \Big| \frac{|P^\mcA|}{n} - \beta \Big| \ge \delta\Big\}\right) \ 
\le \ (n+1)\,\exp \big(-c_\delta n^2 + O(n)\big).
\]
\end{prop}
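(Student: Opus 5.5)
First I would settle that $c_\delta$ is well defined and positive. Since $\eff$ is a nonconstant quadratic (or linear) polynomial, Lemma~\ref{prop:maximiser} shows that $\Eff^{\max}$ is a finite subset of $[0,1]$ with at most two points, namely $\{\alpha^\star\}$, $\{0\}$, $\{1\}$ or $\{0,1\}$. Take $\delta$ small enough, say $\delta<1/2$. Then the set
\[
K_\delta=\{\gamma\in[0,1]:\inf_{\beta\in\Eff^{\max}}|\gamma-\beta|\ge\delta\}
\]
is nonempty and compact, being a closed subset of $[0,1]$. The function $\gamma\mapsto \max\eff-\eff(\gamma)$ is continuous on $K_\delta$, so its infimum over $K_\delta$ is attained at some $\gamma_0\in K_\delta$. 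This point $\gamma_0$ is not a maximiser, so $c_\delta=\max\eff-\eff(\gamma_0)>0$.

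Next I would bound the probability of the bad event. It is the disjoint union of $\mbW_n(m)$ over those $m\in\{0,\dots,n\}$ with $m/n\in K_\delta$, so
\[
\mbbP_n(\cdot)=\sum_{m:\,m/n\in K_\delta}\frac{Z_n(m)}{Z_n}\le\sum_{m:\,m/n\in K_\delta}\frac{Z_n(m)}{Z_n(m_n^\star)},
\]
where $m_n^\star$ maximises $Z_n(m)$ and we use $Z_n\ge Z_n(m_n^\star)$. Lemma~\ref{lem:ratio} bounds each term by $\exp\big(-n^2[\eff(m_n^\star/n)-\eff(m/n)]+n\ln2\big)$.

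Lemma~\ref{lem:max-gap} gives $\eff(m_n^\star/n)\ge\max\eff-O(1/n)$. For $m/n\in K_\delta$ we have $\eff(m/n)\le\max\eff-c_\delta$. Combining these, the exponent is at most $-c_\delta n^2+O(n)+n\ln2=-c_\delta n^2+O(n)$. There are at most $n+1$ terms, which gives the claimed bound.

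The only delicate point is making sure the $O(n)$ is uniform in $m$. It is: Lemma~\ref{lem:max-gap} supplies an explicit constant $(\ln 2+K/2)/n$ that depends only on the weights, and the entropy term is bounded by $n\ln2$ for every $m$. Alternatively, one could avoid $m_n^\star$ entirely by comparing directly with $Z_n(\lfloor\alpha^\star n\rceil)$ for some $\alpha^\star\in\Eff^{\max}$. By the mean value estimate, $\ln Z_n(\lfloor\alpha^\star n\rceil)\ge n^2\max\eff-Kn/2$, which yields the same bound.
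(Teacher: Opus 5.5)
Your proposal is correct and follows essentially the same route as the paper: bound each $Z_n(m)/Z_n$ by $Z_n(m)/Z_n(m_n^\star)$, apply Lemma~\ref{lem:ratio} together with the near-maximality estimate of Lemma~\ref{lem:max-gap}, and sum over at most $n+1$ values of $m$. Your compactness argument for $c_\delta>0$ (the infimum is attained at a point whose distance from $\Eff^{\max}$ is at least $\delta$) makes explicit a step the paper only asserts, and your alternative of comparing directly with $Z_n(\lfloor\alpha^\star n\rceil)$ is the device the paper later uses in the scaled case.
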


\begin{proof}
Fix $\delta>0$ and let
\[
S_\delta  :=  \Big\{\alpha\in[0,1]:\ \inf_{\beta\in\Eff^{\max}}|\alpha-\beta|\ge\delta\Big\}.
\]
Since $\eff$ is a polynomial of degree at most 2 and (by assumption) nonconstant it is clear that 
$S_\delta \neq \varnothing$ if $\delta$ is small enough.
Let $m_n^\star\in\arg\max_{0\le m\le n} Z_n(m)$ and write $\alpha_n^\star= \dfrac{m_n^\star}{n}$. 
By Lemma~\ref{lem:max-gap}, there exists a constant $C>0$ (depending only on the fixed weights) such that
\begin{equation}\label{eq:pf:nearmax}
\eff(\alpha_n^\star)\ \ge\ \max_{\alpha \in [0,1]}\eff(\alpha) \ - \ \dfrac{C}{n}.
\end{equation}
Suppose that
$m\in\{0,1,\dots,n\}$ and that $\alpha := \dfrac{m}{n}\in S_\delta$. 
Lemma~\ref{lem:ratio} gives
\begin{equation}\label{Z-n-m divided by Z-n-m-star}
\dfrac{Z_n(m)}{Z_n(m_n^\star)}
\ \le\ \exp \Big(-n^2\big[\eff(\alpha_n^\star)-\eff(\alpha)\big]+n\ln 2\Big).
\end{equation}
Define
\[
c_\delta\ =\ \inf\Big\{\big(\max_{\beta \in [0,1]}\eff(\beta)\big) - \eff(\gamma):\ \gamma\in S_\delta\Big\}.
\]
As $\eff$ is a nonconstant polynomial of degree at most 2 and we have chosen $\delta > 0$
small enough that $S_\delta \neq \varnothing$ it follows that $c_\delta > 0$.
Combining the definition of $c_\delta$ with \eqref{eq:pf:nearmax} gives
\[
\eff(\alpha_n^\star)-\eff(\alpha) \geq  \max_{\beta \in [0, 1]} \eff(\beta) - \eff(\alpha) - \frac{C}{n}
\geq c_\delta - \frac{C}{n}.
\]
Combining this with \eqref{Z-n-m divided by Z-n-m-star} gives
\begin{equation}\label{eq:pf:ratio-final}
\dfrac{Z_n(m)}{Z_n(m_n^\star)}
\ \le\ \exp \Big(-n^2\big[c_\delta-\dfrac{C}{n}\big]+n\ln 2\Big)
\ =\ \exp \big(-c_\delta n^2 + O(n)\big).
\end{equation}
Since $Z_n\ge Z_n(m_n^\star)$, we have
\[
\mbb P_n \big(\big\{\mc A\in \mb W_n: \big|P^\mcA\big| = m \big\}\big)
=\dfrac{Z_n(m)}{Z_n}
\ \le\ \dfrac{Z_n(m)}{Z_n(m_n^\star)}.
\]
Summing \eqref{eq:pf:ratio-final} over at most $n+1$ values of $m \in \{0, \ldots, n\}$ 
with $\alpha:=\dfrac{m}{n}\in S_\delta$ yields
\[
\mbb P_n \Big(\{\mc A \in \mb W_n:\inf_{\beta\in\Eff^{\max}}\big||P^\mcA|/n -\beta\big|\ge\delta\}\Big)
\ \le\ (n+1)\,\exp \big(-c_\delta n^2 + O(n)\big).
\]
\end{proof}

\noindent
The remaining results of this section give more precise information about the colour proportion
in certain cases. This information will be used to prove logical convergence laws in those cases.

\begin{lem}\label{lem:pinning} $\text{  }$ \\
(a) If $\Eff^{\max} = \{0\}$ and $\eff'(0) < 0$, then 
$\lim_{n\to\infty} \mbbP_n\big(\mbW_n(0)\big) = 1$. \\
(b) If $\Eff^{\max} = \{1\}$ and $\eff'(1) > 0$, then 
$\lim_{n\to\infty} \mbbP_n\big(\mbW_n(n)\big) = 1$. 
\end{lem}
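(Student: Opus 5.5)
Part (b) should follow from (a) by the symmetry that swaps coloured and uncoloured vertices, so the plan is to prove (a) directly from the exact formula \eqref{ln of Z-n(m)}. It suffices to show that $\sum_{m=1}^n Z_n(m)/Z_n(0)\to 0$, because $\mbbP_n(\mbW_n(0)) = Z_n(0)/Z_n \ge \big(1+\sum_{m\ge1}Z_n(m)/Z_n(0)\big)^{-1}$. For each $m$ we have
\[
\ln\frac{Z_n(m)}{Z_n(0)} = \ln\binom{n}{m} + n^2\big(\eff(m/n)-\eff(0)\big),
\]
and since $\eff$ is quadratic, $n^2(\eff(m/n)-\eff(0)) = c_1 mn + c_2 m^2$, where $c_1=\eff'(0)<0$.

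I would split the sum at $m\le \delta n$ and $m>\delta n$, with $\delta>0$ small. The range $m>\delta n$ is handled by Proposition~\ref{thm:concentration}: since $\Eff^{\max}=\{0\}$, the total probability of $|P^\mcA|\ge\delta n$ is at most $(n+1)\exp(-c_\delta n^2+O(n))\to 0$. Equivalently, directly, $\eff(m/n)-\eff(0)\le -c_\delta$ there while $\ln\binom nm\le n\ln2$. For $1\le m\le\delta n$ I would use $\binom nm\le n^m$ together with $c_1mn+c_2m^2\le m n(c_1+|c_2|\delta)\le m n c_1/2$, after choosing $\delta\le |c_1|/(2|c_2|)$ (any $\delta$ works if $c_2=0$). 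This gives
\[
\frac{Z_n(m)}{Z_n(0)}\le \exp\big(m(\ln n - |c_1| n/2)\big)\le e^{-m|c_1|n/4}
\]
for large $n$, and the geometric sum over $m\ge1$ is $O(e^{-|c_1|n/4})\to 0$.

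The only point that needs care is the choice of $\delta$. Because $\eff'(0)<0$ is strict, the linear term dominates both the entropy $\ln\binom nm\le m\ln n$ and the quadratic correction uniformly for $m\le\delta n$, and this is what pins all the mass at $m=0$. This is the step that separates the present case from case (3), where $c_1=0$ and the entropy then balances the $c_2m^2$ term. For (b) I would apply the same argument to $m' = n-m$, using $\ln\binom nm=\ln\binom n{n-m}$ and $n^2(\eff(1-m'/n)-\eff(1)) = -\eff'(1)m'n + c_2 m'^2$ with $\eff'(1)>0$.
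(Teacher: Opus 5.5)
Your proof is correct, and its skeleton is the paper's. You split at $m\approx\delta n$, dispose of $m\ge\delta n$ with Proposition~\ref{thm:concentration} (or your direct ratio bound), and compare $Z_n(m)$ with $Z_n(0)$ for $1\le m\le\delta n$. The difference is in that last comparison.

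The paper goes through Lemma~\ref{lem:uniform}, i.e.\ the Stirling expansion $\ln Z_n(m)=n^2\eff(m/n)+nH(m/n)+O(\ln n)$. It uses that $\eff$ is decreasing near $0$ to replace $\eff(m/n)$ by $\eff(1/n)$, so the decrement $c_1n+c_2$ is extracted only once. It then chooses $\delta$ by continuity of $H$ at $0$ so that $nH(m/n)\le n(|c_1|-\delta)$. This gives a uniform per-term bound $e^{-\delta n+O(\ln n)}$, followed by a union bound over at most $n$ terms.

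You instead work with the exact identity $\ln\frac{Z_n(m)}{Z_n(0)}=\ln\binom nm+c_1mn+c_2m^2$. You keep the full linear decrement $c_1mn$ and pay for the entropy only with the crude bound $\binom nm\le n^m$. This avoids Stirling and the entropy function altogether and makes $\delta$ explicit ($\delta\le|c_1|/(2|c_2|)$). It also gives terms decaying geometrically in $m$, hence the sharper estimate $\mbbP_n(1\le|P^\mcA|\le\delta n)=O(e^{-|c_1|n/4})$.

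The paper's version has the advantage of reusing an expansion it needs anyway for other regimes. Your explicit reduction of (b) to (a) via $m'=n-m$ and $n^2(\eff(1-m'/n)-\eff(1))=-\eff'(1)m'n+c_2m'^2$ fills in the details the paper leaves to the reader.
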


\begin{proof}
(a) Suppose that $\Eff^{\max} = \{0\}$ and that $\eff'(0) < 0$.
Recall that $\eff(\alpha) = c_2\alpha^2 + c_1 \alpha + c_0$. 
As $\eff'(0) = c_1$ it follows that $c_1 < 0$.
We have $\eff(1/n) - \eff(0) = \frac{c_2}{n^2} + \frac{c_1}{n}$ and hence
$n^2\eff(1/n) - n^2\eff(0) = c_2 + c_1 n$.
By Lemma~\ref{lem:uniform}
we have 
\[
\ln Z_n(m) = n^2\eff\Big(\frac{m}{n}\Big) + nH\Big(\frac{m}{n}\Big) + O(\ln n).
\]
Using this we get 
\begin{align}\label{comparing Z-n(m) and Z-n(0)}
\ln\frac{Z_n(m)}{Z_n(0)} &= \ln Z_n(m) - \ln Z_n(0) \\
&= 
n^2\Big[\eff\Big(\frac{m}{n}\Big) - \eff(0)\Big] + nH\Big(\frac{m}{n}\Big) \pm O(\ln n). \nonumber
\end{align}
Since $\eff'(0) < 0$ it follows that $\eff$ is decreasing in an open interval containing $0$.
Then we can choose $\delta > 0$ so that $\delta < |c_1|$ and $\eff$ is decreasing on $[-\delta, \delta]$.
As $H$ is continuous and $H(0) = 0$ we can also let $\delta$ be small enough so that 
$H(\alpha) < |c_1| - \delta$ for all $\alpha \in [0, \delta]$.
Suppose that  $1 \leq m \leq \delta n$.
Then $1/n \leq m/n \leq \delta$ so $\eff(m/n) \leq \eff(1/n)$ and using
\eqref{comparing Z-n(m) and Z-n(0)}
we get 
\begin{align*}
\ln\frac{Z_n(m)}{Z_n(0)} &\leq n^2\Big[\eff\Big(\frac{1}{n}\Big) - \eff(0)\Big] + 
n(|c_1| - \delta) \pm O(\ln n) \\
&= c_2 + c_1n + n(|c_1| - \delta) \pm O(\ln n)  = -\delta n \pm O(\ln n).
\end{align*}
It follows that 
\begin{align*}
&\mbbP_n\big(\big\{\mcA \in \mbW_n : 1 \leq |P^\mcA| \leq \lfloor \delta n \rfloor\big\}\big) =
\frac{\sum_{m=1}^{\lfloor \delta n \rfloor} Z_n(m)}{Z_n} 
\leq \frac{\sum_{m=1}^{\lfloor \delta n \rfloor} Z_n(m)}{Z_n(0)} \\
&\leq n e^{-\delta n \pm O(\ln n)} \to 0 \quad \text{ as } n \to \infty.
\end{align*}
It now suffices to show that
\[
\lim_{n\to\infty} \mbbP_n\big(\big\{\mcA \in \mbW_n : \lfloor \delta n \rfloor \leq |P^\mcA| \big\}\big) = 0.
\]
But this follows from Theorem~\ref{thm:concentration}.

(b) Suppose that $\Eff^{\max} = \{1\}$ and $\eff'(1) > 0$.
This case is ``symmetric'' to the previous and we can argue in essentially the same way 
(letting $n-1$, respectively $n$, have the roles of $1$, respectively $0$, in case (a)) and show that, 
for sufficiently small $\delta > 0$,
\[
\lim_{n\to\infty} \mbbP_n\big(\big\{\mcA \in \mbW_n : (1-\delta)n \leq |P^\mcA| \leq n-1 \big\}\big) = 0.
\]
We leave the details to the reader.
\end{proof}

\begin{lem}\label{distribution of P when 0 and 1 are maxpoints}
If $\Eff^{\max} = \{0, 1\}$
then 
\[
\lim_{n\to\infty} \mbbP_n\big(\mbW_n(0)\big) = \lim_{n\to\infty} \mbbP_n\big(\mbW_n(n)\big) = 1/2.
\]
\end{lem}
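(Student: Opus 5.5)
By Lemma~\ref{prop:maximiser}, the hypothesis $\Eff^{\max}=\{0,1\}$ occurs only in case (b), with $c_2>0$ and $C_{12}=C_{34}$. The case $c_2=0$ is excluded: there $\Eff^{\max}=\{0,1\}$ would force $\eff$ constant, which gives $[0,1]$. So I will work with $c_2>0$ and $c_1=-c_2$, since $\eff(0)=\eff(1)$ means $c_2+c_1=0$. Then $\eff(\alpha)-\eff(0)=c_2\alpha^2-c_2\alpha=-c_2\alpha(1-\alpha)$.

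The key observation is $Z_n(0)=C_{34}^{n^2}=C_{12}^{n^2}=Z_n(n)$, by Lemma~\ref{lem:uniform}. So the two endpoint masses are exactly equal. It therefore suffices to show
\[
\sum_{m=1}^{n-1}Z_n(m)=o\big(Z_n(0)\big).
\]
Once this holds, $Z_n=2Z_n(0)(1+o(1))$, and both $\mbbP_n(\mbW_n(0))$ and $\mbbP_n(\mbW_n(n))$ tend to $1/2$.

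To bound the sum, I use the exact formula \eqref{ln of Z-n(m)} together with the identity above:
\[
\frac{Z_n(m)}{Z_n(0)}=\binom{n}{m}\exp\big(-c_2\,m(n-m)\big).
\]
I will use $\binom{n}{m}\le n^{\min(m,n-m)}$. Put $k=\min(m,n-m)\ge1$. Then $m(n-m)\ge kn/2$, so
\[
\frac{Z_n(m)}{Z_n(0)}\le \exp\big(k\ln n-c_2kn/2\big)\le \exp\big(-k(c_2n/2-\ln n)\big).
\]
For $n$ large we have $c_2n/2-\ln n\ge c_2n/4$. Summing over $m$, each $k$ appears at most twice, so the total is at most $2\sum_{k\ge1}e^{-kc_2n/4}=O(e^{-c_2n/4})\to0$. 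This gives the claim without even invoking Proposition~\ref{thm:concentration}.

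The only real care point is the uniform bound over all $m$, including $m$ near $n/2$. The inequality $\binom{n}{m}\le n^k$ together with $m(n-m)\ge kn/2$ handles this in one stroke, so I expect no serious obstacle.
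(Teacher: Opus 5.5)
Your proof is correct, and it takes a more direct route than the paper's.

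The paper's proof has three steps:
\begin{enumerate}
\item It uses $\eff(\alpha)=\eff(1-\alpha)$ to get $Z_n(n-m)=Z_n(m)$, which reduces the problem to $1\le m\le\lceil n/2\rceil$.
\item It removes the middle range $\delta n\le m\le(1-\delta)n$ with Proposition~\ref{thm:concentration}.
\item It removes the range $1\le m\le\delta n$ by rerunning the pinning argument of Lemma~\ref{lem:pinning}(a). That argument needs the Stirling-based expansion of Lemma~\ref{lem:uniform}, the sign condition $\eff'(0)<0$, and smallness of the entropy $H$ near $0$.
\end{enumerate}

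You instead combine the exact formula \eqref{eq:Znm-exact} with the factorisation $\eff(\alpha)-\eff(0)=-c_2\alpha(1-\alpha)$. This factorisation holds because $\eff(0)=\eff(1)$, and it already contains the paper's symmetry. Two crude estimates, $\binom{n}{m}\le n^{\min(m,n-m)}$ and $m(n-m)\ge \min(m,n-m)\,n/2$, then handle every $1\le m\le n-1$ uniformly. You need neither Stirling's approximation nor the concentration result, and you get the explicit rate $\mbbP_n\bigl(\bigcup_{m=1}^{n-1}\mbW_n(m)\bigr)=O(e^{-c_2n/4})$. Together with the exact equality $Z_n(0)=Z_n(n)$, this gives both limits at once.

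The paper's argument is modular: it reuses the concentration and pinning machinery already built for the other regimes. Yours is self-contained, avoids asymptotic expansions entirely, and gives a sharper quantitative bound.
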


\begin{proof}
Suppose that $\Eff^{\max} = \{0, 1\}$.
Then $\eff(0) = \eff(1)$, $\eff'(0) < 0$ and $\eff'(1) > 0$ and $\eff$ has its global minimum in $\alpha = 1/2$.
Since $\eff$ is a polynomial of degree 2 it follows that, for all $\alpha \in [0, 1]$, $\eff(\alpha) = \eff(1-\alpha)$.
Recall that from \eqref{ln of Z-n(m)} we have
$Z_n(m) = \exp\big(\ln\binom{n}{m} + n^2\eff(m/n)\big)$.
Hence, for all $0 \leq m \leq n$,
\begin{align*}
Z_n(n-m) &= \exp\bigg(\ln\binom{n}{n-m} + n^2\eff((n-m)/n)\bigg)  \\
&= \exp\bigg(\ln \binom{n}{m} + n^2\eff(m/n)\bigg) = Z_n(m).
\end{align*}
Therefore it suffices to prove that
\[
\lim_{n\to\infty} \mbbP_n\bigg(\bigcup_{m=1}^{\lceil n/2 \rceil} \mbW_n(m)\bigg) = 0.
\]
It follows from Theorem~\ref{thm:concentration}
that for all $\delta > 0$, 
\[
\lim_{n\to\infty} \mbbP_n\bigg(\bigcup_{\delta n \leq m \leq (1-\delta)n} \mbW_n(m)\bigg) = 0.
\]
Hence it suffices to prove that for sufficiently small $\delta > 0$,
\[
\lim_{n\to\infty} \mbbP_n\bigg(\bigcup_{1 \leq m \leq \delta n} \mbW_n(m)\bigg) = 0.
\]
Since $\eff'(0) < 0$, so $\eff$ is decreasing on $(-\delta, \delta)$ if $\delta > 0$ is small enough,
we can reason just as in the proof of 
Lemma~\ref{lem:pinning}~(a) 
to get this conclusion.
\end{proof}

\begin{lem}\label{lem:caseC-window-rigorous}
(a) If $\Eff^{\max} = \{0\}$ and $\eff'(0) = 0$ then, for all $\varepsilon \in (0, \tfrac{1}{2})$,
\begin{equation*}\label{eq:caseC-window}
\begin{aligned}
\lim_{n\to\infty} \mbbP_n \Big(
  \big\{\mc A \in \mb W_n :
    \big\lfloor\tfrac{1 - \varepsilon}{2|c_2|}\ln n \big\rfloor
      \le \big|P^\mcA\big|
      \le \big\lceil \tfrac{1 + \varepsilon}{2|c_2|}\ln n \big\rceil
  \big\}
\Big)
\ = \ 1.
\end{aligned}
\end{equation*}
(b) If  $\Eff^{\max} = \{1\}$ and $\eff'(1) = 0$ then, for all $\varepsilon \in (0, \tfrac{1}{2})$,
\begin{equation*}\label{eq:caseC-window}
\begin{aligned}
\lim_{n\to\infty} \mbbP_n \Big(
  \big\{\mc A \in \mbW_n :
    n - \big\lceil \tfrac{1 + \varepsilon}{2|c_2|}\ln n \big\rceil
      \le \big|P^\mcA\big|
      \le n - \big\lfloor \tfrac{1 - \varepsilon}{2|c_2|}\ln n \big\rfloor
  \big\}
\Big)
\ = \ 1.
\end{aligned}
\end{equation*}
\end{lem}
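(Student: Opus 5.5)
We prove (a); part (b) follows by the symmetry $m \leftrightarrow n-m$, which swaps the roles of $C_{12}$ and $C_{34}$ and of $\eff(\alpha)$ and $\eff(1-\alpha)$. Under the hypotheses of (a) we have $\eff'(0)=c_1=0$. Since $\Eff^{\max}=\{0\}$ is a singleton and $\eff$ is nonconstant, Lemma~\ref{prop:maximiser} forces $c_2<0$. Hence
\[
n^2\big[\eff(m/n)-\eff(0)\big] = c_2 m^2 = -|c_2| m^2 .
\]
By \eqref{ln of Z-n(m)} we get the exact identity
\[
\frac{Z_n(m)}{Z_n(0)} = \binom{n}{m} e^{-|c_2| m^2}.
\]
Set $T_m := \binom{n}{m}e^{-|c_2|m^2}$ and $m_0 := \frac{\ln n}{2|c_2|}$. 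The whole problem then reduces to showing that the mass of the sequence $(T_m)_{0\le m\le n}$ concentrates on the window $[(1-\varepsilon)m_0,(1+\varepsilon)m_0]$, up to floors and ceilings.

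Step 1 (the ratio of consecutive terms). We compute
\[
\frac{T_{m+1}}{T_m} = \frac{n-m}{m+1}\, e^{-|c_2|(2m+1)} .
\]
For $m=O(\ln n)$ its logarithm equals
\[
\ln n - \ln(m+1) - 2|c_2| m + O(1) = 2|c_2|(m_0-m) - \ln(m+1) + O(1).
\]
Thus the ratio is huge when $m\le(1-\varepsilon)m_0$: it is at least $n^{\varepsilon}/O(\ln n)$. It is tiny when $m\ge(1+\varepsilon)m_0$: it is at most $n^{-\varepsilon}$. So $T_m$ is unimodal with its peak near $m_0$. Strictly, the peak is at the solution of $2|c_2|m+\ln m\approx\ln n$, which is $m_0 - O(\ln\ln n)$ and therefore still inside the window.

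Step 2 (the lower tail). Let $m_- := \lfloor (1-\varepsilon)m_0\rfloor$. For $m< m_-$ the ratios up to $m_-$ are all at least $n^{\varepsilon/2}$ for large $n$, so $T_m \le n^{-\varepsilon/2}\,T_{m_-}$. Summing over at most $O(\ln n)$ such $m$ gives
\[
\sum_{m<m_-} T_m \le O(\ln n)\, n^{-\varepsilon/2}\, T_{m_-} = o\Big(\sum_{\text{window}} T_m\Big).
\]
This covers $m=0$ as well.

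Step 3 (the upper tail). Let $m_+ := \lceil (1+\varepsilon)m_0\rceil$. For $m_+\le m\le \delta n$, with $\delta$ small, the ratios are bounded by $n^{-\varepsilon/2}$ as long as $m\le n^{1/2}$, say. Beyond that point the term $e^{-2|c_2|m}$ dominates the factor $n$, since $m\gg\ln n$. In either regime the tail is geometrically dominated by $T_{m_+}\le n^{-\varepsilon/2}T_{m_+-1}$, and $T_{m_+-1}$ is a window term. Finally, the region $m\ge\delta n$ has probability $(n+1)e^{-c_\delta n^2+O(n)}\to0$ by Proposition~\ref{thm:concentration}.

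Combining Steps 2 and 3, the mass outside the window is $o$ of the mass inside it. Dividing by $Z_n=\sum_m Z_n(m)$ then gives probability tending to $1$.

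The main obstacle is the uniform control of the ratio estimate across the range $m_+\le m\le\delta n$. The plan there is to split at $m=\sqrt n$. Below the split we use $\frac{n-m}{m+1}\le n$. Above it we use
\[
\frac{T_{m+1}}{T_m}\le n\,e^{-2|c_2|\sqrt n}\to 0 .
\]
This gives ratio at most $1/2$ throughout, hence a geometric series.
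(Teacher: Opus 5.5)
Your proposal is correct and follows essentially the paper's route. The paper also works with the log-ratio $h_n(m)=\ln\frac{Z_n(m+1)}{Z_n(m)}=\ln\frac{n-m}{m+1}+c_2(2m+1)$. It bounds this below by $\frac{\varepsilon}{2}\ln n$ up to $m_-$ and above by $-\frac{\varepsilon}{2}\ln n$ from $m_+-1$ on, sums the resulting geometric tails, and obtains (b) from the $m\leftrightarrow n-m$ symmetry. The paper gets uniformity over the tails from the strict monotonicity of $h_n$ in $m$. Your ``main obstacle'' in Step 3 is not actually there: since $\frac{n-m}{m+1}\le n$ and $e^{-|c_2|(2m+1)}$ decreases in $m$, the ratio is at most $n\,e^{-|c_2|(2m_++1)}<n^{-\varepsilon}$ for every $m\ge m_+$, so neither the split at $\sqrt n$ nor Proposition~\ref{thm:concentration} is needed.
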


\noindent
The proof of Lemma~\ref{lem:caseC-window-rigorous}
is found in the appendix.

\section{Logical convergence laws}\label{sec:logical convergence laws}

\noindent
In this section we prove Theorem~\ref{convergence laws}.
We will use Theorem~\ref{the colour distribution, exact formulation} which has already been proved.
We will also use the following basic probability theoretic fact, which follows easily from the definition 
of conditional probability:

\begin{fact}\label{dividing a conditional probability}
Suppose that $\mbbP$ is a probability measure on a set $\Omega$.
Let $X, Y \subseteq \Omega$ be measurable and suppose that 
$Y = Y_1 \cup \ldots \cup Y_m$ where $Y_i \cap Y_j = \es$ if $i \neq j$ and each $Y_i$ is measurable.
Let $I \subseteq [0, 1]$ be an interval.
If $\mbbP(X \ | \  Y_i) \in I$ for all $i = 1, \ldots, m$, then $\mbbP(X \ | \ Y) \in I$.
\end{fact}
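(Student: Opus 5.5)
Fact~\ref{dividing a conditional probability} is the statement to prove. The plan is to reduce it to an identity saying that the conditional probability given $Y$ is a convex combination of the conditional probabilities given the $Y_i$. Then we use that an interval is convex.

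First, we handle degenerate pieces. Conditional probabilities are only defined for events of positive measure, so we assume $\mbbP(Y) > 0$. We also read the hypothesis as ranging over those $i$ with $\mbbP(Y_i) > 0$. Any $Y_i$ with $\mbbP(Y_i) = 0$ contributes nothing to the sums below and can be discarded. Let $J := \{i : \mbbP(Y_i) > 0\}$. Then $J \neq \es$, because $\mbbP(Y) = \sum_i \mbbP(Y_i) > 0$.

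Next, we derive the identity. The sets $X \cap Y_i$ are pairwise disjoint and their union is $X \cap Y$. Finite additivity therefore gives
\[
\mbbP(X \cap Y) = \sum_{i \in J} \mbbP(X \cap Y_i) = \sum_{i \in J} \mbbP(Y_i)\,\mbbP(X \ | \ Y_i).
\]
Dividing by $\mbbP(Y) = \sum_{i\in J}\mbbP(Y_i)$ yields
\[
\mbbP(X \ | \ Y) = \sum_{i \in J} \lambda_i\, \mbbP(X \ | \ Y_i),
\qquad \text{where } \lambda_i := \frac{\mbbP(Y_i)}{\mbbP(Y)} \geq 0 \text{ and } \sum_{i\in J}\lambda_i = 1.
\]

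Finally, we conclude from convexity. Each $\mbbP(X \ | \ Y_i)$ with $i \in J$ lies in $I$, and an interval is a convex subset of $\mbbR$. A convex combination of points of $I$ therefore lies in $I$. Concretely, if $I$ has endpoints $s \le t$, each of which may be included or excluded, then $s \le \sum_{i\in J} \lambda_i\, \mbbP(X \ | \ Y_i) \le t$. Strictness at an open endpoint is preserved because every term is strict and some $\lambda_i$ is positive. Hence $\mbbP(X \ | \ Y) \in I$.

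No step is a real obstacle. The only care needed is the bookkeeping of null pieces $Y_i$ and of open endpoints of $I$, which the convexity argument handles directly.
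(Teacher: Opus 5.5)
Your proof is correct and takes essentially the same approach as the paper. The paper gives no written proof and only notes that the fact ``follows easily from the definition of conditional probability''; your convex-combination identity $\mbbP(X \mid Y) = \sum_{i} \frac{\mbbP(Y_i)}{\mbbP(Y)}\,\mbbP(X \mid Y_i)$, together with the convexity of $I$, is exactly that routine argument, and your extra care with null pieces and open endpoints is harmless.
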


\begin{defin}\label{definition of extension axiom}{\rm 
(a) By a {\em literal} we mean an atomic formula or a negation of an atomic formula.\\
(b) By a {\em maximal consistent conjunction of literals} we mean a  conjunction $\theta(x_1, \ldots, x_k)$
of literals which is satisfiable and such that, for all $i, j \in \{1, \ldots, k\}$,
\begin{itemize}
\item either $x_i = x_j$ or $x_i \neq x_j$ is a conjunct of $\theta(x_1, \ldots, x_k)$,
\item either $P(x_i)$ or $\neg P(x_i)$ is a conjunct of $\theta(x_1, \ldots, x_k)$, and
\item either $R(x_i, x_j)$ or $\neg R(x_i, x_j)$ is a conjunct of $\theta(x_1, \ldots, x_k)$.
\end{itemize}
(c) By an {\em extension axiom} we mean a sentence of the form 
(for any $k \in \mbbN$)
\[
 \forall x_1, \ldots, x_k \exists y \big(\theta_1(x_1, \ldots, x_k) \rightarrow \theta_2(x_1, \ldots, x_k, y)\big)
\]
where $\theta_1(x_1, \ldots, x_k)$ and $\theta_2(x_1, \ldots, x_k, y)$ are maximal consistent conjunctions of literals
such that $\theta_1(x_1, \ldots, x_k) \wedge \theta_2(x_1, \ldots, x_k, y)$ is satisfiable,
and, for all distinct $i, j \in \{1, \ldots, k\}$, $x_i \neq x_j$ and $x_i \neq y$ are conjuncts
of $\theta_2$.
We allow $k = 0$ so $\exists y \theta(y)$ is an extension axiom 
if $\theta(y)$ is a maximal consistent conjunction of literals.\\
(d) Let $T_{ext}$ be the set of all extension axioms.
}\end{defin}

\noindent
The following fact follows from the 0-1 law for first-order logic and the uniform probability distribution,
via Fagin's proof \cite{Fag},
and can also be found in \cite[Example~3.2.11, Corollary~4.1.3]{EF}.

\begin{fact}\label{the theory of extension axioms is complete}
(a) $T_{ext}$ has only infinite models, is $\omega$-categorical, and hence complete
(i.e. for every sentence $\psi$, either $T_{ext} \models \psi$ or $T_{ext} \models \neg \psi$).\\
(b) If $\lim_{n\to\infty}\mbbP_n(\psi) = 1$ for every $\psi \in T_{ext}$ then 
$(\mbbP_n : n \in \mbbN^+)$ satisfies a 0-1 law for first-order logic, and
for every sentence $\varphi \in FO(\sigma)$, 
$\lim_{n\to\infty}\mbbP_n(\varphi) = 1$ if and only if $T_{ext} \models \varphi$.
\end{fact}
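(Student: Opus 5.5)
For part (a) I will argue purely model-theoretically, and for part (b) I will reduce to (a) by compactness.

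\emph{Infinite models.} Let $\mcA \models T_{ext}$ and let $a_1, \ldots, a_k$ be distinct elements of $\mcA$. Take $\theta_1(x_1, \ldots, x_k)$ to be the maximal consistent conjunction of literals realised by $(a_1, \ldots, a_k)$. Take $\theta_2$ to extend $\theta_1$ by $x_i \neq y$ for all $i$ together with any choice of the literals $P(y)$, $R(y,y)$, $R(x_i,y)$, $R(y,x_i)$ or their negations. The corresponding extension axiom supplies a new element $b \notin \{a_1, \ldots, a_k\}$. Starting from $k=0$ and iterating, $\mcA$ has no finite bound on its size, so it is infinite.

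\emph{Consistency.} This is needed before completeness means anything. I will either build a countable model as a union of a chain of finite structures, at each step adding witnesses for all finitely many pending extension requests, or quote that each extension axiom has probability tending to 1 under the uniform distribution on $\mbW_n$. The latter follows from a union bound: for fixed $k$ there are at most $n^k$ tuples, and each fails with probability at most $(1-2^{-(2k+2)})^{n-k}$. Compactness then gives a model.

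\emph{$\omega$-categoricity.} I will use a back-and-forth argument between two countable models $\mcA, \mcB$ of $T_{ext}$. Suppose $p : \{a_1, \ldots, a_k\} \to \{b_1, \ldots, b_k\}$ is a finite partial isomorphism and $c \in \mcA$ is a new element. Let $\theta_1$ describe $\bar a$ (equivalently $\bar b$), and let $\theta_2$ describe $(\bar a, c)$, including $P(c)$ or its negation and $R(c,c)$ or its negation. The extension axiom for this pair $(\theta_1, \theta_2)$ holds in $\mcB$ and yields $d$ with $(\bar b, d)$ satisfying $\theta_2$. So $p \cup \{(c,d)\}$ is again a partial isomorphism, and the back step is symmetric. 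Enumerating both models and alternating the steps produces an isomorphism. Since $T_{ext}$ has no finite models and is $\omega$-categorical in a countable signature, the Łoś--Vaught test gives that $T_{ext}$ is complete.

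\emph{Part (b).} Let $\varphi$ be a sentence. By (a) either $T_{ext} \models \varphi$ or $T_{ext} \models \neg\varphi$. In the first case compactness gives finitely many $\psi_1, \ldots, \psi_s \in T_{ext}$ with $\psi_1 \wedge \ldots \wedge \psi_s \models \varphi$. Then
\[
\mbbP_n(\varphi) \ \ge\ 1 - \sum_{i=1}^s \mbbP_n(\neg\psi_i) \ \to\ 1 .
\]
In the second case the same argument gives $\mbbP_n(\neg\varphi) \to 1$, so $\mbbP_n(\varphi) \to 0$. This proves the 0-1 law, and also the characterisation that the limit is 1 exactly when $T_{ext} \models \varphi$.

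The only delicate step is the back-and-forth. The definition of extension axioms has to allow $\theta_2$ to prescribe every literal involving $y$, including $P(y)$ and the loop $R(y,y)$, and the definition does allow this. With that checked, the rest is standard, following \cite{EF}.
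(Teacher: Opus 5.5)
Your proof is correct and is essentially the standard Fagin-style argument that the paper cites from \cite{Fag} and \cite{EF} without writing it out: extension axioms have probability tending to $1$ under the uniform measure, which gives consistency by compactness; there are no finite models; a back-and-forth gives $\omega$-categoricity; the \L{}o\'{s}--Vaught test gives completeness; and compactness plus a union bound gives part~(b). Your part~(b) argument uses nothing about $\mbbP_n$ beyond its being a probability measure on $\mbW_n$, which is exactly what the paper relies on when it later applies the fact to $\widetilde{\mbbP}_n$.
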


\begin{lem}\label{0-1 laws in cases 1 and 2}
Suppose that either $\Eff^{\max} = [0, 1]$, or that
$\Eff^{\max} = \{\alpha^\star\}$ where $\alpha^\star \in (0, 1)$.
Then $(\mbbP_n : n \in \mbbN^+)$ satisfies a 0-1 law for first-order logic.
\end{lem}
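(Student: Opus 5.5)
By Fact~\ref{the theory of extension axioms is complete}~(b), it suffices to show that $\lim_{n\to\infty}\mbbP_n(\psi) = 1$ for every extension axiom $\psi \in T_{ext}$. Fix such a $\psi = \forall x_1,\ldots,x_k \exists y(\theta_1 \to \theta_2)$. In both cases, Theorem~\ref{the colour distribution, exact formulation} (parts~(1) and~(2)) gives some $\gamma \in (0,1)$ and $\varepsilon > 0$ with $[\gamma-\varepsilon,\gamma+\varepsilon] \subseteq (0,1)$ such that the event
\[
\mbY_n := \bigcup_{(\gamma-\varepsilon)n \le m \le (\gamma+\varepsilon)n} \mbW_n(m)
\]
has $\mbbP_n(\mbY_n) \to 1$. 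Here $\gamma = 1/2$ in case (1) and $\gamma = \alpha^\star$ in case (2). So it suffices to bound $\mbbP_n(\neg\psi \mid \mbX_{\bar\alpha})$ uniformly over all colourings $\bar\alpha$ with $|\{k : \alpha_k = 1\}| \in [(\gamma-\varepsilon)n,(\gamma+\varepsilon)n]$. We then combine these bounds using Fact~\ref{dividing a conditional probability}, with $\mbY_n$ partitioned into the sets $\mbX_{\bar\alpha}$.

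Fix such an $\bar\alpha$. By Proposition~\ref{lem:edge-factor}, conditioned on $\mbX_{\bar\alpha}$ all edge variables $R_{(a,b)}$ are independent, and each has probability in $\{p_{11},p_{00},p_{10},p_{01}\}$. Since the weights are finite reals, all of these lie in $(0,1)$, so let $p := \min\{p_{ij}, 1-p_{ij}\} > 0$.

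Now fix distinct $a_1,\ldots,a_k$ satisfying $\theta_1$. The formula $\theta_2$ dictates a colour for $y$, say $P$ or $\neg P$. The number of candidate vertices $b \notin\{a_1,\ldots,a_k\}$ of that colour is at least $\min(\gamma-\varepsilon,1-\gamma-\varepsilon)n - k =: \lambda n - k$ with $\lambda > 0$. For each such $b$, the requirements of $\theta_2$ concern only the edges $R(b,b)$, $R(a_i,b)$ and $R(b,a_i)$. These are at most $2k+1$ edge variables, and they are disjoint for different $b$. Hence, conditioned on $\mbX_{\bar\alpha}$, each candidate $b$ independently witnesses $\theta_2$ with probability at least $p^{2k+1}$. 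The probability that no $b$ works is therefore at most $(1-p^{2k+1})^{\lambda n - k}$.

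A union bound over the at most $n^k$ tuples $(a_1,\ldots,a_k)$ gives
\[
\mbbP_n(\neg\psi \mid \mbX_{\bar\alpha}) \le n^k (1-p^{2k+1})^{\lambda n-k} \to 0,
\]
uniformly in $\bar\alpha$. Fact~\ref{dividing a conditional probability} then yields $\mbbP_n(\neg\psi \mid \mbY_n) \to 0$, and with $\mbbP_n(\mbY_n)\to 1$ this gives $\mbbP_n(\psi)\to 1$.

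The only delicate point is making sure the colour classes are of linear size, so that the witnesses $y$ of either colour exist in abundance. This is exactly what parts (1) and (2) of Theorem~\ref{the colour distribution, exact formulation} provide, since $\gamma$ is strictly inside $(0,1)$. Uniformity over colourings is automatic because the bound depends only on $n$, $k$, $p$ and $\lambda$. The resulting limit theory is the same (namely $T_{ext}$) in both cases.
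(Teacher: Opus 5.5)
Your proof is correct and follows essentially the same route as the paper. Both reduce via Fact~\ref{the theory of extension axioms is complete}(b) to extension axioms and condition on a fixed colouring with linearly many vertices of each colour. Both then use the conditional edge independence of Proposition~\ref{lem:edge-factor} to bound the failure probability by $n^k(1-p')^{\Theta(n)}$, and recombine via Fact~\ref{dividing a conditional probability}. The only cosmetic differences are that you handle both colours of $y$ at once through $\lambda=\min(\gamma-\varepsilon,1-\gamma-\varepsilon)$, and you use the crude lower bound $p^{2k+1}$ in place of the paper's exact witness probability.
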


\begin{proof}
Adopt the assumptions of the lemma.
It follows from parts~(1) and~(2) of
Theorem~\ref{the colour distribution, exact formulation} 
that there is $\varepsilon \in (0, 1/2)$ such that if
\[
\mbX_n^\varepsilon = \big\{\mcA \in \mbW_n : \varepsilon n \leq |P^\mcA| \leq (1 - \varepsilon)n \big\} \ 
\text{ then } \ \lim_{n\to\infty}\mbbP_n\big(\mbX_n^\varepsilon\big) = 1.
\]
This together with Fact~\ref{the theory of extension axioms is complete}~(b)
implies that it is sufficient to show that for every $\psi \in T_{ext}$, 
$\lim_{n\to\infty}\mbbP_n\big(\psi \ \big| \ \mbX_n^\varepsilon\big) = 1$.

So let 
$\psi := \forall x_1, \ldots, x_k \exists y \big( \theta_1(x_1, \ldots, x_k) \rightarrow \theta_2(x_1, \ldots, x_k, y)\big)$
be an extension axiom.
Suppose that $P(y)$ is a conjunct of $\theta_2(x_1, \ldots, x_k, y)$. The case when $\neg P(y)$ is a
conjunct of the same formula is treated in a similar way so we leave it to the reader.

Fix any $B \subseteq [n]$ such that $\varepsilon n \leq |B| \leq (1 - \varepsilon) n$ and let
\[
\mbY_n^B = \big\{\mcA \in \mbX_n^\varepsilon : P^\mcA = B \big\}.
\]
Suppose that $a_1, \ldots, a_k \in [n]$, $\mcA \in \mbY_n^B$, and 
$\mcA \models \theta_1(a_1, \ldots, a_k)$.
Take any $b \in B \setminus \{a_1, \ldots, a_k\}$.
Due to Proposition~\ref{lem:edge-factor},
the probability
that $\mcA \models \theta_2(a_1, \ldots, a_k, b)$
is a constant $p \in (0, 1)$ which is a product of numbers of the form $p_{01}, p_{10}$, $p_{11}$,
$1 - p_{01}$,  $1 - p_{10}$, and $1 - p_{11}$
from Notation~\ref{not:typed}, and $p$ depends only on $\theta_2$.
Using the independence result
in Proposition~\ref{lem:edge-factor}, the probability (conditioned on $\mcA \in \mbY_n^B$) that 
$\mcA \not\models \theta_2(a_1, \ldots, a_k, b)$ for all 
$b \in B \setminus \{a_1, \ldots, a_k\}$ is at most $(1 - p)^{\varepsilon n - k}$.
Since the number of choices of $a_1, \ldots, a_k$ is bounded by $n^k$ it follows that
$\mbbP_n\big(\neg\psi \ \big| \ \mbY_n^B \big) \leq n^k (1 - p)^{\varepsilon n - k}$.
Since we have the same bound for all $B \subseteq [n]$ such that 
$\varepsilon n \leq |B| \leq (1 - \varepsilon) n$
it follows from
Fact~\ref{dividing a conditional probability}
that $\mbbP_n\big(\neg\psi \ \big| \ \mbX_n^\varepsilon\big) \leq n^k (1 - p)^{\varepsilon n - k}$,
so this (conditional) probability tends to 0 as $n\to\infty$.
Hence $\lim_{n\to\infty}\mbbP_n\big(\psi \ \big| \ \mbX_n^\varepsilon\big) = 1$.
\end{proof}

\begin{defin}\label{definition of variants of extension theories}{\rm
Let $T_{ext}^-$ (respectively $T_{ext}^+$) contain $\neg \exists x P(x)$ 
(respectively $\forall x P(x)$) and all extension axioms 
$\forall \bar{x} \exists y \big(\theta_1(\bar{x}) \rightarrow \theta_2(\bar{x}, y)\big)$
such that $\neg P(y)$ (respectively $P(y)$) 
is a conjunct of $\theta_2(\bar{x}, y)$ and $\bar{x}$ denotes a finite sequence
of distinct variables.
}\end{defin}

\begin{lem}\label{the theory of modified extension axioms is complete}
(a) $T_{ext}^-$ and $T_{ext}^+$ have only infinite models, are $\omega$-categorical, and 
are therefore complete.\\
(b) If $\lim_{n\to\infty}\mbbP_n(\psi) = 1$ for every $\psi \in T_{ext}^-$ (respectively $\psi \in T_{ext}^+)$
then $(\mbbP_n : n \in \mbbN^+)$ satisfies a 0-1 law for first-order logic.
\end{lem}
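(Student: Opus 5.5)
By the evident symmetry between $P$ and $\neg P$, I would treat only $T_{ext}^-$ and obtain the statements for $T_{ext}^+$ by exchanging the roles of $P(x)$ and $\neg P(x)$ throughout. The central observation is that every model $\mcA$ of $T_{ext}^-$ has $P^\mcA = \es$. Its relevant structure is therefore a digraph with loops allowed, in the signature $\{R\}$. Moreover, the axioms of $T_{ext}^-$ whose formulas $\theta_1,\theta_2$ are consistent with $\forall x\neg P(x)$ are, after deleting the literals $\neg P(\cdot)$, exactly the extension axioms for the signature $\{R\}$. The axioms in which $\theta_1$ contains some $P(x_i)$ hold vacuously, and the consistency of $\theta_1\wedge\theta_2$ excludes mixed cases with $\theta_2$.

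For part (a), I first prove that every model is infinite. Given any finite set $\{a_1,\ldots,a_k\}$ of distinct elements of a model, apply the axiom with $\theta_1(x_1,\ldots,x_k)$ equal to the complete diagram of $a_1,\ldots,a_k$, all $x_i$ uncoloured, and with $\theta_2$ demanding a new $y \neq x_i$ carrying any fixed $R$-pattern. This axiom produces a further element, so the model cannot be finite. It also shows, taking $k=0$, that $\exists y\,\neg P(y)$ holds.

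Next I prove $\omega$-categoricity by the standard back-and-forth argument. Let $\mcA,\mcB$ be countable models of $T_{ext}^-$ and let $f:\{a_1,\ldots,a_k\}\to\{b_1,\ldots,b_k\}$ be a finite partial isomorphism. For a new element $a$ in $\mcA$, let $\theta_1$ be the diagram of the $b_i$, which equals the diagram of the $a_i$, and let $\theta_2$ be the diagram of $(a_1,\ldots,a_k,a)$. Since $a$ is uncoloured, $\neg P(y)$ is a conjunct of $\theta_2$, so $\forall\bar x\exists y(\theta_1\to\theta_2)$ belongs to $T_{ext}^-$. This axiom gives a matching $b$ in $\mcB$, and the back direction is symmetric. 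A countable back-and-forth chain then yields an isomorphism. Because $\sigma$ is finite, the usual argument also handles the coding needed to enumerate axioms. By Vaught's test (the theory has no finite models, is $\omega$-categorical, and is in a countable language), $T_{ext}^-$ is complete.

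For part (b), let $\varphi$ be a sentence. By completeness, either $T_{ext}^- \models \varphi$ or $T_{ext}^-\models\neg\varphi$; say the former. By compactness there are finitely many $\psi_1,\ldots,\psi_s\in T_{ext}^-$ with $\{\psi_1,\ldots,\psi_s\}\models\varphi$. Then $\mbbP_n(\neg\varphi)\le\sum_{i}\mbbP_n(\neg\psi_i)\to 0$, so $\mbbP_n(\varphi)\to 1$. In the other case $\mbbP_n(\varphi)\to 0$, which gives the 0-1 law.

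The main obstacle is part (a), specifically verifying carefully that the allowed axioms suffice for the back-and-forth. Two points need checking: the requirement in Definition~\ref{definition of extension axiom} that $\theta_2$ contain $x_i\neq y$, and the fact that only axioms with $\neg P(y)$ are included. Both are harmless here because the sentence $\neg\exists x P(x)$ forces every element to be uncoloured. Alternatively, one can reduce the whole argument to Fact~\ref{the theory of extension axioms is complete} applied to the signature $\{R\}$ by relativising.
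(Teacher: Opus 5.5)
Your proposal is correct and follows the same route as the paper. For (a), a back-and-forth argument gives $\omega$-categoricity, using that $\neg\exists x P(x)$ makes every element uncoloured, so every axiom the argument needs has $\neg P(y)$ as a conjunct; the extension axioms exclude finite models, and Vaught's test then gives completeness. For (b), compactness works exactly as in the classical case. You actually write out the details that the paper only asserts as a ``straightforward modification'' of the argument for $T_{ext}$, with a pointer to the parametric-class framework it cites.
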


\begin{proof}
The proof of (a) is a straightforward modification of the (back-and-forth) argument used to prove
the classical statement of Fact~\ref{the theory of extension axioms is complete}.
This is explained to some extent in \cite[Section~4.2]{EF} since the class
of finite $\sigma$-structures $\mcA$ such that $\mcA \models \neg \exists x P(x)$
(respectively such that $\mcA \models \forall x P(x)$) is a {\em parametric class} in the sense of \cite{EF}.
Part~(b) follows from part~(a) in exactly the same way as (b) follows from (a) in 
Fact~\ref{the theory of extension axioms is complete}.
\end{proof}

\begin{lem}\label{0-1 laws in cases 5 and 6}
Suppose that either $\Eff^{\max} = \{0\}$ and $\eff'(0) < 0$, 
or that $\Eff^{\max} = \{1\}$ and $\eff'(1) > 0$.
Then $(\mbbP_n : n \in \mbbN^+)$ satisfies a 0-1 law for first-order logic.
\end{lem}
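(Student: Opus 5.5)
We treat the case $\Eff^{\max} = \{0\}$ with $\eff'(0) < 0$; the other case is symmetric, with $P$ and $\neg P$ interchanged, $T_{ext}^+$ in place of $T_{ext}^-$, and $p_{11}$ in place of $p_{00}$. By Lemma~\ref{lem:pinning}~(a) we have $\lim_{n\to\infty}\mbbP_n(\mbW_n(0)) = 1$. Together with Lemma~\ref{the theory of modified extension axioms is complete}~(b), this shows it suffices to prove $\lim_{n\to\infty}\mbbP_n(\psi \mid \mbW_n(0)) = 1$ for every $\psi \in T_{ext}^-$. Indeed, $|\mbbP_n(\psi) - \mbbP_n(\psi \mid \mbW_n(0))| \le 2\,\mbbP_n(\mbW_n\setminus\mbW_n(0)) \to 0$. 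For the axiom $\neg\exists x P(x)$ the conditional probability is trivially $1$.

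Now let $\psi = \forall x_1,\ldots,x_k \exists y\,(\theta_1(\bar x) \to \theta_2(\bar x, y))$ be an extension axiom in which $\neg P(y)$ is a conjunct of $\theta_2$. Conditioning on $\mbW_n(0)$ is the same as conditioning on $\mbX_{\bar\alpha}$ with $\bar\alpha = (0,\ldots,0)$. By Proposition~\ref{lem:edge-factor}, under this conditioning all the variables $R_{(a,b)}$ are independent and each equals $1$ with probability $p_{00} \in (0,1)$, so the random structure is a random digraph with loops and edge probability $p_{00}$, and every vertex is uncoloured. If $\theta_1$ contains a conjunct $P(x_i)$, then $\theta_1$ is never satisfied in $\mbW_n(0)$ and $\psi$ holds trivially. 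Otherwise every $x_i$ is uncoloured. Since $\theta_1\wedge\theta_2$ is satisfiable, any conjunct $P(\cdot)$ in $\theta_2$ would contradict $\theta_1$ or $\neg P(y)$, so none occurs. Fix distinct $a_1,\ldots,a_k$ with $\theta_1(\bar a)$ holding, and fix $b \notin \{a_1,\ldots,a_k\}$. Then $\theta_2(\bar a, b)$ only prescribes the values of the $2k+1$ edge variables $R_{(a_i,b)}$, $R_{(b,a_i)}$ and $R_{(b,b)}$. Its conditional probability is therefore a constant $p > 0$ that is a product of factors $p_{00}$ and $1-p_{00}$. For distinct $b$ these events involve disjoint sets of edge variables and are therefore independent. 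Hence the probability that no $b$ works is at most $(1-p)^{n-k}$.

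A union bound over at most $n^k$ tuples gives
\[
\mbbP_n(\neg\psi \mid \mbW_n(0)) \le n^k(1-p)^{n-k} \to 0 .
\]
The only point requiring care is the bookkeeping for tuples $\bar a$ with repeated entries, together with the requirement that $\theta_1$ forces the $x_i$ to be distinct. This is already built into Definition~\ref{definition of extension axiom}, and in any case the count of tuples stays at most $n^k$. So no step is a genuine obstacle; the substantive work was done in Lemma~\ref{lem:pinning} and Lemma~\ref{the theory of modified extension axioms is complete}.
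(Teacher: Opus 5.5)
Your proposal is correct and follows the same route as the paper. You use Lemma~\ref{lem:pinning} and the completeness of $T_{ext}^-$ (resp.\ $T_{ext}^+$) from Lemma~\ref{the theory of modified extension axioms is complete} to reduce to showing $\lim_{n\to\infty}\mbbP_n(\psi \mid \mbW_n(0)) = 1$ (resp.\ with $\mbW_n(n)$), and then run the counting argument of Lemma~\ref{0-1 laws in cases 1 and 2} on the resulting random digraph with edge probability $p_{00}$ (resp.\ $p_{11}$), which correctly fills in the details the paper leaves to the reader.
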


\begin{proof}
First suppose that $\Eff^{\max} = \{0\}$ and $\eff'(0) < 0$.
By part~(5) of Theorem~\ref{the colour distribution, exact formulation},
$\lim_{n\to\infty}\mbbP_n\big(\mbW_n(0)\big) = 1$.
This together with 
Lemma~\ref{the theory of modified extension axioms is complete}
implies that it suffices to show that, for every $\psi \in T_{ext}^-$,
$\lim_{n\to\infty}$ $\mbbP_n\big(\psi \ \big| \ \mbW_n(0)\big) = 1$.
By the definition of $\mbW_n(0)$ it follows that $\mcA \models \neg\exists x P(x)$ for every $\mcA \in \mbW_n(0)$.
It remains to show that, for every extension axiom $\psi \in T_{ext}^-$, 
$\lim_{n\to\infty} \mbbP_n\big(\psi \ \big| \ \mbW_n(0)\big) = 1$.
This can be done by minor (and simplifying) modifications of the argument in the
proof of Lemma~\ref{0-1 laws in cases 1 and 2}, so we leave the details to the reader.

Suppose that $\Eff^{\max} = \{1\}$ and $\eff'(1) > 0$. 
By part~(6) of Theorem~\ref{the colour distribution, exact formulation},
$\lim_{n\to\infty}\mbbP_n\big(\mbW_n(n)\big) = 1$.
Hence it suffices to prove that for every $\psi \in T_{ext}^+$, 
$\lim_{n\to\infty} \mbbP_n\big(\psi \ \big| \ \mbW_n(n)\big) = 1$.
Since $\mbW_n(n)$ contains all $\mcA \in \mbW_n$ such that $\mcA \models \forall x P(x)$
we can argue similarly (and in a simpler way) as in the proof of
Lemma~\ref{0-1 laws in cases 1 and 2} and we leave the details to the reader.
\end{proof}

\begin{lem}\label{convergence law in case 7}
If $\Eff^{\max} = \{0, 1\}$ then 
$(\mbbP_n : n \in \mbbN^+)$ satisfies a convergence law, but not 0-1 law, for first-order logic, and for 
every first-order sentence $\varphi$, $\lim_{n\to\infty}\mbbP_n(\varphi) \in \{0, \frac{1}{2}, 1\}$.
\end{lem}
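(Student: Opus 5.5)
The plan is to decompose the probability space according to the two endpoint events and reuse the endpoint 0-1 laws. By part~(7) of Theorem~\ref{the colour distribution, exact formulation} (i.e.\ Lemma~\ref{distribution of P when 0 and 1 are maxpoints}), $\lim_{n\to\infty}\mbbP_n(\mbW_n(0)) = \lim_{n\to\infty}\mbbP_n(\mbW_n(n)) = 1/2$. So for any sentence $\varphi$,
\[
\mbbP_n(\varphi) = \mbbP_n(\varphi \mid \mbW_n(0))\,\mbbP_n(\mbW_n(0)) + \mbbP_n(\varphi \mid \mbW_n(n))\,\mbbP_n(\mbW_n(n)) + o(1).
\]
It therefore suffices to show that $\lim_{n\to\infty}\mbbP_n(\varphi\mid\mbW_n(0))$ and $\lim_{n\to\infty}\mbbP_n(\varphi\mid\mbW_n(n))$ exist and lie in $\{0,1\}$. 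The limit of $\mbbP_n(\varphi)$ is then $\frac12(\epsilon_0+\epsilon_1)$ with $\epsilon_0,\epsilon_1\in\{0,1\}$, which lies in $\{0,\frac12,1\}$.

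For the conditional limits, I would prove that $\lim_{n\to\infty}\mbbP_n(\psi \mid \mbW_n(0)) = 1$ for every $\psi\in T_{ext}^-$, and that $\lim_{n\to\infty}\mbbP_n(\psi \mid \mbW_n(n)) = 1$ for every $\psi\in T_{ext}^+$. This is exactly the argument already used in the proof of Lemma~\ref{0-1 laws in cases 5 and 6}.

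\begin{itemize}
\item On $\mbW_n(0)$ the sentence $\neg\exists x P(x)$ holds trivially. By Proposition~\ref{lem:edge-factor}, conditioned on $\mbW_n(0)$ (a single colouring), edges are independent with probability $p_{00}\in(0,1)$.
\item For an extension axiom with $\neg P(y)$ in $\theta_2$, fix witnesses $a_1,\ldots,a_k$. Each $b\notin\{a_1,\ldots,a_k\}$ independently fails to realise $\theta_2$ with probability $1-p$ for some fixed $p>0$. A union bound then gives failure probability at most $n^k(1-p)^{n-k}\to 0$.
\item The case $\mbW_n(n)$ is symmetric, using $p_{11}$.
\end{itemize}

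By the completeness of $T_{ext}^-$ and $T_{ext}^+$ in Lemma~\ref{the theory of modified extension axioms is complete}~(a), compactness gives, for each $\varphi$, a finite subset of the relevant theory deciding $\varphi$. Hence the conditional probability of $\varphi$ tends to $1$ if the theory proves $\varphi$, and to $0$ otherwise.

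To show that the 0-1 law genuinely fails, take $\varphi := \exists x P(x)$. Since $T_{ext}^-\models\neg\varphi$ and $T_{ext}^+\models\varphi$, the limit of $\mbbP_n(\varphi)$ is exactly $\frac12$. One could equally argue directly: $\varphi$ fails on $\mbW_n(0)$ and holds on $\mbW_n(n)$, so $\mbbP_n(\varphi)\to 1/2$.

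I expect no serious obstacle. The only point needing care is to state cleanly that the events $\mbW_n(0)$ and $\mbW_n(n)$ exhaust the measure asymptotically, so the error term is genuinely $o(1)$. This is immediate from part~(7) of Theorem~\ref{the colour distribution, exact formulation}, since the two probabilities sum to $1$ in the limit.
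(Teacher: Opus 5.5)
Your proposal is correct and follows essentially the same route as the paper. You split $\mathbb{P}_n(\varphi)$ over the two endpoint events, whose probabilities each tend to $\frac12$ so that the middle colourings contribute only $o(1)$. You then get conditional 0-1 limits on $\mathbf{W}_n(0)$ and $\mathbf{W}_n(n)$ from the complete theories $T_{ext}^-$ and $T_{ext}^+$, and you witness the failure of the 0-1 law with $\exists x P(x)$, where the paper uses its negation. You also write out the compactness step and the extension-axiom estimate on the endpoint events, which the paper only refers back to the proof of the cases~(5) and~(6) lemma.
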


\begin{proof}
Suppose that $\Eff^{\max} = \{0, 1\}$.
By Theorem~\ref{the colour distribution, exact formulation},
\[
\lim_{n\to\infty} \mbbP_n\big(\mbW_n(0)\big) = \lim_{n\to\infty} \mbbP_n\big(\mbW_n(n)\big) = 1/2.
\]
It follows that $\lim_{n\to\infty}\mbbP_n\big(\neg\exists x P(x)\big) = 1/2$, so we do not have a 0-1 law.
In the proof of Lemma~\ref{0-1 laws in cases 5 and 6}
we explained that for every $\psi \in T_{ext}^-$ and every $\psi' \in T_{ext}^+$,
\[
\lim_{n\to\infty} \mbbP_n\big(\psi \ \big| \ \mbW_n(0)\big) = 
\lim_{n\to\infty} \mbbP_n\big(\psi' \ \big| \ \mbW_n(n)\big) = 1.
\]
Since both $T_{ext}^-$ and $T_{ext}^+$ are complete it follows that, for every sentence $\psi$,
\[
\lim_{n\to\infty} \mbbP_n\big(\psi \ \big| \ \mbW_n(0)\big) \in \{0, 1\} \ \text{ and } \
\lim_{n\to\infty} \mbbP_n\big(\psi \ \big| \ \mbW_n(n)\big) \in \{0, 1\}.
\]
From the above conclusions it follows that, for every sentence $\psi$,
\begin{align*}
\mbbP_n\big(\psi\big) 
&= \mbbP_n\big(\psi \ \big| \ \mbW_n(0)\big) \cdot \mbbP_n\big(\mbW_n(0)\big) \\
&+
\mbbP_n\Bigg( \psi \ \bigg| \ \bigcup_{m=1}^{n-1} \mbW_n(m)\Bigg) \cdot 
\mbbP_n\Bigg(\bigcup_{m=1}^{n-1} \mbW_n(m)\Bigg) \\
&+ 
 \mbbP_n\big(\psi \ \big| \ \mbW_n(n)\big) \cdot \mbbP_n\big(\mbW_n(n)\big) \\
&= \mbbP_n\big(\psi \ \big| \ \mbW_n(0)\big) \cdot \Big( \tfrac{1}{2} + o(1)\Big) +
\mbbP_n\Bigg( \psi \ \bigg| \ \bigcup_{m=1}^{n-1} \mbW_n(m)\Bigg) \cdot o(1) \\
&+ \mbbP_n\big(\psi \ \big| \ \mbW_n(n)\big) \cdot \Big( \tfrac{1}{2} + o(1)\Big).
\end{align*}
Hence $\lim_{n\to\infty} \mbbP_n(\psi) \in \{0, 1/2, 1\}$.
\end{proof}

\begin{defin}{\rm
For every $r \in \mbbN$, let $T_{ext}^r$ be the set of all extension axioms of the form 
\[
 \forall x_1, \ldots, x_k \exists y \big(\theta_1(x_1, \ldots, x_k) \rightarrow \theta_2(x_1, \ldots, x_k, y)\big)
\]
where $k \leq r$.
}\end{defin}

\noindent
For the definition of the {\em Ehrenfeucht-Fra\"{i}ss\'{e} game}, or {\em EF-game} for short, in $r$ rounds
on two structures $\mcA$ and $\mcB$, say, we refer to e.g. \cite[Section~2.2]{EF}
(where the game is called the Ehrenfeucht game).
Part~(a) of the next fact is found in e.g. \cite[Theorem~2.2.8]{EF}.
Part~(b) is a straightforward, and certainly well-known, consequence of the definition of an
extension axiom and the definition of the EF-game.

\begin{fact}\label{restricted extension axioms and EF games}
Let $r \in \mbbN$.\\
(a) If the duplicator wins (equivalently ``has a winning strategy for'') the EF-game in $r$ rounds on the
structures $\mcA$ and $\mcB$, then $\mcA$ and $\mcB$ satisfy exactly the same first-order sentences
of quantifier-rank at most $r$.\\
(b) If $\mcA, \mcB \models T_{ext}^r$ then the duplicator wins the EF-game in $r$ rounds
on $\mcA$ and $\mcB$.
\end{fact}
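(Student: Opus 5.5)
Part~(a) is the standard Ehrenfeucht--Fra\"{i}ss\'{e} theorem, so I will only prove part~(b). The plan is to give the duplicator an explicit strategy that keeps an invariant. Write $a_1, \ldots, a_i \in A$ and $b_1, \ldots, b_i \in B$ for the elements chosen in the first $i$ rounds. The invariant is that the map $a_j \mapsto b_j$ is a \emph{partial isomorphism}:
\begin{itemize}
\item $a_j = a_l$ if and only if $b_j = b_l$;
\item $P(a_j)$ holds in $\mcA$ if and only if $P(b_j)$ holds in $\mcB$;
\item $R(a_j, a_l)$ holds in $\mcA$ if and only if $R(b_j, b_l)$ holds in $\mcB$, for all $j, l \le i$, including $j = l$ (loops).
\end{itemize}
Since the winning condition after $r$ rounds is exactly that this map is a partial isomorphism, it suffices to show by induction on $i < r$ that the invariant can be maintained through round $i+1$. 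The case $i = 0$ is trivial, since the empty map is a partial isomorphism.

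For the inductive step, suppose the invariant holds after round $i < r$. By symmetry (the hypothesis is symmetric in $\mcA$ and $\mcB$), assume the spoiler picks $c \in A$.
\begin{itemize}
\item If $c = a_j$ for some $j \le i$, the duplicator answers with $b_j$. The invariant is clearly preserved.
\item Otherwise, list the \emph{distinct} elements among $a_1, \ldots, a_i$ as $a_{j_1}, \ldots, a_{j_k}$, so $k \le i \le r-1$. Let $\theta_1(x_1, \ldots, x_k)$ be the maximal consistent conjunction of literals (Definition~\ref{definition of extension axiom}) satisfied by $(a_{j_1}, \ldots, a_{j_k})$ in $\mcA$. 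Let $\theta_2(x_1, \ldots, x_k, y)$ be the one satisfied by $(a_{j_1}, \ldots, a_{j_k}, c)$.
\end{itemize}

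I then check that these formulas give a legitimate extension axiom.
\begin{itemize}
\item $\theta_1 \wedge \theta_2$ is satisfiable, since it is realized in $\mcA$.
\item $\theta_2$ contains the conjuncts $x_l \neq x_{l'}$ for $l \neq l'$ and $x_l \neq y$, because the listed elements are distinct and $c$ differs from all of them.
\end{itemize}
Hence $\psi := \forall x_1, \ldots, x_k \exists y \big(\theta_1 \rightarrow \theta_2\big)$ is an extension axiom with $k \le r$, so $\psi \in T_{ext}^r$ and therefore $\mcB \models \psi$. (When $k = 0$ this is the axiom $\exists y\, \theta_2(y)$.) By the invariant, $\mcB \models \theta_1(b_{j_1}, \ldots, b_{j_k})$, because $\theta_1$ only records equalities, $P$ and $R$ among these elements. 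So there is $d \in B$ with $\mcB \models \theta_2(b_{j_1}, \ldots, b_{j_k}, d)$, and the duplicator answers $d$.

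It remains to check that $d$ preserves the invariant. Since $\theta_2$ is maximal, $d$ is distinct from every $b_j$. Moreover, $d$ has the same $P$-status as $c$, and the same $R$-relations to and from each $b_{j_l}$ (and to itself) as $c$ has to and from $a_{j_l}$ (and itself). The duplicate entries among $a_1, \ldots, a_i$ map consistently, since equal $a$'s correspond to equal $b$'s. Hence $a_1, \ldots, a_i, c \mapsto b_1, \ldots, b_i, d$ is again a partial isomorphism, which completes the induction.

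The only delicate point is bookkeeping. The extension axioms quantify over \emph{distinct} variables and need maximal descriptions, which is why I pass to the distinct elements among the previously chosen ones before choosing $\theta_1$ and $\theta_2$. I also need the number of such elements to be at most $r$ so that $\psi$ lies in $T_{ext}^r$; this holds because at most $r-1$ elements have been chosen before the last round.
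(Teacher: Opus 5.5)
Your proof is correct. It is the standard back-and-forth argument that the paper means when it calls part (b) a straightforward consequence of the definitions of extension axioms and the EF-game; the paper gives no further detail, and like you it takes part (a) from the literature. Your bookkeeping is the right one: passing to the distinct previously chosen elements forces $\theta_2$ to contain all the required inequalities, and $k \le r-1$ puts the resulting axiom in $T_{ext}^r$.
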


\noindent
Recall Definition~\ref{definition of phase function} of $c_2$ and $\eff$.
So if $\Eff^{\max} = \{0\}$ and $\eff'(0) = 0$,
or if $\Eff^{\max} = \{1\}$ and $\eff'(1) = 0$, then $c_2 < 0$.

\begin{lem}\label{0-1 laws in cases 3 and 4}
Suppose that $\Eff^{\max} = \{0\}$ and $\eff'(0) = 0$, or that
$\Eff^{\max} = \{1\}$ and $\eff'(1) = 0$.
For every $r \in \mbbN$, if $|c_2|$ is sufficiently small (depending on $r$),
meaning that $\eff$ varies sufficiently little on $[0, 1]$, 
then $(\mbbP_n : n \in \mbbN^+)$ satisfies a 0-1 law for all first-order sentences of quantifier-rank at most $r$.
\end{lem}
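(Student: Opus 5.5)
By symmetry (swapping $P$ and $\neg P$) it suffices to treat the case $\Eff^{\max}=\{0\}$ and $\eff'(0)=0$; then $c_1=0$ and $c_2<0$. By part~(3) of Theorem~\ref{the colour distribution, exact formulation}, the number of coloured vertices concentrates in the window $m\in[\lfloor \tfrac{1-\varepsilon}{2|c_2|}\ln n\rfloor,\lceil\tfrac{1+\varepsilon}{2|c_2|}\ln n\rceil]$. The key observation is that this window tends to infinity with $n$, and its width scales like $\ln n/|c_2|$.

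Fix $r$. The plan is to use the restricted extension axioms $T_{ext}^r$ together with Fact~\ref{restricted extension axioms and EF games}. If every $\psi\in T_{ext}^r$ has limit probability $1$, then with probability $\to 1$ any two random structures are $r$-round EF-equivalent. It follows that for each sentence $\varphi$ of quantifier rank at most $r$, either $\varphi$ or $\neg\varphi$ holds in \emph{all} models of $T_{ext}^r$. This holds because $T_{ext}^r$ has models (e.g. countable models of $T_{ext}$), and any two models of $T_{ext}^r$ agree on rank-$\le r$ sentences. Hence $\mbbP_n(\varphi)\to 0$ or $1$. So the whole task reduces to showing $\lim_n\mbbP_n(\psi)=1$ for each $\psi\in T_{ext}^r$, i.e.\ with $k\le r$.

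To prove this, I would condition on $P^\mcA=B$ with $|B|=m$ in the window, and then use Fact~\ref{dividing a conditional probability}. Let $\psi=\forall x_1\ldots x_k\exists y(\theta_1\to\theta_2)$. If $\neg P(y)$ is a conjunct of $\theta_2$, the argument of Lemma~\ref{0-1 laws in cases 1 and 2} applies verbatim, using the $n-m-k$ uncoloured witnesses: the failure probability is at most $n^k(1-p)^{n-m-k}\to 0$. If $P(y)$ is a conjunct of $\theta_2$, the candidate witnesses are only the $m-k\ge m-r$ coloured vertices. By Proposition~\ref{lem:edge-factor}, each candidate succeeds independently with probability $p=p(\theta_2)\ge q_r$, where
\[q_r:=\min\{p_{ij},1-p_{ij}\}^{2r+1}\]
is a lower bound uniform over all $\theta_2$ with at most $r$ parameters. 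The union bound over tuples must be done carefully. The tuples range over $\bar a$, but only the coloured parameters' choices and the edges to them matter. Still, a crude bound gives
\[\mbbP_n(\neg\psi\mid P^\mcA=B)\le n^k(1-q_r)^{m-k}\le n^{r}\exp\Big(-q_r\tfrac{1-\varepsilon}{2|c_2|}\ln n+O(1)\Big)= n^{\,r-q_r(1-\varepsilon)/(2|c_2|)}\,O(1).\]
This tends to $0$ as soon as $|c_2|<q_r(1-\varepsilon)/(2r)$. Note that $q_r$ depends only on the $p_{ij}$, i.e.\ on the ratios $e^{w_1}/e^{w_2}$ and so on. The smallness condition is therefore a condition on $c_2$ with the $p_{ij}$ held fixed, which is exactly the meaning of ``$|c_2|$ sufficiently small depending on $r$'' (equivalently, $|C_{12}C_{34}-C_{56}C_{78}|$ small).

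The main obstacle is this last estimate: the union bound over $n^k$ tuples competes with a witness pool of only logarithmic size. One must check that the exponent $-q_r/(2|c_2|)$ beats $k\le r$. One could try to refine the bound by counting tuples according to how many coordinates lie in $B$. However, the uncoloured parameters still contribute $n^{k}$ choices, so the crude bound above is what I expect to use. Finally, I would combine these conditional bounds with part~(3) of Theorem~\ref{the colour distribution, exact formulation}. This gives $\mbbP_n(\psi)\to1$ for all $\psi\in T_{ext}^r$, and hence the 0-1 law for quantifier rank $\le r$.
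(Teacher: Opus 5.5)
Your proposal is correct and follows essentially the same route as the paper. You reduce via Fact~\ref{restricted extension axioms and EF games} to showing that every $\psi\in T_{ext}^r$ has limit probability $1$, and condition on the coloured set lying in the logarithmic window from part~(3) of Theorem~\ref{the colour distribution, exact formulation}. You then handle $\neg P(y)$-witnesses as in Lemma~\ref{0-1 laws in cases 1 and 2}, and for $P(y)$-witnesses beat the $n^r$ union bound with a failure bound of the form $n^{\,r-c/|c_2|}$. The paper instead uses a per-axiom $p$ and the window $[\tfrac{1}{4|c_2|}\ln n,\tfrac{1}{|c_2|}\ln n]$; your uniform constant $q_r$ only makes explicit what the paper leaves implicit, namely that the threshold on $|c_2|$ depends on the $p_{ij}$ as well as on $r$.
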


\begin{proof}
Suppose that $\Eff^{\max} = \{0\}$ and $\eff'(0) = 0$.
Fix any $r \in \mbbN^+$.
Let 
\[
\mbX_n = 
\big\{\mcA \in \mb W_n :
    \tfrac{1}{4|c_2|}\ln n 
      \ \le \  \big|P^\mcA\big|
      \ \le \  \tfrac{1}{|c_2|}\ln n 
  \big\}.
\]
By part (3) of Theorem~\ref{the colour distribution, exact formulation},
$\lim_{n\to\infty} \mbbP_n\big(\mbX_n\big) = 1$.
Thus it suffices to prove that if $|c_2|$ is sufficiently small then, for every sentence $\psi$ of quantifier-rank at most $r$,
$\lim_{n\to\infty} \mbbP_n\big(\psi \ \big| \ \mbX_n\big)$ exists and equals 0 or 1.
By Fact~\ref{restricted extension axioms and EF games}
it suffices to show that if $|c_2|$ is sufficiently small then, for every $\psi \in T_{ext}^r$,
$\lim_{n\to\infty} \mbbP_n\big(\psi \ \big| \ \mbX_n\big) = 1$.

Let
$\psi \in T_{ext}^r$, so
\[
\psi := \forall x_1, \ldots, x_k \exists y \big( \theta(x_1, \ldots, x_k) \rightarrow \theta_2(x_1, \ldots, x_k, y)\big)
\]
is an extension axiom and $k \leq r$.

First suppose that  $P(y)$ is a conjunct of $\theta_2(x_1, \ldots, x_k, y)$.
Fix any $B \subseteq [n]$ such that $\tfrac{1}{4|c_2|}\ln n \leq |B| \leq \tfrac{1}{|c_2|}\ln n$ and let
\[
\mbY_n^B = \big\{\mcA \in \mbX_n : P^\mcA = B \big\}.
\]
Suppose that $a_1, \ldots, a_k \in [n]$, $\mcA \in \mbY_n^B$, and 
$\mcA \models \theta_1(a_1, \ldots, a_k)$.
Take any $b \in B \setminus \{a_1, \ldots, a_k\}$.
By Proposition~\ref{lem:edge-factor},
the probability
that $\mcA \models \theta_2(a_1, \ldots, a_k, b)$
is a constant $p \in (0, 1)$ which is a product of numbers of the form $p_{01}, p_{10}$, $p_{11}$,
$1 - p_{01}$, $1 - p_{10}$, and $1 - p_{11}$
from Notation~\ref{not:typed}, and $p$ depends only on $\theta_2$.
Let $d := \tfrac{1}{4|c_2|}$.
The statement about independence 
in Proposition~\ref{lem:edge-factor} 
implies that the probability (conditioned on $\mcA \in \mbY_n^B$) that 
$\mcA \not\models \theta_2(a_1, \ldots, a_k, b)$ for all 
$b \in B \setminus \{a_1, \ldots, a_k\}$ is at most $(1 - p)^{d \ln n - r}$.
If we set $q := (1 - p)^d$ then
\begin{align*}
(1 - p)^{d \ln n - r} = \frac{q^{\ln n}}{(1 - p)^r} = \frac{e^{(\ln n)(\ln q)}}{(1 - p)^r}
= \frac{n^{\ln q}}{(1 - p)^r}.
\end{align*}
Since the number of choices of $a_1, \ldots, a_k$ is bounded by $n^r$ it follows that
\[
\mbbP_n\big(\neg\psi \ \big| \ \mbY_n^B \big) \leq n^r \frac{n^{\ln q}}{(1 - p)^r} 
= \frac{n^{r + \ln q}}{(1 - p)^r}.
\]
Since we have the same bound for all $B \subseteq [n]$ such that 
$\tfrac{1}{4|c_2|}\ln n \leq |B| \leq \tfrac{1}{|c_2|}\ln n$
it follows from
Fact~\ref{dividing a conditional probability}
that $\mbbP_n\big(\neg\psi \ \big| \ \mbX_n\big) \leq \frac{n^{r + \ln q}}{(1 - p)^r}$.
By definition we have $q \in (0, 1)$ so $\ln q < 0$. If $q$ is sufficiently small then $|\ln q| > r$
and hence $r + \ln q < 0$ from which it follows that 
$\frac{n^{r + \ln q}}{(1 - p)^r} \to 0$ as $n\to\infty$.
We can make $q := (1 - p)^d$ as small as we like by making $|c_2|$ in $d := \tfrac{1}{4|c_2|}$ sufficiently small.
So if $|c_2|$  is sufficiently small,
then $\lim_{n\to\infty} \mbbP_n\big(\psi \ \big| \ \mbX_n\big) = 1$.

Note that with probability tending to 1 as $n\to\infty$,
a random $\mcA \in \mbW_n$ will have at least $n/2$ uncoloured vertices.
It follows that if $\neg P(y)$ is a conjunct of $\theta_2(x_1, \ldots, x_k, y)$ then we can reason
similarly as in the proof of 
Lemma~\ref{0-1 laws in cases 1 and 2}
to show that $\lim_{n\to\infty} \mbbP_n\big(\psi \ \big| \ \mbX_n\big) = 1$,
moreover, the conclusion holds without assuming either $k \leq r$ or that $|c_2|$ is sufficiently small.

Suppose that $\Eff^{\max} = \{1\}$ and $\eff'(1) = 0$.
By part (4) of Theorem~\ref{the colour distribution, exact formulation} we have
\[
\lim_{n\to\infty} \mbbP_n\big( \big\{\mcA \in \mb W_n :
    \tfrac{1}{4|c_2|}\ln n 
      \ \le \  n - \big|P^\mcA\big|
      \ \le \  \tfrac{1}{|c_2|}\ln n 
  \big\}\big) = 1
\]
Therefore we can argue just as in the previous part of this lemma, but with the
roles of `$P(y)$' and `$\neg P(y)$' switched.
\end{proof}

\noindent
The next two lemmas are proved in Appendix~\ref{not 0-1 law in cases 3 and 4}.

\begin{lem}\label{convergence to 1/e in case 3}
There are choices of non-negative weights $w_1,\ldots,w_8$ such that $\Eff^{\max}=\{0\}$, $\eff'(0)=0$, 
and $(\mbbP_n:n\in\mbbN^+)$ does not satisfy a first-order $0$-$1$ law.
\end{lem}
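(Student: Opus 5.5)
The plan is to exhibit explicit weights in the regime $\Eff^{\max}=\{0\}$, $\eff'(0)=0$ and a sentence $\psi$ whose probability stays bounded away from $0$ and $1$. Recall $\eff'(0)=c_1$ and $c_2=\ln\frac{C_{12}C_{34}}{C_{56}C_{78}}$. So it suffices to impose
\[
C_{56}C_{78}=C_{34}^2 \qquad\text{and}\qquad C_{12}<C_{34}.
\]
Then $c_1=0$ and $c_2=\ln(C_{12}/C_{34})<0$, and by Lemma~\ref{prop:maximiser}(a) we get $\Eff^{\max}=\{0\}$. These conditions involve only the sums $C_{ij}$. Within $C_{56}$ we are therefore free to choose the ratio $e^{w_5}:e^{w_6}$, which fixes $p_{10}$. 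Adding a common constant to all weights changes neither the distribution nor the $p_{ij}$, so nonnegativity is harmless. I will couple the parameters by requiring
\[
\ln(1/p_{10}) = 2|c_2|,
\]
which is achieved by first fixing $C_{34}$ and $p_{10}$ and then taking $C_{12}=C_{34}p_{10}^{1/2}$.

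The test sentence is
\[
\psi := \exists y\,\big(\neg P(y)\wedge \forall x\,(P(x)\rightarrow R(x,y))\big),
\]
saying that some uncoloured vertex receives an edge from every coloured vertex. Conditioned on $P^\mcA=B$ with $|B|=m$, Proposition~\ref{lem:edge-factor} makes the events ``$y$ is a witness'' independent over the $n-m$ uncoloured $y$, each of probability $p_{10}^m$. Hence
\[
\mbbP_n(\neg\psi \mid P^\mcA=B)=(1-p_{10}^m)^{n-m}=\exp\big(-\lambda_n(m)+o(1)\big),
\qquad \lambda_n(m):=n\,p_{10}^{m},
\]
uniformly for $m=O(\ln n)$. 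With the coupling above, $\lambda_n(m)=\exp\big(\ln n-2|c_2|m\big)$. So $\lambda_n(m)$ is of order $1$ precisely when $m$ lies within $O(1)$ of $m_0:=\frac{\ln n}{2|c_2|}$, which is the centre of the window in part~(3) of Theorem~\ref{the colour distribution, exact formulation}.

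The main obstacle is that the window in Theorem~\ref{the colour distribution, exact formulation}(3) only locates $|P^\mcA|$ up to a factor $1\pm\varepsilon$. That is far too coarse here, since shifting $m$ by $\varepsilon m_0$ changes $\lambda_n$ by a factor $n^{\pm\varepsilon}$. I therefore need a local limit statement: $|P^\mcA|-m_0$ is tight. From \eqref{ln of Z-n(m)} with $c_1=0$ we have
\[
\ln Z_n(m)-\ln Z_n(0)=\ln\binom nm+c_2m^2=m\ln n-\ln m!+c_2m^2+O(m^2/n).
\]
The ratio of consecutive terms is $\exp\big(\ln n-\ln(m+1)-|c_2|(2m+1)+o(1)\big)$. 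Because the $\ln(m+1)$ term shifts the mode only by $O(\ln\ln n)$, I would recentre at the true maximiser $\tilde m_n$ of this expression. Around $\tilde m_n$ the increments decrease linearly with slope $-2|c_2|$, so $\mbbP_n(\mbW_n(\tilde m_n+j))\le Ce^{-|c_2|j^2/2}$ uniformly in $n$. Also $\mbbP_n(\mbW_n(\tilde m_n))\ge\eta>0$, by a Gaussian-type sum comparison. Summing the tails away from $\tilde m_n$ uses Proposition~\ref{thm:concentration} and the proof of Lemma~\ref{lem:caseC-window-rigorous}.

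To avoid the $\ln\ln n$ drift, I will define $\lambda$ relative to $\tilde m_n$ instead of $m_0$. That is, choose the coupling so that $n\,p_{10}^{\tilde m_n}$ stays bounded above and below. If this fails, I would instead choose $\psi$ with a witness condition whose probability decays exactly like $\binom{n}{m}$-ratios, so that the same $\ln m!$ correction appears. The more robust fallback is a subsequence argument. Since $\lambda_n(\tilde m_n+j)=\lambda_n(\tilde m_n)e^{-2|c_2|j}$ and $j$ ranges over a tight set containing $0$ with mass $\ge\eta$, $\mbbP_n(\psi)$ is an average of $1-e^{-\lambda}$ over a non-degenerate set of $\lambda$'s. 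Along a subsequence where $\lambda_n(\tilde m_n)$ stays in a compact subset of $(0,\infty)$, $\mbbP_n(\psi)$ is therefore bounded away from both $0$ and $1$. This contradicts any 0-1 law, which proves Lemma~\ref{convergence to 1/e in case 3}.
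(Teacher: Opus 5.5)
Your choice of weights, the formula $\mbbP_n(\neg\psi\mid P^\mcA=B)=(1-p_{10}^m)^{n-m}$, and your local analysis of $|P^\mcA|$ around its mode are all fine. The last step fails, however, and your own tightness estimate actually shows that $\psi$ obeys a 0-1 limit. Put $\kappa:=\ln(1/p_{10})/(2|c_2|)$. The mode satisfies $\frac{n-\tilde m_n}{\tilde m_n+1}e^{-|c_2|(2\tilde m_n+1)}\approx 1$, so $\tilde m_n=\frac{\ln n-\ln\ln n}{2|c_2|}+O(1)$. Hence $\lambda_n(\tilde m_n)=n\,p_{10}^{\tilde m_n}=n^{1-\kappa}(\ln n)^{\kappa}e^{O(1)}$. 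For fixed weights this tends to $\infty$ if $\kappa\le 1$ and to $0$ if $\kappa>1$; under your coupling $\kappa=1$ it is $\asymp\tilde m_n\asymp\ln n$. So no choice of weights keeps $n\,p_{10}^{\tilde m_n}$ bounded above and below, and no subsequence keeps $\lambda_n(\tilde m_n)$ in a compact subset of $(0,\infty)$; the subsequence fallback is vacuous. Since $|P^\mcA|-\tilde m_n$ has Gaussian tails, the shift needed to bring $\lambda$ down to order $1$ carries vanishing mass. That shift is about $\frac{\ln\ln n}{2|c_2|}$, i.e.\ all the way back to $m_0$. So under your coupling $\mbbP_n(\psi)\to 1$, and for every other choice of weights $\mbbP_n(\psi)\to 0$ or $1$. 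Your remaining fallback, a witness condition reproducing the $\ln m!$ correction, is not carried out. With fixed constants, nothing of the form $(1-p^m)^{n-m}$ can supply the missing factor of order $1/\ln n$ in the expected number of witnesses.

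The missing idea is a second source of randomness whose fluctuations dwarf the $\ln\ln n$ offset. The paper takes $\psi_0:=\forall x(\neg P(x)\to\exists y(P(y)\wedge R(y,y)\wedge R(x,y)))$ with $p_{11}=\tfrac12$, $p_{01}=1-e^{-2}$, $c_1=0$, $c_2=-\tfrac12$. Given $|P^\mcA|=m$, the exponent is no longer $m$ but the number $Y_m\sim\mathrm{Bin}(m,\tfrac12)$ of looped coloured vertices. So $\mbbP_n(\psi_0)$ is the average of $\mbbE[(1-e^{-2Y_m})^{n-m}]$ against the law $\propto\binom nm e^{-m^2/2}$ of $m$. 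Near the mode $m\approx\ln n-\ln\ln n$, the threshold $\tfrac12\ln(n-m)$ exceeds $\mbbE Y_m=m/2$ by about $\tfrac12\ln\ln n=o(\sqrt m)$. Meanwhile $Y_m$ fluctuates on the scale $\sqrt m\asymp\sqrt{\ln n}$, so the offset is washed out and the limit is exactly $\tfrac12$ (Lemmas~\ref{lem:fair-coin-small-shift} and~\ref{lem:rare-class-half-limit}). Your sentence can be repaired the same way. Take $\psi':=\exists y\,(\neg P(y)\wedge\forall x((P(x)\wedge R(x,x))\to R(x,y)))$ with $p_{11}=\tfrac12$, $p_{10}=e^{-2}$, $C_{56}C_{78}=C_{34}^2$ and $C_{12}=e^{-1/2}C_{34}$. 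Then $\mbbP_n(\neg\psi')$ is exactly the average in Lemma~\ref{lem:rare-class-half-limit}, so $\mbbP_n(\psi')\to\tfrac12$.
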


\begin{lem}\label{convergence to 1/e in case 4}
There are choices of non-negative weights $w_1,\ldots,w_8$ such that $\Eff^{\max}=\{1\}$, $\eff'(1)=0$, 
and $(\mbbP_n:n\in\mbbN^+)$ does not satisfy a first-order $0$-$1$ law.
\end{lem}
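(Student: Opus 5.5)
We derive this from Lemma~\ref{convergence to 1/e in case 3} by the symmetry that interchanges coloured and uncoloured vertices, rather than redoing the technical computation of Appendix~\ref{not 0-1 law in cases 3 and 4}. Take weights $w_1,\ldots,w_8\ge 0$ as provided by Lemma~\ref{convergence to 1/e in case 3}. So $\Eff^{\max}=\{0\}$, $\eff'(0)=0$, and there is a sentence $\varphi$ such that $\mbbP_n(\varphi)$ does not tend to $0$ or to $1$. Define new non-negative weights
\[
(w'_1,w'_2,w'_3,w'_4,w'_5,w'_6,w'_7,w'_8) := (w_3,w_4,w_1,w_2,w_7,w_8,w_5,w_6),
\]
and let $\mbbP'_n$, $\eff_{new}$, $C'_{ij}$ be the corresponding objects.

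The first step is to define the complementation map $\mcA\mapsto\mcA^c$ on $\mbW_n$. It keeps $R^{\mcA^c}=R^\mcA$ and sets $P^{\mcA^c}=[n]\setminus P^\mcA$; it is a bijection of $\mbW_n$. Replacing $P$ by $\neg P$ turns $\varphi_1,\varphi_2,\varphi_3,\varphi_4,\varphi_5,\varphi_6,\varphi_7,\varphi_8$ into $\varphi_3,\varphi_4,\varphi_1,\varphi_2,\varphi_7,\varphi_8,\varphi_5,\varphi_6$, respectively. Hence $|\varphi_i(\mcA^c)|=|\varphi_{\pi(i)}(\mcA)|$ for the involution $\pi=(1\,3)(2\,4)(5\,7)(6\,8)$. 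The new weights were chosen so that $\sum_i w'_i|\varphi_i(\mcA^c)|=\sum_i w_i|\varphi_i(\mcA)|$. Therefore $\mu'_n(\mcA^c)=\mu_n(\mcA)$, $Z'_n=Z_n$, and $\mbbP'_n(\mcA^c)=\mbbP_n(\mcA)$.

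Next, let $\varphi^c$ be obtained from $\varphi$ by replacing each atomic formula $P(t)$ by $\neg P(t)$. A routine induction on formulas shows that $\mcA\models\varphi$ if and only if $\mcA^c\models\varphi^c$. Since complementation is a bijection, $\mbbP'_n(\varphi^c)=\mbbP_n(\varphi)$ for all $n$. So $(\mbbP'_n)$ also fails the $0$-$1$ law, witnessed by the first-order sentence $\varphi^c$.

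Finally, we check which case the new weights fall into. We have $C'_{12}=C_{34}$, $C'_{34}=C_{12}$, and $C'_{56}C'_{78}=C_{78}C_{56}$. Substituting into \eqref{eq:def-psi} gives $\eff_{new}(\alpha)=\eff(1-\alpha)$. Hence $\Eff^{\max}_{new}=\{1-\beta:\beta\in\Eff^{\max}\}=\{1\}$ and $\eff_{new}'(1)=-\eff'(0)=0$, which is exactly the hypothesis of case~(4). This proves the lemma.

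There is no real obstacle here beyond bookkeeping. The one point needing care is checking that the permutation of indices matches the swap $P\leftrightarrow\neg P$ in every $\varphi_i$, including the mixed pairs $\varphi_5\leftrightarrow\varphi_7$ and $\varphi_6\leftrightarrow\varphi_8$. All the analytic difficulty is already contained in Lemma~\ref{convergence to 1/e in case 3}.
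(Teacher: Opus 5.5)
Your proposal is correct and follows the paper's approach. The paper proves this lemma in one line (``the same construction with coloured and uncoloured vertices interchanged''), and your bijection $\mcA\mapsto\mcA^c$, the index involution $\pi=(1\,3)(2\,4)(5\,7)(6\,8)$ on the weights, the transfer $\mbbP'_n(\varphi^c)=\mbbP_n(\varphi)$ and the identity $\eff_{new}(\alpha)=\eff(1-\alpha)$ are exactly the bookkeeping that line leaves implicit; your version also has the minor advantage of using Lemma~\ref{convergence to 1/e in case 3} as a black box, so it works for any witnessing choice of weights, not just the specific one constructed there.
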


\section{The natural \texorpdfstring{$1/n$}{1/n}-scaling of the weights}\label{sec:one-over-n-scaling}

\noindent
In this section we prove
Theorems~\ref{the colour proportion in scaled case}
and~\ref{0-1 law in scaled case}.
The proofs will be completed by 
Corollaries~\ref{corollary for colour concentration}
and~\ref{cor:logical-0-1 law in scaled case}
below.
We keep the formulas $\varphi_1(x,y),\ldots,\varphi_8(x,y)$ and the fixed non-negative weights
$w_1,\ldots,w_8$ from the coloured-digraph setup. The set of structures is still $\mbW_n$ and $\mbW_n(m)$ is defined as in 
Notation~\ref{notation for Z-n}. 
The
scaled model is obtained by replacing, at domain size $n$, each weight $w_i$ by $w_i/n$.
Equivalently, for $\mcA\in\mbW_n$, define
\[
\widetilde{\mu}_n(\mcA)
:=
\exp\Big(\frac1n\sum_{i=1}^8 w_i\,|\varphi_i(\mcA)|\Big).
\]
For $\mbX\subseteq\mbW_n$, set
\[
\widetilde{\mu}_n(\mbX):=\sum_{\mcA\in\mbX}\widetilde{\mu}_n(\mcA).
\]
For $0\le m\le n$, let
\[
\widetilde{Z}_n(m):=\widetilde{\mu}_n\big(\mbW_n(m)\big),
\qquad
\widetilde{Z}_n:=\widetilde{\mu}_n(\mbW_n)=\sum_{m=0}^n\widetilde{Z}_n(m),
\]
and define
\[
\widetilde{\mbbP}_n(\mbX):=\frac{\widetilde{\mu}_n(\mbX)}{\widetilde{Z}_n}
\qquad(\mbX\subseteq\mbW_n).
\]
If $\widetilde{\mu}_n(\mbY)>0$, then
\[
\widetilde{\mbbP}_n(\mbX \ \big| \ \mbY)
:=
\frac{\widetilde{\mu}_n(\mbX\cap\mbY)}{\widetilde{\mu}_n(\mbY)}.
\]
If $\varphi(\bar{x})$ is a $\sigma$-formula and $\bar{a}\in[n]^{|\bar{x}|}$, then
\[
\widetilde{\mbbP}_n(\varphi(\bar{a}))
:=
\widetilde{\mbbP}_n\big(\{\mcA\in\mbW_n:\mcA\models\varphi(\bar{a})\}\big).
\]
 In particular, if $\varphi$ is a $\sigma$-sentence, then
\[
\widetilde{\mbbP}_n(\varphi)
:=
\widetilde{\mbbP}_n\big(\{\mcA\in\mbW_n:\mcA\models\varphi\}\big).
\]

\medskip

\noindent
Analogously as in Notation~\ref{notation C-ij}, define
\[
\begin{array}{ll}
\widetilde C_{12}(n):=e^{w_1/n}+e^{w_2/n},
&\widetilde C_{34}(n):=e^{w_3/n}+e^{w_4/n},\\[2mm]
\widetilde C_{56}(n):=e^{w_5/n}+e^{w_6/n},
&\widetilde C_{78}(n):=e^{w_7/n}+e^{w_8/n}.
\end{array}
\]
Let
\[
\begin{aligned}
S_{12}&:=w_1+w_2,
&
S_{34}&:=w_3+w_4,\\
S_{56}&:=w_5+w_6,
&
S_{78}&:=w_7+w_8,
\end{aligned}
\]
and
\begin{equation}\label{delta}
S_\times:=S_{56}+S_{78}=w_5+w_6+w_7+w_8,
\qquad
\Delta:=S_{12}+S_{34}-S_\times.
\end{equation}
For $\alpha\in[0,1]$, define
\begin{equation}\label{eq:scaled-energy-function}
E(\alpha)
:=
\frac12\Big(
\alpha^2S_{12}
+(1-\alpha)^2S_{34}
+\alpha(1-\alpha)S_\times
\Big),
\end{equation}
\begin{equation}\label{eq:scaled-bounded-correction}
\begin{aligned}
Q(\alpha)
:= \frac18\Big(&
\alpha^2(w_1-w_2)^2 +(1-\alpha)^2(w_3-w_4)^2 \\
&+ \alpha(1-\alpha)\big((w_5-w_6)^2+(w_7-w_8)^2\big)
\Big),
\end{aligned}
\end{equation}
Recall from Definition~\ref{not:H} that the entropy function is
\[
H(\alpha)=
\begin{cases}
-\alpha\ln\alpha-(1-\alpha)\ln(1-\alpha), & \text{if }0<\alpha<1,\\[1mm]
0, & \text{if }\alpha=0\text{ or }\alpha=1.
\end{cases}
\]
This is the same entropy function as in the unscaled part of the paper. The reason is that
the entropy term comes only from choosing the coloured set. More precisely, if
$|P^\mcA|=m$, then there are exactly $\binom{n}{m}$ possible choices for $P^\mcA$,
both in the unscaled model and in the scaled model. Scaling the weights does not change the binomial factor
$\binom{n}{m}$. Hence the Stirling estimate
\[
\ln\binom{n}{m}=nH(m/n)+O(\ln n)
\]
uses exactly the same function $H$ as before.

We define the scaled phase function by
\begin{equation}\label{eq:scaled-phase-function}
\widetilde{\eff}(\alpha):=H(\alpha)+E(\alpha).
\end{equation}
The term $H(\alpha)$ is the colour-set entropy contribution, and the term $E(\alpha)$
is the leading contribution of the scaled ordered-pair weights. 

\begin{lem}\label{lem:scaled-two-term-log-expansion}
Let $u,v\in\mbbR$ be fixed. Then, as $n\to\infty$,
\begin{equation}\label{eq:scaled-log-two-term}
\ln\bigl(e^{u/n}+e^{v/n}\bigr)
=
\ln 2+\frac{u+v}{2n}+\frac{(u-v)^2}{8n^2}+O(n^{-4}).
\end{equation}
More generally, for every fixed $s>0$,
\begin{equation}\label{eq:scaled-log-two-term-s}
\ln\bigl(e^{u/n^s}+e^{v/n^s}\bigr)
=
\ln 2+\frac{u+v}{2n^s}+\frac{(u-v)^2}{8n^{2s}}+O(n^{-4s}).
\end{equation}

\end{lem}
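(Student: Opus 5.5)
The natural approach is to factor out the symmetric part and reduce everything to the Taylor expansion of $\ln\cosh$. Put $h:=1/n^s$, $a:=(u+v)/2$ and $b:=(u-v)/2$. Then
\[
e^{uh}+e^{vh} \;=\; e^{ah}\bigl(e^{bh}+e^{-bh}\bigr) \;=\; 2e^{ah}\cosh(bh),
\]
so
\[
\ln\bigl(e^{uh}+e^{vh}\bigr) \;=\; \ln 2 + ah + \ln\cosh(bh).
\]
The first two terms already give $\ln 2+\frac{u+v}{2n^s}$ exactly. The whole statement therefore reduces to an expansion of $\ln\cosh$ near $0$.

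The function $g(t):=\ln\cosh t$ is even and real-analytic near $0$, with $g(0)=0$ and $g''(0)=1$. Hence $g(t)=\tfrac{t^2}{2}+O(t^4)$ as $t\to0$. Concretely, $\cosh t=1+\tfrac{t^2}{2}+O(t^4)$ and $\ln(1+x)=x+O(x^2)$, which gives
\[
\ln\cosh t=\tfrac{t^2}{2}+O(t^4).
\]
This holds for, say, $|t|\le 1$, with an absolute constant in the $O(\cdot)$. The fact that $g$ is even is what kills the cubic term and yields the remainder $O(t^4)$ rather than $O(t^3)$.

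Next substitute $t=bh=\frac{u-v}{2n^s}$. Since $u,v$ are fixed, $|t|\le1$ for all large $n$, and
\[
\frac{t^2}{2}=\frac{(u-v)^2}{8n^{2s}}, \qquad O(t^4)=O\big((u-v)^4n^{-4s}\big)=O(n^{-4s}),
\]
where the implied constant depends only on $u$ and $v$. This proves \eqref{eq:scaled-log-two-term-s}. Taking $s=1$ gives \eqref{eq:scaled-log-two-term}.

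There is no real obstacle here. The only point needing care is to justify that the error is $O(t^4)$ and not merely $O(t^3)$, which comes from the evenness of $\ln\cosh$. A clean way to check this is Taylor's theorem with Lagrange remainder to fourth order: the odd derivatives of $g$ vanish at $0$, and $g^{(4)}$ is bounded on $[-1,1]$.
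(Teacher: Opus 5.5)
Your proof is correct and follows essentially the same route as the paper. Both write $e^{u/n^s}+e^{v/n^s}=2\exp\bigl(\frac{u+v}{2n^s}\bigr)\cosh\bigl(\frac{u-v}{2n^s}\bigr)$ and then obtain $\ln\cosh t=\frac{t^2}{2}+O(t^4)$ from $\cosh t=1+\frac{t^2}{2}+O(t^4)$ together with $\ln(1+x)=x+O(x^2)$; your remark about evenness and the fourth-order Lagrange remainder is only an extra justification of the $O(t^4)$ error, which the paper takes directly from the Taylor expansion of $\cosh$.
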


\begin{proof}
It is enough to prove the more general estimate~\eqref{eq:scaled-log-two-term-s}. The special
case~\eqref{eq:scaled-log-two-term} is obtained by taking $s=1$.

Fix $s>0$. We have
\[
\frac{u}{n^s}
=
\frac{u+v}{2n^s}+\frac{u-v}{2n^s},
\qquad
\frac{v}{n^s}
=
\frac{u+v}{2n^s}-\frac{u-v}{2n^s},
\]
thus
\[
e^{\frac{u}{n^s}}+e^{\frac{v}{n^s}}=\exp (\frac{u+v}{2n^s}+\frac{u-v}{2n^s})+\exp (\frac{u+v}{2n^s}-\frac{u-v}{2n^s}).
\]
Therefore
\[
\begin{aligned}
e^{u/n^s}+e^{v/n^s}
&=
\exp\left(\frac{u+v}{2n^s}\right)
\left[
\exp\left(\frac{u-v}{2n^s}\right)
+
\exp\left(-\frac{u-v}{2n^s}\right)
\right].
\end{aligned}
\]
Using
\[
e^t+e^{-t}=2\cosh t,
\]
with
\[
t=\frac{u-v}{2n^s},
\]
we get 
\[
e^{u/n^s}+e^{v/n^s}
=
2\exp\left(\frac{u+v}{2n^s}\right)
\cosh\left(\frac{u-v}{2n^s}\right).
\]
Taking logarithms gives
\begin{equation}\label{eq:log-form}
\ln\bigl(e^{u/n^s}+e^{v/n^s}\bigr)
=
\ln 2+\frac{u+v}{2n^s}
+
\ln\cosh\left(\frac{u-v}{2n^s}\right).
\end{equation}

It remains to expand the last term. By the Taylor expansion of $\cosh t$ at $t=0$, we have
\[
\cosh t
=
1+\frac{t^2}{2}+O(t^4)
\qquad \text{as } t\to 0.
\]
Define
\[
u(t):=\frac{t^2}{2}+O(t^4).
\]
Then $u(t)\to 0$ as $t\to0$, and $u(t)^2=O(t^4)$. By the Taylor expansion of $\ln(1+u)$ at $u=0$, we have
\[
\ln(1+u)
=
u+O(u^2)
\qquad \text{as } u\to0.
\]
Applying this with $u=u(t)$, we obtain
\[
\ln\cosh t
=
\ln(1+u(t))
=
u(t)+O(u(t)^2)
=
\frac{t^2}{2}+O(t^4)
\qquad \text{as } t\to0.
\]

Now substitute
\[
t=\frac{u-v}{2n^s}.
\]
Then
\[
\frac{t^2}{2}
=
\frac12\left(\frac{u-v}{2n^s}\right)^2
=
\frac{(u-v)^2}{8n^{2s}},
\]
and, since $u$ and $v$ are fixed,
\[
O(t^4)
=
O\left(\left(\frac{u-v}{2n^s}\right)^4\right)
=
O(n^{-4s}).
\]
Therefore
\[
\ln\cosh\left(\frac{u-v}{2n^s}\right)
=
\frac{(u-v)^2}{8n^{2s}}
+
O(n^{-4s}).
\]
Substituting this back into ~\eqref{eq:log-form}, we obtain
\[
\ln\bigl(e^{u/n^s}+e^{v/n^s}\bigr)
=
\ln 2+\frac{u+v}{2n^s}
+\frac{(u-v)^2}{8n^{2s}}
+O(n^{-4s}).
\]
This proves~\eqref{eq:scaled-log-two-term-s}. 
\end{proof}

\medskip

\noindent
We now explain why the power $1/n$ is the natural scaling when the leading
ordered-pair contribution is nonconstant.

First recall the unscaled calculation from Section~\ref{sec:asymptotics}. In the unscaled model,
Equation~\eqref{ln of Z-n(m)} says that, for $0\le m\le n$,
\[
\ln Z_n(m)
=
\ln\binom{n}{m}
+
n^2\eff(m/n).
\]
 If we write $\alpha=m/n$, then the
ordered-pair part is
\[
n^2\eff(\alpha)
=
n^2\left[
\alpha^2\ln C_{12}
+
(1-\alpha)^2\ln C_{34}
+
\alpha(1-\alpha)\ln(C_{56}C_{78})
\right].
\]
This term has order $n^2$. Indeed, when $|P^\mcA|=m$, the ordered pairs split as follows:
\[
m^2=n^2\alpha^2,
\qquad
(n-m)^2=n^2(1-\alpha)^2,
\qquad
m(n-m)=n^2\alpha(1-\alpha).
\]
By contrast, the binomial factor contributes
\[
\ln\binom{n}{m}
=
nH(\alpha)+O(\ln n).
\]
This is a contribution to $\ln Z_n(m)$ of order $n$. Thus, in the
unscaled model, the ordered-pair weight contribution dominates the colour-set entropy whenever
$\eff$ is nonconstant.

Now consider a general scaling $w_i/n^s$, where $s>0$ is fixed. Temporarily write
$Z_n^{(s)}(m)$ for the analogue of $Z_n(m)$ under this scaling, and set
\[
C_{ij}^{(s)}(n):=e^{w_i/n^s}+e^{w_j/n^s}.
\]
The same counting as in the formula for $Z_n(m)$ gives
\[
Z_n^{(s)}(m)
=
\binom{n}{m}
C_{12}^{(s)}(n)^{m^2}
C_{34}^{(s)}(n)^{(n-m)^2}
C_{56}^{(s)}(n)^{m(n-m)}
C_{78}^{(s)}(n)^{m(n-m)}.
\]
Taking logarithms gives
\[
\begin{aligned}
\ln Z_n^{(s)}(m)
&=
\ln\binom{n}{m}
+
m^2\ln C_{12}^{(s)}(n)
+
(n-m)^2\ln C_{34}^{(s)}(n) \\
&\quad
+
m(n-m)\ln C_{56}^{(s)}(n)
+
m(n-m)\ln C_{78}^{(s)}(n).
\end{aligned}
\]

By Lemma~\ref{lem:scaled-two-term-log-expansion},
\[
\begin{aligned}
\ln C_{12}^{(s)}(n)
&=
\ln2+\frac{S_{12}}{2n^s}
+\frac{(w_1-w_2)^2}{8n^{2s}}
+O(n^{-4s}),\\
\ln C_{34}^{(s)}(n)
&=
\ln2+\frac{S_{34}}{2n^s}
+\frac{(w_3-w_4)^2}{8n^{2s}}
+O(n^{-4s}),\\
\ln C_{56}^{(s)}(n)
&=
\ln2+\frac{S_{56}}{2n^s}
+\frac{(w_5-w_6)^2}{8n^{2s}}
+O(n^{-4s}),\\
\ln C_{78}^{(s)}(n)
&=
\ln2+\frac{S_{78}}{2n^s}
+\frac{(w_7-w_8)^2}{8n^{2s}}
+O(n^{-4s}).
\end{aligned}
\]
Put again $\alpha=m/n$. The order $n^2$ terms are
\[
\begin{aligned}
& m^2\ln2+(n-m)^2\ln2+m(n-m)\ln2+m(n-m)\ln2 \\
&\qquad =
\bigl(m^2+(n-m)^2+2m(n-m)\bigr)\ln2
=
n^2\ln2.
\end{aligned}
\]
This term is independent of $m$. It is therefore present in $\ln Z_n^{(s)}(m)$, but it
does not affect comparisons between different colour counts.

The next terms are of order $n^{2-s}$. They are
\[
\begin{aligned}
& m^2\frac{S_{12}}{2n^s}
+
(n-m)^2\frac{S_{34}}{2n^s}
+
m(n-m)\frac{S_{56}}{2n^s}
+
m(n-m)\frac{S_{78}}{2n^s} \\
&\qquad =
n^{2-s}\frac12
\left[
\alpha^2S_{12}
+
(1-\alpha)^2S_{34}
+
\alpha(1-\alpha)(S_{56}+S_{78})
\right] \\
&\qquad =
n^{2-s}E(\alpha).
\end{aligned}
\]
The terms of order $n^{2-2s}$ are
\[
\begin{aligned}
& m^2\frac{(w_1-w_2)^2}{8n^{2s}}
+
(n-m)^2\frac{(w_3-w_4)^2}{8n^{2s}} \\
&\quad
+
m(n-m)\frac{(w_5-w_6)^2}{8n^{2s}}
+
m(n-m)\frac{(w_7-w_8)^2}{8n^{2s}} \\
&\qquad =
n^{2-2s}Q(\alpha).
\end{aligned}
\]
The remaining errors are bounded by a constant times
\[
n^2\cdot n^{-4s}=n^{2-4s}.
\]
 Therefore
\[
\ln Z_n^{(s)}(m)
=
n^2\ln2
+
\ln\binom{n}{m}
+
n^{2-s}E(\alpha)
+
n^{2-2s}Q(\alpha)
+
O(n^{2-4s}).
\]
Using the same binomial estimate as before,
\[
\ln\binom{n}{m}=nH(\alpha)+O(\ln n),
\]
we obtain
\[
\ln Z_n^{(s)}(m)
=
n^2\ln2
+
nH(\alpha)
+
n^{2-s}E(\alpha)
+
n^{2-2s}Q(\alpha)
+
O(\ln n)
+
O(n^{2-4s}).
\]

Now assume that $E$ is nonconstant. Then the first nonconstant contribution coming from
the ordered-pair weights has size $n^{2-s}$, while the colour-set entropy contribution
has size $n$. Hence:
\[
\begin{array}{|c|c|}
\hline 
s<1
&
n^{2-s}E(\alpha)\text{ is larger than the entropy term }nH(\alpha).
\\[1mm]
s=1
&
n^{2-s}E(\alpha)=nE(\alpha)\text{ has the same order as }nH(\alpha).
\\[1mm]
s>1
&
n^{2-s}E(\alpha)\text{ is smaller than the entropy term }nH(\alpha). \\
\hline
\end{array}
\]
Thus, when $E$ is nonconstant, $s=1$ is the unique positive scaling exponent for which
the leading ordered-pair weight contribution and the colour-set entropy contribution compete
on the same scale.

In particular, for the $1/n$-scaled model, we have
\[
\ln Z_n^{(1)}(m)
=
n^2\ln2
+
nH(\alpha)
+
nE(\alpha)
+
Q(\alpha)
+
O(\ln n).
\]
Equivalently,
\[
\ln Z_n^{(1)}(m)
=
n^2\ln2
+
n\bigl(H(\alpha)+E(\alpha)\bigr)
+
O(\ln n).
\]
This is why the scaled phase function is
\[
\widetilde{\eff}(\alpha)=H(\alpha)+E(\alpha).
\]

\begin{rem}\label{rem:why-1-over-n-is-the-right-power}
{\rm
In the argument above we assume that $E$ is not constant. This assumption is important. If
$E$ is constant, then the term
\[
n^{2-s}E(\alpha)
\]
has the same value for every colour proportion $\alpha$. Hence, although this term may be
large, it does not help compare different colour counts. In that case, the first term coming
from the ordered-pair weights which can distinguish different values of $\alpha$ may occur
later in the expansion, starting with the term
\[
n^{2-2s}Q(\alpha),
\]
where $Q$ is defined in~\eqref{eq:scaled-bounded-correction}.

To see when $E(\alpha)$ is constant, recall that
\[
E(\alpha)
=
\frac12\Big(
\alpha^2S_{12}
+
(1-\alpha)^2S_{34}
+
\alpha(1-\alpha)S_\times
\Big),
\]
where
\[
S_\times=S_{56}+S_{78}.
\]
We expand the two products involving $1-\alpha$:
\[
(1-\alpha)^2=1-2\alpha+\alpha^2,
\qquad
\alpha(1-\alpha)=\alpha-\alpha^2.
\]
Substituting these into the formula for $E(\alpha)$, we get
\[
\begin{aligned}
E(\alpha)
&=
\frac12\Big(
\alpha^2S_{12}
+
(1-2\alpha+\alpha^2)S_{34}
+
(\alpha-\alpha^2)S_\times
\Big)\\
&=
\frac12\Big(
S_{34}
+
(-2S_{34}+S_\times)\alpha
+
(S_{12}+S_{34}-S_\times)\alpha^2
\Big).
\end{aligned}
\]
Thus $E(\alpha)$ is constant when the
coefficients of $\alpha$ and $\alpha^2$ are both zero. Therefore $E$ is constant only
when
\[
-2S_{34}+S_\times=0
\qquad\text{and}\qquad
S_{12}+S_{34}-S_\times=0.
\]
Equivalently,
\[
S_\times=2S_{34}
\qquad\text{and}\qquad
S_{12}+S_{34}=S_\times,
\]
which means
\[
S_{12}+S_{34}=2S_{34},
\]
and hence
\[
S_{12}=S_{34}.
\]
So the constant case is when
\[
S_{12}=S_{34}=\frac{S_\times}{2}.
\]

}
\end{rem}

\begin{prop}\label{prop:scaled-partition-expansion}
For every $n\in\mbbN^+$ and every $m\in\{0,\ldots,n\}$,
\begin{equation}\label{eq:scaled-Znm-exact}
\widetilde{Z}_n(m)
=
\binom{n}{m}
\widetilde C_{12}(n)^{m^2}
\widetilde C_{34}(n)^{(n-m)^2}
\widetilde C_{56}(n)^{m(n-m)}
\widetilde C_{78}(n)^{m(n-m)}.
\end{equation}
If $\alpha:=m/n$, then, as $n\to\infty$, for $0\le m\le n$,
\begin{equation}\label{eq:scaled-logZ-full}
\ln \widetilde{Z}_n(m)
=
n^2\ln 2+\ln\binom{n}{m}+nE(\alpha)+Q(\alpha)+O(n^{-2}).
\end{equation}
Consequently, as $n\to\infty$, for $0\le m\le n$,
\begin{equation}\label{eq:scaled-logZ-leading}
\ln \widetilde{Z}_n(m)
=
n^2\ln 2+n\widetilde{\eff}(m/n)+O(\ln n).
\end{equation}

\end{prop}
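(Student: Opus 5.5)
The proof has three steps: the exact formula, a uniform expansion of each logarithm, and the entropy estimate.

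\textbf{Exact formula.} First we establish \eqref{eq:scaled-Znm-exact} by repeating the counting argument that gave \eqref{eq:Znm-exact}, with $w_i$ replaced by $w_i/n$ throughout. Fix $M\subseteq[n]$ with $|M|=m$. The argument of Proposition~\ref{lem:edge-factor} shows that $\widetilde{\mu}_n$ factorises over ordered pairs, since $\frac1n\sum_i w_i|\varphi_i(\mcA)|=\sum_{(a,b)}w_{\iota(a,b,R^\mcA(a,b))}/n$. Hence
\[
\widetilde{\mu}_n\big(\{\mcA:P^\mcA=M\}\big)=\prod_{(a,b)\in[n]^2}\big(e^{w_{\iota(a,b,0)}/n}+e^{w_{\iota(a,b,1)}/n}\big).
\]
There are $m^2$ coloured--coloured pairs, $(n-m)^2$ uncoloured--uncoloured pairs, and $m(n-m)$ pairs of each mixed type. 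So this product equals $\widetilde C_{12}(n)^{m^2}\widetilde C_{34}(n)^{(n-m)^2}\widetilde C_{56}(n)^{m(n-m)}\widetilde C_{78}(n)^{m(n-m)}$. Summing over the $\binom nm$ choices of $M$ gives \eqref{eq:scaled-Znm-exact}.

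\textbf{Expansion of the logarithm.} Next we take logarithms and apply Lemma~\ref{lem:scaled-two-term-log-expansion} with $s=1$ to each of $\ln\widetilde C_{12}(n),\dots,\ln\widetilde C_{78}(n)$, using the pairs $(u,v)=(w_1,w_2),\dots,(w_7,w_8)$. Each expansion has the form $\ln2+\frac{S_{ij}}{2n}+\frac{(w_i-w_j)^2}{8n^2}+O(n^{-4})$. The implied constants depend only on the four fixed pairs, so they are uniform in $m$.

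We multiply these expansions by the exponents $m^2,(n-m)^2,m(n-m),m(n-m)$ and regroup, exactly as in the computation preceding Remark~\ref{rem:why-1-over-n-is-the-right-power}:
\begin{itemize}
\item the $\ln 2$ terms sum to $n^2\ln2$, since $m^2+(n-m)^2+2m(n-m)=n^2$;
\item the $1/n$ terms give $nE(\alpha)$;
\item the $1/n^2$ terms give $Q(\alpha)$.
\end{itemize}
The remainders total at most $n^2\cdot O(n^{-4})=O(n^{-2})$ uniformly in $0\le m\le n$, because all four exponents are at most $n^2$. Adding $\ln\binom nm$ yields \eqref{eq:scaled-logZ-full}.

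\textbf{Leading-order form.} For \eqref{eq:scaled-logZ-leading} we need $\ln\binom nm=nH(m/n)+O(\ln n)$ uniformly in $0\le m\le n$. For $1\le m\le n-1$ this is Lemma~\ref{lem:uniform}; its proof gives the error $-\frac12\ln(2\pi n\alpha(1-\alpha))+O(1)$, and $\alpha(1-\alpha)\ge \frac{1}{2n}$ implies this error is $O(\ln n)$. At $m=0$ and $m=n$ both sides are $0$, since $H(0)=H(1)=0$. Since $Q$ is bounded on $[0,1]$, it is absorbed into the $O(\ln n)$ term, and with $\widetilde\eff=H+E$ we obtain \eqref{eq:scaled-logZ-leading}.

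The main point needing care is uniformity of the error terms in $m$, both in the Taylor remainders and in the Stirling estimate near the endpoints. It is handled by the bounds noted above: the exponents are at most $n^2$, and $\alpha(1-\alpha)\ge \frac1{2n}$ away from the endpoints.
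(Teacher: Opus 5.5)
Your proof is correct and follows essentially the same route as the paper. Like the paper, you factorise $\widetilde{\mu}_n$ over ordered pairs to get the exact formula, apply Lemma~\ref{lem:scaled-two-term-log-expansion} with $s=1$ and regroup using $m^2+(n-m)^2+2m(n-m)=n^2$, then insert the Stirling estimate and absorb the bounded $Q$. Your explicit bound $\alpha(1-\alpha)\ge \frac{1}{2n}$ for $1\le m\le n-1$ is a small but useful addition, since the paper asserts that $-\frac12\ln(2\pi n\alpha(1-\alpha))=O(\ln n)$ uniformly without justifying it near the endpoints.
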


\begin{proof}
Fix $n\in\mbbN^+$ and $m\in\{0,\ldots,n\}$. Choose $M\subseteq[n]$ with $|M|=m$, and write
$M^c:=[n]\setminus M$. We first compute the weight of the event that the coloured set is exactly
$M$.

If $a,b\in M$, then the ordered pair $(a,b)$ contributes either $e^{w_1/n}$, when $R(a,b)$ holds,
or $e^{w_2/n}$, when $R(a,b)$ does not hold. Summing over the two possible values of $R(a,b)$ gives
one factor $e^{w_1/n}+e^{w_2/n}=\widetilde C_{12}(n)$. There are $m^2$ such ordered pairs, so the
contribution of all pairs inside $M$ is $\widetilde C_{12}(n)^{m^2}$.

The same reasoning applies to the other colour-types. There are $(n-m)^2$ ordered pairs with
$a,b\in M^c$, and each gives the factor $\widetilde C_{34}(n)$. There are $m(n-m)$ ordered pairs
with $a\in M$ and $b\in M^c$, and each gives $\widetilde C_{56}(n)$. Finally, there are $m(n-m)$
ordered pairs with $a\in M^c$ and $b\in M$, and each gives $\widetilde C_{78}(n)$. Using independence of the summation over the different ordered pairs,
\begin{equation}\label{eq:scaled-fixed-colouring-factor}
\begin{aligned}
\widetilde{\mu}_n\big(\{\mcA\in\mbW_n:P^\mcA=M\}\big)
&=
\prod_{(a,b)\in M^2}\big(e^{w_1/n}+e^{w_2/n}\big)\\
&\quad\cdot
\prod_{(a,b)\in (M^c)^2}\big(e^{w_3/n}+e^{w_4/n}\big)\\
&\quad\cdot
\prod_{a\in M}\prod_{b\in M^c}\big(e^{w_5/n}+e^{w_6/n}\big)\\
&\quad\cdot
\prod_{a\in M^c}\prod_{b\in M}\big(e^{w_7/n}+e^{w_8/n}\big)\\
&=
\widetilde C_{12}(n)^{m^2}
\widetilde C_{34}(n)^{(n-m)^2}\\
&\quad\cdot
\widetilde C_{56}(n)^{m(n-m)}
\widetilde C_{78}(n)^{m(n-m)}.
\end{aligned}
\end{equation}
There are $\binom{n}{m}$ possible choices of $M$, and the value in
\eqref{eq:scaled-fixed-colouring-factor} depends only on $m$. This proves
\eqref{eq:scaled-Znm-exact}.

Applying Lemma~\ref{lem:scaled-two-term-log-expansion} to $(w_1,w_2)$, $(w_3,w_4)$, $(w_5,w_6)$,
and $(w_7,w_8)$ gives
\begin{equation}\label{eq:scaled-four-log-estimates}
\begin{aligned}
\ln\widetilde C_{12}(n)
&=
\ln 2+\frac{S_{12}}{2n}+\frac{(w_1-w_2)^2}{8n^2}+O(n^{-4}),\\
\ln\widetilde C_{34}(n)
&=
\ln 2+\frac{S_{34}}{2n}+\frac{(w_3-w_4)^2}{8n^2}+O(n^{-4}),\\
\ln\widetilde C_{56}(n)
&=
\ln 2+\frac{S_{56}}{2n}+\frac{(w_5-w_6)^2}{8n^2}+O(n^{-4}),\\
\ln\widetilde C_{78}(n)
&=
\ln 2+\frac{S_{78}}{2n}+\frac{(w_7-w_8)^2}{8n^2}+O(n^{-4}).
\end{aligned}
\end{equation}
Writing $\alpha=m/n$, we have
\[
m^2=n^2\alpha^2,
\qquad
(n-m)^2=n^2(1-\alpha)^2,
\qquad
m(n-m)=n^2\alpha(1-\alpha).
\]
Multiplying the estimates in~\eqref{eq:scaled-four-log-estimates} by the corresponding numbers of
ordered pairs gives, uniformly for $0\le m\le n$,
\begin{align*}
m^2\ln\widetilde C_{12}(n)
&= n^2\alpha^2\ln 2
   + n\frac{S_{12}}2\alpha^2 \\
&\qquad
   + \frac{(w_1-w_2)^2}{8}\alpha^2
   + O(n^{-2}),\\[0.3em]
(n-m)^2\ln\widetilde C_{34}(n)
&= n^2(1-\alpha)^2\ln 2
   + n\frac{S_{34}}2(1-\alpha)^2 \\
&\qquad
   + \frac{(w_3-w_4)^2}{8}(1-\alpha)^2
   + O(n^{-2}),\\[0.3em]
m(n-m)\ln\widetilde C_{56}(n)
&= n^2\alpha(1-\alpha)\ln 2
   + n\frac{S_{56}}2\alpha(1-\alpha) \\
&\qquad
   + \frac{(w_5-w_6)^2}{8}\alpha(1-\alpha)
   + O(n^{-2}),\\[0.3em]
m(n-m)\ln\widetilde C_{78}(n)
&= n^2\alpha(1-\alpha)\ln 2
   + n\frac{S_{78}}2\alpha(1-\alpha) \\
&\qquad
   + \frac{(w_7-w_8)^2}{8}\alpha(1-\alpha)
   + O(n^{-2}).
\end{align*}
 Adding the four estimates and using
\[
\alpha^2+(1-\alpha)^2+2\alpha(1-\alpha)=1
\]
gives
\[
\begin{aligned}
&m^2\ln\widetilde C_{12}(n)
+(n-m)^2\ln\widetilde C_{34}(n)
+m(n-m)\ln\widetilde C_{56}(n)\\
&\qquad
+m(n-m)\ln\widetilde C_{78}(n)
=
n^2\ln 2+nE(\alpha)+Q(\alpha)+O(n^{-2}),
\end{aligned}
\]
for $0\le m\le n$. Taking logarithms in~\eqref{eq:scaled-Znm-exact} proves
\eqref{eq:scaled-logZ-full}.

It remains to estimate the binomial term. For $1\le m\le n-1$, the Stirling calculation
in the proof of Lemma~\ref{lem:uniform} gives
\[
\ln\binom{n}{m}
=
nH(\alpha)-\frac12\ln\bigl(2\pi n\alpha(1-\alpha)\bigr)+O(1).
\]
Thus
\[
\ln\binom{n}{m}=nH(\alpha)+O(\ln n).
\]
 At the endpoints $m=0$ and $m=n$, both
$\ln\binom{n}{m}$ and $H(m/n)$ are equal to $0$. Therefore the same estimate holds for
all $0\le m\le n$.

The function $Q$ is continuous on $[0,1]$, and hence bounded there. Substituting the binomial
estimate into~\eqref{eq:scaled-logZ-full} and absorbing the bounded term $Q(\alpha)$ and the
$O(n^{-2})$ term into $O(\ln n)$ gives~\eqref{eq:scaled-logZ-leading}.
\end{proof}

Recall that \[ \widetilde{\eff}(\alpha) =H(\alpha)+E(\alpha)= H(\alpha)+\frac12\Big( \alpha^2S_{12}+(1-\alpha)^2S_{34}+\alpha(1-\alpha)S_\times \Big). \] Let \[ \widetilde{\Eff}^{\max} := \underset{\alpha\in[0,1]}{\arg\max}\,\widetilde{\eff}(\alpha). \] Since $\widetilde{\eff}$ is continuous on the compact interval $[0,1]$, it attains its maximum there. Hence $\widetilde{\Eff}^{\max}$ is nonempty. Moreover, \[ \widetilde{\Eff}^{\max} = \left\{ \gamma\in[0,1]: \widetilde{\eff}(\gamma) = \max_{\alpha\in[0,1]}\widetilde{\eff}(\alpha) \right\} = \widetilde{\eff}^{-1} \left( \left\{ \max_{\alpha\in[0,1]}\widetilde{\eff}(\alpha) \right\} \right). \] The singleton \[ \left\{ \max_{\alpha\in[0,1]}\widetilde{\eff}(\alpha) \right\} \] is closed in $\mathbb{R}$, and therefore its inverse image under the continuous function $\widetilde{\eff}$ is closed in $[0,1]$. Since $[0,1]$ is compact, it follows that $\widetilde{\Eff}^{\max}$ is compact.

\begin{lem}\label{lem:scaled-variational-properties}
There exists $\eta_0>0$ such that
\begin{equation}\label{eq:scaled-maximisers-away-from-boundary}
\widetilde{\Eff}^{\max}\subseteq[\eta_0,1-\eta_0].
\end{equation}
For $0<\alpha<1$, and $\Delta=S_{12}+S_{34}-S_\times$ ,
\begin{align}
\widetilde{\eff}'(\alpha)
&=
\ln\frac{1-\alpha}{\alpha}
+\Delta\alpha+\frac12S_\times-S_{34},
\label{eq:scaled-first-derivative}\\
\widetilde{\eff}''(\alpha)
&=
-\frac1{\alpha(1-\alpha)}+\Delta.
\label{eq:scaled-second-derivative}
\end{align}
In particular,
\[
\widetilde{\eff}'(\alpha)\to+\infty\quad\text{as }\alpha\to0,
\qquad
\widetilde{\eff}'(\alpha)\to-\infty\quad\text{as }\alpha\to1.
\]
If $\Delta\le4$, then $\widetilde{\eff}$ is strictly concave on $[0,1]$. Hence, for
$\Delta\le4$, the set $\widetilde{\Eff}^{\max}$ is a singleton.
\end{lem}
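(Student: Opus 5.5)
The plan is to compute the derivatives first and then read off the remaining claims from them. On $(0,1)$ we have $H'(\alpha)=\ln\frac{1-\alpha}{\alpha}$ (as in the proof of Lemma~\ref{lem:entropy-range}). Expanding $E$ as in Remark~\ref{rem:why-1-over-n-is-the-right-power} gives
\[
E(\alpha)=\tfrac12\big(S_{34}+(S_\times-2S_{34})\alpha+\Delta\alpha^2\big),
\]
so $E'(\alpha)=\Delta\alpha+\tfrac12S_\times-S_{34}$ and $E''(\alpha)=\Delta$. Adding these to the derivatives of $H$ yields \eqref{eq:scaled-first-derivative} and \eqref{eq:scaled-second-derivative}, using $H''(\alpha)=-\frac1{\alpha(1-\alpha)}$. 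The limits of $\widetilde{\eff}'$ at the endpoints follow at once. The term $\Delta\alpha+\tfrac12S_\times-S_{34}$ is bounded on $[0,1]$, while $\ln\frac{1-\alpha}{\alpha}\to+\infty$ as $\alpha\to0^+$ and $\to-\infty$ as $\alpha\to1^-$.

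Next I would show \eqref{eq:scaled-maximisers-away-from-boundary}. By the limits just proved, there is $\eta_0\in(0,\tfrac14)$ with $\widetilde{\eff}'>0$ on $(0,2\eta_0]$ and $\widetilde{\eff}'<0$ on $[1-2\eta_0,1)$. The function $\widetilde{\eff}$ is continuous on $[0,1]$, since $H$ and $E$ are, and differentiable on $(0,1)$. So the mean value theorem gives $\widetilde{\eff}(\alpha)<\widetilde{\eff}(2\eta_0)$ for every $\alpha\in[0,2\eta_0)$, including $\alpha=0$. Symmetrically, $\widetilde{\eff}(\alpha)<\widetilde{\eff}(1-2\eta_0)$ for every $\alpha\in(1-2\eta_0,1]$. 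Hence no point of $[0,2\eta_0)\cup(1-2\eta_0,1]$ is a maximiser, and $\widetilde{\Eff}^{\max}\subseteq[2\eta_0,1-2\eta_0]\subseteq[\eta_0,1-\eta_0]$. The set $\widetilde{\Eff}^{\max}$ is nonempty, as noted before the lemma.

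Finally I would treat the concavity claim. The AM--GM inequality gives $\alpha(1-\alpha)\le\tfrac14$, with equality only at $\alpha=\tfrac12$, so $-\frac1{\alpha(1-\alpha)}\le-4$. Hence $\widetilde{\eff}''(\alpha)\le\Delta-4\le0$ on $(0,1)$, with equality possible only at the single point $\alpha=\tfrac12$ (and only when $\Delta=4$). A continuous function on $[0,1]$ whose second derivative on $(0,1)$ is nonpositive and vanishes at no more than one point has a strictly decreasing first derivative there. It is therefore strictly concave on $(0,1)$, and by continuity also on $[0,1]$. The endpoint step is the only place needing care: for $x<y$ in $[0,1]$ and $t\in(0,1)$, apply strict concavity on $[x+\epsilon,y-\epsilon]$ and let $\epsilon\to0$ to get the weak inequality. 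Strictness then follows by comparing with an interior subinterval. A strictly concave function has at most one maximiser, so $\widetilde{\Eff}^{\max}$ is a singleton.

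The main obstacle is the borderline case $\Delta=4$, where $\widetilde{\eff}''$ is not strictly negative. The point to get right there is that vanishing of $\widetilde{\eff}''$ at the isolated point $\tfrac12$ still yields strict monotonicity of $\widetilde{\eff}'$.
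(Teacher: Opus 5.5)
Your proposal is correct. The derivative computation and the concavity argument are essentially the paper's. For $\Delta=4$ the paper gets strict monotonicity of $\widetilde{\eff}'$ by integrating $\widetilde{\eff}''$ over $[\alpha,\beta]$, which is the same observation you make.

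Where you differ is the boundary claim \eqref{eq:scaled-maximisers-away-from-boundary}. The paper works with function values. It expands $H(\alpha)=\alpha\ln(1/\alpha)+O(\alpha)$ and $E(\alpha)-E(0)=O(\alpha)$ to get $\widetilde{\eff}(\alpha)-\widetilde{\eff}(0)=\alpha\ln(1/\alpha)+O(\alpha)>0$ for small $\alpha$, and argues symmetrically at $1$. This only shows that the endpoints themselves are not maximisers, so the paper then needs compactness of $\widetilde{\Eff}^{\max}\subseteq(0,1)$ to extract $\eta_0>0$.

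You instead compute the derivatives first. The slope $\ln\frac{1-\alpha}{\alpha}$ diverges while $E'$ stays bounded, and the mean value theorem applies up to $\alpha=0$ because $\widetilde{\eff}$ is continuous on $[0,1]$. This gives strict monotonicity of $\widetilde{\eff}$ on explicit neighbourhoods $[0,2\eta_0]$ and $[1-2\eta_0,1]$, and hence $\eta_0$ directly, with no asymptotic expansion of $H$ and no compactness step.

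The mechanism is the same in both: the entropy slope blows up at the endpoints while the energy slope stays bounded. Your version is a more economical packaging that reuses a derivative formula you need anyway. The paper's value-based comparison makes the competition between the entropy gain $\alpha\ln(1/\alpha)$ and the linear energy change explicit.

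Your care in passing from strict concavity on $(0,1)$ to $[0,1]$ also fills in what the paper dismisses as ``by continuity''. Continuity alone yields only weak concavity on the closed interval; strictness needs your interior-subinterval comparison.
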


\begin{proof}
The function $\widetilde{\eff}$ is continuous on the compact interval $[0,1]$, so
$\widetilde{\Eff}^{\max}$ is nonempty and compact.
\\
We first show that neither endpoint is a maximiser.
\\
We first show that the endpoint $0$ is not a maximiser. Recall that
\[
H(\alpha)=-\alpha \ln \alpha-(1-\alpha)\ln(1-\alpha),
\]
in particular, $H(0)=0.$
\\
For $0<\alpha<1$, we have
\[
-\alpha\ln\alpha=\alpha\ln\left(\frac{1}{\alpha}\right).
\]
Moreover, using the Taylor expansion of $\ln(1-\alpha)$ around $\alpha=0$, we get
\[
\ln(1-\alpha)=-\alpha+O(\alpha^2)
\qquad \text{as } \alpha\to 0.
\]
Therefore
\[
-(1-\alpha)\ln(1-\alpha)
=
-(1-\alpha)\bigl(-\alpha+O(\alpha^2)\bigr)
=
\alpha+O(\alpha^2).
\]
Hence, as $\alpha\to0$,
\[
H(\alpha)
=
\alpha\ln\left(\frac{1}{\alpha}\right)+\alpha+O(\alpha^2)
=
\alpha\ln\left(\frac{1}{\alpha}\right)+O(\alpha).
\]

Next, $E$ is the polynomial
\[
E(\alpha)
=
\frac{1}{2}S_{34}
+
\frac{1}{2}(S_\times-2S_{34})\alpha
+
\frac{1}{2}(S_{12}+S_{34}-S_\times)\alpha^2.
\]
Thus
\[
E(0)=\frac{1}{2}S_{34}.
\]
Consequently,
\[
E(\alpha)-E(0)
=
\frac{1}{2}(S_\times-2S_{34})\alpha
+
\frac{1}{2}(S_{12}+S_{34}-S_\times)\alpha^2.
\]
Equivalently,
\[
E(\alpha)-E(0)
=
\alpha\left[
\frac{1}{2}(S_\times-2S_{34})
+
\frac{1}{2}(S_{12}+S_{34}-S_\times)\alpha
\right].
\]
The expression inside the square brackets is bounded as $\alpha\to0$. Hence
\[
E(\alpha)-E(0)=O(\alpha)
\qquad \text{as } \alpha\to0.
\]

Combining the estimates for $H$ and $E$, we obtain
\[
\widetilde{\eff}(\alpha)-\widetilde{\eff}(0)
=
H(\alpha)-H(0)+E(\alpha)-E(0)
=
\alpha\ln\left(\frac{1}{\alpha}\right)+O(\alpha).
\]

Since $\ln(1/\alpha)\to\infty$ as $\alpha\to0$, the right hand side is positive for all sufficiently small $\alpha>0$. Therefore
\[
\widetilde{\eff}(\alpha)>\widetilde{\eff}(0)
\]
for all sufficiently small $\alpha>0$. Hence $0$ is not a maximiser, which means,
$
0\notin\widetilde{\Eff}^{\max}.
$

Similarly, if $\alpha=1-\varepsilon$ and $\varepsilon\to0$, then
\[
H(1-\varepsilon)=\varepsilon\ln(1/\varepsilon)+O(\varepsilon),
\qquad
E(1-\varepsilon)-E(1)=O(\varepsilon),
\]
so
\[
\widetilde{\eff}(1-\varepsilon)-\widetilde{\eff}(1)
=
\varepsilon\ln(1/\varepsilon)+O(\varepsilon)>0
\]
for all sufficiently small $\varepsilon>0$. Thus $1\notin\widetilde{\Eff}^{\max}$.
Since $\widetilde{\Eff}^{\max}$ is compact and contained in $(0,1)$, its distance from
$\{0,1\}$ is positive. This proves~\eqref{eq:scaled-maximisers-away-from-boundary}.

For $0<\alpha<1$,
\[
H'(\alpha)=\ln\frac{1-\alpha}{\alpha},
\qquad
H''(\alpha)=-\frac1{\alpha(1-\alpha)}.
\]
Also,
\[
E(\alpha)
=
\frac12\Big(\Delta\alpha^2+(S_\times-2S_{34})\alpha+S_{34}\Big),
\]
and hence
\[
E'(\alpha)=\Delta\alpha+\frac12S_\times-S_{34},
\qquad
E''(\alpha)=\Delta.
\]
Adding the derivatives of $H$ and $E$ gives~\eqref{eq:scaled-first-derivative} and
\eqref{eq:scaled-second-derivative}. 

The number $4$ is important because
\[
\alpha(1-\alpha)\le\frac14
\qquad\text{for }0<\alpha<1,
\]
or equivalently
\[
\frac1{\alpha(1-\alpha)}\ge4,
\]
with equality only at $\alpha=1/2$. Thus the entropy curvature
$-1/(\alpha(1-\alpha))$ is always at most $-4$.

Assume first that $\Delta<4$. For $0<\alpha<1$,
\[
\widetilde{\eff}''(\alpha)
=
-\frac1{\alpha(1-\alpha)}+\Delta
\le -4+\Delta<0.
\]
Thus $\widetilde{\eff}'$ is strictly decreasing on $(0,1)$.

If $\Delta=4$, then
\[
\widetilde{\eff}''(\alpha)=4-\frac1{\alpha(1-\alpha)}\le0
\]
for all $\alpha\in(0,1)$, and equality holds only at $\alpha=1/2$. Hence, if
$0<\alpha<\beta<1$, then
\[
\widetilde{\eff}'(\beta)-\widetilde{\eff}'(\alpha)
=
\int_\alpha^\beta \widetilde{\eff}''(t)\,dt<0,
\]
because every nontrivial interval $[\alpha,\beta]$ contains points different from $1/2$, and on
those points the integrand is strictly negative. Thus $\widetilde{\eff}'$ is again strictly decreasing
on $(0,1)$.

In both cases $\Delta\le4$, the derivative $\widetilde{\eff}'$ is strictly decreasing on $(0,1)$.
Therefore $\widetilde{\eff}$ is strictly concave on $(0,1)$, and by continuity it is strictly concave
on $[0,1]$. A strictly concave function on a compact interval has at most one maximiser, and therefore
$\widetilde{\Eff}^{\max}$ is a singleton when $\Delta\le4$.
\end{proof}

\noindent
For the concentration estimate below, set
\[
\widetilde{\beta}:=\max_{\alpha\in[0,1]}\widetilde{\eff}(\alpha),
\qquad
\widetilde{\Eff}^{\max}
=
\{\rho\in[0,1]:\widetilde{\eff}(\rho)=\widetilde{\beta}\}.
\]
For $\delta>0$, let
\[
\widetilde B_\delta
:=
\left\{\gamma\in[0,1]:
\inf_{\rho\in\widetilde{\Eff}^{\max}}|\gamma-\rho|\ge\delta
\right\}.
\]
Thus $\widetilde B_\delta$ consists of the colour proportions lying at least $\delta$
away from the maximising set. We denote the corresponding event by
\[
\widetilde A_{n,\delta}
:=
\left\{\mcA\in\mbW_n:
\frac{|P^\mcA|}{n}\in\widetilde B_\delta
\right\}.
\]

\begin{prop}
\label{prop:scaled-colour-concentration}

Fix $\delta>0$. If $\widetilde B_\delta=\varnothing$, then
$\widetilde{\mbbP}_n(\widetilde A_{n,\delta})=0$
for every $n$. If $\widetilde B_\delta\neq\varnothing$, set
\[
\widetilde c_\delta
:=
\widetilde\beta
-
\max_{\gamma\in\widetilde B_\delta}
\widetilde{\eff}(\gamma).
\]
Then $\widetilde c_\delta>0$, and
\[
\widetilde{\mbbP}_n(\widetilde A_{n,\delta})
\le
\exp\bigl(-\widetilde c_\delta n+O(\ln n)\bigr)
\qquad
(n\to\infty).
\]

Consequently, for every open neighbourhood
$U$ of $\widetilde{\Eff}^{\max}$ in $[0,1]$,
\[
\lim_{n\to\infty}
\widetilde{\mbbP}_n
\left(
\left\{
\mcA\in\mbW_n:
\frac{|P^\mcA|}{n}\in U
\right\}
\right)
=
1.
\]
\end{prop}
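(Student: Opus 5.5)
The plan is to mirror the proof of Proposition~\ref{thm:concentration}, replacing the $n^2$-scale comparison by the $n$-scale comparison supplied by Proposition~\ref{prop:scaled-partition-expansion}. The empty case is trivial: if $\widetilde B_\delta=\varnothing$ then $\widetilde A_{n,\delta}=\varnothing$, so its probability is $0$. Assume now $\widetilde B_\delta\neq\varnothing$.

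First we establish $\widetilde c_\delta>0$. The set $\widetilde B_\delta$ is closed in $[0,1]$, since $\gamma\mapsto\inf_{\rho\in\widetilde{\Eff}^{\max}}|\gamma-\rho|$ is continuous; hence it is compact. Because $\widetilde{\eff}$ is continuous on $[0,1]$ (as $H$ and $E$ are), the maximum of $\widetilde{\eff}$ over $\widetilde B_\delta$ is attained at some $\gamma_0$. Since $\gamma_0\notin\widetilde{\Eff}^{\max}$, we get $\widetilde{\eff}(\gamma_0)<\widetilde\beta$, and therefore $\widetilde c_\delta>0$.

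Next comes the probability bound. Fix $\rho\in\widetilde{\Eff}^{\max}$; by Lemma~\ref{lem:scaled-variational-properties}, $\rho\in[\eta_0,1-\eta_0]$. Let $m_\rho:=\lfloor\rho n\rceil$. By \eqref{eq:scaled-logZ-leading} and the uniform $O(\ln n)$ error there,
\[
\ln\widetilde Z_n\ \ge\ \ln\widetilde Z_n(m_\rho)=n^2\ln2+n\widetilde{\eff}(m_\rho/n)+O(\ln n).
\]
The step needing care is replacing $\widetilde{\eff}(m_\rho/n)$ by $\widetilde\beta$. Here the interior location of $\rho$ matters. On $[\eta_0/2,1-\eta_0/2]$, $\widetilde{\eff}'$ is bounded by \eqref{eq:scaled-first-derivative}, so the mean value theorem gives $|\widetilde{\eff}(m_\rho/n)-\widetilde\beta|=O(1/n)$ for large $n$. (The merely H\"older continuity of $H$ at the endpoints would only give an $O(\ln n/n)$ loss, which would also suffice.) Thus $\ln\widetilde Z_n\ge n^2\ln2+n\widetilde\beta+O(\ln n)$.

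For each $m$ with $m/n\in\widetilde B_\delta$, \eqref{eq:scaled-logZ-leading} gives
\[
\ln\widetilde Z_n(m)\le n^2\ln2+n(\widetilde\beta-\widetilde c_\delta)+O(\ln n).
\]
The $O(\ln n)$ constant is uniform in $m$, because the Stirling bound in Lemma~\ref{lem:uniform} and the expansion \eqref{eq:scaled-logZ-full} are uniform in $m$. Summing over at most $n+1$ such values of $m$ and dividing by $\widetilde Z_n$ yields
\[
\widetilde{\mbbP}_n(\widetilde A_{n,\delta})\le (n+1)\exp\bigl(-\widetilde c_\delta n+O(\ln n)\bigr)=\exp\bigl(-\widetilde c_\delta n+O(\ln n)\bigr).
\]

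For the consequence, let $U$ be an open neighbourhood of $\widetilde{\Eff}^{\max}$ in $[0,1]$. The set $[0,1]\setminus U$ is compact and disjoint from the compact set $\widetilde{\Eff}^{\max}$, so the distance between them is some $\delta>0$. Hence $[0,1]\setminus U\subseteq\widetilde B_\delta$. The complementary event is therefore contained in $\widetilde A_{n,\delta}$, whose probability tends to $0$ by the bound above, so the probability of $|P^\mcA|/n\in U$ tends to $1$.

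The main obstacle is the uniformity of the $O(\ln n)$ terms in $m$, including the endpoints $m=0$ and $m=n$. This is handled by invoking the statement of Proposition~\ref{prop:scaled-partition-expansion}, which asserts the estimate for all $0\le m\le n$.
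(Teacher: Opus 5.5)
Your proposal is correct and follows essentially the same route as the paper. You get $\widetilde c_\delta>0$ from compactness of $\widetilde B_\delta$ via continuity of the distance function, and you control the rounding loss at an interior maximiser from Lemma~\ref{lem:scaled-variational-properties} by the mean value theorem, with $\widetilde{\eff}'$ bounded on $[\eta_0/2,1-\eta_0/2]$. You then sum the uniform $O(\ln n)$ estimate \eqref{eq:scaled-logZ-leading} over at most $n+1$ values of $m$. The only cosmetic difference is that you bound $\ln\widetilde Z_n$ from below and $\ln\widetilde Z_n(m)$ from above separately, rather than working with the ratio $\widetilde Z_n(m)/\widetilde Z_n(\widehat m_n)$.
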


\begin{proof}
Define the distance from a colour proportion $\gamma\in[0,1]$ to the whole set of maximisers by
\[
d(\gamma)
:=
\inf_{\rho\in\widetilde{\Eff}^{\max}}|\gamma-\rho|.
\]
Then
\[
\widetilde B_\delta
=
\{\gamma\in[0,1]:d(\gamma)\ge\delta\},
\]
and
\[
\widetilde A_{n,\delta}
=
\Big\{\mcA\in\mbW_n:
d\Big(\frac{|P^\mcA|}{n}\Big)\ge\delta
\Big\}.
\]

Suppose first that $\widetilde B_\delta=\varnothing$. For every $\mcA\in\mbW_n$, the number
$|P^\mcA|/n$ belongs to $[0,1]$. Since no point of $[0,1]$ belongs to $\widetilde B_\delta$, we have
\[
d\Big(\frac{|P^\mcA|}{n}\Big)<\delta.
\]
Therefore $\mcA\notin\widetilde A_{n,\delta}$. Hence $\widetilde A_{n,\delta}=\varnothing$, and so
\[
\widetilde{\mbbP}_n(\widetilde A_{n,\delta})=0
\]
for every $n$.

Now suppose that $\widetilde B_\delta\neq\varnothing$. 

The function $d$ is continuous. Indeed, for any
$\gamma,\gamma'\in[0,1]$ and any
$\alpha\in\widetilde{\Eff}^{\max}$, the triangle inequality gives
\[
|\gamma-\alpha|
\le
|\gamma-\gamma'|+|\gamma'-\alpha|.
\]
Taking the infimum over
$\alpha\in\widetilde{\Eff}^{\max}$, we obtain
\[
d(\gamma)
=
\inf_{\alpha\in\widetilde{\Eff}^{\max}}|\gamma-\alpha|
\le
|\gamma-\gamma'|
+
\inf_{\alpha\in\widetilde{\Eff}^{\max}}|\gamma'-\alpha|
=
|\gamma-\gamma'|+d(\gamma').
\]
Similarly, we also get
\[
d(\gamma')
\le
|\gamma-\gamma'|+d(\gamma).
\]
Therefore
\[
|d(\gamma)-d(\gamma')|
\le
|\gamma-\gamma'|.
\]
Thus $d$ is Lipschitz continuous, and in particular continuous, on
$[0,1]$.

Since
\[
\widetilde B_\delta
=
d^{-1}([\delta,\infty)),
\]
the set $\widetilde B_\delta$ is closed in the compact interval $[0,1]$, therefore
$\widetilde B_\delta$ is compact. Since we are now assuming
$\widetilde B_\delta\neq\varnothing$, and since $\widetilde{\eff}$ is continuous, there exists
$\gamma_\delta\in\widetilde B_\delta$ such that
\[
\widetilde{\eff}(\gamma_\delta)
=
\max_{\gamma\in\widetilde B_\delta}\widetilde{\eff}(\gamma).
\]

The set $\widetilde B_\delta$ contains no maximising colour proportion. Indeed, if
$\rho\in\widetilde{\Eff}^{\max}$, then
\[
d(\rho)=0,
\]
whereas every $\gamma\in\widetilde B_\delta$ satisfies
\[
d(\gamma)\ge\delta>0.
\]
Hence
\[
\widetilde B_\delta\cap\widetilde{\Eff}^{\max}=\varnothing.
\]

If
\[
\max_{\gamma\in\widetilde B_\delta}\widetilde{\eff}(\gamma)
=
\widetilde{\beta},
\]
then
\[
\widetilde{\eff}(\gamma_\delta)=\widetilde{\beta}.
\]
By the definition of $\widetilde{\Eff}^{\max}$, this would imply
\[
\gamma_\delta\in\widetilde{\Eff}^{\max}.
\]
But $\gamma_\delta\in\widetilde B_\delta$, contradicting
\[
\widetilde B_\delta\cap\widetilde{\Eff}^{\max}=\varnothing.
\]
Therefore
\[
\max_{\gamma\in\widetilde B_\delta}\widetilde{\eff}(\gamma)
<
\widetilde{\beta}.
\]
Hence
\[
\widetilde c_\delta
=
\widetilde{\beta}
-
\max_{\gamma\in\widetilde B_\delta}\widetilde{\eff}(\gamma)
>
0.
\]

We now prove the probability estimate. The estimate
\eqref{eq:scaled-logZ-leading} is uniform in $m\in\{0,\ldots,n\}$. Hence there are constants
$C>0$ and $N_1\in\mbbN^+$ such that, for every $n\ge N_1$ and every
$m\in\{0,\ldots,n\}$,
\begin{equation}\label{eq:scaled-logZ-two-sided}
\left|
\ln\widetilde Z_n(m)
-
\Big(n^2\ln2+n\widetilde{\eff}(m/n)\Big)
\right|
\le
C\ln(n+1).
\end{equation}
Equivalently, for all such $n$ and $m$,
\[
\ln\widetilde Z_n(m)
\le
n^2\ln2+n\widetilde{\eff}(m/n)+C\ln(n+1),
\]
and
\[
\ln\widetilde Z_n(m)
\ge
n^2\ln2+n\widetilde{\eff}(m/n)-C\ln(n+1).
\]

Let
\[
\alpha^\star\in\widetilde{\Eff}^{\max}.
\]

By Lemma~\ref{lem:scaled-variational-properties}, there is $\eta_0>0$ such that
\[
\widetilde{\Eff}^{\max}\subseteq[\eta_0,1-\eta_0].
\]
In particular,
\[
\alpha^\star\in[\eta_0,1-\eta_0].
\]
For every $n$, choose an integer $\widehat m_n\in\{0,\ldots,n\}$ such that
\[
|\widehat m_n-\alpha^\star n|\le \frac12.
\]
Then
\[
\left|\frac{\widehat m_n}{n}-\alpha^\star\right|
\le
\frac1{2n}.
\]

Set
\[
L
:=
\sup_{\alpha\in[\eta_0/2,\,1-\eta_0/2]}
|\widetilde{\eff}'(\alpha)|.
\]
This number is finite because $\widetilde{\eff}'$ is continuous on the compact interval
$[\eta_0/2,1-\eta_0/2]$.

For all sufficiently large $n$, both $\alpha^\star$ and $\widehat m_n/n$ lie in
$[\eta_0/2,1-\eta_0/2]$. Indeed, $\alpha^\star\in[\eta_0,1-\eta_0]$, and
$\widehat m_n/n$ is within $1/(2n)$ of $\alpha^\star$.

By the mean value theorem, there exists a point $\xi_n$ between $\alpha^\star$ and
$\widehat m_n/n$ such that
\[
\widetilde{\eff}\Big(\frac{\widehat m_n}{n}\Big)
-
\widetilde{\eff}(\alpha^\star)
=
\widetilde{\eff}'(\xi_n)
\left(
\frac{\widehat m_n}{n}-\alpha^\star
\right).
\]
Taking absolute values gives
\[
\left|
\widetilde{\eff}\Big(\frac{\widehat m_n}{n}\Big)
-
\widetilde{\eff}(\alpha^\star)
\right|
=
|\widetilde{\eff}'(\xi_n)|
\left|
\frac{\widehat m_n}{n}-\alpha^\star
\right|.
\]
Since $\xi_n\in[\eta_0/2,1-\eta_0/2]$, we have
\[
|\widetilde{\eff}'(\xi_n)|\le L.
\]
Therefore
\[
\left|
\widetilde{\eff}\Big(\frac{\widehat m_n}{n}\Big)
-
\widetilde{\eff}(\alpha^\star)
\right|
\le
L
\left|
\frac{\widehat m_n}{n}-\alpha^\star
\right|
\le
\frac{L}{2n}.
\]
Since $\alpha^\star$ is a maximiser,
\[
\widetilde{\eff}(\alpha^\star)=\widetilde{\beta}.
\]
Hence
\begin{equation}\label{eq:scaled-rounded-maximiser}
\widetilde{\eff}\Big(\frac{\widehat m_n}{n}\Big)
\ge
\widetilde{\beta}-\frac{L}{2n}.
\end{equation}

Now let $m\in\{0,\ldots,n\}$ be such that
\[
\frac{m}{n}\in\widetilde B_\delta.
\]
By the definition of $\widetilde c_\delta$,
\[
\max_{\gamma\in\widetilde B_\delta}\widetilde{\eff}(\gamma)
=
\widetilde{\beta}-\widetilde c_\delta.
\]
Since $m/n\in\widetilde B_\delta$, it follows that
\[
\widetilde{\eff}\Big(\frac{m}{n}\Big)
\le
\widetilde{\beta}-\widetilde c_\delta.
\]
Combining this with \eqref{eq:scaled-rounded-maximiser}, we get
\[
\begin{aligned}
\widetilde{\eff}\Big(\frac{m}{n}\Big)
-
\widetilde{\eff}\Big(\frac{\widehat m_n}{n}\Big)
&\le
(\widetilde{\beta}-\widetilde c_\delta)
-
\left(\widetilde{\beta}-\frac{L}{2n}\right)\\
&=
-\widetilde c_\delta+\frac{L}{2n}.
\end{aligned}
\]
Multiplying by $n$ gives
\begin{equation}\label{eq:scaled-phase-gap-after-rounding}
n\left[
\widetilde{\eff}\Big(\frac{m}{n}\Big)
-
\widetilde{\eff}\Big(\frac{\widehat m_n}{n}\Big)
\right]
\le
-\widetilde c_\delta n+\frac{L}{2}.
\end{equation}

Since
\[
\widetilde Z_n
=
\sum_{r=0}^n\widetilde Z_n(r)
\ge
\widetilde Z_n(\widehat m_n),
\]
we have
\[
\widetilde{\mbbP}_n(\mbW_n(m))
=
\frac{\widetilde Z_n(m)}{\widetilde Z_n}
\le
\frac{\widetilde Z_n(m)}{\widetilde Z_n(\widehat m_n)}.
\]
Taking logarithms and using \eqref{eq:scaled-logZ-two-sided}, first with the upper bound for $m$
and then with the lower bound for $\widehat m_n$, gives
\[
\begin{aligned}
\ln
\frac{\widetilde Z_n(m)}{\widetilde Z_n(\widehat m_n)}
&=
\ln\widetilde Z_n(m)
-
\ln\widetilde Z_n(\widehat m_n)\\
&\le
\Big[
n^2\ln2
+
n\widetilde{\eff}\Big(\frac{m}{n}\Big)
+
C\ln(n+1)
\Big]\\
&\quad -
\Big[
n^2\ln2
+
n\widetilde{\eff}\Big(\frac{\widehat m_n}{n}\Big)
-
C\ln(n+1)
\Big]\\
&=
n\left[
\widetilde{\eff}\Big(\frac{m}{n}\Big)
-
\widetilde{\eff}\Big(\frac{\widehat m_n}{n}\Big)
\right]
+
2C\ln(n+1).
\end{aligned}
\]
Using \eqref{eq:scaled-phase-gap-after-rounding}, we obtain
\[
\ln
\frac{\widetilde Z_n(m)}{\widetilde Z_n(\widehat m_n)}
\le
-\widetilde c_\delta n+\frac{L}{2}+2C\ln(n+1).
\]
Therefore, for every $m$ with $m/n\in\widetilde B_\delta$,
\[
\widetilde{\mbbP}_n(\mbW_n(m))
\le
\exp\left(
-\widetilde c_\delta n+\frac{L}{2}+2C\ln(n+1)
\right).
\]
Since $L/2$ is constant and $2C\ln(n+1)=O(\ln n)$, this can be written as
\[
\widetilde{\mbbP}_n(\mbW_n(m))
\le
\exp\big(-\widetilde c_\delta n+O(\ln n)\big),
\]
for all $m$ satisfying $m/n\in\widetilde B_\delta$.

Finally,
\[
\widetilde A_{n,\delta}
=
\bigcup_{\substack{0\le m\le n\\ m/n\in\widetilde B_\delta}}
\mbW_n(m).
\]
The union is disjoint because two different values of $m$ give two different colour counts.
There are at most $n+1$ possible values of $m$. Hence
\[
\begin{aligned}
\widetilde{\mbbP}_n(\widetilde A_{n,\delta})
&=
\sum_{\substack{0\le m\le n\\ m/n\in\widetilde B_\delta}}
\widetilde{\mbbP}_n(\mbW_n(m))\\
&\le
(n+1)\exp\big(-\widetilde c_\delta n+O(\ln n)\big).
\end{aligned}
\]
This proves the concentration estimate.

Now let $U$ be a neighbourhood of
$\widetilde{\Eff}^{\max}$ in $[0,1]$. Since $\widetilde{\Eff}^{\max}$ is compact and
$d$ is continuous, there exists $\delta>0$ such that
\[
\{\gamma\in[0,1]:d(\gamma)<\delta\}
\subseteq
U.
\]
Therefore, if $\gamma\notin U$, then
\[
d(\gamma)\ge\delta.
\]
Equivalently,
\[
[0,1]\setminus U
\subseteq
\widetilde B_\delta.
\]
Thus
\[
\begin{aligned}
\widetilde{\mbbP}_n\Big(
\Big\{\mcA\in\mbW_n:\frac{|P^\mcA|}{n}\notin U\Big\}
\Big)
\le
\widetilde{\mbbP}_n(\widetilde A_{n,\delta}).
\end{aligned}
\]
If $\widetilde B_\delta=\varnothing$, then the right-hand side is always $0$. If
$\widetilde B_\delta\neq\varnothing$, then the estimate already proved gives
\[
\widetilde{\mbbP}_n(\widetilde A_{n,\delta})
\le
(n+1)\exp\big(-\widetilde c_\delta n+O(\ln n)\big)
\longrightarrow
0,
\]
because $\widetilde c_\delta>0$. Hence
\[
\widetilde{\mbbP}_n\Big(
\Big\{\mcA\in\mbW_n:\frac{|P^\mcA|}{n}\notin U\Big\}
\Big)
\longrightarrow
0.
\]
Taking complements gives
\[
\widetilde{\mbbP}_n\Big(
\Big\{\mcA\in\mbW_n:\frac{|P^\mcA|}{n}\in U\Big\}
\Big)
\longrightarrow
1.
\]
This proves the proposition.
\end{proof}

\medskip

\noindent
From~(\ref{eq:scaled-maximisers-away-from-boundary}) and 
Proposition~\ref{prop:scaled-colour-concentration} we now immediately get:

\begin{cor}\label{corollary for colour concentration}
For every choice of nonnegative real weights $w_1, \ldots, w_8$, there is $\varepsilon > 0$ such that
\[
\lim_{n\to\infty} \widetilde{\mbbP}_n \Big(
  \big\{\mcA \in \mb W_n :
    \varepsilon n
      \le \big|P^\mcA\big|
      \le (1 - \varepsilon)n
  \big\}
\Big) = 1.
\]
\end{cor}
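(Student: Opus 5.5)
We derive the corollary by combining the two results just proved. By \eqref{eq:scaled-maximisers-away-from-boundary} in Lemma~\ref{lem:scaled-variational-properties} there is $\eta_0>0$ with $\widetilde{\Eff}^{\max}\subseteq[\eta_0,1-\eta_0]$, and we set $\varepsilon:=\eta_0/2$. This $\varepsilon$ depends only on $\widetilde{\eff}$, hence only on the weights, which is the order of quantifiers the corollary requires.

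Next we take
\[
U:=\big(\eta_0/2,\,1-\eta_0/2\big),
\]
which is open in $[0,1]$ and contains $[\eta_0,1-\eta_0]\supseteq\widetilde{\Eff}^{\max}$. So $U$ is an open neighbourhood of $\widetilde{\Eff}^{\max}$ in $[0,1]$. The final assertion of Proposition~\ref{prop:scaled-colour-concentration} then gives
\[
\lim_{n\to\infty}\widetilde{\mbbP}_n\Big(\Big\{\mcA\in\mbW_n:\tfrac{|P^\mcA|}{n}\in U\Big\}\Big)=1.
\]

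For every $\mcA$ with $|P^\mcA|/n\in U$ we have $\varepsilon n<|P^\mcA|<(1-\varepsilon)n$, so this event is contained in $\{\mcA\in\mbW_n:\varepsilon n\le |P^\mcA|\le(1-\varepsilon)n\}$. Monotonicity of $\widetilde{\mbbP}_n$ together with the trivial upper bound $1$ gives the limit $1$ for the larger event, which is the claim.

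There is no real obstacle here: the substantive work, namely keeping the maximisers away from the endpoints via the $\alpha\ln(1/\alpha)$ entropy term and the exponential concentration bound, is already done in Lemma~\ref{lem:scaled-variational-properties} and Proposition~\ref{prop:scaled-colour-concentration}.
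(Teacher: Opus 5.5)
Your proof is correct and follows the paper's argument: the paper derives the corollary immediately from \eqref{eq:scaled-maximisers-away-from-boundary} and Proposition~\ref{prop:scaled-colour-concentration}. Your choice of $\varepsilon=\eta_0/2$, the neighbourhood $U=(\eta_0/2,1-\eta_0/2)$, and the final monotonicity step simply spell out that one-line deduction.
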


\begin{notation}\label{not:scaled-typed}{\rm
For $n\in\mbbN^+$ and $B\subseteq[n]$, let
\[
\widetilde{\mbY}_n^B
:=
\bigl\{\mcA\in\mbW_n:P^\mcA=B\bigr\}.
\]
Thus $\widetilde{\mbY}_n^B$ is the event that the set of coloured vertices is $B$.
For the corresponding one-edge probabilities in the scaled model, set
\begin{align*}
\widetilde p_{11}^{(n)}
&:=
\frac{e^{w_1/n}}{e^{w_1/n}+e^{w_2/n}},
&
\widetilde p_{00}^{(n)}
&:=
\frac{e^{w_3/n}}{e^{w_3/n}+e^{w_4/n}},\\[1mm]
\widetilde p_{10}^{(n)}
&:=
\frac{e^{w_5/n}}{e^{w_5/n}+e^{w_6/n}},
&
\widetilde p_{01}^{(n)}
&:=
\frac{e^{w_7/n}}{e^{w_7/n}+e^{w_8/n}}.
\end{align*}
In the subscript, $1$ denotes membership in $B$ and $0$ denotes non-membership in $B$,
for the first and second coordinates respectively.
}\end{notation}

\begin{lem}\label{lem:scaled-edge-factor}

Let $n\in\mbbN^+$ and let $B\subseteq[n]$. Conditioned on $\widetilde{\mbY}_n^B$, the edge variables
\[
\big(R_{(a,b)}:(a,b)\in[n]^2\big)
\]
are mutually independent. Moreover, for every $(a,b)\in[n]^2$,
\[
\widetilde{\mbbP}_n\big(R_{(a,b)}=1 \ \big| \ \widetilde{\mbY}_n^B\big)
=
\begin{cases}
\widetilde p_{11}^{(n)} & \text{if } a,b\in B,\\
\widetilde p_{00}^{(n)} & \text{if } a,b\notin B,\\
\widetilde p_{10}^{(n)} & \text{if } a\in B,\ b\notin B,\\
\widetilde p_{01}^{(n)} & \text{if } a\notin B,\ b\in B.
\end{cases}
\]
Furthermore,
\begin{align*}
\widetilde p_{11}^{(n)}
&=
\frac12+\frac{w_1-w_2}{4n}+O(n^{-3}),\\
\widetilde p_{00}^{(n)}
&=
\frac12+\frac{w_3-w_4}{4n}+O(n^{-3}),\\
\widetilde p_{10}^{(n)}
&=
\frac12+\frac{w_5-w_6}{4n}+O(n^{-3}),\\
\widetilde p_{01}^{(n)}
&=
\frac12+\frac{w_7-w_8}{4n}+O(n^{-3}).
\end{align*}
In particular, there exists $N\in\mbbN^+$ such that, for every $n\ge N$, each of the eight numbers
\[
\widetilde p_{11}^{(n)},\ 1-\widetilde p_{11}^{(n)},\
\widetilde p_{00}^{(n)},\ 1-\widetilde p_{00}^{(n)},\
\widetilde p_{10}^{(n)},\ 1-\widetilde p_{10}^{(n)},\
\widetilde p_{01}^{(n)},\ 1-\widetilde p_{01}^{(n)}
\]
is between $1/4$ and $3/4$.
\end{lem}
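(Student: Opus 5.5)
The plan has three parts: reproduce the argument of Proposition~\ref{lem:edge-factor} with the scaled weights, then expand the four edge probabilities in $1/n$, and finally deduce the uniform bounds $1/4$ and $3/4$.

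\emph{Independence and the edge probabilities.} Fix $B\subseteq[n]$. For $\mcA\in\widetilde{\mbY}_n^B$, every ordered pair $(a,b)$ and value $r\in\{0,1\}$ determine a unique index $\iota(a,b,r)\in\{1,\ldots,8\}$ such that $R^\mcA(a,b)=r$ if and only if $\mcA\models\varphi_{\iota(a,b,r)}(a,b)$. This gives the factorisation
\[
\widetilde\mu_n(\mcA)=\prod_{(a,b)\in[n]^2}e^{w_{\iota(a,b,R^\mcA(a,b))}/n}.
\]
Summing over the free edge choices, $\widetilde\mu_n(\widetilde{\mbY}_n^B)=\prod_{(a,b)}\bigl(e^{w_{\iota(a,b,0)}/n}+e^{w_{\iota(a,b,1)}/n}\bigr)$. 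For any finite set of distinct pairs with prescribed values, the conditional probability is then a product of one-pair ratios, exactly as in the proof of Proposition~\ref{lem:edge-factor}; this gives mutual independence. Reading off the one-pair ratio in each of the four colour cases yields $\widetilde p^{(n)}_{11},\widetilde p^{(n)}_{00},\widetilde p^{(n)}_{10},\widetilde p^{(n)}_{01}$. Since this step is verbatim the earlier argument with $w_i$ replaced by $w_i/n$, I would simply cite it.

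\emph{Asymptotics.} For fixed $u,v$, write
\[
\frac{e^{u/n}}{e^{u/n}+e^{v/n}}=\frac{1}{1+e^{-(u-v)/n}}=\frac12+\frac12\tanh\Bigl(\frac{u-v}{2n}\Bigr),
\]
which is a closed form rather than an estimate. Since $\tanh t=t+O(t^3)$ as $t\to 0$, this equals $\tfrac12+\tfrac{u-v}{4n}+O(n^{-3})$. Applying it to $(w_1,w_2)$, $(w_3,w_4)$, $(w_5,w_6)$ and $(w_7,w_8)$ gives the four expansions. The only point needing care is that the $O(n^{-3})$ term comes from the oddness of $\tanh$: there is no $t^2$ term.

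\emph{Bounds.} Each of the eight numbers has the form $\tfrac12\pm\tfrac{u-v}{4n}+O(n^{-3})$, so it tends to $\tfrac12$. Take $N$ so large that every such deviation is below $\tfrac14$ for all $n\ge N$, for instance requiring $\max_i|w_{2i-1}-w_{2i}|/(4N)<\tfrac18$ and the error term below $\tfrac18$. Alternatively, the monotone bound $\bigl|\tanh\bigl(\tfrac{u-v}{2n}\bigr)\bigr|\le\tfrac{|u-v|}{2n}$ avoids the $O$-constant entirely.

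None of these steps is a real obstacle; the closest is stating the independence cleanly, which the earlier proposition already handles.
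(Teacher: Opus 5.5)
Your proposal is correct and follows the paper's proof essentially step by step. It uses the same factorisation of $\widetilde\mu_n$ over ordered pairs, which the paper writes out with explicit factors $\lambda^{(n)}_{a,b}(r)$ instead of citing Proposition~\ref{lem:edge-factor}; the same first-order expansion of the four edge probabilities; and the same convergence argument for the $1/4$ and $3/4$ bounds. The paper expands the logistic function $g(t)=1/(1+e^{-t})$ directly and checks $g''(0)=0$, whereas your identity $g(t)=\tfrac12+\tfrac12\tanh(t/2)$ gets the missing quadratic term from oddness and, through $|\tanh x|\le|x|$, even yields an explicit $N$.
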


\begin{proof}
Fix $n\in\mbbN^+$ and $B\subseteq[n]$. After conditioning on
$\widetilde{\mbY}_n^B$, the set of vertices satisfying $P$ is fixed and equal to $B$. Thus the only
remaining choices are the truth values of the binary relation $R(a,b)$ for the ordered pairs
$(a,b)\in[n]^2$.

For each ordered pair $(a,b)$ and each $r\in\{0,1\}$, define a factor
$\lambda_{a,b}^{(n)}(r)$ as follows. If $a,b\in B$, then
\[
\lambda_{a,b}^{(n)}(1)=e^{w_1/n},
\qquad
\lambda_{a,b}^{(n)}(0)=e^{w_2/n}.
\]
If $a,b\notin B$, then
\[
\lambda_{a,b}^{(n)}(1)=e^{w_3/n},
\qquad
\lambda_{a,b}^{(n)}(0)=e^{w_4/n}.
\]
If $a\in B$ and $b\notin B$, then
\[
\lambda_{a,b}^{(n)}(1)=e^{w_5/n},
\qquad
\lambda_{a,b}^{(n)}(0)=e^{w_6/n}.
\]
If $a\notin B$ and $b\in B$, then
\[
\lambda_{a,b}^{(n)}(1)=e^{w_7/n},
\qquad
\lambda_{a,b}^{(n)}(0)=e^{w_8/n}.
\]
This records which one of the formulas
$\varphi_1(a,b),\ldots,\varphi_8(a,b)$ is satisfied by the ordered pair $(a,b)$. 

Let $\mcA\in\widetilde{\mbY}_n^B$. Since the colouring is fixed, each ordered pair contributes
exactly one factor, according to whether $R(a,b)$ is true or false in $\mcA$. Therefore
\[
\widetilde{\mu}_n(\mcA)
=
\exp\Big(\frac1n\sum_{i=1}^8 w_i\,|\varphi_i(\mcA)|\Big)
=
\prod_{(a,b)\in[n]^2}
\lambda_{a,b}^{(n)}\big(R_{(a,b)}(\mcA)\big),
\]
where $R_{(a,b)}(\mcA)$ equals $1$ if $\mcA\models R(a,b)$ and equals $0$
otherwise.

Now sum this product over all possible choices of the edge values. A choice of all edge values is a
function
\[
\xi:[n]^2\to\{0,1\}.
\]
Hence
\[
\begin{aligned}
\widetilde{\mu}_n(\widetilde{\mbY}_n^B)
&=
\sum_{\xi:[n]^2\to\{0,1\}}
\prod_{(a,b)\in[n]^2}
\lambda_{a,b}^{(n)}\big(\xi(a,b)\big)\\
&=
\prod_{(a,b)\in[n]^2}
\Big(
\lambda_{a,b}^{(n)}(0)+\lambda_{a,b}^{(n)}(1)
\Big).
\end{aligned}
\]

Let $F\subseteq[n]^2$ be finite, and for each $(a,b)\in F$ fix a value
$r_{a,b}\in\{0,1\}$. We compute the conditional probability that all these prescribed edge values
occur. The weight of the event
\[
\Big\{\mcA\in\widetilde{\mbY}_n^B:
R_{(a,b)}(\mcA)=r_{a,b}\text{ for all }(a,b)\in F
\Big\}
\]
is obtained by fixing the factors for the pairs in $F$ and summing over the remaining pairs:
\[
\begin{aligned}
&\widetilde{\mu}_n\Big(
\Big\{\mcA\in\widetilde{\mbY}_n^B:
R_{(a,b)}(\mcA)=r_{a,b}\text{ for all }(a,b)\in F
\Big\}
\Big)\\
&\qquad =
\prod_{(a,b)\in F}
\lambda_{a,b}^{(n)}(r_{a,b})
\prod_{(a,b)\in[n]^2\setminus F}
\Big(
\lambda_{a,b}^{(n)}(0)+\lambda_{a,b}^{(n)}(1)
\Big).
\end{aligned}
\]
Dividing this expression by
\[
\widetilde{\mu}_n(\widetilde{\mbY}_n^B)
=
\prod_{(a,b)\in[n]^2}
\Big(
\lambda_{a,b}^{(n)}(0)+\lambda_{a,b}^{(n)}(1)
\Big)
\]
cancels all factors coming from $[n]^2\setminus F$. Therefore
\[
\begin{aligned}
&\widetilde{\mbbP}_n \left(
\bigwedge_{(a,b)\in F}\{R_{(a,b)}=r_{a,b}\}
\ \middle|\ \widetilde{\mbY}_n^B
\right)\\
&\qquad =
\prod_{(a,b)\in F}
\widetilde{\mbbP}_n \left(
R_{(a,b)}=r_{a,b}
\ \middle|\ \widetilde{\mbY}_n^B
\right)\\
&\qquad =
\prod_{(a,b)\in F}
\frac{
\lambda_{a,b}^{(n)}(r_{a,b})
}{
\lambda_{a,b}^{(n)}(0)+\lambda_{a,b}^{(n)}(1)
}.
\end{aligned}
\]
The right-hand side is a product of one-edge conditional probabilities. Hence, conditioned on
$\widetilde{\mbY}_n^B$, the variables
\[
\big(R_{(a,b)}:(a,b)\in[n]^2\big)
\]
are mutually independent.

Taking $F=\{(a,b)\}$ gives the one-edge probabilities. If $a,b\in B$, then
\[
\widetilde{\mbbP}_n\big(R_{(a,b)}=1 \ \big| \ \widetilde{\mbY}_n^B\big)
=
\frac{e^{w_1/n}}{e^{w_1/n}+e^{w_2/n}}
=
\widetilde p_{11}^{(n)}.
\]
If $a,b\notin B$, then
\[
\widetilde{\mbbP}_n\big(R_{(a,b)}=1 \ \big| \ \widetilde{\mbY}_n^B\big)
=
\frac{e^{w_3/n}}{e^{w_3/n}+e^{w_4/n}}
=
\widetilde p_{00}^{(n)}.
\]
If $a\in B$ and $b\notin B$, then
\[
\widetilde{\mbbP}_n\big(R_{(a,b)}=1 \ \big| \ \widetilde{\mbY}_n^B\big)
=
\frac{e^{w_5/n}}{e^{w_5/n}+e^{w_6/n}}
=
\widetilde p_{10}^{(n)}.
\]
If $a\notin B$ and $b\in B$, then
\[
\widetilde{\mbbP}_n\big(R_{(a,b)}=1 \ \big| \ \widetilde{\mbY}_n^B\big)
=
\frac{e^{w_7/n}}{e^{w_7/n}+e^{w_8/n}}
=
\widetilde p_{01}^{(n)}.
\]

Now define
\[
g(t)
:=
\frac{1}{1+e^{-t}}.
\]
Then
\[
g(0)=\frac12.
\]
Differentiating gives
\[
g'(t)
=
\frac{e^{-t}}{(1+e^{-t})^2}.
\]
Hence
\[
g'(0)
=
\frac{1}{(1+1)^2}
=
\frac14.
\]
Differentiating once more gives
\[
g''(t)
=
\frac{e^{-t}(e^{-t}-1)}{(1+e^{-t})^3}.
\]
Therefore
\[
g''(0)
=
\frac{1\cdot(1-1)}{(1+1)^3}
=
0.
\]
The Taylor expansion of $g$ at $0$ is therefore
\[
\begin{aligned}
g(t)
&=
g(0)+g'(0)t+\frac12g''(0)t^2+O(t^3)\\
&=
\frac12+\frac{t}{4}+O(t^3)
\qquad(t\to0).
\end{aligned}
\]

For $\widetilde p_{11}^{(n)}$, we have:
\[
\begin{aligned}
\widetilde p_{11}^{(n)}
&=
\frac{e^{w_1/n}}{e^{w_1/n}+e^{w_2/n}}\\
&=
\frac{1}{1+e^{w_2/n}e^{-w_1/n}}\\
&=
\frac{1}{1+e^{(w_2-w_1)/n}}\\
&=
\frac{1}{1+e^{-(w_1-w_2)/n}}\\
&=
g\Big(\frac{w_1-w_2}{n}\Big).
\end{aligned}
\]
Using the Taylor expansion with
\[
t=\frac{w_1-w_2}{n}
\]
gives
\[
\widetilde p_{11}^{(n)}
=
\frac12
+
\frac{w_1-w_2}{4n}
+
O(n^{-3}).
\]

The same calculation, written out for the other three colour-types, gives
\[
\begin{aligned}
\widetilde p_{00}^{(n)}
&=
\frac{e^{w_3/n}}{e^{w_3/n}+e^{w_4/n}}\\
&=
\frac{1}{1+e^{(w_4-w_3)/n}}\\
&=
\frac{1}{1+e^{-(w_3-w_4)/n}}\\
&=
g\Big(\frac{w_3-w_4}{n}\Big)\\
&=
\frac12+\frac{w_3-w_4}{4n}+O(n^{-3}),
\end{aligned}
\]
\[
\begin{aligned}
\widetilde p_{10}^{(n)}
&=
\frac{e^{w_5/n}}{e^{w_5/n}+e^{w_6/n}}\\
&=
\frac{1}{1+e^{(w_6-w_5)/n}}\\
&=
\frac{1}{1+e^{-(w_5-w_6)/n}}\\
&=
g\Big(\frac{w_5-w_6}{n}\Big)\\
&=
\frac12+\frac{w_5-w_6}{4n}+O(n^{-3}),
\end{aligned}
\]
and
\[
\begin{aligned}
\widetilde p_{01}^{(n)}
&=
\frac{e^{w_7/n}}{e^{w_7/n}+e^{w_8/n}} \\
&=
\frac{1}{1+e^{(w_8-w_7)/n}} \\
&=
\frac{1}{1+e^{-(w_7-w_8)/n}}\\
&=
g\Big(\frac{w_7-w_8}{n}\Big) \\
&=
\frac12+\frac{w_7-w_8}{4n}+O(n^{-3}).
\end{aligned}
\]
The error term is $O(n^{-3})$ because the weights $w_1,\ldots,w_8$ are fixed, so each substituted
value of $t$ is of order $1/n$.

In particular,
\[
\widetilde p_{11}^{(n)},\quad
\widetilde p_{00}^{(n)},\quad
\widetilde p_{10}^{(n)},\quad
\widetilde p_{01}^{(n)}
\longrightarrow
\frac12 \quad \text{as}\quad n\to \infty
\]
Their complements also converge to $1/2$ as $n \to \infty$:
\[
1-\widetilde p_{11}^{(n)},\quad
1-\widetilde p_{00}^{(n)},\quad
1-\widetilde p_{10}^{(n)},\quad
1-\widetilde p_{01}^{(n)}
\longrightarrow
\frac12.
\]

Thus each of the eight listed quantities converges to $1/2$.

Let these eight quantities be denoted by $q_1^{(n)},\ldots,q_8^{(n)}$. For each
$j\in\{1,\ldots,8\}$, convergence gives an integer $N_j$ such that, whenever
$n\ge N_j$,
\[
\left|q_j^{(n)}-\frac12\right|<\frac14.
\]
Set
\[
N:=\max\{N_1,\ldots,N_8\}.
\]
Then, for every $n\ge N$ and every $j\in\{1,\ldots,8\}$,
\[
\left|q_j^{(n)}-\frac12\right|<\frac14.
\]
Hence
\[
q_j^{(n)}
>
\frac12-\frac14
=
\frac14 \quad \text{and}\quad q_j^{(n)}<\frac12+\frac14=\frac34.
\]
Therefore all eight quantities are at least $1/4$ for every $n\ge N$.
\end{proof}

\begin{cor}\label{cor:logical-0-1 law in scaled case}
For every sentence $\varphi \in FO(\sigma)$,
$\lim_{n\to\infty} \widetilde{\mbbP}_n(\varphi)$ exists, is either 0 or 1, and the limit does not depend on
the weights $w_1, \ldots, w_8$.
In particular, for every choice of non-negative real weights $w_1,\ldots,w_8$, the scaled sequence
$(\widetilde{\mbbP}_n:n\in\mbbN^+)$ satisfies a $0$--$1$ law for first-order logic.
\end{cor}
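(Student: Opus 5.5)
The plan is to follow the proof of Lemma~\ref{0-1 laws in cases 1 and 2}, with Fact~\ref{the theory of extension axioms is complete}~(b) as the endpoint. The target is to show that $\lim_{n\to\infty}\widetilde{\mbbP}_n(\psi)=1$ for every $\psi\in T_{ext}$. Once this is established, Fact~\ref{the theory of extension axioms is complete}~(b) gives a 0-1 law, and moreover $\lim_{n\to\infty}\widetilde{\mbbP}_n(\varphi)=1$ if and only if $T_{ext}\models\varphi$. Since $T_{ext}$ makes no reference to the weights, the limit cannot depend on $w_1,\ldots,w_8$, which is the second assertion of the corollary.

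By Corollary~\ref{corollary for colour concentration} there is $\varepsilon\in(0,1/2)$ such that the event
\[
\widetilde{\mbX}_n^\varepsilon:=\{\mcA\in\mbW_n:\varepsilon n\le|P^\mcA|\le(1-\varepsilon)n\}
\]
has $\widetilde{\mbbP}_n$-probability tending to $1$. It therefore suffices to prove that $\widetilde{\mbbP}_n(\neg\psi\mid\widetilde{\mbX}_n^\varepsilon)\to0$. By Fact~\ref{dividing a conditional probability}, this reduces to a uniform bound on $\widetilde{\mbbP}_n(\neg\psi\mid\widetilde{\mbY}_n^B)$ over all $B\subseteq[n]$ with $\varepsilon n\le|B|\le(1-\varepsilon)n$.

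Now fix such a $B$ and an extension axiom
\[
\psi=\forall x_1,\ldots,x_k\exists y\,(\theta_1\rightarrow\theta_2).
\]
Assume $P(y)$ is a conjunct of $\theta_2$; the case $\neg P(y)$ is symmetric, using $[n]\setminus B$, which also has at least $\varepsilon n$ elements. Fix $a_1,\ldots,a_k$ satisfying $\theta_1$. For a candidate $b\in B\setminus\{a_1,\ldots,a_k\}$, the event $\theta_2(\bar a,b)$ concerns only the $2k+1$ edge variables $R_{(a_i,b)}$, $R_{(b,a_i)}$, $R_{(b,b)}$. By Lemma~\ref{lem:scaled-edge-factor}, these are conditionally independent given $\widetilde{\mbY}_n^B$, and for distinct candidates $b$ they are disjoint sets of variables. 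Each prescribed edge value has conditional probability in $[1/4,3/4]$ once $n\ge N$, so the probability $p_n$ that a given $b$ works satisfies $p_n\ge 4^{-(2k+1)}=:p$. Hence
\[
\widetilde{\mbbP}_n(\neg\psi\mid\widetilde{\mbY}_n^B)\le n^k(1-p)^{\varepsilon n-k}\to0.
\]

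The one point needing care is that the edge probabilities now vary with $n$, unlike in Lemma~\ref{0-1 laws in cases 1 and 2}. This is handled by the uniform lower bound $1/4$ from Lemma~\ref{lem:scaled-edge-factor}, which makes $p$ independent of $n$, and the remaining steps go through as before.
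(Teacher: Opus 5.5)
Your proposal is correct and follows essentially the same route as the paper. The paper uses colour concentration (Corollary~\ref{corollary for colour concentration}), then the conditional edge independence of Lemma~\ref{lem:scaled-edge-factor} to rerun the argument of Lemma~\ref{0-1 laws in cases 1 and 2} for every extension axiom, and then Fact~\ref{the theory of extension axioms is complete}(b), whose characterisation of the limit through $T_{ext}$ gives independence from the weights. The one difference is that you state explicitly the $n$-independent success probability $p=4^{-(2k+1)}$, which comes from the $[1/4,3/4]$ bound on the scaled edge probabilities; the paper leaves this step implicit when it says ``argue like in the proof''.
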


\begin{proof}
By Corollary~\ref{corollary for colour concentration} there is $\varepsilon > 0$ such that if 
\[
\mbX_n^\varepsilon := \big\{\mcA \in \mbW_n : \varepsilon n \leq |P^\mcA| \leq (1 - \varepsilon) n \big\}
\]
then $\lim_{n\to\infty} \widetilde{\mbbP}_n\big(\mbX_n^\varepsilon\big) = 1$.
By
Lemma~\ref{lem:scaled-edge-factor}
we have $\lim_{n\to\infty} \widetilde{p}_{ij}^{(n)} = 1/2$ for all $i, j \in \{0, 1\}$.
Therefore
we can argue like in the proof of 
Lemma~\ref{0-1 laws in cases 1 and 2} 
to conclude that for every extension axiom $\psi$, $\lim_{n\to\infty} \widetilde{\mbbP}_n(\psi) = 1$.
Now the claim of the corollary follows directly from Fact~\ref{the theory of extension axioms is complete}
with $\widetilde{\mbbP}_n$ in place of $\mbbP_n$ (as part~(b) of the fact is valid for any sequence
of distributions).
\end{proof}

\section{Conclusions}

\noindent
We have considered a Markov logic network (MLN) $\mbbM$ which associates a weight to every 
soft constraint in the form of a boolean combination of the two atomic formulas $P(x)$ and $R(x, y)$
(representing a property and a relation) and we have studied the probability distribution $\mbbP_n$
on the set of possible worlds with domain $[n] := \{1, \ldots, n\}$, denoted $\mbW_n$.
We have identified a possible world for the property $P$ and relation $R$ with a coloured digraph ($P$ being the colour predicate).
A characterization has been given of the possible almost sure properties of large random coloured digraphs, in terms
of the proportion of coloured vertices and the edge probability conditioned on the colours of the vertices.
This characterization has 7 cases where each case depends on the weights of the soft constraints.
We have proved that in 4 of the cases first-order sentences obey a 0-1 law, in one case they obey a 
convergence law, and in the two remaining cases first-order sentence up to a certain quantifier-rank obey a 0-1 law,
but a 0-1 law does not hold in general.

In the above results we have considered the standard semantics of MLNs.
Partly because various kinds of ``scaled MLN semantics'' has been considered in the literature
\cite{JBB, Mittal, Wei25}
we have also considered a semantics which scales the weights by $1/n$, where $n$ is the domain size.
In this scaled setting we have proved, for large domains, 
the proportion of coloured elements is almost surely bounded away from 0 
and from 1 as the domain size tends to infinity, the probability of an edge 
(regardless of the colours of the vertices) tends to 1/2 as the domain size tends to infinity
(Lemma~\ref{lem:scaled-edge-factor}),
and that all first-order sentences obey a 0-1 law.
Also, for every sentence, its limit probability does {\em not} depend on the weights.

Our analysis is, to our best knowledge, the so-far most complete one about possible worlds with increasing domain sizes
for a nontrivial language. In \cite{KopMLN} a similar analysis is carried out for a language with only one predicate symbol
$P$ that has arity 1 (a colouring without edges).

Our findings are also interesting since researchers have, on the one hand, found that with
the standard unscaled semantics the limit probability of certain formulas may not depend on the weights
\cite{Mittal, Poole},
and on the other hand, researchers originally introduced scaled semantics for MLNs to
make probabilities of certain formulas dependent on the weights \cite{JBB, Mittal, Wei25}.
For coloured digraphs we have seen that if we use the standard unscaled semantics for MLNs, 
then the weights {\em do have} influence on the limit probabilities of first-order sentences (and on the almost
sure structural properties of large possible worlds), while if we use the scaling by $1/n$ which is natural in this context,
then the weights do {\em not} have influence on the limit probabilities of first-order sentences.

\appendix

\section{Proof of Lemma~\ref{lem:caseC-window-rigorous}}

Recall the formulation of Lemma~\ref{lem:caseC-window-rigorous}:

\smallskip

\noindent
(a) If $\Eff^{\max} = \{0\}$ and $\eff'(0) = 0$ then, for all $\varepsilon \in (0, \tfrac{1}{2})$,
\begin{equation*}\label{eq:caseC-window}
\begin{aligned}
\lim_{n\to\infty} \mbbP_n \Big(
  \big\{\mc A \in \mb W_n :
    \big\lfloor\tfrac{1 - \varepsilon}{2|c_2|}\ln n \big\rfloor
      \le \big|P^\mcA\big|
      \le \big\lceil \tfrac{1 + \varepsilon}{2|c_2|}\ln n \big\rceil
  \big\}
\Big)
\ = \ 1.
\end{aligned}
\end{equation*}
(b) If  $\Eff^{\max} = \{1\}$ and $\eff'(1) = 0$ then, for all $\varepsilon \in (0, \tfrac{1}{2})$,
\begin{equation*}\label{eq:caseC-window}
\begin{aligned}
\lim_{n\to\infty} \mbbP_n \Big(
  \big\{\mc A \in \mbW_n :
    n - \big\lceil \tfrac{1 + \varepsilon}{2|c_2|}\ln n \big\rceil
      \le \big|P^\mcA\big|
      \le n - \big\lfloor \tfrac{1 - \varepsilon}{2|c_2|}\ln n \big\rfloor
  \big\}
\Big)
\ = \ 1.
\end{aligned}
\end{equation*}

\begin{proof}
(a) Suppose that $\Eff^{\max} = \{0\}$ and $\eff'(0) = 0$.
Then $c_2<0$ and $c_1=0$ so
\[
\eff(\alpha)=c_2\alpha^2+c_0.
\]
For $0\le m\le n-1$ define
\begin{equation}\label{definition of h-n(m)}
h_n(m) := \ln\dfrac{Z_n(m+1)}{Z_n(m)}.
\end{equation}
So the sign of $h_n(m)$ tells if $Z_n(m+1) > Z_n(m)$, $Z_n(m+1) = Z_n(m)$,
or $Z_n(m+1) < Z_n(m)$.
It follows that (if $0\le m\le n-1$)
\begin{equation}\label{eq:block-hn}
\begin{aligned}
h_n(m) =& \ln(Z_n(m+1)) - \ln(Z_n(m)) \\
=& \ln\binom{n}{m+1} + n^2 \eff\Big(\frac{m+1}{n}\Big) - \ln\binom{n}{m} - n^2\eff\Big(\frac{m}{n}\Big) \\
=& \ln \Bigg(
   \frac{n!}{(m+1)!(n-m-1)!}
   \cdot
   \frac{m!(n-m)!}{n!}
   \Bigg) \\
   &+ n^{2} \Bigg(
   \eff \Big(\dfrac{m+1}{n}\Big)
   - \eff \Big(\dfrac{m}{n}\Big)
   \Bigg) \\
=& \ln\dfrac{n-m}{m+1}
   + n^{2} \Bigg(
   \eff \Big(\dfrac{m+1}{n}\Big)
   - \eff \Big(\dfrac{m}{n}\Big)
   \Bigg) \\
 =& \ln\dfrac{n-m}{m+1} + c_2 (2m + 1).
\end{aligned}
\end{equation}
Recalling that $c_2 < 0$, it follows that for $0\le m\le n-2$,
\[
h_n(m + 1)-h_n(m)=\ln\frac{n-m-1}{n-m}+\ln\frac{m+1}{m+2}+2c_2 \ < \ 0.
\]
Hence $m\mapsto h_n(m)$ is strictly decreasing on $\{0,1,\dots,n-1\}$. 
A strictly decreasing sequence can pass from being positive to being negative at most once.
Therefore there exists $m_\star\in\{0,\dots,n-1\}$ such that $h_n(m)>0$ for $m<m_\star$, $h_n(m_\star)\ge 0\ge h_n(m_\star{+}1)$, and $h_n(m)<0$ for $m>m_\star$. Equivalently,
\[
Z_n(0)<Z_n(1)<\cdots<Z_n(m_\star) \ (\le)\ \ Z_n(m_\star{+}1)>\cdots>Z_n(n),
\]
with two maxima $m_\star$ and $m_\star + 1$ only when $h_n(m_\star)=0$.

Fix $k\in\mbb N$. For $0\le j\le n-1$ we have
\[
h_n(j)=\ln\frac{n-j}{j+1}+2c_2\,j+c_2.
\]
So for each fixed $j\le k$, $\lim_{n\to\infty} h_n(j) = \infty$.
Hence there exists $N(k)$ such that for all $n\ge N(k)$ we have $h_n(j)>0$ for every $0\le j\le k$. 
It follows that if  $n \geq N(k)$ then
\[
Z_n(0)<Z_n(1)<\cdots<Z_n(k)<Z_n(k{+}1).
\]
Moreover, since $h_n(m)$ is strictly decreasing in $m$ for all $0\le m\le k$, we have
\begin{equation}\label{Z-n(m) over Z-n(k+1)}
\begin{aligned}
&\frac{Z_n(m)}{Z_n(k + 1)} =
\frac{Z_n(m)}{Z_n(m + 1)} \cdot \frac{Z_n(m+1)}{Z_n(m + 2)} \cdot \\ 
&\ldots \cdot 
\frac{Z_n(k - 1)}{Z_n(k)} \cdot \frac{Z_n(k)}{Z_n(k + 1)} = \\
&=\prod_{r=m}^k e^{-h_n(r)} = 
\exp \Big(-\sum_{r=m}^{k} h_n(r)\Big)\ \\
&\le\ \exp \big(-(k{+}1-m)\,h_n(k)\big)\ \le\ e^{-h_n(k)}.
\end{aligned}
\end{equation} 

Let $\varepsilon \in (0, 1/2)$ and define 
\[
m_- := m_-(n) := \bigg\lfloor \frac{1 - \varepsilon}{2|c_2|}\ln n \bigg\rfloor \ \text{ and } \
m_+ := m_+(n) := \bigg\lceil \frac{1 + \varepsilon}{2|c_2|}\ln n \bigg\rceil.
\]
Part~(a) of the lemma follows directly from claims~1 and~2 below.

\medskip
\noindent
{\bf Claim 1.} $\lim_{n\to\infty} \mbbP_n\bigg(\bigcup_{0 \leq m \leq m_-(n)} \mbW_n(m)\bigg) = 0$.
\smallskip

\noindent
We now prove Claim 1.
By the definition of $m_-(n)$ we have
\begin{equation}\label{properties of m-minus}
\begin{aligned}
&m_-(n)=\frac{1-\varepsilon}{2|c_2|} \ln n+O(1), \quad
\lim_{n\to\infty}m_-(n) = \infty, \text{ and } \\
&\lim_{n\to\infty} \frac{m_-(n)}{n} = 0.
\end{aligned}
\end{equation}
We have
\[
\ln\frac{n-m_-}{m_-+1}
=\ln(n-m_-)-\ln(m_-+1)
\]
and since $m_-(n) =O(\ln n)$ and $\dfrac{m_-(n)}{n}\to 0$ we get
(with the abbreviation $m_- := m_-(n)$)
\[
\ln(n-m_-)=\ln n+\ln\Bigl(1-\frac{m_-}{n}\Bigr)
=\ln n+O\Bigl(\frac{m_-}{n}\Bigr)
=\ln n+o(1),
\]
and since $\lim_{n\to\infty} m_-(n) = \infty$,
\[
\ln(m_-+1)=\ln m_-+\ln\Bigl(1+\frac{1}{m_-}\Bigr)
=\ln m_-+O\Bigl(\frac{1}{m_-}\Bigr)
=\ln m_-+o(1).
\]
Thus
\[
\ln\frac{n-m_-}{m_-+1}
=\ln n-\ln m_-+o(1).
\]
Moreover, using \eqref{properties of m-minus}, we get
\[
2c_2 m_-
=-2|c_2|\,m_-
=-(1-\varepsilon)\ln n+O(1).
\]
Substituting into \eqref{eq:block-hn} at $m=m_-$ gives
\[
\begin{aligned}
h_n(m_-)
&=\ln\frac{n-m_-}{m_-+1}+2c_2 m_-+c_2\\[3pt]
&=\bigl(\ln n-\ln m_-+o(1)\bigr)
  - (1-\varepsilon)\ln n+O(1)\\[3pt]
&=\varepsilon\ln n-\ln m_-+o(\ln n).
\end{aligned}
\]
Since $m_-=\Theta(\ln n)$, we have 
\[
\ln m_-=\ln\ln n+O(1)=o(\ln n),
\]
so there exists $n_1$ such that, for all $n\ge n_1$,
\begin{equation}\label{eq:caseC-hn-mminus-lb-new}
h_n(m_-)\ \ge\ \frac{\varepsilon}{2}\,\ln n.
\end{equation}

Because $h_n$ is strictly decreasing on $\{0, \ldots, n-1\}$
it follows that for all $0\le r\le m_- - 1$ and $n\ge n_1$,
\begin{equation}\label{eq:caseC-hn-all-small-new}
h_n(r)\ \ge\ h_n(m_-)\ \ge\ \frac{\varepsilon}{2}\,\ln n.
\end{equation}
Using \eqref{Z-n(m) over Z-n(k+1)}, it follows that for all $m\le m_-$,
\[
\frac{Z_n(m)}{Z_n(m_- + 1)}
=\exp\biggl(-\sum_{r=m}^{m_-}h_n(r)\biggr),
\]
and by \eqref{eq:caseC-hn-all-small-new},
\[
\sum_{r=m}^{m_-}h_n(r)
\ \ge\ (m_- + 1 -m)\,\frac{\varepsilon}{2}\,\ln n,
\]
so
\begin{equation*}\label{eq:caseC-lower-tail-ratio-new}
\frac{Z_n(m)}{Z_n(m_- + 1)}
\ \le\ n^{-(\frac{\varepsilon}{2})(m_- + 1 - m)}.
\end{equation*}
Since $Z_n\ge Z_n(m_- + 1)$, we obtain
\begin{align*}
&\mbbP_n\Bigg(\bigcup_{0 \leq m \leq m_-(n)} \mbW_n(m)\Bigg) \ 
= \ \sum_{m=0}^{m_-(n)}\frac{Z_n(m)}{Z_n}
 \ \le\ \sum_{m=0}^{m_-(n)}\frac{Z_n(m)}{Z_n(m_-(n)  + 1)}\\
& \leq \ \sum_{m=0}^{m_-(n)} n^{-(\frac{\varepsilon}{2})(m_- + 1 - m)}
 \ \le \ \sum_{k = 1}^\infty n^{-(\frac{\varepsilon}{2})k}
 = \frac{n^{-\frac{\varepsilon}{2}}}{1-n^{-\frac{\varepsilon}{2}}}.
\end{align*}
Since $n^{-\frac{\varepsilon}{2}}/(1-n^{-\frac{\varepsilon}{2}}) \to 0$ as $n\to\infty$ the claim is proved.

\medskip
\noindent
{\bf Claim 2.} $\lim_{n\to\infty} \mbbP_n\bigg(\bigcup_{m_+(n) \leq m \leq n} \mbW_n(m)\bigg) = 0$.
\smallskip

\noindent
We argue similarly as when we proved Claim~1.
By the definition of $m_+(n)$ we have
\begin{equation}\label{properties of m-plus}
\begin{aligned}
&m_+(n)=\frac{1+\varepsilon}{2|c_2|} \ln n+O(1),
\quad
\lim_{n\to\infty}m_+(n) = \infty,\ \ \text{ and } \\ 
&\lim_{n\to\infty} \frac{m_+(n)}{n} = 0.
\end{aligned}
\end{equation}
It follows (with the abbreviation $m_+ = m_+(n)$) that
\[
2c_2 m_+
=-2|c_2|\,m_+
=-(1+\varepsilon)\ln n+O(1).
\]
Similarly as in the proof of Claim~1, we get
\[
\ln\frac{n-m_+}{m_++1}
=\ln n-\ln m_+ + o(1).
\]
Therefore, by using \eqref{eq:block-hn}, we get
\[
\begin{aligned}
h_n(m_+)
&=\ln\frac{n-m_+}{m_++1}+2c_2 m_+ + c_2\\[3pt]
&=\bigl(\ln n-\ln m_++o(1)\bigr)
  -(1+\varepsilon)\ln n+O(1)\\[3pt]
&=-\varepsilon\ln n-\ln m_+ + o(\ln n).
\end{aligned}
\]
Since $m_+=\Theta(\ln n)$, we have $\ln m_+=o(\ln n)$, so there exists $n_2$ such that, for all $n\ge n_2$,
\begin{equation}\label{eq:caseC-hn-mplus-better}
h_n(m_+)\ \le\ -\frac{3\varepsilon}{4}\,\ln n.
\end{equation}

For every integer $k\ge 2$ we have $\dfrac{k-1}{k}\in[\frac{1}{2},1)$, so
\[
\bigl|\ln\dfrac{k-1}{k}\bigr|\le \ln 2.
\]
Applying this with $k=n-m$ and $k=m+2$ gives
\[
\Bigl|\ln\frac{n-m-1}{n-m}\Bigr|\le \ln 2,
\qquad
\Bigl|\ln\frac{m+1}{m+2}\Bigr|\le \ln 2,
\]
and therefore, for  all $n$ and all $0\le m\le n-2$,
\begin{equation*}
\bigl|h_n(m+1)-h_n(m)\bigr|
\le 2\ln 2 + 2|c_2|.
\end{equation*}
Let
$L := 2\ln 2 + 2|c_2|$ so $\bigl|h_n(m+1)-h_n(m)\bigr| \leq L$ for all $0\le m\le n-2$.
Then
\begin{align*}
h_n(m_+-1)
&= h_n(m_+) + \bigl(h_n(m_+-1)-h_n(m_+)\bigr) \\
&\le\ h_n(m_+) + L \ \le \ -\frac{3\varepsilon}{4}\ln n + L.
\end{align*}
Combining this with \eqref{eq:caseC-hn-mplus-better}, there is $n_3$ such that, for all $n\ge n_3$,
\[
L\ \le\ \frac{\varepsilon}{4}\,\ln n,
\]
and hence
\begin{equation}\label{eq:caseC-hn-mplusminus1-ub-new}
h_n(m_+-1)
\ \le\ -\frac{3\varepsilon}{4}\,\ln n + \frac{\varepsilon}{4}\,\ln n
\ =\ -\frac{\varepsilon}{2}\,\ln n.
\end{equation}

Now fix $n\ge \max\{n_2,n_3\}$. 
From \eqref{Z-n(m) over Z-n(k+1)} and the (above proved) fact that $h_n(m)$ is strictly decreasing it follows that
for all $m\ge m_++1$,
\[
\ln\frac{Z_n(m)}{Z_n(m_+)}
= \sum_{r=m_+}^{m-1} h_n(r)
\ \le\ (m-m_+)\,h_n(m_+)
\ \le\ - (m-m_+)\,\frac{\varepsilon}{2}\,\ln n
\]
Thus, for $m_+ +1 \leq m \leq n$, 
\begin{equation*}\label{eq:caseC-upper-tail-ratio-new}
\frac{Z_n(m)}{Z_n(m_+)}
\ \le\ n^{-(\frac{\varepsilon}{2})(m-m_+)}.
\end{equation*}
By \eqref{eq:caseC-hn-mplusminus1-ub-new} we also have
\[
\frac{Z_n(m_+)}{Z_n}
\ \le\ \frac{Z_n(m_+)}{Z_n(m_+-1)}
= \exp\bigl(h_n(m_+-1)\bigr)
\ \le\ n^{-\frac{\varepsilon}{2}}.
\]
Since $Z_n\ge Z_n(m_+(n))$, we get
\begin{align*}
&\mbb P_n\Bigg(\bigcup_{m_+(n) \leq m \leq n} \mbW_n(m)\Bigg)
= \sum_{m=m_+}^{n}\frac{Z_n(m)}{Z_n} \\
&\le \frac{Z_n(m_+)}{Z_n}
    +\sum_{m=m_++1}^{n}\frac{Z_n(m)}{Z_n(m_+)} \nonumber \\
&\le n^{-\frac{\varepsilon}{2}}
    +\sum_{k = 1}^\infty n^{-(\frac{\varepsilon}{2})k}
\ \le \ n^{-\frac{\varepsilon}{2}} + \frac{n^{-\frac{\varepsilon}{2}}}{1-n^{-\frac{\varepsilon}{2}}}
\nonumber
\end{align*}
Since the last expression above tends to 0 as $n \to \infty$, Claim~2 is proved.
Part~(a) of Lemma~\ref{lem:caseC-window-rigorous} 
follows immediately from claims~1 and~2.

\medskip

(b) Now suppose that $\Eff^{\max} = \{1\}$ and $\eff'(1) = 0$.
Then there are constants $d_2 < 0$ and $d_0 > 0$ such that
\[
\eff(\alpha) = d_2(\alpha - 1)^2 + d_0.
\]
This case is ``symmetric'' to the previous case if we consider ``uncoloured'' vertices instead of ``coloured'',
that is, if we consider $n - m$ (where $m$ is the number of coloured vertices) instead of $m$ in part~(a) of
Lemma~\ref{lem:caseC-window-rigorous}.
We have 
\begin{align*}
\ln Z_n(m) &= \ln\binom{n}{m} + n^2 \eff\Big(\frac{m}{n}\Big) 
=  \ln\binom{n}{m} + n^2 \Bigg(d_2\bigg(\frac{m}{n} - 1\bigg)^2 + d_0 \Bigg) \\
&= \ln\binom{n}{n-m} + n^2 \Bigg(d_2\bigg(\frac{n-m}{n}\bigg)^2 + d_0 \Bigg)
\end{align*}
so we can argue as in part~(a) with $n-m$ in place of $m$.
Therefore, with the same notations $m_-(n)$ and $m_+(n)$ as above, the arguments proving part~(a) show that 
\[
\lim_{n\to\infty} \mbbP_n\Big( \big\{\mcA \in \mbW_n : m_-(n) \leq n - |P^\mcA| \leq m_+(n) \big\}\big)
= 1
\]
and it follows that
\[
\lim_{n\to\infty} \mbbP_n\Big( \big\{\mcA \in \mbW_n : n - m_+(n) \leq |P^\mcA| \leq n - m_-(n) \big\}\big)
= 1.
\]
This concludes the proof of Lemma~\ref{lem:caseC-window-rigorous}.
\end{proof}

\section{Proof of Lemmas~\ref{convergence to 1/e in case 3} and \ref{convergence to 1/e in case 4}}
\label{not 0-1 law in cases 3 and 4}

\noindent
We will first prove two quite technical lemmas which will be used in the proof of 
Lemmas~\ref{convergence to 1/e in case 3} and~\ref{convergence to 1/e in case 4}.
Recall the constants $p_{01}$ and $p_{10}$ from Notation~\ref{not:typed}.
Note that $c_2$ depends on the weights $w_1, \ldots, w_8$, that $p_{01}$ depends only on $w_7, w_8$, and that $p_{10}$ depends only on $w_5, w_6$.

\begin{lem}\label{lem:fair-coin-small-shift}
For each $m\ge 1$, let
\[
X_{m,1},\ldots,X_{m,m}
\]
be independent and identically distributed $\{0,1\}$-valued random variables
satisfying
\[
\mbbP(X_{m,i}=0)=\mbbP(X_{m,i}=1)=\frac12
\qquad
\text{for every } i=1,\ldots,m,
\]
and put
\[
Y_m:=\sum_{i=1}^m X_{m,i}.
\]
Let $(S_n)_{n\ge 1}$ be a sequence of non-empty finite subsets of
$\mbbN^+$ such that
\[
\lim_{n\to\infty}\min_{m\in S_n}m=\infty.
\]
For each $m\in S_n$, let $a_{n,m}\in\mbbR$. If
\[
\lim_{n\to\infty}
\max_{m\in S_n}\frac{|a_{n,m}|}{\sqrt m}=0,
\]
then
\[
\lim_{n\to\infty}\sup_{m\in S_n}
\left|
\mbbP(Y_m\ge m/2+a_{n,m})-\frac12
\right|
=0
\]
and
\[
\lim_{n\to\infty}\sup_{m\in S_n}
\left|
\mbbP(Y_m>m/2+a_{n,m})-\frac12
\right|
=0.
\]
\end{lem}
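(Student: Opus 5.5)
The plan is to reduce everything to the central limit theorem for $\mathrm{Bin}(m,1/2)$, in its uniform (Berry–Esseen) form. The one difficulty is that the shift $a_{n,m}$ depends on both $n$ and $m$. A plain CLT applied along each fixed $m$ would not control the supremum over $m\in S_n$, so a uniform bound is needed.

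Write $Z_m:=(Y_m-m/2)/(\sqrt m/2)$, which has mean $0$ and variance $1$. Let $\Phi$ be the standard normal distribution function. Since the $X_{m,i}$ are i.i.d.\ with bounded third moment, Berry–Esseen gives an absolute constant $K$ with
\[
\sup_{t\in\mbbR}\bigl|\mbbP(Z_m\le t)-\Phi(t)\bigr|\le \frac{K}{\sqrt m}
\quad\text{for every } m\ge 1.
\]
The same bound holds for $\mbbP(Z_m<t)$, either by taking left limits or by applying it at $t-\eta$ and letting $\eta\downarrow 0$ with $\Phi$ continuous. Now rewrite the events:
\[
\mbbP(Y_m\ge m/2+a_{n,m})=1-\mbbP\bigl(Z_m<t_{n,m}\bigr),
\qquad
\mbbP(Y_m> m/2+a_{n,m})=1-\mbbP\bigl(Z_m\le t_{n,m}\bigr),
\]
where $t_{n,m}:=2a_{n,m}/\sqrt m$.

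Both quantities are then within $K/\sqrt m$ of $1-\Phi(t_{n,m})$. Since $\Phi$ is $1/\sqrt{2\pi}$-Lipschitz and $\Phi(0)=1/2$, we get $|1-\Phi(t_{n,m})-1/2|\le |t_{n,m}|/\sqrt{2\pi}$. Hence for each $m\in S_n$ both deviations from $1/2$ are at most
\[
\frac{K}{\sqrt m}+\frac{2}{\sqrt{2\pi}}\cdot\frac{|a_{n,m}|}{\sqrt m}.
\]

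Take the supremum over $m\in S_n$. This supremum is a maximum, since $S_n$ is finite. It is bounded by $K/\sqrt{\min S_n}+\sqrt{2/\pi}\,\max_{m\in S_n}|a_{n,m}|/\sqrt m$, and both terms tend to $0$ by the hypotheses. This proves both limits.

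The only step that is not immediate is justifying the uniformity in $m$, which Berry–Esseen handles. If one wants to avoid citing it, there is an alternative. Argue by contradiction: choose $n_j$ and $m_j\in S_{n_j}$ with deviation at least $\eta$. Then $m_j\to\infty$ and $t_{n_j,m_j}\to 0$, so the ordinary CLT along $m_j$, together with Pólya's theorem that convergence to a continuous distribution function is uniform, gives a contradiction. I would present the Berry–Esseen version as the main argument.
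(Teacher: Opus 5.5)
Your proof is correct, but it obtains the estimate that is uniform in $m$ by a different route from the paper.

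\emph{Your route.} You invoke the Berry--Esseen theorem, whose constant is absolute here because $\rho/\sigma^3=1$ for a fair coin. You get the value $\tfrac12$ from $\Phi(0)=\tfrac12$, and you absorb the shift $t_{n,m}=2a_{n,m}/\sqrt m$ through the Lipschitz bound on $\Phi$. Your handling of $\ge$ versus $>$ via left limits is fine. So is your fallback through the ordinary central limit theorem and P\'olya's theorem along a subsequence.

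\emph{The paper's route.} It never uses a normal approximation.
\begin{itemize}
\item It proves the anti-concentration bound $\max_j\mbbP(Y_m=j)=2^{-m}\binom{m}{\lfloor m/2\rfloor}\le C_{\rm bin}/\sqrt m$ directly from the Stirling bounds in Lemma~\ref{lem:stirling}.
\item It uses the exact symmetry $\mbbP(Y_m=j)=\mbbP(Y_m=m-j)$ to show that $\mbbP(Y_m\ge m/2)$ and $\mbbP(Y_m>m/2)$ lie within $C_{\rm bin}/\sqrt m$ of $\tfrac12$.
\item It notes that moving the threshold from $m/2$ to $m/2+a$ changes the event by at most $\lfloor|a|\rfloor+2$ integer values, each of mass at most $C_{\rm bin}/\sqrt m$.
\end{itemize}
This gives the bound $C(|a|+1)/\sqrt m$, which has the same shape as your $K/\sqrt m+\sqrt{2/\pi}\,|a|/\sqrt m$.

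\emph{What each buys.} The paper's proof is elementary and self-contained, needing only the Stirling formula already stated in the paper. However, it relies on the symmetry of the fair coin to pin down the value $\tfrac12$. Your proof is shorter and would work unchanged for any fixed bias after centring at the mean. The price is citing a considerably deeper theorem.
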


\begin{proof}
For $0\le j\le m$, the event $\{Y_m=j\}$
means that exactly $j$ of the variables
\[
X_{m,1},\ldots,X_{m,m}
\]
are equal to $1$, and the remaining $m-j$ variables are equal to $0$.

There are
\[
\binom mj
\]
ways to choose the $j$ positions where the value is $1$. Since $X_{m,1},\ldots,X_{m,m}$ are independent and identically distributed
Bernoulli random variables with parameter $1/2$,
for any fixed sequence $(a_1,\ldots,a_m)\in\{0,1\}^m$, independence gives
\[
\begin{aligned}
\mbbP\bigl(X_{m,1}=a_1,\ldots,X_{m,m}=a_m\bigr)
=
2^{-m}.
\end{aligned}
\]
Therefore
\[
\mbbP(Y_m=j)=2^{-m}\binom mj,
\qquad 0\le j\le m.
\]

We now find where this probability is largest. Since the factor $2^{-m}$
does not depend on $j$, it is enough to find where
\[
\binom mj
\]
is largest.
It is well-known that $\binom{m}{j}$ reaches its maximum when $j = \lfloor m/2 \rfloor$ and when $j = \lceil m/2 \rceil$.
One way to see this is to observe that 
\[
\frac{\binom m{j+1}}{\binom mj}
=
\frac{\frac{m!}{(j+1)!(m-j-1)!}}
{\frac{m!}{j!(m-j)!}}
=
\frac{j!(m-j)!}{(j+1)!(m-j-1)!}
=
\frac{m-j}{j+1}
\]
and note that $(m-j)/(j+1)$ is at most 1 when $j \geq \lfloor m/2 \rfloor$ and at least 1 when $j < \lceil m/2 \rceil$.
Thus, for every $m\ge 1$,
\[
\max_{0\le j\le m}\binom mj
=
\binom m{\lfloor m/2\rfloor}.
\]
Consequently,
\[
\max_{0\le j\le m}\mbbP(Y_m=j)
=
2^{-m}\binom m{\lfloor m/2\rfloor}.
\]

We now bound this largest point probability. Fix $r\ge 1$. Applying
Lemma~\ref{lem:stirling} to $(2r)!$ and to $r!$, and writing $\rho_k$ for
the Stirling remainder corresponding to $k!$, we obtain
\[
\begin{aligned}
\binom{2r}{r}
&=
\frac{(2r)!}{(r!)^2} \\
&=
\frac{
\sqrt{2\pi}\,(2r)^{2r+\frac12}e^{-2r}e^{\rho_{2r}}
}{
\left(\sqrt{2\pi}\,r^{r+\frac12}e^{-r}e^{\rho_r}\right)^2
} \\
&=
\frac{
\sqrt{2\pi}\,(2r)^{2r+\frac12}e^{-2r}e^{\rho_{2r}}
}{
2\pi\,r^{2r+1}e^{-2r}e^{2\rho_r}
} \\
&=
\frac{(2r)^{2r+\frac12}}{\sqrt{2\pi}\,r^{2r+1}}\,
e^{\rho_{2r}-2\rho_r}.
\end{aligned}
\]
Since
\[
(2r)^{2r+\frac12}
=
2^{2r+\frac12}r^{2r+\frac12},
\]
we get
\[
\frac{(2r)^{2r+\frac12}}{\sqrt{2\pi}\,r^{2r+1}}
=
\frac{2^{2r+\frac12}}{\sqrt{2\pi}\sqrt r}
=
\frac{2^{2r}}{\sqrt{\pi r}}
=
\frac{4^r}{\sqrt{\pi r}}.
\]
Therefore
\[
\binom{2r}{r}
=
\frac{4^r}{\sqrt{\pi r}}\,e^{\rho_{2r}-2\rho_r}.
\]
By the remainder bounds in Lemma~\ref{lem:stirling}, we have
\[
\rho_r>0
\qquad\text{and}\qquad
\rho_{2r}<\frac{1}{24r}.
\]
Hence
\[
\rho_{2r}-2\rho_r
\le \rho_{2r}
<
\frac{1}{24r}
\le
\frac{1}{24}.
\]
Thus
\[
e^{\rho_{2r}-2\rho_r}
\le e^{1/24}.
\]
Consequently,
\[
\binom{2r}{r}
\le
\frac{e^{1/24}}{\sqrt{\pi}}\,
\frac{4^r}{\sqrt r}.
\]
So, with
\[
C_0:=\frac{e^{1/24}}{\sqrt{\pi}},
\]
we have, for every $r\ge 1$,
\[
\binom{2r}{r}\le C_0\frac{4^r}{\sqrt r}.
\]
We use this to prove a bound valid for all $m\ge 1$.

First suppose that $m=2r$ with $r\ge 1$. Then
\[
2^{-m}\binom m{\lfloor m/2\rfloor}
=
2^{-2r}\binom{2r}{r}.
\]
Since $m=2r$, we have
\begin{equation}\label{eq:even-middle-point-probability}
2^{-m}\binom m{\lfloor m/2\rfloor}
=
2^{-2r}\binom{2r}{r}.
\end{equation}
Since $2^{2r}=4^r$, we get
\begin{equation}\label{eq:even-r-point-bound}
\begin{aligned}
2^{-2r}\binom{2r}{r}
&\le
\frac{1}{4^r}\, C_0\frac{4^r}{\sqrt r}  \\
&=
\frac{C_0}{\sqrt r}.
\end{aligned}
\end{equation}
Since $m=2r$, we also have $r=m/2$, and hence
\begin{equation}\label{eq:even-r-to-m-conversion}
\frac{C_0}{\sqrt r}
=
\frac{C_0}{\sqrt{m/2}}
=
\frac{\sqrt2 C_0}{\sqrt m}.
\end{equation}
Combining \eqref{eq:even-middle-point-probability},
\eqref{eq:even-r-point-bound}, and \eqref{eq:even-r-to-m-conversion}, we get
\begin{equation}\label{eq:even-binomial-point-bound}
2^{-m}\binom m{\lfloor m/2\rfloor}
\le
\frac{\sqrt2 C_0}{\sqrt m}
\qquad
(m=2r,\ r\ge 1).
\end{equation}

Next suppose that $m=2r+1$ with $r\ge 1$. Then
\begin{equation}\label{eq:odd-middle-point-probability}
2^{-m}\binom m{\lfloor m/2\rfloor}
=
2^{-(2r+1)}\binom{2r+1}{r}.
\end{equation}
We have
\begin{equation}\label{eq:odd-middle-coefficient-identity}
\binom{2r+1}{r}
=
\frac{2r+1}{r+1}\binom{2r}{r}.
\end{equation}
Moreover,
\begin{equation}\label{eq:odd-ratio-bound}
\frac{2r+1}{r+1}
=
2-\frac{1}{r+1}
\le 2.
\end{equation}
Thus \eqref{eq:odd-middle-coefficient-identity} and
\eqref{eq:odd-ratio-bound} imply
\begin{equation}\label{eq:odd-middle-coefficient-bound}
\binom{2r+1}{r}
\le
2\binom{2r}{r}.
\end{equation}
Using \eqref{eq:odd-middle-point-probability} and
\eqref{eq:odd-middle-coefficient-bound}, we get
\begin{equation}\label{eq:odd-even-point-comparison}
\begin{aligned}
2^{-(2r+1)}\binom{2r+1}{r}
&\le
2^{-(2r+1)}\cdot 2\binom{2r}{r}  \\
&=
2^{-2r}\binom{2r}{r}.
\end{aligned}
\end{equation}
Combining \eqref{eq:odd-even-point-comparison} with
\eqref{eq:even-r-point-bound}, we get
\begin{equation}\label{eq:odd-r-point-bound}
2^{-(2r+1)}\binom{2r+1}{r}
\le
\frac{C_0}{\sqrt r}.
\end{equation}
Since $m=2r+1$ and $r\ge 1$, we have
\begin{equation}\label{eq:odd-m-r-comparison}
m=2r+1\le 3r.
\end{equation}
Since $m,r>0$, \eqref{eq:odd-m-r-comparison} gives
\begin{equation}\label{eq:odd-root-comparison}
\frac1{\sqrt r}
\le
\frac{\sqrt3}{\sqrt m}.
\end{equation}
Combining \eqref{eq:odd-middle-point-probability},
\eqref{eq:odd-r-point-bound}, and \eqref{eq:odd-root-comparison}, we get
\begin{equation}\label{eq:odd-binomial-point-bound}
2^{-m}\binom m{\lfloor m/2\rfloor}
\le
\frac{\sqrt3 C_0}{\sqrt m}
\qquad
(m=2r+1,\ r\ge 1).
\end{equation}

It remains only to include the case $m=1$. In this case,
\begin{equation}\label{eq:m-one-binomial-point-bound}
\max_{0\le j\le 1}\mbbP(Y_1=j)=\frac12.
\end{equation}
Set
\begin{equation}\label{eq:Cbin-definition}
C_{\rm bin}
:=
\max\left\{
\sqrt2 C_0,\sqrt3 C_0,\frac12
\right\}.
\end{equation}
Then \eqref{eq:even-binomial-point-bound},
\eqref{eq:odd-binomial-point-bound},
\eqref{eq:m-one-binomial-point-bound}, and
\eqref{eq:Cbin-definition} give, for every $m\ge 1$,
\begin{equation}\label{eq:binomial-point-bound}
\max_{0\le j\le m}\mbbP(Y_m=j)
=
2^{-m}\binom m{\lfloor m/2\rfloor}
\le
\frac{C_{\rm bin}}{\sqrt m}.
\end{equation}

We now determine how the two probabilities
\begin{equation}\label{eq:half-threshold-probabilities}
\mbbP(Y_m\ge m/2)
\qquad\text{and}\qquad
\mbbP(Y_m>m/2)
\end{equation}
compare with $1/2$.
\\
The distribution of $Y_m$ is symmetric around $m/2$. Indeed, for every
$0\le j\le m$,
\[
\mbbP(Y_m=j)
=
2^{-m}\binom mj
=
2^{-m}\binom m{m-j}
=
\mbbP(Y_m=m-j).
\]

First suppose that $m$ is odd. Write $m=2r+1$. Then
\[
\frac m2=r+\frac12.
\]
Since $Y_m$ is integer-valued, the two events
\[
\{Y_m\ge m/2\}
\qquad\text{and}\qquad
\{Y_m>m/2\}
\]
are the same event. Thus,
\[
\{Y_m\ge r+1/2\}
=
\{Y_m>r+1/2\}
=
\{Y_m\ge r+1\}.
\]
By symmetry, the probability that $Y_m$ is below $r+1/2$ is the same as
the probability that $Y_m$ is above $r+1/2$. There is no middle integer
value, because $r+1/2$ is not an integer. Hence each side has probability
$1/2$. Therefore
\[
\mbbP(Y_m\ge m/2)
=
\mbbP(Y_m>m/2)
=
\frac12.
\]

Now suppose that $m$ is even. Write $m=2r$. Then
\[
\frac m2=r.
\]
This time $r$ is an integer, so the value $Y_m=r$ can occur. Let
\[
p_m:=\mbbP(Y_m=r)=\mbbP(Y_m=m/2).
\]
By symmetry,
\[
\mbbP(Y_m<r)=\mbbP(Y_m>r).
\]
Let
\[
q_m:=\mbbP(Y_m>r)=\mbbP(Y_m<r).
\]

The three disjoint events
\[
\{Y_m<r\},
\qquad
\{Y_m=r\},
\qquad
\{Y_m>r\}
\]
cover the whole probability space. Therefore
\[
q_m+p_m+q_m=1.
\]
So
\[
2q_m+p_m=1,
\]
and hence
\[
q_m=\frac12-\frac12 p_m.
\]
Thus
\[
\mbbP(Y_m>m/2)
=
\mbbP(Y_m>r)
=
q_m
=
\frac12-\frac12\mbbP(Y_m=m/2).
\]
Also,
\[
\mbbP(Y_m\ge m/2)
=
\mbbP(Y_m\ge r)
=
\mbbP(Y_m=r)+\mbbP(Y_m>r).
\]
Therefore
\[
\mbbP(Y_m\ge m/2)
=
p_m+q_m
=
p_m+\frac12-\frac12p_m
=
\frac12+\frac12p_m.
\]
That is,
\[
\mbbP(Y_m\ge m/2)
=
\frac12+\frac12\mbbP(Y_m=m/2),
\]
and
\[
\mbbP(Y_m>m/2)
=
\frac12-\frac12\mbbP(Y_m=m/2).
\]

Using \eqref{eq:binomial-point-bound}, we therefore get,
for every even $m$,
\[
\left|
\mbbP(Y_m\ge m/2)-\frac12
\right|
=
\frac12\mbbP(Y_m=m/2)
\le
\frac12\frac{C_{\rm bin}}{\sqrt m}
\le
\frac{C_{\rm bin}}{\sqrt m},
\]
and similarly
\[
\left|
\mbbP(Y_m>m/2)-\frac12
\right|
=
\frac12\mbbP(Y_m=m/2)
\le
\frac{C_{\rm bin}}{\sqrt m}.
\]
If $m$ is odd, write $m=2r+1$. Since $Y_m$ takes integer values, we have
\[
\{Y_m\ge m/2\}
=
\{Y_m>m/2\}
=
\{Y_m\ge r+1\}.
\]
By symmetry of the binomial distribution with parameter $1/2$,
\[
\mbbP(Y_m\ge r+1)=\frac12.
\]
Hence
\[
\left|\mbbP(Y_m\ge m/2)-\frac12\right|=0
\]
and
\[
\left|\mbbP(Y_m>m/2)-\frac12\right|=0.
\]
Hence, for every
$m\ge 1$,
\begin{equation}\label{eq:fair-coin-centred-half}
\left|\mbbP(Y_m\ge m/2)-\frac12\right|
\le
\frac{C_{\rm bin}}{\sqrt m},
\qquad
\left|\mbbP(Y_m>m/2)-\frac12\right|
\le
\frac{C_{\rm bin}}{\sqrt m}.
\end{equation}

Now fix $a\in\mbbR$. We compare
\[
\{Y_m\ge m/2+a\}
\qquad\text{with}\qquad
\{Y_m\ge m/2\}.
\]
Since $Y_m$ only takes integer values, these two events can differ only for
integer values $j$ lying between $m/2$ and $m/2+a$.

More precisely, the sets of integers
\[
\{j\in\mbbZ:j\ge m/2+a\}
\qquad\text{and}\qquad
\{j\in\mbbZ:j\ge m/2\}
\]
can differ only inside an interval of length $|a|$. This interval contains at
most
\[
\lfloor |a|\rfloor+2
\]
integers. Therefore,
\[
\left|
\mbbP(Y_m\ge m/2+a)-\mbbP(Y_m\ge m/2)
\right|
\]
is at most the total probability of at most $\lfloor |a|\rfloor+2$ possible
values of $Y_m$. Each single value has probability at most
\[
\max_{0\le j\le m}\mbbP(Y_m=j).
\]
Hence, using \eqref{eq:binomial-point-bound},
\[
\left|
\mbbP(Y_m\ge m/2+a)-\mbbP(Y_m\ge m/2)
\right|
\le
(\lfloor |a|\rfloor+2)
\max_{0\le j\le m}\mbbP(Y_m=j)
\]
and therefore
\[
\left|
\mbbP(Y_m\ge m/2+a)-\mbbP(Y_m\ge m/2)
\right|
\le
(\lfloor |a|\rfloor+2)\frac{C_{\rm bin}}{\sqrt m}.
\]
Since
\[
\lfloor |a|\rfloor+2\le |a|+2,
\]
we get
\[
\left|
\mbbP(Y_m\ge m/2+a)-\mbbP(Y_m\ge m/2)
\right|
\le
\frac{C_{\rm bin}(|a|+2)}{\sqrt m}.
\]

Exactly the same argument applies to the strict inequality. The sets
\[
\{j\in\mbbZ:j>m/2+a\}
\qquad\text{and}\qquad
\{j\in\mbbZ:j>m/2\}
\]
can also differ in at most $\lfloor |a|\rfloor+2$ integer values. Hence
\[
\left|
\mbbP(Y_m>m/2+a)-\mbbP(Y_m>m/2)
\right|
\le
\frac{C_{\rm bin}(|a|+2)}{\sqrt m}.
\]

Combining these estimates with \eqref{eq:fair-coin-centred-half}, we get
\[
\left|
\mbbP(Y_m\ge m/2+a)-\frac12
\right|
\le
\]
\[
\left|
\mbbP(Y_m\ge m/2+a)-\mbbP(Y_m\ge m/2)
\right|
+
\left|
\mbbP(Y_m\ge m/2)-\frac12
\right|.
\]
Using the two bounds above,
\[
\left|
\mbbP(Y_m\ge m/2+a)-\frac12
\right|
\le
\frac{C_{\rm bin}(|a|+2)}{\sqrt m}
+
\frac{C_{\rm bin}}{\sqrt m}.
\]
Thus
\[
\left|
\mbbP(Y_m\ge m/2+a)-\frac12
\right|
\le
\frac{C_{\rm bin}(|a|+3)}{\sqrt m}.
\]
Since
\[
|a|+3\le 3(|a|+1),
\]
we may choose a constant $C>0$ such that
\[
\left|
\mbbP(Y_m\ge m/2+a)-\frac12
\right|
\le
C\frac{|a|+1}{\sqrt m}.
\]
The same proof gives
\[
\left|
\mbbP(Y_m>m/2+a)-\frac12
\right|
\le
C\frac{|a|+1}{\sqrt m}.
\]
Therefore, for all $m\ge 1$ and all $a\in\mbbR$,
\begin{equation}\label{eq:shifted-tail-uniform-bound}
\max\left\{
\left|\mbbP(Y_m\ge m/2+a)-\frac12\right|,
\left|\mbbP(Y_m>m/2+a)-\frac12\right|
\right\}
\le
C\frac{|a|+1}{\sqrt m}.
\end{equation}

Now take $a=a_{n,m}$. From \eqref{eq:shifted-tail-uniform-bound}, for each
$m\in S_n$,
\[
\left|\mbbP(Y_m\ge m/2+a_{n,m})-\frac12\right|
\le
C\frac{|a_{n,m}|+1}{\sqrt m}.
\]
Taking the supremum over $m\in S_n$, we get
\[
\sup_{m\in S_n}
\left|\mbbP(Y_m\ge m/2+a_{n,m})-\frac12\right|
\le
\sup_{m\in S_n}
C\frac{|a_{n,m}|+1}{\sqrt m}.
\]
We know
\[
\sup_{m\in S_n}
C\frac{|a_{n,m}|+1}{\sqrt m}
\le
C\max_{m\in S_n}\frac{|a_{n,m}|}{\sqrt m}
+
C\sup_{m\in S_n}\frac1{\sqrt m}.
\]
Since $S_n$ is a non-empty finite subset of $\mbbN^+$,
\[
\sup_{m\in S_n}\frac1{\sqrt m}
=
\frac1{\sqrt{\min_{m\in S_n}m}}.
\]
Therefore
\[
\sup_{m\in S_n}
\left|\mbbP(Y_m\ge m/2+a_{n,m})-\frac12\right|
\le
C\max_{m\in S_n}\frac{|a_{n,m}|}{\sqrt m}
+
\frac{C}{\sqrt{\min_{m\in S_n}m}}.
\]
By assumption,
\[
\lim_{n\to \infty}\max_{m\in S_n}\frac{|a_{n,m}|}{\sqrt m}= 0,
\]
and also
\[
\lim_{n\to\infty}\min_{m\in S_n} m=\infty.
\]
so
\[
\lim_{n\to\infty}
\frac{C}{\sqrt{\min_{m\in S_n}m}}
=0.
\]
Hence
\[
\lim_{n\to\infty}\sup_{m\in S_n}
\left|
\mbbP(Y_m\ge m/2+a_{n,m})-\frac12
\right|
=0.
\]

The proof for the strict inequality is identical. Indeed,
\[
\left|\mbbP(Y_m>m/2+a_{n,m})-\frac12\right|
\le
C\frac{|a_{n,m}|+1}{\sqrt m},
\]
so the same supremum estimate gives
\[
\lim_{n\to\infty}\sup_{m\in S_n}
\left|
\mbbP(Y_m>m/2+a_{n,m})-\frac12
\right|
=0.
\]
This proves the two limits.
\end{proof}

\begin{lem}\label{lem:rare-class-half-limit}
For $0\le q\le n$, set
\[
A_n(q):=\binom nq e^{-q^2/2}.
\]
Let $Y_0:=0$, and for each $q\ge 1$ let
\[
Y_q:=X_{q,1}+\cdots+X_{q,q},
\]
where $X_{q,1},\ldots,X_{q,q}$ are independent and identically distributed
$\{0,1\}$-valued random variables satisfying
\[
\mbbP(X_{q,i}=0)=\mbbP(X_{q,i}=1)=\frac12
\qquad
\text{for every } i=1,\ldots,q.
\]
Then
\[
\lim_{n\to\infty}
\frac{
\sum_{q=0}^n A_n(q)\,
\mbbE\left[(1-e^{-2Y_q})^{n-q}\right]
}{
\sum_{q=0}^n A_n(q)
}
=
\frac12.
\]
\end{lem}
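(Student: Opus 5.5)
The plan is to show that the weights $A_n(q)$ concentrate on a narrow window of $q$ around $q^\star_n := \ln n - \ln\ln n$. On that window, $\mbbE\bigl[(1-e^{-2Y_q})^{n-q}\bigr]$ is essentially $\mbbP(2Y_q \ge \ln n)$, and since $\ln n - q$ is small compared with $\sqrt q$ there, Lemma~\ref{lem:fair-coin-small-shift} makes this probability tend to $\frac12$ uniformly in $q$. Since the integrand always lies in $[0,1]$, any window carrying asymptotically all of the mass $\sum_q A_n(q)$ suffices: the contribution from outside the window is $o(1)$ after normalisation.

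\emph{Step 1 (concentration of $A_n$).} As in the appendix proof of Lemma~\ref{lem:caseC-window-rigorous}, I would use the log-ratio
\[
g_n(q):=\ln\frac{A_n(q+1)}{A_n(q)}=\ln\frac{n-q}{q+1}-q-\tfrac12 ,
\]
which satisfies $g_n(q+1)-g_n(q)\le -1$. So $q\mapsto A_n(q)$ is log-concave, and its log-ratio drops by at least $1$ per step.

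I would then check that $g_n(q)=0$ at some $q_0=\ln n-\ln\ln n+O(1)$. Exactly as in Claims~1 and~2 of that proof, summing geometric tails gives, for any $K_n\to\infty$,
\[
\frac{\sum_{|q-q_0|>K_n}A_n(q)}{\sum_q A_n(q)}\to 0 .
\]
The reason is that $A_n(q_0\pm k)/A_n(q_0)\le e^{-k(k-1)/2}$, with rounding handled as before. I would take $K_n := (\ln n)^{1/4}$ and restrict attention to the window $S_n:=\{q: |q-q_0|\le K_n\}$.

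\emph{Step 2 (sandwiching the expectation).} Put $\omega_n:=(\ln n)^{1/4}$ and $x:=e^{-2Y_q}$. On the event $\{2Y_q\ge \ln n+\omega_n\}$ we have $(1-x)^{n-q}\ge 1-nx\ge 1-e^{-\omega_n}$. On the event $\{2Y_q\le \ln n-\omega_n\}$ we have $(1-x)^{n-q}\le \exp(-(n-q)x)\le \exp\bigl(-\tfrac{n-q}{n}e^{\omega_n}\bigr)\to 0$, uniformly for $q\in S_n$. Since the integrand lies in $[0,1]$,
\[
\bigl(1-e^{-\omega_n}\bigr)\,\mbbP\bigl(Y_q\ge \tfrac q2+a^+_{n,q}\bigr)\ \le\ \mbbE\bigl[(1-e^{-2Y_q})^{n-q}\bigr]\ \le\ \mbbP\bigl(Y_q> \tfrac q2+a^-_{n,q}\bigr)+o(1),
\]
where $a^\pm_{n,q}:=\tfrac12(\ln n-q\pm\omega_n)$.

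\emph{Step 3 (applying Lemma~\ref{lem:fair-coin-small-shift}).} For $q\in S_n$ we have $|\ln n-q|\le \ln\ln n+K_n+O(1)$. Hence $\max_{q\in S_n}|a^\pm_{n,q}|/\sqrt q=O\bigl((\ln n)^{1/4}/\sqrt{\ln n}\bigr)\to 0$, and $\min S_n\to\infty$. So Lemma~\ref{lem:fair-coin-small-shift} gives both bounding probabilities $\to\frac12$ uniformly over $q\in S_n$. Averaging with the weights $A_n(q)$ over $S_n$, and using Step 1 for the complement, yields the limit $\frac12$.

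The main obstacle is Step 1's precise localisation. The window of length $\ln n$ from Lemma~\ref{lem:caseC-window-rigorous} (there with $|c_2|=\tfrac12$) is far too coarse, because Lemma~\ref{lem:fair-coin-small-shift} needs $|\ln n-q|=o(\sqrt{\ln n})$. This requires locating the peak $q_0$ to within $O(\ln\ln n)$ of $\ln n$ and proving the $O(1)$-width concentration from the slope bound $g_n(q+1)-g_n(q)\le-1$. I would do this by directly evaluating $g_n$ at $\ln n-\ln\ln n\pm C$.
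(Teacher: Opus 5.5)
Your proposal is correct and follows essentially the same route as the paper: you localise the weights $A_n(q)$ around $\ln n-\ln\ln n$ to within $o(\sqrt{\ln n})$, sandwich $\mbbE\bigl[(1-e^{-2Y_q})^{n-q}\bigr]$ between two binomial tails shifted by $\pm(\ln n)^{1/4}$, and apply Lemma~\ref{lem:fair-coin-small-shift} uniformly over the window. The differences are only in bookkeeping. You centre at the mode, using the log-concavity bound $g_n(q+1)-g_n(q)\le -1$, and take a growing window $K_n=(\ln n)^{1/4}$ with threshold $\tfrac12\ln n$. The paper instead centres at $\lfloor \ln n-\ln\ln n\rfloor$, uses the cruder ratio bound $\exp(D|j|-j^2/2)$ with a fixed $K$ and a $\lim_{K}\limsup_{n}$ argument, and takes threshold $\tfrac12\ln(n-q)$.
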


\begin{proof}
Put
\[
L_n:=\ln n,
\qquad
\ell_n:=\ln\ln n,
\qquad
s_n:=L_n-\ell_n,
\qquad
N_n:=\lfloor s_n\rfloor .
\]
Then $|N_n-s_n|<1$. Hence
\begin{equation}\label{eq:rchl-Nn-basic}
N_n=L_n-\ell_n+O(1),
\qquad
\lim_{n\to\infty}N_n=\infty,
\qquad
\lim_{n\to\infty}\frac{N_n}{n}=0 .
\end{equation}
Moreover,
\[
\begin{aligned}
\ln N_n =
\ln\left(L_n\cdot \frac{N_n}{L_n}\right) =
\ln L_n+\ln\left(\frac{N_n}{L_n}\right).
\end{aligned}
\]
Since
\[
\frac{N_n}{L_n}
=
1-\frac{\ell_n}{L_n}+O\left(\frac1{L_n}\right),
\]
we have
\[
\lim_{n\to\infty}\frac{N_n}{L_n}=1,
\qquad
\lim_{n\to\infty}\ln\left(\frac{N_n}{L_n}\right)=0.
\]
Therefore
\[
\ln N_n
=
\ell_n+o(1).
\]
Also, by \eqref{eq:rchl-Nn-basic},
\[
\ln(n-N_n)=\ln(n(1-\frac{N_n}{n}))
=
L_n+\ln\left(1-\frac{N_n}{n}\right),
\]
and since
\[
\lim_{n\to\infty}\frac{N_n}{n}=0,
\]
we get
\[
\ln(n-N_n)=L_n+o(1).
\]
Thus
\begin{equation}\label{eq:rchl-log-basic}
\ln N_n=\ell_n+o(1),
\qquad
\ln(n-N_n)=L_n+o(1).
\end{equation}

Now define
\[
\alpha_n:=\ln\frac n{N_n}-N_n,
\qquad
\beta_n:=N_n+\ln\frac{N_n}{n-N_n}.
\]
Using \eqref{eq:rchl-Nn-basic} and \eqref{eq:rchl-log-basic}, we obtain
\[
\alpha_n
=
L_n-\ln N_n-N_n
=
L_n-(\ell_n+o(1))-(L_n-\ell_n+O(1))
=
O(1),
\]
and
\[
\beta_n
=
N_n+\ln N_n-\ln(n-N_n)
=
(L_n-\ell_n+O(1))+(\ell_n+o(1))-(L_n+o(1))
=
O(1).
\]
Hence
\begin{equation}\label{eq:rchl-centering}
\alpha_n=O(1),
\qquad
\beta_n=O(1).
\end{equation}

\smallskip
\noindent

Now let
\[
Z_n:=\sum_{r=0}^n A_n(r).
\]
Since $A_n(0)=1$, we have $Z_n>0$. Define a random variable $Q_n$ on
$\{0,\ldots,n\}$ by
\[
\mbbP(Q_n=q)=\frac{A_n(q)}{Z_n}.
\]
This is a probability distribution because each $A_n(q)$ is non-negative
and $Z_n$ is the sum of all the $A_n(q)$.

Let $j\in\mbbZ$ and assume first that $j\ge0$ and $0\le N_n+j\le n$.
Then
\[
\frac{\binom n{N_n+j}}{\binom n{N_n}}
=
\prod_{i=1}^j
\frac{n-N_n-i+1}{N_n+i}.
\]
For each factor in this product,
\[
n-N_n-i+1\le n,
\qquad
N_n+i\ge N_n.
\]
Therefore
\[
\frac{\binom n{N_n+j}}{\binom n{N_n}}
\le
\left(\frac n{N_n}\right)^j.
\]
Also,
\[
(N_n+j)^2-N_n^2=2N_nj+j^2.
\]
Hence
\[
\begin{aligned}
\frac{A_n(N_n+j)}{A_n(N_n)}
&=
\frac{\binom n{N_n+j}}{\binom n{N_n}}
\exp\left(-\frac{(N_n+j)^2-N_n^2}{2}\right)        \\
&\le
\left(\frac n{N_n}\right)^j
\exp\left(-N_nj-\frac{j^2}{2}\right)                \\
&=
\exp\left(j\left(\ln\frac n{N_n}-N_n\right)-\frac{j^2}{2}\right).
\end{aligned}
\]
By the definition of $\alpha_n$, this gives
\begin{equation}\label{eq:rchl-ratio-positive}
\frac{A_n(N_n+j)}{A_n(N_n)}
\le
\exp\left(j\alpha_n-\frac{j^2}{2}\right).
\end{equation}

Now assume that $j<0$. 
\\
Set $j=-k$, where $k\ge1$, and assume that
$0\le N_n-k\le n$. Then
\[
\frac{\binom n{N_n-k}}{\binom n{N_n}}
=
\prod_{i=0}^{k-1}
\frac{N_n-i}{n-N_n+i+1}.
\]
For each factor in this product,
\[
N_n-i\le N_n,
\qquad
n-N_n+i+1\ge n-N_n.
\]
Therefore
\[
\frac{\binom n{N_n-k}}{\binom n{N_n}}
\le
\left(\frac{N_n}{n-N_n}\right)^k.
\]
Also,
\[
(N_n-k)^2-N_n^2=-2N_nk+k^2.
\]
Hence
\[
\begin{aligned}
\frac{A_n(N_n-k)}{A_n(N_n)}
&=
\frac{\binom n{N_n-k}}{\binom n{N_n}}
\exp\left(-\frac{(N_n-k)^2-N_n^2}{2}\right)        \\
&\le
\left(\frac{N_n}{n-N_n}\right)^k
\exp\left(N_nk-\frac{k^2}{2}\right)                 \\
&=
\exp\left(k\left(N_n+\ln\frac{N_n}{n-N_n}\right)
-\frac{k^2}{2}\right).
\end{aligned}
\]
By the definition of $\beta_n$, this gives
\begin{equation}\label{eq:rchl-ratio-negative}
\frac{A_n(N_n-k)}{A_n(N_n)}
\le
\exp\left(k\beta_n-\frac{k^2}{2}\right).
\end{equation}

From \eqref{eq:rchl-centering}, there is a constant $D>0$ such that, for all
sufficiently large $n$,
\[
|\alpha_n|\le D,
\qquad
|\beta_n|\le D.
\]
Combining \eqref{eq:rchl-ratio-positive} and
\eqref{eq:rchl-ratio-negative}, we get:
\\
whenever $0\le N_n+j\le n$,
\begin{equation}\label{eq:rchl-ratio-bound}
\frac{A_n(N_n+j)}{A_n(N_n)}
\le
\exp\left(D|j|-\frac{j^2}{2}\right).
\end{equation}

Since $Z_n\ge A_n(N_n)$, \eqref{eq:rchl-ratio-bound} implies, for every
$K>1$,
\[
\begin{aligned}
\mbbP(|Q_n-s_n|>K)
&=
\sum_{\substack{0\le q\le n\\ |q-s_n|>K}}
\frac{A_n(q)}{Z_n}                                      \\
&=
\sum_{\substack{j\in\mbbZ\\ 0\le N_n+j\le n\\ |N_n+j-s_n|>K}}
\frac{A_n(N_n+j)}{Z_n}                                  \\
&\le
\sum_{\substack{j\in\mbbZ\\ 0\le N_n+j\le n\\ |N_n+j-s_n|>K}}
\frac{A_n(N_n+j)}{A_n(N_n)}                              \\
&\le
\sum_{\substack{j\in\mbbZ\\ |N_n+j-s_n|>K}}
\exp\left(D|j|-\frac{j^2}{2}\right).
\end{aligned}
\]
Because $|N_n-s_n|<1$, the condition $|N_n+j-s_n|>K$ implies
\[
|j|
=
|(N_n+j-s_n)-(N_n-s_n)|
\ge
|N_n+j-s_n|-|N_n-s_n|
>
K-1.
\]
Thus
\[
\mbbP(|Q_n-s_n|>K)
\le
\sum_{|j|>K-1}
\exp\left(D|j|-\frac{j^2}{2}\right).
\]
For all sufficiently large $|j|$,
\[
D|j|-\frac{j^2}{2}
\le
-\frac{j^2}{4}.
\]
Hence the series
\[
\sum_{j\in\mbbZ}
\exp\left(D|j|-\frac{j^2}{2}\right)
\]
is convergent. 
\\
Therefore
\[
\lim_{K\to\infty}
\sum_{|j|>K-1}
\exp\left(D|j|-\frac{j^2}{2}\right)
=0.
\]
Consequently,
\begin{equation}\label{eq:rchl-Q-concentration}
\lim_{K\to\infty}
\limsup_{n\to\infty}
\mbbP(|Q_n-s_n|>K)
=0.
\end{equation}

\smallskip
\noindent

Now fix $K\ge1$ and define
\[
I_{n,K}:=\{q\in\{0,\ldots,n\}: |q-s_n|\le K\}.
\]
For all sufficiently large $n$, this set is non-empty because
$N_n\in I_{n,K}$, since $|N_n-s_n|<1\le K$.

If $q\in I_{n,K}$, then
\[
s_n-K\le q\le s_n+K.
\]
Since $s_n=L_n-\ell_n$, this gives
\[
L_n-\ell_n-K\le q\le L_n-\ell_n+K.
\]
Therefore
\[
\lim_{n\to\infty}\min_{q\in I_{n,K}}q=\infty.
\]
Moreover,
\[
0\le
\max_{q\in I_{n,K}}\frac qn
\le
\frac{L_n-\ell_n+K}{n},
\]
and hence
\[
\lim_{n\to\infty}
\max_{q\in I_{n,K}}\frac qn
=0.
\]
Finally, for $q\in I_{n,K}$,
\[
q-L_n
=
-\ell_n+(q-s_n),
\]
and so
\[
\left|\frac q{L_n}-1\right|
=
\frac{|q-L_n|}{L_n}
\le
\frac{\ell_n+K}{L_n}.
\]
Since
\[
\lim_{n\to\infty}\frac{\ell_n+K}{L_n}=0,
\]
we obtain
\begin{equation}\label{eq:rchl-interval-asymptotics}
\lim_{n\to\infty}\min_{q\in I_{n,K}}q=\infty,
\qquad
\lim_{n\to\infty}\max_{q\in I_{n,K}}\frac qn=0,
\qquad
\lim_{n\to\infty}
\max_{q\in I_{n,K}}
\left|\frac q{L_n}-1\right|
=0 .
\end{equation}

For $q\in I_{n,K}$, define
\begin{equation}\label{eq:rchl-r-h-def}
r_{n,q}:=\frac12\ln(n-q).
\end{equation}
By the second limit in \eqref{eq:rchl-interval-asymptotics}, every
$q\in I_{n,K}$ satisfies $q<n$ for all sufficiently large $n$. Hence
$r_{n,q}$ is well-defined for all $q\in I_{n,K}$ and all sufficiently
large $n$. 
\\
From \eqref{eq:rchl-r-h-def},
\begin{equation}\label{eq:rchl-normalization}
(n-q)e^{-2r_{n,q}}=1 .
\end{equation}

We next compare $r_{n,q}$ with $L_n/2$. For $q\in I_{n,K}$,
\[
\ln(n-q)
=
L_n+\ln\left(1-\frac qn\right).
\]
For all sufficiently large $n$,
\[
0\le \frac qn\le \frac12
\qquad
\text{for every } q\in I_{n,K}.
\]
Using the fact that
\[
|\ln(1-u)|\le 2u
\qquad
\text{if } 0\le u\le 1/2,
\]
which follows from
\[
|\ln(1-u)|
=
-\ln(1-u)
=
\int_0^u \frac{dt}{1-t}
\le
\int_0^u 2\,dt
=
2u,
\]
we get
\[
\max_{q\in I_{n,K}}
|\ln(n-q)-L_n|
\le
2\max_{q\in I_{n,K}}\frac qn.
\]
By \eqref{eq:rchl-interval-asymptotics},
\[
\lim_{n\to\infty}
\max_{q\in I_{n,K}}
|\ln(n-q)-L_n|
=0.
\]
Therefore
\begin{equation}\label{eq:rchl-r-close-L}
\lim_{n\to\infty}
\max_{q\in I_{n,K}}
\left|r_{n,q}-\frac{L_n}{2}\right|
=0 .
\end{equation}

For $q\in I_{n,K}$, we have
\[
q=s_n+(q-s_n)=L_n-\ell_n+(q-s_n).
\]
Hence
\[
\frac q2
=
\frac{L_n}{2}-\frac{\ell_n}{2}+\frac{q-s_n}{2}.
\]
Therefore
\begin{equation}\label{another expression for rnq - q/2}
r_{n,q}-\frac q2
=
\left(r_{n,q}-\frac{L_n}{2}\right)
+\frac{\ell_n}{2}
-\frac{q-s_n}{2}.
\end{equation}
Since $|q-s_n|\le K$ for $q\in I_{n,K}$, \eqref{eq:rchl-r-close-L}
implies that
for each fixed $K$, we have, uniformly for $q\in I_{n,K}$,
\begin{equation}\label{eq:rchl-r-minus-halfq}
r_{n,q}-\frac q2
=
\frac{\ell_n}{2}+O_K(1)+o(1),
\end{equation}
where $O_K(1)$ denotes a term bounded in absolute value by a constant
depending on $K$, but independent of $n$ and $q$. 

Define 
\[
h_n:=L_n^{1/4},
\]
and for $q\in I_{n,K}$, define
\[
a^+_{n,q}:=r_{n,q}+h_n-\frac q2,
\qquad
a^-_{n,q}:=r_{n,q}-h_n-\frac q2 .
\]
Using \eqref{another expression for rnq - q/2}, we have
\[
a^\pm_{n,q}
=
\left(r_{n,q}-\frac{L_n}{2}\right)
+\frac{\ell_n}{2}
-\frac{q-s_n}{2}
\pm h_n .
\]
Thus
\[
|a^\pm_{n,q}|
\le
\left|r_{n,q}-\frac{L_n}{2}\right|
+\frac{\ell_n}{2}
+\frac K2
+h_n .
\]
Taking the maximum over $q\in I_{n,K}$ and dividing by
$\sqrt{\min_{q\in I_{n,K}}q}$ gives
\[
\max_{q\in I_{n,K}}
\frac{|a^\pm_{n,q}|}{\sqrt q}
\le
\frac{
\max_{q\in I_{n,K}}
\left|r_{n,q}-\frac{L_n}{2}\right|
+\frac{\ell_n}{2}
+\frac K2
+h_n
}{
\sqrt{\min_{q\in I_{n,K}}q}
}.
\]
The last limit in \eqref{eq:rchl-interval-asymptotics} also gives
\[
\lim_{n\to\infty}
\frac{\min_{q\in I_{n,K}}q}{L_n}
=1.
\]
Indeed, for all sufficiently large $n$ the set $I_{n,K}$ is non-empty, and
\[
\left|
\frac{\min_{q\in I_{n,K}}q}{L_n}-1
\right|
\le
\max_{q\in I_{n,K}}
\left|
\frac q{L_n}-1
\right|,
\]
whose right-hand side tends to $0$.
Also,
\[
\lim_{n\to\infty}\frac{\ell_n}{\sqrt{L_n}}=0,
\qquad
\lim_{n\to\infty}\frac{h_n}{\sqrt{L_n}}
=
\lim_{n\to\infty}\frac{L_n^{1/4}}{L_n^{1/2}}
=0.
\]
Together with \eqref{eq:rchl-r-close-L}, these estimates imply
\begin{equation}\label{eq:rchl-small-shifts}
\lim_{n\to\infty}
\max_{q\in I_{n,K}}
\frac{\left|r_{n,q}+h_n-\frac q2\right|}{\sqrt q}
=0,
\qquad
\lim_{n\to\infty}
\max_{q\in I_{n,K}}
\frac{\left|r_{n,q}-h_n-\frac q2\right|}{\sqrt q}
=0 .
\end{equation}

By the first limit in \eqref{eq:rchl-interval-asymptotics}, the numbers
$q$ in $I_{n,K}$ become arbitrarily large as $n\to\infty$. Therefore the
condition needed to apply Lemma~\ref{lem:fair-coin-small-shift} is satisfied
with $S_n=I_{n,K}$. 
\\
Applying Lemma~\ref{lem:fair-coin-small-shift} once with
$a^+_{n,q}$ and once with $a^-_{n,q}$, and using
\eqref{eq:rchl-small-shifts}, we get
\begin{equation}\label{eq:rchl-binomial-half}
\begin{aligned}
\lim_{n\to\infty}
\max_{q\in I_{n,K}}
\left|
\mbbP(Y_q\ge r_{n,q}+h_n)-\frac12
\right|
&=0,                                                     \\
\lim_{n\to\infty}
\max_{q\in I_{n,K}}
\left|
\mbbP(Y_q> r_{n,q}-h_n)-\frac12
\right|
&=0 .
\end{aligned}
\end{equation}

\smallskip
\noindent

Now, for $0\le q\le n$, set
\[
F_{n,q}:=
\mbbE\left[(1-e^{-2Y_q})^{n-q}\right].
\]
For $q\in I_{n,K}$ and $0\le y\le q$, define
\[
g_{n,q}(y):=(1-e^{-2y})^{n-q}.
\]
Then
\[
F_{n,q}=\mbbE[g_{n,q}(Y_q)].
\]
 
We now estimate $g_{n,q}$ below and above the point $r_{n,q}$.
Assume first that
\[
y\le r_{n,q}-h_n.
\]
Then
\[
-2y\ge -2r_{n,q}+2h_n,
\]
and so
\[
e^{-2y}\ge e^{-2r_{n,q}}e^{2h_n}.
\]
Multiplying by $n-q$ and using \eqref{eq:rchl-normalization}, we get
\[
(n-q)e^{-2y}
\ge
(n-q)e^{-2r_{n,q}}e^{2h_n}
=
e^{2h_n}.
\]
Since
\[
(1-u)^s\le e^{-su}
\qquad
(0\le u\le1,\ s\in\mbbN),
\]
which follows from $1-u\le e^{-u}$ for $0\leq u\leq 1$, we have
\[
g_{n,q}(y)
=
(1-e^{-2y})^{n-q}
\le
\exp\left(-(n-q)e^{-2y}\right)
\le
\exp(-e^{2h_n}).
\]
Thus
\begin{equation}\label{eq:rchl-g-below}
y\le r_{n,q}-h_n
\quad\text{implies}\quad
g_{n,q}(y)\le \exp(-e^{2h_n}).
\end{equation}

Now assume that
\[
y\ge r_{n,q}+h_n.
\]
Then
\[
-2y\le -2r_{n,q}-2h_n,
\]
and hence
\[
e^{-2y}\le e^{-2r_{n,q}}e^{-2h_n}.
\]
Multiplying by $n-q$ and using \eqref{eq:rchl-normalization}, we get
\[
(n-q)e^{-2y}
\le
(n-q)e^{-2r_{n,q}}e^{-2h_n}
=
e^{-2h_n}.
\]
Using
\[
1-(1-u)^s\le su
\qquad
(0\le u\le1,\ s\in\mbbN),
\]
which follows from
\[
1-(1-u)^s
=
u\sum_{i=0}^{s-1}(1-u)^i
\le su ,
\]
we get
\[
1-g_{n,q}(y)
=
1-(1-e^{-2y})^{n-q}
\le
(n-q)e^{-2y}
\le
e^{-2h_n}.
\]
Therefore
\begin{equation}\label{eq:rchl-g-above}
y\ge r_{n,q}+h_n
\quad\text{implies}\quad
g_{n,q}(y)\ge 1-e^{-2h_n}.
\end{equation}

Here and below, $\mathbf{1}_E$ denotes the indicator random variable of
the event $E$.
Using \eqref{eq:rchl-g-above} on the event
$\{Y_q\ge r_{n,q}+h_n\}$ gives
\[
\begin{aligned}
F_{n,q}
&=
\mbbE[g_{n,q}(Y_q)]                                      \\
&\ge
\mbbE\left[
g_{n,q}(Y_q)\mathbf 1_{\{Y_q\ge r_{n,q}+h_n\}}
\right]                                                   \\
&\ge
(1-e^{-2h_n})\mbbP(Y_q\ge r_{n,q}+h_n)                    \\
&\ge
\mbbP(Y_q\ge r_{n,q}+h_n)-e^{-2h_n}.
\end{aligned}
\]
Let
\[
B_{n,q}:=\{Y_q\le r_{n,q}-h_n\}.
\]
Then
\[
B_{n,q}^c=\{Y_q>r_{n,q}-h_n\},
\]
and the two events $B_{n,q}$ and $B_{n,q}^c$ partition the probability
space. Hence
\[
\mathbf 1_{B_{n,q}}+\mathbf 1_{B_{n,q}^c}=1.
\]
Using this partition inside the expectation gives
\[
\begin{aligned}
F_{n,q}
&=
\mbbE[g_{n,q}(Y_q)]                                      \\
&=
\mbbE\left[
g_{n,q}(Y_q)\mathbf 1_{B_{n,q}}
\right]
+
\mbbE\left[
g_{n,q}(Y_q)\mathbf 1_{B_{n,q}^c}
\right].
\end{aligned}
\]
On the event $B_{n,q}$ we have $Y_q\le r_{n,q}-h_n$, so
\eqref{eq:rchl-g-below} gives
\[
g_{n,q}(Y_q)\le \exp(-e^{2h_n}).
\]
Therefore
\[
\mbbE\left[
g_{n,q}(Y_q)\mathbf 1_{B_{n,q}}
\right]
\le
\exp(-e^{2h_n})\mbbP(B_{n,q}).
\]
On the complementary event $B_{n,q}^c$, we only use the trivial bound
\[
0\le g_{n,q}(Y_q)\le 1.
\]
Thus
\[
\mbbE\left[
g_{n,q}(Y_q)\mathbf 1_{B_{n,q}^c}
\right]
\le
\mbbP(B_{n,q}^c).
\]
Combining the two estimates, we obtain
\[
\begin{aligned}
F_{n,q}
&\le
\exp(-e^{2h_n})\mbbP(B_{n,q})
+
\mbbP(B_{n,q}^c)                                      \\
&=
\exp(-e^{2h_n})\mbbP(Y_q\le r_{n,q}-h_n)
+
\mbbP(Y_q>r_{n,q}-h_n)                                \\
&\le
\exp(-e^{2h_n})
+
\mbbP(Y_q>r_{n,q}-h_n).
\end{aligned}
\]
Hence, for every $q\in I_{n,K}$ and all sufficiently large $n$,
\begin{equation}\label{eq:rchl-expectation-squeeze}
\mbbP(Y_q\ge r_{n,q}+h_n)-e^{-2h_n}
\le
F_{n,q}
\le
\mbbP(Y_q>r_{n,q}-h_n)+\exp(-e^{2h_n}).
\end{equation}

By \eqref{eq:rchl-binomial-half},
\[
\lim_{n\to\infty}
\max_{q\in I_{n,K}}
\left|
\mbbP(Y_q\ge r_{n,q}+h_n)-\frac12
\right|
=0,
\]
and
\[
\lim_{n\to\infty}
\max_{q\in I_{n,K}}
\left|
\mbbP(Y_q>r_{n,q}-h_n)-\frac12
\right|
=0.
\]
Since
\[
\lim_{n\to\infty}h_n=\infty,
\]
we also have
\[
\lim_{n\to\infty}e^{-2h_n}=0,
\qquad
\lim_{n\to\infty}\exp(-e^{2h_n})=0.
\]
Taking the maximum over $q\in I_{n,K}$ in
\eqref{eq:rchl-expectation-squeeze} therefore gives
\begin{equation}\label{eq:rchl-F-uniform-half}
\lim_{n\to\infty}
\max_{q\in I_{n,K}}
\left|
F_{n,q}-\frac12
\right|
=0 .
\end{equation}

\smallskip
\noindent

Finally, by the definition of $Q_n$,
\begin{equation}\label{eq:rchl-average-identity}
\begin{aligned}
\mbbE[F_{n,Q_n}]
&=
\sum_{q=0}^n \mbbP(Q_n=q)F_{n,q}                         \\
&=
\sum_{q=0}^n \frac{A_n(q)}{Z_n}F_{n,q}                    \\
&=
\frac{
\sum_{q=0}^n A_n(q)\,
\mbbE\left[(1-e^{-2Y_q})^{n-q}\right]
}{
\sum_{q=0}^n A_n(q)
}.
\end{aligned}
\end{equation}
It remains to show that
\[
\lim_{n\to\infty}\mbbE[F_{n,Q_n}]=\frac12.
\]

Since $0\le F_{n,q}\le1$ for every $0\le q\le n$, we have
\[
\left|F_{n,q}-\frac12\right|\le1.
\]
For each fixed $K\ge1$,
\[
\begin{aligned}
\left|
\mbbE[F_{n,Q_n}]-\frac12
\right|
&=
\left|
\mbbE\left[F_{n,Q_n}-\frac12\right]
\right|                                                   \\
&\le
\mbbE\left[
\left|F_{n,Q_n}-\frac12\right|
\mathbf 1_{\{Q_n\in I_{n,K}\}}
\right]                                                   \\
&\quad+
\mbbE\left[
\left|F_{n,Q_n}-\frac12\right|
\mathbf 1_{\{Q_n\notin I_{n,K}\}}
\right]                                                   \\
&\le
\max_{q\in I_{n,K}}
\left|F_{n,q}-\frac12\right|
+
\mbbP(Q_n\notin I_{n,K}).
\end{aligned}
\]
Taking $\limsup_{n\to\infty}$ and using
\eqref{eq:rchl-F-uniform-half}, we get
\[
\limsup_{n\to\infty}
\left|
\mbbE[F_{n,Q_n}]-\frac12
\right|
\le
\limsup_{n\to\infty}
\mbbP(Q_n\notin I_{n,K}).
\]
Since
\[
Q_n\notin I_{n,K}
\quad\text{if and only if}\quad
|Q_n-s_n|>K,
\]
\eqref{eq:rchl-Q-concentration} gives
\[
\lim_{K\to\infty}
\limsup_{n\to\infty}
\mbbP(Q_n\notin I_{n,K})
=0.
\]
Therefore
\[
\limsup_{n\to\infty}
\left|
\mbbE[F_{n,Q_n}]-\frac12
\right|
=0.
\]
Hence
\[
\lim_{n\to\infty}\mbbE[F_{n,Q_n}]=\frac12.
\]
This proves the lemma.
\end{proof}

\medskip

\noindent
Now we are ready to prove:

\medskip

\noindent
{\bf Lemma~\ref{convergence to 1/e in case 3}} {\rm
There are choices of non-negative weights $w_1,\ldots,w_8$ such that $\Eff^{\max}=\{0\}$, $\eff'(0)=0$,
and $(\mbbP_n:n\in\mbbN^+)$ does not satisfy a first-order $0$-$1$ law.
}

\begin{proof}
Choose $C\ge e^2$, and choose $w_1,\ldots,w_8\ge 0$ by
\[
\begin{aligned}
e^{w_1}=e^{w_2} &= \frac12 e^{-1/2}C,\\
e^{w_3}=e^{w_4}=e^{w_5}=e^{w_6} &= \frac12 C,\\
e^{w_7} &= (1-e^{-2})C,\\
e^{w_8} &= e^{-2}C.
\end{aligned}
\]
These choices are possible because every number on the right is at least
$1$.

By Notation~\ref{notation C-ij} and Notation~\ref{not:typed},
\[
C_{12}=e^{-1/2}C,
\qquad
C_{34}=C,
\qquad
C_{56}=C,
\qquad
C_{78}=C,
\]
and
\[
p_{11}=p_{00}=p_{10}=\frac12,
\qquad
p_{01}=1-e^{-2}.
\]
Therefore
\[
c_1=\ln\frac{C_{56}C_{78}}{(C_{34})^2}=0,
\qquad
c_2=\ln\frac{C_{12}C_{34}}{C_{56}C_{78}}=-\frac12.
\]
Using \eqref{normal form of phase function},
\[
\eff(\alpha)=c_0-\frac{\alpha^2}{2}.
\]
Hence $0$ is the unique maximiser of $\eff$ on $[0,1]$, and
$\eff'(0)=0$. Thus
\[
\Eff^{\max}=\{0\}
\qquad\text{and}\qquad
\eff'(0)=0.
\]

Now consider the first-order sentence
\[
\psi_0:=
\forall x\Bigl(
\neg P(x)\rightarrow
\exists y\bigl(P(y)\wedge R(y,y)\wedge R(x,y)\bigr)
\Bigr).
\]
It says that every uncoloured vertex has an edge to some coloured vertex
which has a loop. We prove that
\[
\lim_{n\to\infty}\mbbP_n(\psi_0)=\frac12.
\]
This is enough to show that $(\mbbP_n:n\in\mbbN^+)$ does not satisfy a
first-order $0$-$1$ law.

Let
\[
M_n(\mcA):=|P^\mcA|.
\]
Since $\eff(\alpha)=c_0-\alpha^2/2$, equation~\eqref{ln of Z-n(m)} gives, for
$0\le m\le n$,
\[
Z_n(m)
=
e^{n^2c_0}\binom nm e^{-m^2/2}.
\]
The factor $e^{n^2c_0}$ is independent of $m$, so it cancels when the
probabilities $\mbbP_n(M_n=m)$ are normalised over $m$. Thus, with
\[
A_n(m):=\binom nm e^{-m^2/2},
\]
we have
\begin{equation}\label{eq:case3-Mn-distribution}
\mbbP_n(M_n=m)
=
\frac{A_n(m)}{\sum_{r=0}^n A_n(r)}.
\end{equation}

Fix $B\subseteq[n]$ with $|B|=m$, and condition on $P^\mcA=B$. Set
\[
L_B(\mcA):=\{y\in B:\mcA\models R(y,y)\},
\qquad
Y_m:=|L_B(\mcA)|.
\]
By Proposition~\ref{lem:edge-factor} and $p_{11}=1/2$, under this
conditioning $Y_m$ is a sum of $m$ independent and identically distributed
$\{0,1\}$-valued random variables with parameter $1/2$. 

Now fix $L\subseteq B$ and suppose that $|L|=\ell$. Conditional on
$L_B(\mcA)=L$, a fixed uncoloured vertex $x$ has no edge to any vertex of
$L$ with probability
\[
(1-p_{01})^\ell=e^{-2\ell}.
\]
By Proposition~\ref{lem:edge-factor}, the corresponding events are
independent as $x$ ranges over the uncoloured vertices. Hence
\[
\mbbP_n(\psi_0\mid P^\mcA=B,\ L_B(\mcA)=L)
=
(1-e^{-2\ell})^{n-m}.
\]

It remains to remove the conditioning on $L_B(\mcA)$. By conditioning on
each possible value of the random set $L_B(\mcA)$ and applying the law of
total expectation, we obtain
\[
\mbbP_n(\psi_0\mid P^\mcA=B)
=
\mbbE\left[(1-e^{-2Y_m})^{n-m}\right].
\]
The expectation is taken under the conditional distribution given
$P^\mcA=B$, and the right-hand side depends only on $m=|B|$. Using
\eqref{eq:case3-Mn-distribution}, we get
\[
\mbbP_n(\psi_0)
=
\frac{
\sum_{m=0}^n A_n(m)\,
\mbbE\left[(1-e^{-2Y_m})^{n-m}\right]
}{
\sum_{m=0}^n A_n(m)
}.
\]
By Lemma~\ref{lem:rare-class-half-limit},
\[
\lim_{n\to\infty}\mbbP_n(\psi_0)=\frac12.
\]
Therefore $(\mbbP_n:n\in\mbbN^+)$ does not satisfy a first-order $0$-$1$ law.
\end{proof}

\medskip

\noindent
{\bf Lemma~\ref{convergence to 1/e in case 4}} {\rm
There are choices of non-negative weights $w_1,\ldots,w_8$ such that $\Eff^{\max}=\{1\}$, $\eff'(1)=0$,
and $(\mbbP_n:n\in\mbbN^+)$ does not satisfy a first-order $0$-$1$ law.
}

\begin{proof}
We use the same construction with coloured and uncoloured vertices interchanged. 
\end{proof}

\end{document}